\newcommand{\Ueberschrift}{Infinite series of quaternionic $1$-vertex cube complexes,\\[1ex]  the doubling construction, and explicit cubical Ramanujan complexes}
\newcommand{\Kurztitel}{Quaternionic $1$-vertex cube complexes, non-residually finite lattices and cubical Ramanujan complexes}
\pgfplotsset{compat=1.12}
\DeclareRobustCommand{\gobblefive}[5]{}
\DeclareMathOperator{\rH}{H}
\DeclareMathOperator{\rM}{M}
\DeclareMathOperator{\rS}{S}
\DeclareMathOperator{\rT}{T}
\newcommand{\bA}{{\mathbb A}}
\newcommand{\bC}{{\mathbb C}}
\newcommand{\bF}{{\mathbb F}}
\newcommand{\bG}{{\mathbb G}}
\newcommand{\bN}{{\mathbb N}}
\newcommand{\bP}{{\mathbb P}}
\newcommand{\bQ}{{\mathbb Q}}
\newcommand{\bT}{{\mathbb T}}
\newcommand{\bZ}{{\mathbb Z}}
\newcommand{\cA}{{\mathscr A}}
\newcommand{\cC}{{\mathscr C}}
\newcommand{\cD}{{\mathscr D}}
\newcommand{\cT}{{\mathscr T}}
\newcommand{\cX}{{\mathscr X}}
\newcommand{\dA}{{\mathcal A}}
\newcommand{\dH}{{\mathcal H}}
\newcommand{\dO}{{\mathcal O}}
\newcommand{\dT}{{\mathcal T}}
\newcommand{\fO}{{\mathfrak O}}
\newcommand{\fg}{{\mathfrak g}}
\newcommand{\fm}{{\mathfrak m}}
\newcommand{\fo}{{\mathfrak o}}
\newcommand{\fp}{{\mathfrak p}}
\DeclareSymbolFont{cyrletters}{OT2}{wncyr}{m}{n}
\DeclareMathSymbol{\Sha}{\mathalpha}{cyrletters}{"58}
\DeclareMathOperator{\Aut}{Aut}
\newcommand{\ev}{{\rm ev}}
\DeclareMathOperator{\Hom}{Hom}
\DeclareMathOperator{\im}{im}
\newcommand{\inj}{\hookrightarrow}
\newcommand{\one}{\mathbf{1}}
\DeclareMathOperator{\pr}{pr}
\newcommand{\sets}{{\sf sets}}
\newcommand{\surj}{\twoheadrightarrow}
\newcommand{\xyinj}{\ar@{^(->}}
\DeclareMathOperator{\GL}{GL}
\DeclareMathOperator{\PGL}{PGL}
\DeclareMathOperator{\PSL}{PSL}
\DeclareMathOperator{\SL}{SL}
\newcommand{\matzz}[4]{\left(
\begin{array}{cc} #1 & #2 \\ #3 & #4 \end{array} \right)}
\newcommand{\Gm}{\bG_{\rm m}}
\DeclareMathOperator{\Pic}{Pic}
\DeclareMathOperator{\Spec}{Spec}
\DeclareMathOperator{\disc}{disc}
\DeclareMathOperator{\ord}{ord}
\DeclareMathOperator{\res}{res}
\newcommand{\BTT}{\dT} % Bruhat-Tits tree associated to \PGL_2(K_v)
\newcommand{\RT}{\rT} % regular trees
\newcommand{\PT}{\mathbf{T}} % product of trees
\def\10{{\overrightarrow{10}}}
\def\01{{\overrightarrow{01}}}
\DeclareMathOperator{\Cube}{{\sf Cube}}
\newcommand{\ep}{\varepsilon}
\DeclareMathOperator{\Nrd}{{\rm Nrd}}
\newcommand{\op}{{\rm op}}
\newcommand{\ph}{\varphi}
\newcommand{\ov}[1]{\mbox{${\overline{#1}}$}} 
\DeclarePairedDelimiter\abs{\lvert}{\rvert}
\newtheorem{thm}{Theorem}[section]
\newtheorem{prop}[thm]{Proposition}
\newtheorem{lem}[thm]{Lemma}
\newtheorem{cor}[thm]{Corollary}
\newtheorem{conj}[thm]{Conjecture}
\newtheorem{exABC}{Example}
\theoremstyle{definition}
\newtheorem{defi}[thm]{Definition}
\theoremstyle{remark}
\newtheorem{rmk}[thm]{Remark}
\newtheorem{nota}[thm]{Notation}
\newtheorem{ex}[thm]{Example}
\newenvironment{pro*}[1][\proofname]{{\it{#1:}} }{}
\newenvironment{pro**}[1][]{{\it{#1}} }{\hfill $\square$}
\numberwithin{equation}{section}
\begin{document}

\hrule width\hsize

\vskip 0.9cm

\title[\Kurztitel]{\Large \Ueberschrift} 
\author{Nithi Rungtanapirom}
\address{Nithi Rungtanapirom, Institut f\"ur Mathematik, Goethe--Universit\"at Frankfurt, Ro\-bert-Mayer-Stra{\ss}e {6--8},
60325~Frankfurt am Main, Germany}
\email{rungtana@math.uni-frankfurt.de}

\author{Jakob Stix}
\address{Jakob Stix, Institut f\"ur Mathematik, Goethe--Universit\"at Frankfurt, Ro\-bert-Mayer-Stra{\ss}e~{6--8},
60325 Frankfurt am Main, Germany}
\email{stix@math.uni-frankfurt.de}

\author{Alina Vdovina}
\address{Alina Vdovina, School of Mathematics and Statistics, Newcastle University, Newcastle upon Tyne, NE1 7RU, UK}
\email{alina.vdovina@ncl.ac.uk}

%\thanks{}
\subjclass[2010]{
20F05, %finite presentation
20F65, %Geom Grp Theory
11R52, %Quaternion and other division algebras: arithmetic, zeta functions
11F06, %arithmetic groups
11F41, %Automorphic forms on ${\rm GL}(2)$
20E26} %Residual properties and generalizations; residually finite groups
\keywords{arithmetic lattice, explicit group presentation, non-residually finite CAT(0) group, cubical complex, Ramanujan complex}
\date{August 9, 2018}

\maketitle

\begin{quotation} 
\noindent \small {\bf Abstract} --- We construct vertex transitive lattices on products of trees of arbitrary dimension $d \geq 1$ based on quaternion algebras over global fields with exactly two ramified places. Starting from arithmetic examples, we find non-residually finite groups generalizing earlier results of Wise, Burger and Mozes to higher dimension. We make effective use of the combinatorial language of cubical sets and the doubling construction generalized to arbitrary dimension.  

Congruence subgroups of these quaternion lattices yield explicit cubical Ramanujan complexes, a higher dimensional cubical version of Ramanujan graphs (optimal expanders).
\end{quotation}

%%%%%%%%%%%%%%%%%%%%%%%%%%%%%%%%%%%%%%%%%%%
%%%%%% Table of contents     %%%%%%%%%%%%%%%%%%%%%%%%%%%
%%%%%%%%%%%%%%%%%%%%%%%%%%%%%%%%%%%%%%%%%%%
\setcounter{tocdepth}{1} {\scriptsize \tableofcontents}
%\red{Table of contents will be reduced to level 1 when structure of note is fixed}
%%%%%%%%%%%%%%%%%%%%%%%%%%%%%%%%%%%%%%%%%%%
%%%%%% Main body                %%%%%%%%%%%%%%%%%%%%%%%%%%%
%%%%%%%%%%%%%%%%%%%%%%%%%%%%%%%%%%%%%%%%%%%
\section{Introduction}
Bass--Serre theory is concerned with discrete groups acting on trees. The theory of irreducible lattices acting on products of trees  provides a higher-dimensional generalization of Bass--Serre theory.  
It is a remarkable discovery, due to Burger and Mozes \cite{burger-mozes:groupsontrees,burger-mozes:lattices}  \cite{mozes:survey}, that lattices in products of trees behave similar to irreducible lattices in higher rank semisimple non-archimedean Lie  groups: they have a rich structure with superrigidity, normal subgroup theorems and arithmeticity theorems \`a la Margulis. For the most recent developments on the theory of groups acting on trees see \cite{caprace:survey}.

The methods developed for the study  of lattices in $\Aut(\rT)$,  for a single tree $\rT$, were first extended to the study of irreducible lattices in a product of two trees. 
Unlike irreducible higher rank lattices in semisimple non-archimedean Lie  groups, these lattices in products of two trees may be non-residually finite. The first non-residually finite examples acting on products of two trees were obtained by Wise \cite{wise:thesis}. The results of Burger, Moses, Zimmer  \cite{burger-mozes:simple,burger-mozes:lattices, BMZ} on non-residually finite and simple cocompact lattices in  the group of automorphisms of a product of two trees  are described in \cite{mozes:survey}. Further examples of non-residually finite and simple lattices acting on products of two trees were obtained in \cite{rattaggi:thesis, rattaggi:simple}.  
 
\smallskip
 
In this paper, we construct infinite series of explicit examples of irreducible $S$-arithmetic quaternionic lattices acting simply transitively on the vertices of products of $n\geq 1$ trees of constant valency, see 
Theorem~\ref{thm:structure_Gamma_S} for quaternions over $\bF_q(t)$ 
and Theorem~\ref{thm:torsionfreelattice_S} for quaternions over $\bQ$, in both cases exactly $2$ places are ramified. 
These series of lattices extend the examples for $n=2$ from \cite{stixvdovina:fakequadric3} in the function field case, and those from \cite[\S2.4]{burger-mozes:lattices} \cite{rattaggi:thesis} in the case of Hurwitz quaternions over $\bQ$. 
The emphasis here is on the range $n \geq 3$ and on the word `explicit', at least for the infinite series of examples over global function fields, because our finite presentations of these lattices translate immediately into a combinatorial description of the quotient cube complex. Here explicit means that one has to solve a finite list of discrete logarithms in the multiplicative group of a finite field and from the exponents so obtained can write down a list of $4$-term relations among the generators. For details we refer to Section~\S\ref{sec:concretemodel}.  Here are the smallest examples in each case:

\begin{exABC}[Example \ref{ex:666}]
The following is an irreducible torsion-free lattice 
\[
\Gamma_{\{2,3,4\}} = \left\langle \begin{array}{c}
a_1,a_5,a_9,a_{13},a_{17},a_{21}, \\
b_2,b_6,b_{10},b_{14},b_{18},b_{22}, \\
c_3,c_7,c_{11},c_{15},c_{19},c_{23}
\end{array}
\ \left| 
\begin{array}{c}
a_ia_{i+12} = b_i b_{i+12} = c_ic_{i+12} = 1  \ \text{ for all $i$ }, \\

a_{1}b_{2}a_{17}b_{22}, \ 
a_{1}b_{6}a_{9}b_{10}, \ 
a_{1}b_{10}a_{9}b_{6}, \ 
a_{1}b_{14}a_{21}b_{14}, \ 
a_{1}b_{18}a_{5}b_{18}, \\ 
a_{1}b_{22}a_{17}b_{2}, \ 
a_{5}b_{2}a_{21}b_{6}, \ 
a_{5}b_{6}a_{21}b_{2}, \ 
a_{5}b_{22}a_{9}b_{22}, \\

a_{1}c_{3}a_{17}c_{3}, \ 
a_{1}c_{7}a_{13}c_{19}, \ 
a_{1}c_{11}a_{9}c_{11}, \ 
a_{1}c_{15}a_{1}c_{23}, \ 
a_{5}c_{3}a_{5}c_{19}, \\
a_{5}c_{7}a_{21}c_{7}, \ 
a_{5}c_{11}a_{17}c_{23}, \ 
a_{9}c_{3}a_{21}c_{15}, \ 
a_{9}c_{7}a_{9}c_{23}, \\

b_{2}c_{3}b_{18}c_{23}, \ 
b_{2}c_{7}b_{10}c_{11}, \ 
b_{2}c_{11}b_{10}c_{7}, \ 
b_{2}c_{15}b_{22}c_{15}, \ 
b_{2}c_{19}b_{6}c_{19}, \\
b_{2}c_{23}b_{18}c_{3}, \ 
b_{6}c_{3}b_{22}c_{7}, \ 
b_{6}c_{7}b_{22}c_{3}, \ 
b_{6}c_{23}b_{10}c_{23}.
\end{array}
\right.\right\rangle .
\]
that acts vertex transitively on the product $\RT_{6} \times  \RT_{6} \times  \RT_{6}$.
\end{exABC}

\begin{exABC}[Example \ref{ex:468}]
Using $a_{i+2} = a_i^{-1}$ for $i=1,2$, and $b_{i+3} = b_i^{-1}$ for $i = 1, \ldots, 3$, and $c_{i+4} = c_i^{-1}$ for $i= 1, \ldots, 4$ we define a lattice
\[
\Gamma'_{\{3,5,7\}} = \left\langle
\begin{array}{c}
a_1,a_2 \\ 
b_1,b_2,b_3 \\
c_1,c_2,c_3,c_4
\end{array}
\ \left| 
\begin{array}{c}
a_1b_1a_4b_2,  \ a_1b_2a_4b_4, \  a_1b_3a_2b_1, \ 
a_1b_4a_2b_3,  \ a_1b_5a_1b_6, \ a_2b_2a_2b_6 \\

a_1c_1a_2c_8, \ a_1c_2a_4c_4, \ a_1c_3a_2c_2, \ a_1c_4a_3c_3, \\
a_1c_5a_1c_6, \ a_1c_7a_4c_1, \ a_2c_1a_4c_6, \ a_2c_4a_2c_7 \\

b_1c_1b_5c_4, \
b_1c_2b_1c_5, \
b_1c_3b_6c_1, \
b_1c_4b_3c_6, \
b_1c_6b_2c_3, \
b_1c_7b_1c_8, \\
b_2c_1b_3c_2, \
b_2c_2b_5c_5, \
b_2c_4b_5c_3, \
b_2c_7b_6c_4, \
b_3c_1b_6c_6, \
b_3c_4b_6c_3
\end{array}
\right.\right\rangle.
\]
that is irreducible torsion-free lattice and acts vertex transitively on the product $\RT_{4} \times  \RT_{6} \times  \RT_{8}$.
\end{exABC}

In the function field case over $\bF_q(t)$ we focus on sets $S$ of $\bF_q$-rational places. Hence, all tree factors are $(q+1)$-regular trees. While the theoretical results persist with sets $S$ that also contain non-$\bF_q$-rational places, the explicit nature of our results needs new ideas if it should be extended in this direction.

We would like to mention that the only explicit examples of vertex transitive irreducible lattices in products of buildings in dimensions $\geq 3$ prior to this work are the arithmetic groups acting on  products of ($1$-skeleta) of triangle complexes described in \cite[Prop 3.6]{glasner-mozes}. For a non-existence result of lattices acting on $\RT_6 \times \RT_6 \times \RT_6$ with alternating or symmetric local permutation action due to Radu see the survey 
\cite[\S4.12]{caprace:survey}.

\smallskip

Arithmetic groups are residually finite. Applying the doubling construction pioneered by Wise, Burger and Mozes to their quotient cube complexes, interesting phenomena happen: we get many examples of non-residually finite groups acting vertex transitively on cube complexes of any dimension, see Section~\S\ref{sec:double} and Theorem~\ref{thm:non residually finite doubles}. It is tempting to ask if these non-residually finite lattices are also virtually simple. We introduce in Section~\S\ref{sec:double} the language of cubical sets paralleling the notion of a simplicial sets. Cubical sets help us to deal with the doubling construction in higher dimension.

\smallskip

In Section~\ref{sec:ramanujan} we discuss cubical complexes built from Cayley graphs for quotients by congruence subgroups of the groups given by 
Theorem~\ref{thm:structure_Gamma_S}. They give rise to examples of cubical Ramanujan complexes. Our  definition of Ramanujan complexes is an analogue of the one given in \cite{LSV1} and \cite{JL}. For the connection with more recent results on higher-dimensional expanders, see \cite{LSV}, \cite{LSV1}, \cite{KKL}, \cite{gromovandco}, \cite{lubotzky:takagilecture}, \cite{lubotzky:highexpanders}. 
The proof of Theorem~\ref{thm:Ramanujan} that our complexes are cubical Ramanujan parallels the proof that the quotients of Euclidean buildings by Cartwright-Steger are simplicial Ramanujan, see \cite{KKL}, \cite{LSV1}. Its method of proof  occurred first in \cite{margulis} \cite{LPS} to construct families of Ramanujan graphs. We quickly summarize the method: Ramanujanness is related to the Hecke eigenvalues of an automorphic representation of a quaternion group. The Jacquet--Langlands correspondence transforms the representation to a cuspidal representation for $\GL_2$. The claim on the Hecke eigenvalues thus becomes the eponymous Ramanujan-Petersson conjecture, a theorem due to Drinfeld in the case of a global function field.

The distinctive feature of our cubical Ramanujan complexes for congruence subgroups of our vertex transitive lattices $\Gamma_S$ in the function field case lies in the explicit nature in which these higher dimensional versions of expander graphs can be studied.

\subsection*{Acknowledgments} 
We thank Shai Evra for explaining to us the connection of the spectrum of the adjacency operators via automorphic representation theory to the Ramanujan-Petersson conjecture for $\GL_2$. We thank Dani Wise for discussions related to his doubling construction of cubical complexes. We thank Alessandra Iozzi and Marc Burger for useful discussions and providing
us with the reference \cite{BMZ}. We thank Cornelia Drutu and Alain Valette for discussing connections of the theory of buildings and higher-dimensional expanders at the Newton Institute, Cambridge, spring 2017. The third author would like to thank University of Berkeley, Newton Institute Cambridge, Hunter College CUNY for providing excellent research environment during her stays.

%%%%%%%%%%%%%%%%%%%%%%%%%%%%%%%%%%%%%%%%%%%
\section{Lattices of quaternions over \texorpdfstring{$\bF_q(t)$}{global rational function fields}} \label{sec:lattice}
We review lattices constructed in \cite{stixvdovina:fakequadric3} with a slight generalization to an arbitrary prime power $q$, including powers of $2$. We provide simplified formulas and more importantly generalise to higher rank. 

%----------------------------------------------------------------------------------------------------------------
\subsection{The quaternion algebra}
We fix a prime power $q$, denote by $\bF_q$ a finite field with $q$ elements and consider $K = \bF_q(t)$ as the function field of the projective line $\bP^1$ over $\bF_q$. The quaternion algebra $D$ over $K$ is defined as follows (by $K\{Z,F\}$ we denote the free associative $K$-algebra in variables $Z,F$):
\begin{itemize}
\item
If $q$ is odd, we fix a non-square $c \in \bF_q \setminus (\bF_q)^2$ and define
\[
D := \left(\frac{c,t}{K}\right) = K\{Z,F\}/\big(Z^2 = c, F^2 = t, FZ=-ZF\big).
\]
\item
If $q$ is a power of $2$, we fix an element $c\in\bF_q$ such that the polynomial $X^2+X+c\in\bF_q[X]$ is irreducible and define
\[
D := \left[\frac{c,t}{K}\right) = K\{Z,F\}/\big(Z^2+Z = c, F^2 = t, FZ=(Z+1)F\big).
\]
\end{itemize}

The construction of $D$ has the following variant that is independent of the parity of $q$. By assumption on $c$, the subalgebra 
\[
L = K\oplus K\cdot Z \simeq \bF_{q^2}(t)
\]
is a separable quadratic field extension of $K$ with a unique non-trivial $K$-automorphism denoted by 
\[
\sigma: L\to L, \qquad x\mapsto\overline{x}.
\]
If $q$ is odd, then $\sigma(Z) = -Z$, but if $q$ is even, then $\sigma(Z) = Z+1$.
It follows immediately by comparing the presentations as a $K$-algebra that $D$ is the cyclic algebra
\[
D = (L,\sigma,t) = L\{F\}/(F^2=t). 
\]
Here $L\{F\}$ is the $\sigma$-semi-linear polynomial ring with $Fx=\overline{x}F$ for all $x\in L$.
  
From the definition in terms of symbols as $(c,t/K)$ (resp.\ as $[c,t/K)$) it follows that $D$ ramifies at most in $B = \{0,\infty\}$. That $D$ actually ramifies in all places of $B$ is verified by computing its residue in $t=0$, and by noting that the number of ramified places must be even. The non-vanishing of the residue in $t=0$ is equivalent to the condition imposed on $c$.

%----------------------------------------------------------------------------------------------------------------
\subsection{A sheaf of maximal orders}

The constant sheaf of division algebras $D \otimes \dO_{\bP^1}$ contains a subsheaf of maximal orders defined by 
\[
\dA = \dO \oplus \dO \cdot Z \oplus \dO(-\infty) \cdot  F \oplus \dO(-\infty) \cdot FZ \subseteq D \otimes \dO_{\bP^1}.
\]
Here $\dO(-\infty)$ is the ideal sheaf of the point $\infty \in \bP^1$ and, for an open $U \subseteq \bP^1$, the sections over $U$ are
\[
\dA(U) = \{u + vZ + xF + yFZ \in D \otimes \dO(U) \ ; \  u,v \in \Gamma(U,\dO), \ x,y \in \Gamma\big(U,\dO(-\infty)\big)\}.
\]
The subsheaf $\dA$ is closed under multiplication inherited from $D$ because the simple pole of $t=F^2$ in $\infty$ is compensated by the zeros at $\infty$ imposed for the coefficients $x$ and $y$. The discriminant of the reduced trace form yields a map
\[
\disc : \det(\dA \otimes \dA) \simeq \dO(-4\infty) \to \dO
\]
isomorphic to multiplication by $16c^2t^2$ for $q$ odd and $t^2$ for $q$ even. The cokernel of $\disc$ thus has length $4$ with a contribution of $2$ at $t=0$ and $t = \infty$ each. This shows by the general theory of reduced trace forms that $\dA$ is indeed a sheaf of maximal orders. 

The reduced norm defines a map of sheaves of (multiplicative) monoids 
\[
\Nrd :  \dA \to \dO
\]
which is a quadratic form on the $\dO$-module $\dA$. 
The general multiplicative group for $\dA$ is defined by 
\[
\GL_{1,\dA} := \dA^\times = \Big(U \mapsto \dA(U)^\times = \{w \in \dA(U) \ ; \ \Nrd(w) \in \dO^\times(U)\}\Big)
\] 
and is an algebraic group over $\bP^1$. The group of rational points of the generic fibre is nothing but
\[
\GL_{1,\dA}(K) = (\dA^\times)_\eta = D^\times,
\]
and the center of $\GL_{1,\dA}$ agrees with $\dO^\times$. We further set 
\[
\SL_{1,\dA}  = \ker(\Nrd: \GL_{1,\dA} \to \dO^\times),
\]
and $\PGL_{1,\dA} =  \GL_{1,\dA}/\dO^\times$, the quotient by the center.
The adjoint action by conjugation defines a group homomorphism
\[
\PGL_{1,\dA} \to \underline{\Aut}(\dA)
\]
which by the Skolem-Noether Theorem is an isomorphism outside the locus $B = \{0,\infty\}$ where $D/K$ ramifies, i.e., an  isomorphism over $U$ as soon as $\dA|_U$ is a sheaf of Azumaya algebras.

%----------------------------------------------------------------------------------------------------------------
\subsection{Global torsion in $\dA^\times$}
\label{sec:globaltorsioninlattice}
Let $S$ be a finite set of places of $\bP^1$ containing the ramified places $B = \{0,\infty\}$ for $D/K$. Let $\fo_{K,S} = \Gamma(\bP^1 \setminus S,\dO)$ be the corresponding ring of $S$-integers. We will abuse notation and evaluate sheaves in $\fo_{K,S}$ when we mean $\bP^1 \setminus S$. 

We will be concerned with the $S$-arithmetic group (see 
Notation~\ref{nota:lamdaSaslatticeinlocallycompactgroup} for $\Lambda_S$ as a lattice)
\[
\Lambda_S := \PGL_{1,\dA}(\fo_{K,S}) = 
\begin{cases} 
\big(\fo_{K,S}\{Z,F\}/(Z^2 = c, F^2 = t, ZF+FZ =0)\big)^\times/\fo_{K,S}^\times & \text{if}~2\nmid q, \\[1ex]
\big(\fo_{K,S}\{Z,F\}/(Z^2+Z = c, F^2 = t, FZ=(Z+1)F)\big)^\times/\fo_{K,S}^\times & \text{if}~2\mid q. 
\end{cases}
\]
Here enters the fact that $\fo_{K,S}$ is a principal ideal domain through the corresponding vanishing in the exact sequence of (nonabelian) \'etale cohomology
\[
\Gm(\fo_{K,S}) \to \GL_{1,\dA}(\fo_{K,S}) \to \PGL_{1,\dA}(\fo_{K,S}) \to \rH^1(\fo_{K,S}, \Gm) = \Pic(\fo_{K,S}) = 0.
\]
Clearly, for $B \subseteq T \subseteq S$ we have an inclusion
\[
\Lambda_T \subseteq \Lambda_S.
\]

For $S = B$ we have the following description of the lattice $\Lambda_B$. Observe that the global sections of $\dA$ 
\[
\rH^0(\bP^1,\dA) = \bF_q \oplus \bF_q \cdot Z =: \bF_q[Z] 
\]
yield a quadratic field extension of $\bF_q$ since $X^2-c$ resp. $X^2+X+c$ is irreducible over $\bF_q$. Hence 
\[
\bF_q[Z]^\times/\bF_q^\times \subseteq \Lambda_B \subseteq \Lambda_S.
\]
Moreover, we also have the image of $F$ defining an element of order $2$ in $\Lambda_B$. Furthermore, $F(-)F^{-1}$ induces the non-trivial Galois automorphism of $\bF_q[Z]$ as an extension of $\bF_q$.

\begin{prop}
\label{prop:global_elements}
The group $\Lambda_B$ is isomorphic to the dihedral group $D_{q+1}$ of order $2(q+1)$ in the form
\[
\Lambda_B = (\bF_q[Z]^\times/\bF_q^\times) \rtimes \langle F \rangle,
\]
with the semidirect product action induced by the Galois action: $F(-)F^{-1} =  q$-Frobenius.
\end{prop}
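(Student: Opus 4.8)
The preceding remarks already exhibit $\bF_q[Z]^\times/\bF_q^\times$ and the class of $F$ inside $\Lambda_B$, with $F$ an involution and conjugation by $F$ inducing the Frobenius; so what remains is to show that these generate all of $\Lambda_B$ and that the order is exactly $2(q+1)$. My plan is to compute the relevant unit group directly. Since $\Pic(\fo_{K,B})=0$, the displayed exact sequence of \'etale cohomology identifies $\Lambda_B = R^\times/\bF_q[t,t^{-1}]^\times$ with $R := \dA(\fo_{K,B})$ and $\fo_{K,B}=\bF_q[t,t^{-1}]$, whose units are $\bF_q^\times\cdot t^{\bZ}$. As $\infty \notin \bP^1\setminus\{0,\infty\}$, the sheaves $\dO(-\infty)$ and $\dO$ have equal sections there, so in the cyclic presentation $D=(L,\sigma,t)=L\{F\}/(F^2=t)$ one obtains $R = \bF_{q^2}[t,t^{-1}]\{F\}/(F^2=t,\ Fx=\bar x F)$, where $\bF_{q^2}=\bF_q[Z]$ and $\bar{(\cdot)}$ is the $t$-linear extension of the Frobenius of $\bF_{q^2}/\bF_q$ (so $\bar t = t$). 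I would then write every element uniquely as $w=a+bF$ with $a,b\in\bF_{q^2}[t,t^{-1}]$.

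The key reduction is that $w\in R^\times$ if and only if $\Nrd(w)=a\bar a - t\,b\bar b$ is a unit of $\bF_q[t,t^{-1}]$, i.e.\ a monomial $\lambda t^n$ (one direction is $w^{-1}=\Nrd(w)^{-1}\bar w$, the other is multiplicativity). The heart of the argument — and the step I expect to be the only real obstacle — is a parity observation on $t$-adic orders. If $a\neq 0$ then $a\bar a$ is supported between the \emph{even} integers $2\val(a)$ and $2\deg(a)$, with nonzero endpoint coefficients, since these endpoints are norms $N_{\bF_{q^2}/\bF_q}$ of nonzero scalars; whereas if $b\neq 0$ then $t\,b\bar b$ is supported between the \emph{odd} integers $2\val(b)+1$ and $2\deg(b)+1$, again with nonzero endpoints. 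The extreme exponents of the two summands can therefore never coincide, so no cancellation occurs at the top or the bottom of $\Nrd(w)$. A short comparison of $\min(2\val a,\,2\val b+1)$ with $\max(2\deg a,\,2\deg b+1)$ then forces $\Nrd(w)$ to be a monomial only when one of $a,b$ vanishes and the surviving term is itself a monomial.

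Consequently $R^\times = \bF_{q^2}^\times t^{\bZ}\ \sqcup\ \bF_{q^2}^\times t^{\bZ} F$. Dividing by the centre $\bF_q^\times t^{\bZ}$, the first part becomes $\bF_{q^2}^\times/\bF_q^\times = \bF_q[Z]^\times/\bF_q^\times$, cyclic of order $(q^2-1)/(q-1)=q+1$, while the coset of $F$ contributes an involution because $F^2=t$ is central; hence $|\Lambda_B|=2(q+1)$ and $\Lambda_B$ is generated by the claimed elements. Finally, conjugation by $F$ sends $\zeta\in\bF_{q^2}^\times$ to $\bar\zeta=\zeta^q$, and modulo $\bF_q^\times$ one has $\zeta^q\equiv\zeta^{-1}$ since $\zeta^{q+1}=N_{\bF_{q^2}/\bF_q}(\zeta)\in\bF_q^\times$; thus $F$ inverts the cyclic group of order $q+1$. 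This is precisely the dihedral relation, yielding $\Lambda_B=(\bF_q[Z]^\times/\bF_q^\times)\rtimes\langle F\rangle\cong D_{q+1}$.
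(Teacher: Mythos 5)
Your proposal is correct and follows essentially the same route as the paper: both reduce to the formula $\Nrd(a+bF)=a\bar a-t\,b\bar b$ and exploit the even/odd parity of the $t$-supports of the two summands to force one of $a,b$ to vanish, then quotient by the centre to identify the dihedral group. Your variant, which tracks nonvanishing at both the valuation and the degree endpoints, folds the paper's separate evaluation-at-$t=0$ step (used there to show the surviving factor is a monomial) into the same parity argument, but this is a streamlining rather than a different method.
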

\begin{proof}
Clearly the right hand side is a subgroup of $\Lambda_B$. For the converse inclusion we have to understand elements of $\dA(\bP^1\setminus B)$ with reduced norm invertible in $H^0(\bP^1\setminus B,\dO)=\bF_q[t^\pm]$.

First we scale by a power of $t$ and may assume that the coefficients of 
\[
w = u+vZ+xF+yFZ
\]
are in $\bF_q[t]$. Let $N_{L/K}: L\to K$ denote the norm map.  For later use, we formulate the computation of $\Nrd(w)$ as a lemma.
\begin{lem}
\label{lem:reducednormformula}
\[
\Nrd(u+vZ+xF+yFZ) = N_{L/K}(u+vZ) - N_{L/K}(x+yZ)t
\]
\end{lem}
\begin{proof}
We abbreviate $\alpha = u+vZ$ and $\beta = x+yZ$. Then we have 
\[
\Nrd(\alpha+ F \beta ) =  (\alpha+ F \beta ) (\ov{\alpha} - \ov{\beta} F ) = \alpha\ov{\alpha} + F \beta \ov{\alpha} - \alpha \ov{\beta} F - F \beta  \ov{\beta} F = N_{L/K}(\alpha) - N_{L/K}(\beta)t.
\qedhere
\]
\end{proof}

Now, $\Nrd(w)$ 
must be of the form $\lambda \cdot t^n$ with $\lambda \in \bF_q$ and $n \in \bN$. Since $u+vZ$ and $\overline{u+vZ}$ are polynomials of the same degree over $\bF_q[Z]$, their product $N_{L/K}(u+vZ)$ is of even degree. The same holds for $N_{L/K}(x+yZ)$. It follows that the reduced norm can only be a monomial if 
\[
N_{L/K}(u+vZ) = 0 \quad \text{ or } \quad N_{L/K}(x+yZ) = 0.
\]
But $N_{L/K}$ does not vanish except at $0$, hence $w = u+vZ$ or $w = (x+yZ)F$. Since $F \in \Lambda_B$ is understood, we may restrict to the case $w = u+vZ$. Now 
\[
\Nrd(w) = N_{L/K}(u+vZ) = \lambda \cdot t^n
\]
and by appropriate scaling with a power of $t$ we may assume $u,v \in \bF_q[t]$ and not both $u(0) = v(0) = 0$. If $u$ or $v$ are non-constant, then $n>0$ and evaluating in $t=0$ yields 
\[
N_{\bF_q[Z]/\bF_q}(u(0)+v(0)Z) = 0,
\]
a contradiction to $u(0)+v(0)Z\neq 0$. Therefore $u,v \in \bF_q$ are constants and thus $\Lambda_B$ is not bigger than claimed.
\end{proof}

\begin{nota}
\label{nota:ds}
The usual structure of a dihedral group being generated by a rotation and a reflection can be obtained for $\Lambda_B$ as follows. The multiplicative group of a finite field is cyclic. Let $d$ denote the image in $\Lambda_B$ of a generator $\delta$ of $\bF_q[Z]^\times$. Then, with $s$ being the image of $F$ we have
\[
\Lambda_B = \langle d,s \ | \ d^{q+1} = s^2 = (sd)^2 = 1 \rangle.
\]
\end{nota}

\begin{rmk}
\label{rmk:representing central 2-torsion in dihedral group}
If $q$ is odd, then $\delta^{(q+1)/2}$ represents a $2$-torsion class in $\bF_q[Z]^\times/\bF_q^\times$. Since $Z^2 = c$, this is therefore
\[
\delta^{(q+1)/2} \equiv Z \pmod{\bF_q^\times}.
\]
\end{rmk}

%----------------------------------------------------------------------------------------------------------------
\subsection{Volumes of $S$-arithmetic groups}
\label{subsec:volume_LambdaS}
The homomorphism of algebraic groups
\[
\SL_{1,\dA} \to \PGL_{1,\dA}
\]
is surjective in the \'etale topology with kernel $\mu_2$. We abbreviate
\[
\tilde{\Lambda}_S = \SL_{1,\dA}(\fo_{K,S})
\]
and obtain the exact sequence of (non-abelian) flat cohomology
\begin{equation}
\label{eq:comparewith_sc_lattice}
1 \to \mu_2(\fo_{K,S}) = \{\pm 1\} \to \tilde{\Lambda}_S \to \Lambda_S \xrightarrow{\delta} \rH^1(\fo_{K,S},\mu_2) = \fo_{K,S}^\times/(\fo_{K,S}^\times)^2.
\end{equation}
\begin{lem}
\label{lem:delta_as_Nrd}
If $a \in \Lambda_S = \PGL_{1,\dA}(\fo_{K,S})$ lifts to $\tilde{a} \in \GL_{1,\dA}(\fo_{K,S})$, then 
\[
\delta(a) = \Nrd(\tilde{a}) \pmod{(\fo_{K,S}^\times)^2}.
\]
\end{lem}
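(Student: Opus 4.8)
The plan is to identify the boundary map $\delta$ with the Kummer class of the reduced norm via the standard torsor interpretation of non-abelian connecting homomorphisms. Recall that for the central (fppf) extension
\[
1 \to \mu_2 \to \SL_{1,\dA} \to \PGL_{1,\dA} \to 1
\]
the boundary $\delta(a) \in \rH^1(\fo_{K,S},\mu_2)$ is, by its very construction, the class of the $\mu_2$-torsor
\[
P \ = \ \{s \in \SL_{1,\dA} \ ; \ s \mapsto a \text{ in } \PGL_{1,\dA}\}
\]
of lifts of $a$ to the norm-one group, a torsor trivializable flat-locally but not necessarily admitting a global section. Since $q$ may be even, one works throughout in the flat (fppf) topology, where $\mu_2$ is the correct kernel.

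First I would use the hypothesis that $a$ lifts to $\tilde a \in \GL_{1,\dA}(\fo_{K,S})$ to parametrize $P$ explicitly. As the kernel of $\GL_{1,\dA} \to \PGL_{1,\dA}$ is the center $\dO^\times = \Gm$, every element of $\SL_{1,\dA}$ mapping to $a$ differs from $\tilde a$ by a central scalar, hence is of the form $\lambda \tilde a$ with $\lambda \in \Gm$. The reduced norm of a scalar being its square, the norm-one condition $\Nrd(\lambda \tilde a) = \lambda^2 \Nrd(\tilde a) = 1$ reads $\lambda^2 = \Nrd(\tilde a)^{-1}$. Right multiplication by $\tilde a$ then gives a $\mu_2$-equivariant isomorphism of torsors
\[
\{\lambda \in \Gm \ ; \ \lambda^2 = \Nrd(\tilde a)^{-1}\} \ \xrightarrow{\ \sim\ } \ P,
\]
since scaling the left side by $\zeta \in \mu_2$ scales $\lambda \tilde a$ by the same $\zeta$. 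Thus $P$ is identified with the torsor of square roots of $\Nrd(\tilde a)^{-1}$.

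It then remains to match this with the chosen isomorphism $\rH^1(\fo_{K,S},\mu_2) = \fo_{K,S}^\times/(\fo_{K,S}^\times)^2$. That isomorphism arises from the Kummer sequence
\[
1 \to \mu_2 \to \Gm \xrightarrow{x \mapsto x^2} \Gm \to 1
\]
together with $\rH^1(\fo_{K,S},\Gm) = \Pic(\fo_{K,S}) = 0$, and under it a unit $n$ corresponds exactly to the class of the $\mu_2$-torsor $\{\lambda ; \lambda^2 = n\}$ of its square roots. Applying this to $n = \Nrd(\tilde a)^{-1}$ and using $\Nrd(\tilde a)^{-1} \equiv \Nrd(\tilde a) \pmod{(\fo_{K,S}^\times)^2}$ yields $\delta(a) = \Nrd(\tilde a) \bmod (\fo_{K,S}^\times)^2$, as claimed. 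Note that $\Nrd(\tilde a) \in \fo_{K,S}^\times$ holds automatically since $\tilde a$ lies in $\GL_{1,\dA}(\fo_{K,S})$.

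The only genuine point requiring care is this last compatibility: that the connecting map of the central extension is computed by the same ``torsor of local lifts'' recipe that produces the Kummer description of $\rH^1(\fo_{K,S},\mu_2)$, with the square-root normalization matching on the nose. The rest is the purely formal manipulation of central $\mu_2$-torsors above, so this normalization check, together with verifying that the identification of $P$ is $\mu_2$-equivariant and not merely bijective, is where I expect the actual work to lie.
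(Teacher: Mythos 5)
Your proof is correct and is exactly the approach the paper intends: the paper's own proof consists of the single sentence ``This follows by carefully unraveling the definition of the boundary map,'' and your torsor-of-lifts computation, with the $\mu_2$-equivariant identification $\lambda \mapsto \lambda\tilde a$ and the Kummer normalization of $\rH^1(\fo_{K,S},\mu_2) = \fo_{K,S}^\times/(\fo_{K,S}^\times)^2$, is precisely that unraveling carried out in full. The details check out, including the harmless replacement of $\Nrd(\tilde a)^{-1}$ by $\Nrd(\tilde a)$ modulo squares and the use of the fppf topology to cover the case $2 \mid q$.
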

\begin{proof}
This follows by carefully unraveling the definition of the boundary map.
\end{proof}

\begin{prop}
\label{prop:delta_surjective}
For all $S$ with $B \subseteq S$ the map $\delta: \Lambda_S \to \fo_{K,S}^\times/(\fo_{K,S}^\times)^2$ is surjective.
\end{prop}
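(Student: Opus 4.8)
The plan is to prove the stronger statement that the reduced norm
$\Nrd \colon \GL_{1,\dA}(\fo_{K,S}) \to \fo_{K,S}^\times$ is already surjective; since squares are then obviously in its image, surjectivity of $\delta$ follows from Lemma~\ref{lem:delta_as_Nrd}, which identifies $\delta(a)$ with the class of $\Nrd(\tilde a)$ modulo squares. Because $\Nrd$ is multiplicative, its image is a subgroup of $\fo_{K,S}^\times$, so it suffices to exhibit, for each element of a generating set of $\fo_{K,S}^\times$, a preimage in $\dA(\fo_{K,S})$ whose reduced norm is that generator.

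First I would pin down such a generating set. Since $\Pic(\bP^1)$ is free of rank one on the degree, the $S$-unit theorem gives $\fo_{K,S}^\times = \bF_q^\times \times \langle \pi_p : p \in S \setminus \{\infty\}\rangle$, where for each finite place $p \in S$ one may take $\pi_p$ to be a function with divisor $[p] - \deg(p)\cdot[\infty]$, which is supported on $S$. Thus I must realise, as reduced norms of $S$-integral elements: (i) every constant in $\bF_q^\times$; (ii) the class of $t$, which covers $p=0$ and $p=\infty$; and (iii) one uniformizer $\pi_p$ for each remaining finite $p \in S$.

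For (i) and (ii) the building blocks are explicit. An element $u+vZ$ with $u,v\in\bF_q$ lies in $\rH^0(\bP^1,\dA)=\bF_q[Z]\cong\bF_{q^2}$ (Proposition~\ref{prop:global_elements}), and by Lemma~\ref{lem:reducednormformula} its reduced norm is $N_{L/K}(u+vZ)=N_{\bF_{q^2}/\bF_q}(u+vZ)$; as the finite-field norm is onto $\bF_q^\times$, all constants are attained. For $t$, Lemma~\ref{lem:reducednormformula} gives $\Nrd(F)=-t$, so $t$ lies in the image once $-1$ is realised as a norm. This already settles the case $S=B$. For an even-degree place $p$, which splits in $L=\bF_{q^2}(t)$, I would take $w=\alpha\in L^\times$ with divisor $[\fp]-\tfrac{1}{2}\deg(p)\,[\fq_\infty]$ for a prime $\fp \mid p$ and the prime $\fq_\infty \mid \infty$; pushing the divisor forward shows $\Nrd(w)=N_{L/K}(\alpha)$ is a constant multiple of $\pi_p$. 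The rational places driving the explicit examples are cleanest: choosing $\gamma\in\bF_{q^2}$ with $N_{\bF_{q^2}/\bF_q}(\gamma)=a$, Lemma~\ref{lem:reducednormformula} gives $\Nrd(\gamma+F)=a-t$, a uniformizer at $p=(t-a)$. Combined with (i) this already makes $\Nrd$ surjective whenever $S$ consists of $\bF_q$-rational places.

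The main obstacle is case (iii) for an inert place $p$, that is one of odd degree $\ge 3$: there $N_{L/K}$ produces only even valuation at $p$, so any preimage must genuinely exploit the difference $N_{L/K}(\alpha)-tN_{L/K}(\beta)$ of Lemma~\ref{lem:reducednormformula}, with a cancellation of leading terms forcing odd valuation, and writing one down by hand is awkward. The clean route is to argue existence instead of exhibiting it: since $K$ is a function field it has no archimedean places, so the Hasse--Schilling--Maass norm theorem yields $\Nrd(D^\times)=K^\times$, whence $\pi_p=\Nrd(w_0)$ for some $w_0\in D^\times$. One then corrects $w_0$ into an element of $\dA^\times(\fo_{K,S})$ with the same reduced-norm class, using that $D$ splits at every $v\notin B\subseteq S$ and that $\Nrd(\dA_v^\times)=\dO_v^\times$ at every place because $\dA$ is maximal. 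Assembling these local corrections into a single global $S$-integral element is precisely a class-number-one/strong-approximation input, and is the step I expect to require the most care; it is, however, not needed for the $\bF_q$-rational sets $S$ underlying the explicit constructions.
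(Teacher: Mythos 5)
Your argument is correct, and on the cases the paper actually treats it coincides with the paper's own proof: the paper likewise reduces via Lemma~\ref{lem:delta_as_Nrd} to producing enough reduced norms, obtains all constants from the surjectivity of the finite-field norm together with $\Nrd(u+vZ)=N(u+vZ)$, gets $t$ from $\Nrd(F)=-t$, and gets the uniformizers at the rational places of $S_0$ from $\Nrd(1+\alpha F)=1-N(\alpha)t$ (Lemma~\ref{lem:nrd_formula}), which is your $\Nrd(\gamma+F)=N(\gamma)-t$ up to a constant. Where you genuinely diverge is in your cases (iv) and (v): the paper's written proof only produces linear polynomials $1-N(\alpha)t$ with $N(\alpha)\in\bF_q$, so it literally covers only the situation where $S_0$ consists of $\bF_q$-rational places --- the standing restriction the authors adopt from this point on (see the remark following Proposition~\ref{prop:volumeLambdaS}) --- whereas you supply the missing generators for higher-degree places. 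Your divisor argument for even-degree (split) places is complete and correct, since $\Pic^0(\bP^1_{\bF_{q^2}})=0$ and the pushforward of $[\fp]-\tfrac{1}{2}\deg(p)\,[\fq_\infty]$ is $[p]-\deg(p)\,[\infty]$. For odd-degree inert places your Hasse--Schilling--Maass plus local-correction argument is the standard route (Eichler's norm theorem for unit groups of maximal orders); the assembly step you flag does go through here because for $S\supsetneq B$ every place of $S_0$ is split for $D$, so strong approximation for $\SL_{1,\dA}$ relative to $S$ is available, while the case $S=B$ is already settled by your steps (i) and (ii). In short: for the rational sets $S$ on which all the explicit constructions rest, your proof is the same as the paper's; for general $S$ it is actually more complete than what the paper writes down, at the cost of invoking the norm theorem, with only the final gluing of local corrections left as an acknowledged (and standard) exercise.
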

\begin{proof}
By Lemma~\ref{lem:delta_as_Nrd} we must provide enough reduced norms. First we treat constants, i.e., the image of $\bF_q^\times \to \fo_{K,S}^\times/(\fo_{K,S}^\times)^2$. 
We denote by $N$ the norm map $\bF_q[Z] \to \bF_q$. Norms between finite fields are surjective and for all $u+vZ \in \bF_q[Z]$ the fact
\[
\Nrd(u+vZ) = N(u+vZ) 
\]
shows that constants modulo squares are contained in the image of $\delta$. 

Non-constants modulo squares are provided by $\Nrd(F) = -t$ and the following lemma, again due to the surjectivity of the norm map $N$.
\end{proof}

\begin{lem}
\label{lem:nrd_formula}
For $\alpha \in \bF_q[Z]$ we have
\[
\Nrd(1 + \alpha F) = 1 - N(\alpha)t.
\]
\end{lem}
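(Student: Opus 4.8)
The plan is to reduce the claim to the reduced-norm formula already established in Lemma~\ref{lem:reducednormformula}, which computes $\Nrd(\alpha + F\beta) = N_{L/K}(\alpha) - N_{L/K}(\beta)t$ for $\alpha,\beta \in L$. The only discrepancy is that in $1 + \alpha F$ the factor $F$ sits to the \emph{right} of $\alpha$, whereas that formula is phrased for elements with $F$ on the left.

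First I would move $F$ across $\alpha$ using the defining commutation relation of the cyclic algebra $D = (L,\sigma,t)$. Since $Fx = \overline{x}F$ for all $x \in L$, one obtains $\gamma F = F\overline{\gamma}$ for every $\gamma \in L$, and in particular $\alpha F = F\overline{\alpha}$. Hence $1 + \alpha F = 1 + F\overline{\alpha}$, which is exactly of the form to which Lemma~\ref{lem:reducednormformula} applies, with scalar part $1$ and $F$-coefficient $\overline{\alpha}$.

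Applying that lemma then gives $\Nrd(1 + \alpha F) = N_{L/K}(1) - N_{L/K}(\overline{\alpha})t = 1 - N_{L/K}(\overline{\alpha})t$. To finish I would invoke the Galois-invariance of the norm: $N_{L/K}(\overline{\alpha}) = \overline{\alpha}\cdot\alpha = N_{L/K}(\alpha)$, which equals $N(\alpha)$ since for constant $\alpha \in \bF_q[Z] = \rH^0(\bP^1,\dA)$ the reduced norm restricts to the field norm $N$ on $\bF_q[Z]$. This yields $\Nrd(1 + \alpha F) = 1 - N(\alpha)t$ uniformly in the parity of $q$.

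There is essentially no serious obstacle; the computation is short once the commutation step is isolated. The only point requiring a little care is the bookkeeping of the two presentations of $D$ (odd versus even $q$): one should check that $Fx = \overline{x}F$, hence $\gamma F = F\overline{\gamma}$, holds in both cases, which it does thanks to the unified cyclic-algebra description $D = (L,\sigma,t)$ recorded earlier in the excerpt. A more pedestrian alternative would expand $\alpha = u + vZ$ and rewrite $ZF$ in the basis $\{F,FZ\}$ separately for each parity before applying Lemma~\ref{lem:reducednormformula}; this works but reintroduces precisely the case distinction that the cyclic-algebra formulation lets us avoid.
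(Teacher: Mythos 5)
Your argument is correct and matches the paper, whose entire proof is that the statement is a special case of Lemma~\ref{lem:reducednormformula}; you simply spell out the one-line bookkeeping step $\alpha F = F\overline{\alpha}$ and the Galois-invariance $N_{L/K}(\overline{\alpha}) = N_{L/K}(\alpha) = N(\alpha)$ that the paper leaves implicit.
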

\begin{proof}
This is a special case of Lemma~\ref{lem:reducednormformula}.
\end{proof}
 
\begin{nota}
For a place $v$ of $K$ we denote the completion in $v$ by $K_v$. For a rational place $t = t_0 \in \bF_q$ we write $K_{t_0}$ for the corresponding completion.
\end{nota}

\begin{nota}
\label{nota:lamdaSaslatticeinlocallycompactgroup}
Let $S_0 = S \setminus B$ be the set of unramified places in $S$. For each $v \in S_0$ we consider the Bruhat-Tits building $\BTT_v$ of $\PGL_2(K_v) \simeq \PGL_{1,\dA}(K_v)$, a regular tree of valency $N(v) + 1$ where $N(v)$ is the norm of the place $v$, i.e., the size of the residue field. We also set 
\[
\PT_{S_0} = \prod_{v \in S_0} \BTT_v.
\]
The arithmetic lattice $\Lambda_S$ acts on $\PT_{S_0}$ through its localisations in all $v \in S_0$ as a discrete cocompact subgroup 
\[
\Lambda_S \inj \prod_{v \in S_0} \PGL_2(K_v).
\]

The choice of the sheaf of maximal orders $\dA$ distinguishes by the local condition at $v \in S_0$ a vertex $\star_v \in \BTT_v$. We denote by $\star_{S_0} \in \PT_{S_0}$ the product of the distinguished vertices in each factor.
The main object of this section is the quotient cube complex
\[
\Lambda_S \backslash \PT_{S_0}.
\]
\end{nota}

We now count vertices $P \in \Lambda_S \backslash \PT_{S_0}$ with weight the reciprocal of the size of the stabiliser $\Lambda_P$ of a representative in $T_{S_0}$. 
We denote this \textbf{orbifold degree} by
\[
N_0(\Lambda_S)
 = \sum_{P  \text{ vertex of } \Lambda_S \backslash \PT_{S_0}} \frac{1}{\#\Lambda_P}.
\]

\begin{prop}
\label{prop:volumeLambdaS}
The orbifold degree of the lattice $\Lambda_S$ is
\[
N_0(\Lambda_S) =  \frac{1}{2(q+1)}.
\]
\end{prop}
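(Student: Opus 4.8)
The plan is to collapse the weighted vertex count to a single term by establishing two geometric facts: that $\Lambda_S$ acts transitively on the vertex set of $\PT_{S_0}$, and that the stabiliser of the base vertex $\star_{S_0}$ is exactly $\Lambda_B$. Granting both, all stabilisers are conjugate and the orbifold degree reduces to
\[
N_0(\Lambda_S) = \frac{1}{\#\Lambda_B} = \frac{1}{2(q+1)},
\]
where the value $\#\Lambda_B = 2(q+1)$ is Proposition~\ref{prop:global_elements}. As $\Lambda_S$ embeds into $\prod_{v\in S_0}\PGL_2(K_v)$ and each factor acts faithfully on its tree, the action on $\PT_{S_0}$ is faithful, so the stabilisers are genuine finite groups and the two facts suffice.

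The stabiliser is immediate from the definitions. A vertex of $\BTT_v$ is fixed by $\PGL_{1,\dA}(\dO_v)$ precisely when it equals the distinguished vertex $\star_v$, so $g \in \Lambda_S = \PGL_{1,\dA}(\fo_{K,S})$ fixes $\star_{S_0} = (\star_v)_{v \in S_0}$ if and only if $g$ is integral at every $v \in S_0$. Such a $g$ is then integral on all of $\bP^1 \setminus B$ and thus lies in $\PGL_{1,\dA}(\fo_{K,B}) = \Lambda_B$; conversely $\Lambda_B$ fixes $\star_{S_0}$ since its elements are units at every $v \in S_0$. In particular the reflection $F \in \Lambda_B$, responsible for the factor $2$ in $2(q+1)$, has reduced norm $-t$, a unit away from $B$, and hence fixes $\star_{S_0}$.

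The real work is vertex-transitivity. Here I would first invoke strong approximation for the simply connected group $\SL_{1,\dA}$: because $\SL_{1,\dA}(K_v) \simeq \SL_2(K_v)$ is non-compact for the unramified $v \in S_0$ (and $S_0 \neq \emptyset$, the case $S_0 = \emptyset$ being the base point $S = B$ treated above), the $S$-arithmetic group $\tilde{\Lambda}_S = \SL_{1,\dA}(\fo_{K,S})$ has dense image in $\prod_{v \in S_0} \SL_2(K_v)$. Since the vertex stabilisers there are open, density forces $\tilde{\Lambda}_S$ to act transitively on each fibre of the type map $\tau\colon \prod_{v \in S_0} \BTT_v \to \bigoplus_{v \in S_0} \bZ/2$ recording, in each tree factor, the parity of the distance to $\star_v$ — the invariant preserved by the norm-one group. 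This reduces transitivity to realising every type-shift by an element of the full group $\Lambda_S$.

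A type-shift at $v$ is effected by any $g \in \Lambda_S$ whose reduced norm has odd valuation at $v$, so the crux is surjectivity of the composite
\[
\Lambda_S \xrightarrow{\ \delta\ } \fo_{K,S}^\times/(\fo_{K,S}^\times)^2 \xrightarrow{\ (\val_v)_v\ } \bigoplus_{v \in S_0} \bZ/2 .
\]
The first map is the boundary map of~\eqref{eq:comparewith_sc_lattice}, equals reduced norm modulo squares by Lemma~\ref{lem:delta_as_Nrd}, and is surjective by Proposition~\ref{prop:delta_surjective}; the second is surjective because for each $v \in S_0$ a uniformiser $\pi_v \in \fo_{K,S}^\times$ (the monic irreducible cutting out $v$, whose only pole lies at $\infty \in B$) has odd valuation at $v$ and none at the other places of $S_0$. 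Combining the type-shifting elements of $\Lambda_S$ with the fibrewise transitivity of $\tilde{\Lambda}_S$ merges all vertices into a single $\Lambda_S$-orbit, and the proposition follows. The hard part is exactly this last bookkeeping: checking that the spinor-type parity obstruction on the product of trees is faithfully captured by $\fo_{K,S}^\times/(\fo_{K,S}^\times)^2$ and is annihilated precisely by the surjectivity of $\delta$, together with confirming the applicability of strong approximation despite the compactness of $\SL_{1,\dA}$ at the ramified places $0,\infty$.
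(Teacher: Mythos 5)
Your reduction of the orbifold degree to $1/\#\Lambda_B$, granted vertex-transitivity and the identification of the stabiliser of $\star_{S_0}$ with $\Lambda_B$, is sound, and the stabiliser computation is essentially Proposition~\ref{prop:GammaS}~\ref{propitem:GammaS1}. The gap is in the transitivity step. First, a local error: $\tilde{\Lambda}_S$ is a \emph{discrete} (indeed cocompact) subgroup of $\prod_{v\in S_0}\SL_2(K_v)$, not a dense one, so "density forces fibrewise transitivity" cannot be run as written. More seriously, the correct form of strong approximation does not repair this. Strong approximation for $\SL_{1,D}$ gives density of $\SL_{1,D}(K)$ in the adeles away from a set $T$ of places at which the group is non-compact; to conclude that $\tilde{\Lambda}_S$ acts transitively on the even-type vertices one would need the class set $\SL_{1,D}(K)\backslash\SL_{1,D}(\bA)/W$ to be trivial for $W=\prod_{v\in S_0}\SL_2(\dO_v)\times\prod_{v\in B}\SL_{1,D}(K_v)\times\prod_{v\notin S}\SL_{1,\dA}(\dO_v)$, and for that $W$ must contain the full local group at some place where $\SL_{1,D}$ is isotropic. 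It does not: $W$ is the full local group only at the ramified places $0,\infty$, where $\SL_{1,D}(K_v)$ is compact, while at every $v\in S_0$ the argument must retain the vertex stabiliser $\SL_2(\dO_v)$ — exactly the places one is not allowed to discard. This is the "totally definite" situation: the class set is finite but not formally a singleton, its mass $\sum_i 1/\#\Lambda_{P_i}$ is what Prasad's formula computes, and triviality of the class set is equivalent to the statement being proved. So the proposal assumes the hard part, and the issue you flag in your last sentence (compactness at $0,\infty$) is not a technicality to be confirmed but the point where the approach fails.

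Note also that the paper's logic runs in the opposite direction: Proposition~\ref{prop:volumeLambdaS} is proved first, by comparing Euler--Poincar\'e and Tits measures and applying Prasad's volume formula to $\tilde\Lambda_S$, then descending to $\Lambda_S$ via \eqref{eq:comparewith_sc_lattice}, Proposition~\ref{prop:delta_surjective} and the $S$-unit theorem; vertex-transitivity is only afterwards deduced in Proposition~\ref{prop:GammaS}~\ref{propitem:GammaS2} from $N_0(\Lambda_S)=1/\#\Lambda_B$. If you want to keep your geometric strategy, transitivity can be obtained elementarily as in the proof of Proposition~\ref{prop:GammaS}~\ref{propitem:GammaS3}: the $q+1$ explicit elements $[1+\alpha F]\in PA_\tau$ of reduced norm $1-t/\tau$ move $\star_{S_0}$ bijectively onto its $q+1$ neighbours in $\tau$-direction (distinctness checked with the homomorphism $\rho$ of \eqref{eq:action on Zariski cotangent space at 0} together with part~\ref{propitem:GammaS1}), so the orbit of $\star_{S_0}$ contains all neighbours of each of its points and is therefore everything. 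That route avoids both Prasad and strong approximation, at least for $\bF_q$-rational $S$; your type-shift bookkeeping via $\delta$ then becomes unnecessary.
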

\begin{proof}
The orbifold degree relates to the Tits measure $\mu_{\rm Tits}$ used in Prasad's volume formula by a scaling factor that can be determined by comparison with the Euler-Poincar\'e measure $\mu_{{\rm EP},S_0}$ using \cite[\S4]{borel_prasad}  as follows:
\[
N_0(\Lambda_S) \cdot \prod_{v \in S_0} \frac{1-N(v)}{2} = \mu_{{\rm EP},S_0}(\Lambda_S) = \mu_{{\rm Tits}}(\Lambda_S) \cdot \prod_{v \in S_0} \frac{1-N(v)}{1+N(v)}.
\]
Applying Prasad's volume formula \cite[Theorem 3.7]{prasad:volumeformula}  now yields the following, 
\begin{align*}
\mu_{\rm Tits}(\tilde{\Lambda}_S) & = 
q^{-3} \cdot \prod_{v \in S_0} \frac{N(v)^2}{N(v) - 1}  \cdot \prod_{v \in B} \frac{N(v)^2}{N(v) + 1} \cdot \prod_{v \notin S } \frac{1}{1-N(v)^{-2}}  \\
& =  q^{-3} \cdot  \zeta_K(2)  \cdot \prod_{v \in B} (N(v) - 1) \cdot \prod_{v \in S_0} (N(v) + 1)  \\
& =  q^{-3} \cdot \frac{1}{(1-q^{-2})(1-q^{-1})} \cdot (q-1)^2 \cdot \prod_{v \in S_0} (N(v) + 1)  =
\frac{1}{q+1} \cdot \prod_{v \in S_0} (N(v) + 1).
\end{align*}
Hence by Proposition~\ref{prop:delta_surjective}, \eqref{eq:comparewith_sc_lattice} and the function field analogue of Dirichlet's $S$-Unit Theorem,
\begin{align*}
\mu_{\rm Tits}(\Lambda_S) =\mu_{\rm Tits}(\tilde \Lambda_S) \cdot \frac{\#\mu_2(\fo_{K,S})}{\#\fo_{K,S}^\times/(\fo_{K,S}^\times)^2} = \frac{\mu_{\rm Tits}(\tilde \Lambda_S)}{2^{\#S - 1}} = \frac{1}{2(q+1)} \cdot \prod_{v \in S_0} (\frac{N(v) + 1}{2}).
\end{align*}
The formula for $N_0(\Lambda_S)$ follows at once.
\end{proof}

\begin{rmk}
The orbifold degree of $\Lambda_S$ is independent of the set of places $S$ (containing $B$). In the sequel we restrict to places that correspond to $\bF_q$-rational points because in these cases the description of the lattices turns out to be very explicit. Qualitatively, the results also hold without the restriction imposed on $S$, and in each individual case it is easy to derive explicit presentations.
\end{rmk}

%----------------------------------------------------------------------------------------------------------------
\subsection{The generator sets $A_\tau$} 
Let $\tau \in \bF_q^\times = (\bP^1 \setminus B)(\bF_q)$ be a rational point. 
According to Lemma~\ref{lem:nrd_formula} the elements of
\[
A_\tau := \{1+\alpha F \ ; \ \alpha \in \bF_q[Z] \text{ with } N(\alpha) = \tau^{-1}\}
\]
have reduced norm $1- \tau^{-1} t$, and this is a uniformizer at the place $t = \tau$. The composite
\[
A_\tau \subseteq \GL_{1,\dA}(\fo_{K,\{0,\tau,\infty\}}) \to \PGL_{1,\dA}(\fo_{K,\{0,\tau,\infty\}}) = \Lambda_{\{0,\tau,\infty\}} =: \Lambda_\tau
\]
is injective, and we denote its image by 
\[
PA_\tau \subseteq \Lambda_\tau.
\]

\begin{rmk}
\label{rmk:H90}
The group of elements of norm $1$  in $\bF_q[Z]$, the norm-$1$ torus,  is denoted by 
\[
\bT(q) := \bF_q[Z]^{\times, N = 1} := \ker\big(N:\bF_q[Z]^\times \surj \bF_q^\times\big).
\]
We recall the following direct consequence of Hilbert's Theorem $90$: the map 
\[
\bF_q[Z]^\times \surj  \bT(q), \qquad \lambda \mapsto \lambda/\bar \lambda
\]
is surjective.
\end{rmk}

\begin{prop}
\label{prop:stab_act_Atau1}
Let $\tau \in \bF_q^\times$. 
\begin{enumerate}
\item 
The set $PA_\tau$ contains $q+1$ elements. 
\item 
The group $\Lambda_B$ acts transitively by conjugation 
on  $PA_\tau$ in $\Lambda_\tau$. For $\lambda \in \bF_q[Z]^\times$ and $1+ \alpha F \in A_\tau$ we have
\begin{align*}
\lambda (1+\alpha F)\lambda^{-1} & = 1 + \alpha (\lambda/\bar \lambda) F \\
F(1+\alpha F) F^{-1} & = 1+ \bar \alpha F.
\end{align*}
\end{enumerate}
\end{prop}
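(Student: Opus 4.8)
The plan is to derive both parts from the two conjugation formulas, which I would prove first by a direct computation in $D^\times$, and then read everything off from the behaviour of the constant norm map $N\colon \bF_q[Z]\to\bF_q$, i.e.\ the norm of the quadratic extension $\bF_q[Z]/\bF_q$ with $\bF_q[Z]\cong\bF_{q^2}$.

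For the formulas, recall that $L = \bF_q[Z]\otimes_{\bF_q}K$ is commutative and that $Fx=\bar x F$ for every $x\in L$. Hence for $\lambda\in\bF_q[Z]^\times$ and $1+\alpha F\in A_\tau$ I would compute
\[
\lambda(1+\alpha F)\lambda^{-1} = 1 + \lambda\alpha\,\overline{\lambda^{-1}}F = 1+\alpha(\lambda/\bar\lambda)F,
\]
using $F\lambda^{-1}=\overline{\lambda^{-1}}F$ and commutativity of $L$, and likewise $F(1+\alpha F)F^{-1}=1+F\alpha=1+\bar\alpha F$. These identities already hold in $D^\times=\GL_{1,\dA}(K)$ and therefore descend to $\Lambda_\tau$. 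That conjugation preserves $A_\tau$, and hence $PA_\tau$, is then immediate from multiplicativity of $N$ together with $N(\bar\mu)=N(\mu)$: one gets $N(\lambda/\bar\lambda)=1$ and $N(\bar\alpha)=N(\alpha)$, so the condition $N(\alpha)=\tau^{-1}$ is unchanged. This already exhibits an action of $\Lambda_B$ on $PA_\tau$ by conjugation.

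For part~(1) I would count $A_\tau$ directly: the set $\{\alpha\in\bF_q[Z] : N(\alpha)=\tau^{-1}\}$ is a nonempty fibre of the field norm $N\colon\bF_q[Z]^\times\to\bF_q^\times$, hence a coset of $\bT(q)=\ker N$. Since norms between finite fields are surjective, $\bT(q)$ has order $(q^2-1)/(q-1)=q+1$, so $A_\tau$ has $q+1$ elements; by the injectivity of $A_\tau\to PA_\tau$ noted before the proposition, so does $PA_\tau$. For part~(2) I would fix one $\alpha_0$ with $N(\alpha_0)=\tau^{-1}$, identifying $A_\tau$ with the coset $\alpha_0\bT(q)$. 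The first conjugation formula is exactly the multiplication action $\alpha\mapsto\alpha\cdot(\lambda/\bar\lambda)$; by Hilbert's Theorem~90 (Remark~\ref{rmk:H90}) the map $\lambda\mapsto\lambda/\bar\lambda$ surjects $\bF_q[Z]^\times$ onto $\bT(q)$, and it factors through $\bF_q[Z]^\times/\bF_q^\times$ because $\bF_q^\times$ is $\sigma$-fixed. Thus the rotation subgroup $\bF_q[Z]^\times/\bF_q^\times$ of $\Lambda_B$ acts on $\alpha_0\bT(q)$ as $\bT(q)$ acts on itself by translation, which is (simply) transitive; transporting along the bijection $A_\tau\simeq PA_\tau$ yields transitivity of $\Lambda_B$ on $PA_\tau$.

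I expect no step to be genuinely hard; the only care needed is bookkeeping. One must confirm that the abstract conjugation in the quotient $\Lambda_\tau$ matches the translation action on $\alpha_0\bT(q)$ under $\alpha\mapsto 1+\alpha F$, and that scalars in $\bF_q^\times$ act trivially, so that the action really descends to the dihedral group $\Lambda_B$. The conceptual content is simply the recognition of $A_\tau$ as a torsor under the norm-one torus $\bT(q)$, with surjectivity supplied by Hilbert~90.
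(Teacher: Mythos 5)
Your proposal is correct and follows essentially the same route as the paper: the conjugation formulas are verified by the same direct computation with $Fx=\bar xF$, part~(1) is the same count of the fibre of the surjective norm map as $\#\ker(N)=q+1$, and part~(2) is the same transitivity argument via Hilbert~90 (Remark~\ref{rmk:H90}) and the fact that elements of norm $\tau^{-1}$ differ by an element of $\bT(q)$. You merely spell out the torsor structure and descent to $\Lambda_B$ more explicitly than the paper does.
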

\begin{proof}
(1) The norm map $N: \bF_q[Z]^\times \to \bF_q^\times$ is surjective, hence $\#A_\tau  = \#\ker(N) = q+1$. The map $A_\tau \to PA_\tau $ is clearly bijective.

(2)
The formulas are straightforward and yield again elements in $A_\tau$ because $N(\lambda/\bar \lambda) = 1$ and $N(\alpha) = N(\bar \alpha)$. 
The action is transitive because of the surjectivity recalled in Remark~\ref{rmk:H90} and the fact that elements of norm $\tau^{-1}$ differ multiplicatively by an element of norm $1$.
\end{proof}

\begin{nota}
We will use the notation $g \mapsto [g]$ for the map 
\[
\GL_{1,\dA}(\fo_{K,S})   \to \PGL_{1,\dA}(\fo_{K,S})
\]
if it becomes necessary to distinguish between group elements and representatives modulo the center.
\end{nota}

\begin{nota}
\label{nota:a_tau}
Recall that $s = [F] \in \PGL_{1,\dA}(\fo_{K,S})$ is the image of $F$ and $d = [\delta] \in \Lambda_B$ is the image of a fixed generator $\delta$ of $\bF_q[Z]^\times$.
Let $\tau \in \bF_q^\times$. We define an element $1 + \alpha_0 F \in A_\tau$ and its image $a_\tau \in \Lambda_\tau$ depending on whether $\tau$ is a square in $\bF_q$ or not.
\begin{itemize}
\item If $\tau$ is a square  in $\bF_q$ (note that this case includes the case of even $q$), then we fix a square root $\alpha_0$ of $\tau^{-1}$:
\[
\tau^{-1} = (\alpha_0)^2.
\]
Clearly then $N(\alpha_0) = \tau^{-1}$ and $1+ \alpha_0 F \in A_\tau$ yields an element $a_\tau \in PA_\tau$.
\item
Since $\delta$ generates $\bF_q[Z]^\times$ and the norm map is surjective, the norm $N(\delta) = \delta^{q+1}$ generates $\bF_q^\times$. As a generator therefore $\delta^{q+1}$ is not a square in $\bF_q$.
If $\tau$ is not a square in $\bF_q$, then $\delta^{q+1} \tau^{-1}$ is a square, say 
\[
(\alpha_1)^2 = \delta^{q+1} \tau^{-1},
\]
and we set $\alpha_0 = \alpha_1 \delta^{-1}$. This is an element of norm
\[
N(\alpha_0) =(\alpha_1)^2 \delta^{-1-q} = \tau^{-1}
\]
and thus $1+ \alpha_0 F \in A_\tau$ yields an element $a_\tau \in PA_\tau$.
\end{itemize}
\end{nota}

\begin{prop}
\label{prop:StabAtau_relations}
Let $\tau \in \bF_q^\times$. 
\begin{enumerate}
\item 
\label{propitem:StabAtau_relations1}
We have 
\[
PA_\tau = \{ d^i a_\tau d^{-i} \ ; \ 0 \leq i \leq q\}.
\]
\item
\label{propitem:StabAtau_relations2}
For all $a = [1+\alpha F] \in PA_\tau$ we have
\[
a^{-1} =  [1 - \alpha F] =
\begin{cases}
d^{(q+1)/2} a d^{-(q+1)/2} & \text{if $2 \nmid q$}, \\[1ex]
a & \text{if $2 \mid q$}.
\end{cases}
\]
\item
\label{propitem:StabAtau_relations3}
If $\tau$ is a square in $\bF_q$, then 
\[
sa_\tau s = a_\tau,
\] 
and,  if $\tau$ is not a square in $\bF_q$, then 
\[
(sd)a_\tau (sd)^{-1}  = a_\tau.
\]
\end{enumerate}
\end{prop}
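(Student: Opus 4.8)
The plan is to reduce all three assertions to the explicit conjugation formulas of Proposition~\ref{prop:stab_act_Atau1} together with the structure of $\bF_q[Z]^\times\cong\bF_{q^2}^\times$ as a cyclic group of order $q^2-1$ and of its norm-$1$ subtorus $\bT(q)$, which has order $q+1$. The single computation underlying everything is that conjugation by $d=[\delta]$ multiplies the coefficient $\alpha$ of an element $[1+\alpha F]\in PA_\tau$ by $\delta/\overline{\delta}=\delta^{1-q}$, an element of $\bT(q)$ of exact order $q+1$, which therefore generates $\bT(q)$. For part~\eqref{propitem:StabAtau_relations1} I would then note that the set of $\alpha\in\bF_q[Z]$ with $N(\alpha)=\tau^{-1}=N(\alpha_0)$ is exactly the coset $\alpha_0\,\bT(q)$; since $\delta/\overline{\delta}$ generates $\bT(q)$, the elements $\alpha_0(\delta/\overline{\delta})^i$ for $0\le i\le q$ run without repetition through this coset. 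Hence the $q+1$ elements $d^ia_\tau d^{-i}=[1+\alpha_0(\delta/\overline{\delta})^iF]$ are pairwise distinct and exhaust $A_\tau$, and via the bijection $A_\tau\to PA_\tau$ and the count $\#PA_\tau=q+1$ this gives the claimed equality.

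For part~\eqref{propitem:StabAtau_relations2} I would first compute $(1+\alpha F)(1-\alpha F)=1-\alpha\overline{\alpha}F^2=1-N(\alpha)t=\Nrd(1+\alpha F)=1-\tau^{-1}t$, using Lemma~\ref{lem:nrd_formula}. This factor lies in the centre $K$, and since $1-\tau^{-1}t=-\tau^{-1}(t-\tau)$ is a unit in $\fo_{K,\{0,\tau,\infty\}}$, it becomes trivial in $\Lambda_\tau=\PGL_{1,\dA}(\fo_{K,\{0,\tau,\infty\}})$; hence $a^{-1}=[1-\alpha F]$. When $q$ is even, $-1=1$ forces $[1-\alpha F]=[1+\alpha F]=a$. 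When $q$ is odd, I would invoke Remark~\ref{rmk:representing central 2-torsion in dihedral group}, by which $d^{(q+1)/2}$ is represented by $Z$; conjugation by $Z$ multiplies $\alpha$ by $Z/\overline{Z}=Z/(-Z)=-1$, so $d^{(q+1)/2}a\,d^{-(q+1)/2}=[1-\alpha F]=a^{-1}$.

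Finally, for part~\eqref{propitem:StabAtau_relations3} the key input is the explicit shape of $\alpha_0$ from Notation~\ref{nota:a_tau}. Conjugation by $s=[F]$ sends $[1+\alpha_0F]$ to $[1+\overline{\alpha_0}F]$. If $\tau$ is a square then $\alpha_0\in\bF_q$ is fixed by $\sigma$, giving $sa_\tau s=a_\tau$ at once. If $\tau$ is not a square then $\alpha_0=\alpha_1\delta^{-1}$ with $\alpha_1\in\bF_q$, and composing the two conjugation formulas (first by $d$, then by $s$) yields $(sd)a_\tau(sd)^{-1}=[1+\overline{\alpha_0}(\overline{\delta}/\delta)F]$; here $\overline{\alpha_0}(\overline{\delta}/\delta)=\alpha_1\delta^{-q}\cdot\delta^{q-1}=\alpha_1\delta^{-1}=\alpha_0$, so the value is $a_\tau$. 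I do not expect a genuine obstacle, only bookkeeping: the two points requiring care are the identification of $\delta/\overline{\delta}$ as a generator of the order-$(q+1)$ torus $\bT(q)$ in part~\eqref{propitem:StabAtau_relations1}, and the verification in part~\eqref{propitem:StabAtau_relations2} that the central factor $1-\tau^{-1}t$ really is a unit in $\fo_{K,\{0,\tau,\infty\}}$, so that passing to $\PGL$ is legitimate.
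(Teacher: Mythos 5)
Your proposal is correct and follows essentially the same route as the paper: all three parts are reduced to the conjugation formulas $\lambda(1+\alpha F)\lambda^{-1}=1+\alpha(\lambda/\bar\lambda)F$ and $F(1+\alpha F)F^{-1}=1+\bar\alpha F$ of Proposition~\ref{prop:stab_act_Atau1}, with the norm identity $(1+\alpha F)(1-\alpha F)=\Nrd(1+\alpha F)=1-\tau^{-1}t$ becoming central and invertible in $\fo_{K,\{0,\tau,\infty\}}$, and with the explicit shapes of $\alpha_0$ from Notation~\ref{nota:a_tau} handling part~(3). Your added observation that $\delta/\bar\delta=\delta^{1-q}$ has exact order $q+1$ and hence generates $\bT(q)$ is a useful explicit justification of part~(1), which the paper dismisses as ``clear''.
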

\begin{proof}
Assertion \ref{propitem:StabAtau_relations1} is clear from Proposition~\ref{prop:stab_act_Atau1}.

\ref{propitem:StabAtau_relations2} In the case $2 \nmid q$, the central $2$-torsion element $d^{(q+1)/2}$ is represented by $Z$ by Remark~\ref{rmk:representing central 2-torsion in dihedral group}. Then the claim follows from 
\[
[Z][1+\alpha F] [Z^{-1}] [1+ \alpha F] = [1- \alpha F][1+ \alpha F] = [\Nrd(1+\alpha F)] = 1.
\]
In the case $2 \mid q$ we compute
\[
[1 + \alpha F]^2 = [1 + 2\alpha F  + \alpha \ov{\alpha} F^2] = [1 + N(\alpha) t] = 1.
\]

\ref{propitem:StabAtau_relations3} If $\tau$ is a square, then $a_\tau$ is represented by $1 + \alpha_0 F$ with $\alpha_0 \in \bF_q$, and thus
\[
sa_\tau s = [F(1+\alpha_0F)F^{-1}] = [1+\alpha_0 F]  = a_\tau.
\]
If $\tau$ is not a square, then $a_\tau = [1 + \alpha_1 \delta^{-1} F]$ with $\alpha_1 \in \bF_q$, and thus 
\[
(sd)a_\tau(sd)^{-1} = [F\delta(1+\alpha_1 \delta^{-1} F) \delta^{-1} F^{-1}] = [F(1+\alpha_1 {\bar \delta}^{-1} F)F^{-1}] = [1+\alpha_1 \delta^{-1} F] = a_\tau. \qedhere
\]
\end{proof}

%----------------------------------------------------------------------------------------------------------------
\subsection{The $t$-adic filtration} 
We now extract valuable information by working locally at the ramified place $t=0$ of the quaternion algebra $D/K$. At $t=0$, the completion is the field of Laurent power series
\[
K_0 = \bF_q((t)).
\]
Being ramified, the base change of $D$ to $K_0$ is still a skew field:
\[
\cD_0 := D \otimes_K K_0 = 
\begin{cases}
\left(\frac{c,t}{\bF_q((t))}\right) & \text{ if $2 \nmid q$} \\[1ex]
\left[\frac{c,t}{\bF_q((t))}\right) \ ) & \text{ if $2 \mid q$}.
\end{cases}
\]
As a skew field over a local field, $\cD_0$ has a unique maximal order $\fO_{D,0} \subset \cD_0$. Because of $F^2 = t$, the corresponding valuation  has $F$ as uniformizer generating the unique two-sided maximal ideal $\fm_0 \subseteq \fO_{D,0}$. The `cotangent space' 
\[
\fm_0/\fm_0^2  = F \fO_{D,0} /F^2 \fO_{D,0}
\]
is preserved by conjugation with an arbitrary element. Hence we obtain an $\bF_q$-linear action of $D^\times \subseteq \cD_0^\times$ on the $\bF_q$-vector space
$\fm_0/\fm_0^2$. The map 
\[
\bF_q[Z] \to \fm_0/\fm_0^2, \qquad u+vZ \mapsto (u+vZ)F + \fm_0^2
\]
allows one to interpret this action as an $\bF_q$-linear action on $\bF_q[Z]$ that is computed by 
conjugating in $\cD_0$ (or $D$) and working modulo linear and higher order power series in $t$.  
Since the center of $\cD_0$ acts trivially, we can deduce a homomorphism
\begin{equation}
\label{eq:action on Zariski cotangent space at 0}
\rho: \Lambda_S \to \Aut_{\bF_q\text{-linear}}(\bF_q[Z]).
\end{equation}

\begin{prop}
\label{prop:formula_rho}
Let $\tau \in \bF_q^\times$ and let $[1 + \alpha F] \in PA_\tau$. Then the following holds.
\begin{enumerate}
\item 
\[
\rho([1 + \alpha F]) = 1.
\]
\item 
For $\lambda \in \bF_q[Z]$ we have
\[
\rho([\lambda]) = \Big(u+vZ \mapsto \lambda/\bar \lambda \cdot (u+vZ)\Big)
\]
\item
The map $\rho(F)$ is the $q$-Frobenius 
\[
\rho([F]) = 
\begin{cases}
\big(u+vZ \mapsto u-vZ\big) & \text{if $2\nmid q$,} \\[1ex]
\big(u+vZ \mapsto u+v+vZ\big) & \text{if $2\mid q$.}
\end{cases}
\]
\end{enumerate}
\end{prop}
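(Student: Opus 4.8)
The plan is to read off all three formulas from a single direct computation: conjugate a representative $w = (u+vZ)F$ of a class in $\fm_0/\fm_0^2$ inside $\cD_0$ and reduce modulo $\fm_0^2$. The decisive simplification is that $\fm_0 = F\fO_{D,0}$ with $F^2 = t$ central, so $\fm_0^2 = t\fO_{D,0}$; reducing modulo $\fm_0^2$ therefore amounts to discarding every summand divisible by $t$. Throughout I would use only the two structural relations $Fx = \ov{x}F$ for $x \in L = \bF_q[Z]$ (equivalently $xF = F\ov{x}$) and $F^2 = t$.

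For assertion (2) I would conjugate by $\lambda \in \bF_q[Z]^\times$: moving $\lambda^{-1}$ across $F$ via $F\lambda^{-1} = \ov{\lambda}^{-1}F$ and using commutativity of $L$ gives $\lambda(u+vZ)F\lambda^{-1} = (\lambda/\ov{\lambda})(u+vZ)F$, so no reduction is needed and the induced action on $\bF_q[Z]$ is multiplication by $\lambda/\ov{\lambda}$. For assertion (3) I would conjugate by $F$: since $F(u+vZ)FF^{-1} = F(u+vZ) = \ov{u+vZ}\,F$, the action is the Galois conjugation $u+vZ \mapsto \ov{u+vZ}$, which is $u-vZ$ when $\sigma(Z) = -Z$ (odd $q$) and $(u+v)+vZ$ when $\sigma(Z) = Z+1$ (even $q$), matching the two stated Frobenius formulas.

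Assertion (1) is the only step where the reduction modulo $\fm_0^2$ does real work. Writing $w = 1 + \alpha F$, its reduced-trace conjugate is $1 - \alpha F$ and $\Nrd(w) = 1 - N(\alpha)t$ by Lemma~\ref{lem:nrd_formula}, so $w^{-1} = (1 - N(\alpha)t)^{-1}(1 - \alpha F)$, where the denominator is a central scalar with $(1 - N(\alpha)t)^{-1} \equiv 1 \pmod{t}$. Expanding $(1+\alpha F)(u+vZ)F(1-\alpha F)$ and pushing each spurious $F$ across an element of $L$ via $Fx = \ov{x}F$, every summand except $(u+vZ)F$ collects a factor $F^2 = t$ and hence lies in $\fm_0^2$; together with the central factor being $\equiv 1 \pmod t$ this yields $\rho([1+\alpha F]) = 1$. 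I expect the only point requiring care to be the bookkeeping in this expansion — tracking precisely which cross-terms acquire a factor of $t$ and confirming the central norm factor contributes nothing modulo $\fm_0^2$ — whereas the identification of $\fm_0/\fm_0^2$ with $\bF_q[Z]$ and the descent of $\rho$ to $\Lambda_S$ (the centre of $\cD_0$ acting trivially) are already in place from the preceding discussion.
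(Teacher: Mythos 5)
Your proposal is correct and is exactly the computation the paper has in mind: the paper's proof consists of the single remark that these are straightforward computations, with (1) following because $F^2=t$ vanishes in $\fm_0/\fm_0^2$, which is precisely the mechanism you exploit. Your expansion of $(1+\alpha F)(u+vZ)F(1-\alpha F)$, the observation that $\fm_0^2=t\fO_{D,0}$ kills every cross-term, and the handling of the central factor $(1-N(\alpha)t)^{-1}\equiv 1 \pmod t$ all check out.
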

\begin{proof}
Straightforward computations. For (1) we use that $F^2 = t$ vanishes in $\fm_0 /\fm_0^2$
\end{proof}

\begin{cor}
The map $\rho$ induces an isomorphism of $\Lambda_B$ with the normaliser of the non-split torus $\bT(q) \subseteq \SL(\bF_q[Z])$, where $\SL(\bF_q[Z])$ denotes the $\bF_q$-linear automorphisms of $\bF_q[Z]$ of determinant $1$.
\end{cor}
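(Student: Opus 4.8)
The plan is to evaluate $\rho$ on the two standard generators of $\Lambda_B$ and to recognise the subgroup of $\Aut_{\bF_q\text{-linear}}(\bF_q[Z])$ they generate as the normaliser of $\bT(q)$. By Notation~\ref{nota:ds} together with Proposition~\ref{prop:global_elements}, the group $\Lambda_B$ is the dihedral group generated by $d = [\delta]$ and $s = [F]$ subject to $d^{q+1} = s^2 = (sd)^2 = 1$, so it suffices to compute $\rho(d)$ and $\rho(s)$ from Proposition~\ref{prop:formula_rho} and to understand the subgroup they generate.

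First I would analyse $\rho(d)$. By Proposition~\ref{prop:formula_rho}(2) it is multiplication by $\delta/\bar\delta$ on $\bF_q[Z]$, an element of reduced norm $N(\delta/\bar\delta) = N(\delta)/N(\bar\delta) = 1$, hence lying in $\bT(q)$; as multiplication by a norm-one element its determinant equals that norm, so $\rho(d) \in \SL(\bF_q[Z])$. The key quantitative point is that $\rho(d)$ generates all of $\bT(q)$: since $\delta$ generates the cyclic group $\bF_q[Z]^\times$ of order $q^2-1$ and $\bar\delta = \delta^q$, the element $\delta/\bar\delta = \delta^{1-q}$ has order exactly $q+1 = \#\bT(q)$. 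This is the effective form of the Hilbert~$90$ surjectivity recorded in Remark~\ref{rmk:H90}. Next, by Proposition~\ref{prop:formula_rho}(3) the map $\rho(s)$ is the nontrivial Galois automorphism $\sigma$ of $\bF_q[Z]/\bF_q$; it has order $2$ and conjugates multiplication by $\lambda$ into multiplication by $\bar\lambda = \lambda^{-1}$ for every $\lambda \in \bT(q)$, so $\rho(s)$ normalises $\bT(q)$ and acts on it by inversion.

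I would then assemble these facts. The subgroup $\rho(\Lambda_B) = \langle \bT(q), \sigma\rangle$ contains $\bT(q)$ with the involution $\sigma$ acting by inversion, hence is dihedral of order $2(q+1)$ and normalises $\bT(q)$; moreover $\rho$ is faithful on $\Lambda_B$, because the involution $\sigma$ is a genuine field automorphism and therefore is none of the multiplications constituting $\bT(q) = \rho(\langle d\rangle)$, so the dihedral presentation is preserved. Since a non-split (anisotropic) maximal torus of $\SL(\bF_q[Z]) \cong \SL_2(\bF_q)$ has normaliser of order $2(q+1)$, the containment $\bT(q) \subseteq \rho(\Lambda_B)$ together with the inverting action forces $\rho(\Lambda_B)$ to be exactly this normaliser, which is the assertion.

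The step I expect to require the most care — and the main obstacle to a completely formal argument — is the determinant of the reflection coset, which behaves differently in the two characteristics. In even characteristic $\sigma$ is the transvection $u+vZ \mapsto (u+v)+vZ$ of determinant $1$, so $\rho(\Lambda_B)$ sits inside $\SL(\bF_q[Z])$ and is literally the torus normaliser there. In odd characteristic $\sigma$ is $u+vZ \mapsto u-vZ$ of determinant $-1$, so the reflections of $\rho(\Lambda_B)$ have determinant $-1$; the identification with the normaliser inside $\SL(\bF_q[Z])$ is then carried out at the level of the abstract dihedral structure, or equivalently after replacing $\sigma$ by $\sigma$ composed with a multiplication of norm $-1$, which realises the inverting involution with determinant $1$. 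Apart from this bookkeeping, the only non-formal input is the order computation for $\rho(d)$; everything else follows from the dihedral presentation of $\Lambda_B$ and Proposition~\ref{prop:formula_rho}.
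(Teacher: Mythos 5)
Your computation of the image is correct and is exactly what the paper's one-line proof (``Immediate from Proposition~\ref{prop:formula_rho}'') has in mind: $\rho(d)$ is multiplication by $\delta/\bar\delta=\delta^{1-q}$, which has order $q+1$ and hence generates $\bT(q)$; $\rho(s)$ is the Frobenius $\sigma$, which normalises $\bT(q)$ and acts on it by inversion; and $\rho$ is injective on $\Lambda_B$ since $\sigma$ is not a multiplication map. So $\rho(\Lambda_B)=\bT(q)\rtimes\langle\sigma\rangle$ is dihedral of order $2(q+1)$. Up to this point you are doing the same thing the paper intends, only in more detail.

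The gap is in the last step, exactly where you flag the delicacy, and your proposed repair for odd $q$ does not work. If $N(\mu)=-1$, then $(\sigma\circ m_\mu)^2=m_{\bar\mu}\circ m_\mu=m_{N(\mu)}=-\mathrm{id}$, so $\sigma\circ m_\mu$ has order $4$, not $2$. More structurally: for odd $q$ the only involution of $\SL(\bF_q[Z])\cong\SL_2(\bF_q)$ is $-\mathrm{id}$, so every element of $N_{\SL(\bF_q[Z])}(\bT(q))$ outside the torus squares to $-\mathrm{id}$, and this normaliser is the \emph{dicyclic} group of order $2(q+1)$. It is therefore not abstractly isomorphic to the dihedral group $\Lambda_B$, and no identification ``at the level of the abstract dihedral structure'' is available. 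What Proposition~\ref{prop:formula_rho} actually yields, and what the rest of the paper uses (e.g.\ in Corollary~\ref{cor:GammaS}), is that $\rho$ identifies $\Lambda_B$ with $\bT(q)\rtimes\operatorname{Gal}(\bF_q[Z]/\bF_q)$ inside $\Aut_{\bF_q\text{-linear}}(\bF_q[Z])$. For even $q$ this subgroup lies in $\SL(\bF_q[Z])$ (there $\sigma$ is a transvection of determinant $1$) and equals the $\SL$-normaliser of $\bT(q)$ by the order count, so the corollary is literally correct. For odd $q$ the coset $\bT(q)\sigma$ consists of determinant-$(-1)$ maps, the image is not contained in $\SL(\bF_q[Z])$, and it is not isomorphic to the $\SL$-normaliser; the statement needs to be read with the normaliser formed in $\GL$ (or simply replaced by the assertion about $\bT(q)\rtimes\langle\sigma\rangle$). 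The imprecision originates in the corollary itself, but as written your proof asserts an identification that is false for odd $q$, so this step should be corrected rather than patched by twisting $\sigma$.
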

\begin{proof}
Immediate from Proposition~\ref{prop:formula_rho}.
\end{proof}

\begin{nota}
Let $S \subseteq \bP^1$ be a finite set of $\bF_q$-rational places containing $B = \{0,\infty\}$. We set $S_0 = S \setminus B$ and 
\[
\Gamma_S :=  \langle PA_\tau \ ; \ \tau \in S_0 \rangle \subseteq \Lambda_S
\]
for the subgroup in $\Lambda_S$ generated by all $[1+\alpha F]$ with $N(\alpha)^{-1} = \tau \in S_0$.
\end{nota}

\begin{prop}
\label{prop:p_res_finite}
Let $S \subseteq \bP^1$ be a finite set of $\bF_q$-rational places containing $B = \{0,\infty\}$.  Then the group $\Gamma_S$ is $p$-residually finite.
\end{prop}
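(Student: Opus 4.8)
The plan is to realize $\Gamma_S$ as a subgroup of a pro-$p$ group, which suffices: if $\Gamma_S \hookrightarrow P$ with $P$ pro-$p$, then every $g \neq 1$ survives in some finite $p$-quotient of $P$, and its image there is a finite $p$-group quotient of $\Gamma_S$ separating $g$ from $1$. The pro-$p$ group I would use arises by completing at the ramified place $t=0$, where $\cD_0 = D \otimes_K K_0$ is a skew field over $K_0 = \bF_q((t))$ with maximal order $\fO_{D,0}$, maximal two-sided ideal $\fm_0 = F\fO_{D,0}$, and residue field $\fO_{D,0}/\fm_0 \cong \bF_{q^2}$.

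First I would record the localization homomorphism $\Lambda_S \hookrightarrow \cD_0^\times/K_0^\times = \PGL_{1,\dA}(K_0)$ coming from $K \hookrightarrow K_0$ and argue that it is \emph{injective}. An element $[w]$ lies in the kernel precisely when $w \in D^\times \cap K_0^\times$. Since $D/K$ ramifies at $0$, the algebra $\cD_0$ is a skew field, so no quadratic $K$-subfield of $D$ embeds into $K_0$ (the unramified $L = \bF_{q^2}(t)$ does not, as $\bF_{q^2} \not\subseteq \bF_q$, and $K(F)$ is ramified at $t=0$); hence $D \cap K_0 = K$. Then $w \in K^\times \cap \dA(\fo_{K,S})^\times = \fo_{K,S}^\times$, because a scalar section is in $\dO$ and its reduced norm $w^2$ being a unit forces $w \in \fo_{K,S}^\times$, so $[w]=1$.

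Next I would note that each generator $[1+\alpha F]$ of $\Gamma_S$, with $\alpha \in \bF_q[Z] \subseteq \fO_{D,0}$, satisfies $\alpha F \in \fm_0$, hence $1 + \alpha F \in 1 + \fm_0$; as $1+\fm_0$ is a group, $\Gamma_S$ maps into the image $U$ of $1+\fm_0$ in $\cD_0^\times/K_0^\times$. Finally, $1+\fm_0$ is pro-$p$, because its $\fm_0$-adic filtration has successive quotients $(1+\fm_0^n)/(1+\fm_0^{n+1}) \cong \fm_0^n/\fm_0^{n+1} \cong \bF_{q^2}$, which are elementary abelian $p$-groups; the continuous homomorphic image $U$ is therefore pro-$p$ as well. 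Composing, $\Gamma_S \hookrightarrow U$ exhibits $\Gamma_S$ inside a pro-$p$ group, proving $p$-residual finiteness.

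The one step needing genuine care is the injectivity of $\Lambda_S \hookrightarrow \cD_0^\times/K_0^\times$, that is, the claim that completing at the single ramified place $t=0$ and passing modulo the center loses no information; I expect this center-bookkeeping, together with the identification $D \cap K_0 = K$, to be the main obstacle. Everything downstream is a formal consequence of the $\fm_0$-adic filtration being a $p$-filtration, which is automatic in characteristic $p$.
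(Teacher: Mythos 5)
Your proposal is correct and takes essentially the same route as the paper: the paper's proof consists precisely of the observation that the generators $[1+\alpha F]$ lie in the image of the $1$-units $1+\fm_0$ in $\PGL_{1,\dA}(K_0)$ and that $1+\fm_0$ is pro-$p$; you have merely made explicit the injectivity of $\Lambda_S\to\PGL_{1,\dA}(K_0)$ that the paper leaves implicit. One small remark on that step: your justification that no quadratic subfield of $D$ lies in $K_0$ only checks the two subfields $L$ and $K(F)$, whereas $D$ has many others; it is cleaner to note that an element of $D\cap K_0$ is central in $\cD_0$, hence commutes with all of $D$, hence lies in $Z(D)=K$.
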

\begin{proof}
The generators $[1+\alpha F]$ are images of $1$-units with respect to the valuation of $\cD_0$.  The inclusion 
\[
\Gamma_S \subseteq \im\big(1+\fm_0 \to \PGL_{1,\dA}(K_0)\big)
\]
and the fact that the multiplicative group $1+\fm_0$ is a pro-$p$ group prove the result.
\end{proof}

%----------------------------------------------------------------------------------------------------------------
\subsection{Presentation of $\Lambda_S$} 
We are now ready to describe $\Lambda_S$ in terms of generators and relations. We start with stabiliser and orbit of the standard vertex.

\begin{prop}
\label{prop:GammaS}
Let $S \subseteq \bP^1$ be a finite set of $\bF_q$-rational places containing $B = \{0,\infty\}$. \begin{enumerate}
\item
\label{propitem:GammaS1}
The stabiliser of $\star_{S_0} \in \PT_{S_0}$ for the action of $\Lambda_S$ is the dihedral group $\Lambda_B$ of order $2(q+1)$.
\item 
\label{propitem:GammaS2}
The group $\Lambda_S$ acts transitively on the vertices of $\PT_{S_0}$.
\item 
\label{propitem:GammaS3}
The group $\Gamma_S$ acts transitively on the vertices of $\PT_{S_0}$.
\end{enumerate}
\end{prop}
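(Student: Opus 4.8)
The plan is to establish the two transitivity statements first, by analysing explicitly how the generators $PA_\tau$ move the base vertex along each tree factor, and then to read off the stabiliser from the volume computation of Proposition~\ref{prop:volumeLambdaS}.

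First I would record that $\Lambda_B$ fixes $\star_{S_0}$. Every class in $\Lambda_B = \PGL_{1,\dA}(\fo_{K,B})$ is represented by some $\gamma \in \dA(\bP^1 \setminus B)$ whose reduced norm lies in $\fo_{K,B}^\times = \bF_q^\times \cdot t^{\bZ}$; at a rational place $v$ given by $t = \tau \in S_0$ this $\gamma$ is regular and $\Nrd(\gamma)$ is a $v$-unit, so $\gamma \in \dA_v^\times$ and $[\gamma]$ fixes the distinguished vertex $\star_v$. Doing this for every $v \in S_0$ gives $\Lambda_B \subseteq \Lambda_{\star_{S_0}}$. The crux is then the local picture at a fixed $v \leftrightarrow \tau$. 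By Lemma~\ref{lem:nrd_formula} each $[1+\alpha F] \in PA_\tau$ has reduced norm $1 - \tau^{-1}t$, a $v'$-unit for every $v' \in S_0 \setminus \{v\}$ and of valuation $1$ at $v$; hence $[1+\alpha F]$ lies in $\dA_{v'}^\times$ and fixes $\star_{v'}$ for $v' \neq v$, while the odd valuation at $v$ forces it to move $\star_v$ to an adjacent vertex. To see that $PA_\tau$ reaches \emph{all} neighbours, I compare the conjugation action of $\Lambda_B$ on $PA_\tau$, transitive by Proposition~\ref{prop:stab_act_Atau1}, with the action of $\Lambda_B$ on the link of $\star_v$: reducing $\bF_q[Z]^\times/\bF_q^\times$ at the unramified place $v$ embeds it as the non-split torus $\bT(q)$ in $\PGL_2(\bF_q)$, which acts simply transitively on the $q+1$ neighbours $\cong \bP^1(\bF_q)$ of $\star_v$. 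The evaluation map $\phi\colon PA_\tau \to \{\text{neighbours of }\star_v\}$, $p \mapsto p\star_v$, is $\Lambda_B$-equivariant, so its image is a full $\Lambda_B$-orbit; as the target is transitive, $\phi$ is onto, hence (equal cardinalities) a bijection.

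Transitivity of $\Gamma_S$, assertion \ref{propitem:GammaS3}, now follows by connectivity. The $\Gamma_S$-orbit of $\star_{S_0}$ is closed under passing to neighbours in every tree direction: if $\gamma\star_{S_0}$ lies in the orbit, then its neighbours in the $v$-direction are $\gamma \cdot \{\, p\star_{S_0} : p \in PA_\tau \,\}$, which lie in the orbit because $\gamma PA_\tau \subseteq \Gamma_S$ and, by the previous step, $\{\, p\star_{S_0} : p\in PA_\tau\,\}$ is exactly the set of neighbours of $\star_{S_0}$ in that direction. Since the $1$-skeleton of $\PT_{S_0} = \prod_v \BTT_v$ is connected, any vertex being joined to $\star_{S_0}$ by a path changing one coordinate at a time, the orbit exhausts all vertices. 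This proves \ref{propitem:GammaS3}, and \ref{propitem:GammaS2} is immediate from $\Gamma_S \subseteq \Lambda_S$.

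Finally, assertion \ref{propitem:GammaS1} drops out of the volume formula. By the transitivity just proved there is a single vertex orbit, so the orbifold degree collapses to $N_0(\Lambda_S) = 1/\#\Lambda_{\star_{S_0}}$; comparing with $N_0(\Lambda_S) = 1/(2(q+1))$ from Proposition~\ref{prop:volumeLambdaS} gives $\#\Lambda_{\star_{S_0}} = 2(q+1)$, and together with $\Lambda_B \subseteq \Lambda_{\star_{S_0}}$ and $\#\Lambda_B = 2(q+1)$ from Proposition~\ref{prop:global_elements} this forces $\Lambda_{\star_{S_0}} = \Lambda_B$. I expect the only real difficulty to be the local step of the second paragraph, namely the reduction-at-$v$ computation identifying $\bF_q[Z]^\times/\bF_q^\times$ with the non-split torus acting simply transitively on $\bP^1(\bF_q)$, together with the valuation bookkeeping that makes each $[1+\alpha F]$ fix all tree factors except the $v$-th; once this is in place the transitivity is a soft connectivity argument and the stabiliser is forced by the volume.
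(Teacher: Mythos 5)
Your proof is correct, but it inverts the paper's logical order and replaces the key local step by a different argument. The paper proves \ref{propitem:GammaS1} first and directly: the stabiliser of $\star_{S_0}$ consists of the elements of $\Lambda_S$ that are in addition integral at every $\tau \in S_0$, which is exactly $\PGL_{1,\dA}(\fo_{K,B}) = \Lambda_B$; combined with Proposition~\ref{prop:volumeLambdaS} this yields \ref{propitem:GammaS2}, and for \ref{propitem:GammaS3} the paper shows that the map $PA_\tau \to \{\text{neighbours of } \star_{S_0} \text{ in } \tau\text{-direction}\}$ is \emph{injective} --- distinct elements of $PA_\tau$ have quotient killed by $\rho$, hence not in $\Lambda_B$, the stabiliser of $\star_{S_0}$, unless trivial --- and concludes bijectivity by comparing cardinalities. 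You instead prove \emph{surjectivity} of that map by $\Lambda_B$-equivariance (using Proposition~\ref{prop:stab_act_Atau1}) together with the fact that the reduction of $\bF_q[Z]^\times/\bF_q^\times$ at the unramified place $v$ is a non-split torus acting simply transitively on the link $\bP^1(\bF_q)$; this makes \ref{propitem:GammaS3} independent of both the stabiliser computation and the volume formula, and you then recover \ref{propitem:GammaS1} from $N_0(\Lambda_S) = 1/(2(q+1))$ together with the easy inclusion of $\Lambda_B$ into the stabiliser. Both routes are sound. The paper's route gets injectivity essentially for free from the homomorphism $\rho$, which it reuses in Corollary~\ref{cor:GammaS}; yours isolates the genuinely local input (the Singer-cycle transitivity on the link, a standard fact not stated in the paper) and shows that Proposition~\ref{prop:volumeLambdaS} is only needed to pin down the stabiliser, not to establish transitivity.
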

\begin{proof}
(1) The stabiliser in $\Lambda_S$ of the distinguished vertex $\star_{S_0}$ consists of those elements that are also integral with respect to $\dA$ at all $\tau \in S_0$, hence this is $\PGL_{1,\dA}(\fo_{K,\{0,\infty\}}) = \Lambda_B$, 
and  due to Proposition~\ref{prop:global_elements} a dihedral group of the respective order.

(2) Since by Proposition~\ref{prop:volumeLambdaS} and \ref{propitem:GammaS1} we have
\[
N_0(\Lambda_S) = \frac{1}{2(q+1)} = \frac{1}{\#\Lambda_B},
\]
there is no space for more vertices in  $\Lambda_S\backslash \PT_{S_0}$. Therefore $\Lambda_S$ acts vertex transitively on $\PT_{S_0}$. 

(3) Let $\tau$ be in $S_0$. The elements $[1+\alpha F] \in PA_\tau$ stabilise $\star_v$ for all $v \in S_0$, $v \not= \tau$. Moreover, since by Lemma~\ref{lem:nrd_formula} the reduced norm of $1+\alpha F$ is $1-t/\tau$, hence of $\tau$-valuation $1$ and integral, the standard vertex $\star_{S_0}$ is moved by $[1+\alpha F] \in PA_\tau$ to a neighbour in $\tau$-direction. 

Let $\rho : \Lambda_S \to  \Aut_{\bF_q\text{-linear}}(\bF_q[Z])$ be the homomorphism  from \eqref{eq:action on Zariski cotangent space at 0}. Then $\rho$ is injective on $\Lambda_B$ and trivial for $\Gamma_S$. Therefore,  applying $\rho$ shows that for $[1+\alpha F] \not= [1 + \beta F] \in PA_\tau$ we have
\[
[1+\alpha F]^{-1} [1+\beta F] \notin \Lambda_B.
\]
It follows that $[1+\alpha F] \mapsto [1+\alpha F]\star_{S_0}$ is a bijective map from $PA_\tau$ to the set of all $q+1$ neighbours of $\star_{S_0}$ in  $\tau$-direction. 
Hence  all neighbours of $\star_{S_0}$ are in its $\Gamma_S$-orbit. Therefore the orbit $\Gamma_S\star_{S_0}$ contains all neighbours of its vertices. It follows that  $\Gamma_S$ also acts vertex transitively on $\PT_{S_0}$. 
\end{proof}

\begin{cor}
\label{cor:GammaS}
Let $S \subseteq \bP^1$ be a finite set of $\bF_q$-rational places containing $B = \{0,\infty\}$ and $S_0 = S \setminus B$. 
\begin{enumerate}
\item
\label{coritem:GammaS1}
The group $\Lambda_S$ is generated by $\Lambda_B$ and all $PA_\tau$ for $\tau \in S_0$, more precisely
\[
\Lambda_S = \Gamma_S \Lambda_B
\]
as a product of sets.
\item
\label{coritem:GammaS2}
The group $\Lambda_S$ is generated by $d$, $s$ and $a_\tau$ for all $\tau \in S_0$. 
\item
\label{coritem:GammaS3}
The image of $\rho$ is isomorphic to $\Lambda_B$. Identifying this image with $\Lambda_B$ we have a short exact sequence
\[
1 \to \Gamma_S \to \Lambda_S \xrightarrow{\rho} \Lambda_B \to 1.
\]
\item 
\label{coritem:GammaS4}
The subgroup $\Gamma_S$ is a normal subgroup of index
\[
(\Lambda_S : \Gamma_S) = 2(q+1).
\]
\end{enumerate}
\end{cor}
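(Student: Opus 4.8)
The plan is to deduce all four assertions from the vertex-transitivity statements of Proposition~\ref{prop:GammaS} together with the properties of the homomorphism $\rho$ from \eqref{eq:action on Zariski cotangent space at 0}. The pivotal step is the set-theoretic factorisation \ref{coritem:GammaS1}; once it is in hand, the remaining parts follow formally, with no further geometric input.

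For \ref{coritem:GammaS1} I would take an arbitrary $g \in \Lambda_S$ and inspect the vertex $g\star_{S_0} \in \PT_{S_0}$. Since $\Gamma_S$ acts transitively on the vertices by Proposition~\ref{prop:GammaS}\ref{propitem:GammaS3}, there is some $\gamma \in \Gamma_S$ with $\gamma\star_{S_0} = g\star_{S_0}$. Then $\gamma^{-1}g$ fixes $\star_{S_0}$ and hence lies in its stabiliser, which is $\Lambda_B$ by Proposition~\ref{prop:GammaS}\ref{propitem:GammaS1}. Writing $g = \gamma\cdot(\gamma^{-1}g)$ exhibits $g \in \Gamma_S\Lambda_B$, giving $\Lambda_S = \Gamma_S\Lambda_B$ as a product of sets. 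Part \ref{coritem:GammaS2} is then bookkeeping: $\Lambda_B = \langle d,s\rangle$ by Notation~\ref{nota:ds}, while $\Gamma_S = \langle PA_\tau \,;\, \tau \in S_0\rangle$ with each $PA_\tau = \{d^i a_\tau d^{-i} \,;\, 0 \le i \le q\}$ by Proposition~\ref{prop:StabAtau_relations}\ref{propitem:StabAtau_relations1}, so that $\Gamma_S \subseteq \langle d, a_\tau \,;\, \tau \in S_0\rangle$. Combining this with \ref{coritem:GammaS1} yields the generating set $\{d,s,a_\tau\}$.

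For \ref{coritem:GammaS3} and \ref{coritem:GammaS4} I would exploit that $\rho$ kills every generator $[1+\alpha F]$ of $\Gamma_S$ by Proposition~\ref{prop:formula_rho}(1), so $\Gamma_S \subseteq \ker\rho$, whereas $\rho$ restricts to an injection on $\Lambda_B$ (indeed an isomorphism onto the normaliser of the non-split torus, as established above). Applying $\rho$ to the factorisation from \ref{coritem:GammaS1} gives $\im\rho = \rho(\Gamma_S)\rho(\Lambda_B) = \rho(\Lambda_B) \cong \Lambda_B$. For the reverse inclusion $\ker\rho \subseteq \Gamma_S$, write $g = \gamma b \in \ker\rho$ with $\gamma \in \Gamma_S$ and $b \in \Lambda_B$; then $1 = \rho(g) = \rho(b)$, and injectivity on $\Lambda_B$ forces $b = 1$, whence $g = \gamma \in \Gamma_S$. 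Thus $\ker\rho = \Gamma_S$, which simultaneously produces the short exact sequence $1 \to \Gamma_S \to \Lambda_S \xrightarrow{\rho} \Lambda_B \to 1$, shows $\Gamma_S \trianglelefteq \Lambda_S$ as a kernel, and computes the index as $(\Lambda_S : \Gamma_S) = \#\im\rho = \#\Lambda_B = 2(q+1)$.

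The main obstacle, such as it is, is entirely contained in the already-proven Proposition~\ref{prop:GammaS}: the transitivity of $\Gamma_S$ on vertices and the identification of the stabiliser with $\Lambda_B$ are the substantive facts, and the present corollary is a formal consequence, the only genuine subtlety being to keep track of the fact that $\rho$ is trivial on $\Gamma_S$ but faithful on $\Lambda_B$, which is exactly what makes the two subgroups meet trivially and the sequence split off the dihedral quotient.
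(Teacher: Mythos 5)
Your proposal is correct and follows the paper's own route: part (1) via transitivity of $\Gamma_S$ plus the stabiliser computation, part (2) from Proposition~\ref{prop:StabAtau_relations}, and parts (3)--(4) from the triviality of $\rho$ on $\Gamma_S$ together with its injectivity on $\Lambda_B$. The only cosmetic difference is that you conclude $\ker\rho=\Gamma_S$ directly from the factorisation $\Lambda_S=\Gamma_S\Lambda_B$ and injectivity on $\Lambda_B$, whereas the paper sandwiches the index $(\Lambda_S:\Gamma_S)$ between $\#\Lambda_B$ and $\#\rho(\Lambda_S)$; both are valid and rest on the same facts.
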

\begin{proof}
(1) It follows from Proposition~\ref{prop:GammaS} \ref{propitem:GammaS3} that $\Lambda_S$ is generated by $\Gamma_S$ and the stabiliser $\Lambda_B$ in the form claimed by \ref{coritem:GammaS1}. Assertion (2) follows from assertion \ref{coritem:GammaS1} and Proposition~\ref{prop:StabAtau_relations}. 

(3) The claim on the image of $\rho$ follows from \ref{coritem:GammaS1} and the formulae of 
Proposition~\ref{prop:formula_rho}. The kernel of $\rho$ contains $\Gamma_S$ by loc.~cit., and agrees with it because \ref{coritem:GammaS1} says
\[
2(q+1) = \#\Lambda_B \geq (\Lambda_S : \Gamma_S) \geq (\Lambda_S : \ker(\rho) )  = \#\rho(\Lambda_S) = \#\Lambda_B  = 2(q+1).
\]

The assertion in (4)  follows at once from the proof of \ref{coritem:GammaS3}. 
\end{proof}

\begin{cor}
\label{cor:GammaSsimplytransitiv}
Let $S \subseteq \bP^1$ be a finite set of $\bF_q$-rational places containing $B = \{0,\infty\}$ and $S_0 = S \setminus B$.  Then 
\[
N_0(\Gamma_S) = 1,
\]
and  $\Gamma_S$ acts simply transitively on the vertices of $\PT_{S_0}$.
\end{cor}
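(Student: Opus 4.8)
The plan is to combine the transitivity of the $\Gamma_S$-action established in Proposition~\ref{prop:GammaS}\ref{propitem:GammaS3} with a computation of the vertex stabiliser inside $\Gamma_S$. Since $\Gamma_S$ acts transitively on the vertices of $\PT_{S_0}$, the quotient $\Gamma_S \backslash \PT_{S_0}$ consists of a single vertex, represented by $\star_{S_0}$, so that
\[
N_0(\Gamma_S) = \frac{1}{\#(\Gamma_S)_{\star_{S_0}}},
\]
where $(\Gamma_S)_{\star_{S_0}}$ is the stabiliser of $\star_{S_0}$ in $\Gamma_S$. Hence it suffices to show that this stabiliser is trivial: that simultaneously yields $N_0(\Gamma_S) = 1$ and the freeness of the action, which together with transitivity is exactly simple transitivity.

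First I would identify the stabiliser. By Proposition~\ref{prop:GammaS}\ref{propitem:GammaS1} the full stabiliser of $\star_{S_0}$ in $\Lambda_S$ is $\Lambda_B$, and because $\Gamma_S \subseteq \Lambda_S$ this gives $(\Gamma_S)_{\star_{S_0}} = \Gamma_S \cap \Lambda_B$. Next I would feed in the homomorphism $\rho$ from \eqref{eq:action on Zariski cotangent space at 0}: by Corollary~\ref{cor:GammaS}\ref{coritem:GammaS3} we have $\Gamma_S = \ker(\rho)$, while the corollary to Proposition~\ref{prop:formula_rho} identifying $\rho(\Lambda_B)$ with the normaliser of the non-split torus shows that $\rho|_{\Lambda_B}$ is injective. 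Therefore
\[
\Gamma_S \cap \Lambda_B = \ker(\rho) \cap \Lambda_B = \ker\big(\rho|_{\Lambda_B}\big) = \{1\},
\]
so the stabiliser is trivial and the argument is complete.

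I do not expect a genuine obstacle, as all ingredients are already in place; the only point demanding care is the twofold identification $(\Gamma_S)_{\star_{S_0}} = \Gamma_S \cap \Lambda_B = \ker(\rho|_{\Lambda_B})$, which rests on knowing both that $\rho$ is trivial on $\Gamma_S$ and injective on the finite dihedral group $\Lambda_B$. As a consistency check and an alternative route, the value $N_0(\Gamma_S)=1$ is independently forced by multiplicativity of the orbifold degree under finite-index passage: combining $(\Lambda_S : \Gamma_S) = 2(q+1)$ from Corollary~\ref{cor:GammaS}\ref{coritem:GammaS4} with $N_0(\Lambda_S) = \tfrac{1}{2(q+1)}$ from Proposition~\ref{prop:volumeLambdaS} gives $N_0(\Gamma_S) = (\Lambda_S : \Gamma_S)\cdot N_0(\Lambda_S) = 1$.
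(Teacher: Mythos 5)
Your proposal is correct and follows essentially the same route as the paper: both establish simple transitivity by identifying the vertex stabiliser as $\Gamma_S \cap \Lambda_B = \ker(\rho)\cap\Lambda_B = 1$ (via Proposition~\ref{prop:GammaS}\ref{propitem:GammaS1} and Corollary~\ref{cor:GammaS}\ref{coritem:GammaS3}) and invoking transitivity from Proposition~\ref{prop:GammaS}\ref{propitem:GammaS3}. The only cosmetic difference is that the paper obtains $N_0(\Gamma_S)=1$ from the multiplicativity of the orbifold degree (your ``consistency check''), whereas you derive it directly from the trivial stabiliser; both are valid and rest on the same ingredients.
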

\begin{proof}
The orbifold degree follows from the  multiplicativity 
$N_0(\Gamma_S) = (\Lambda_S : \Gamma_S) N_0(\Lambda_S)$, 
Proposition~\ref{prop:volumeLambdaS} and 
Corollary~\ref{cor:GammaS} \ref{coritem:GammaS4}. 

The stabiliser in the standard vertex $\star_\tau \in \BTT_\tau$ for the action of $\Gamma_\tau$ on $\BTT_\tau$ equals $\Gamma_S \cap \Lambda_B=1$ by Proposition~\ref{prop:GammaS}
\ref{propitem:GammaS1} and Corollary~\ref{cor:GammaS} \ref{coritem:GammaS3}. As moreover $\Gamma_\tau$ acts vertex transitively by Proposition~\ref{prop:GammaS}
\ref{propitem:GammaS3}, all vertex stabilisers are trivial. 
\end{proof}

\subsubsection{The case $q$ odd}
We first deal with the  case $S_0 = \{\tau\}$ of rank $1$.

\begin{thm}
\label{thm:structure_Lambda_tau}
Let $\tau \in \bF_q^\times$ and set $\Gamma_\tau := \langle PA_\tau \rangle \subseteq \Lambda_\tau$.
\begin{enumerate}
\item
The group $\Gamma_\tau$ is a free group of rank $\frac{q+1}{2}$ on the set $PA_\tau$ considered as a set of generators and their inverses with the involution $a \mapsto a^{-1}$ given by 
\[
[1+\alpha F]^{-1} = [1 - \alpha F].
\]

\item Moreover, $\Gamma_\tau$ is a normal subgroup in $\Lambda_\tau$ with complement $\Lambda_B$, so that we have a semi-direct product:
\[
\Lambda_\tau = \Gamma_\tau \rtimes \Lambda_B = \left\langle d, s, a_\tau \ \left| \begin{array}{c} \ d^{q+1} = s^2  = (sd)^2 = (d^{(q+1)/2}a_\tau)^2 = 1, \\
\begin{cases}
sa_\tau  = a_\tau  s & \text{ if $\tau$ is a square in $\bF_q$}, \\
(sd)a_\tau =a_\tau (sd) & \text{ if $\tau$ is not a square in $\bF_q$}.
\end{cases} \end{array}
\right.\right\rangle.
\]
Here $d,s$ are as in Notation~\ref{nota:ds} and $a_\tau \in PA_\tau$ is as in 
Notation~\ref{nota:a_tau}.
\end{enumerate}
\end{thm}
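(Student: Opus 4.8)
The plan is to prove the two assertions in sequence, exploiting the local analysis at $t=0$ that has already been set up. For assertion (1), the key input is that $\Gamma_\tau$ acts simply transitively on the vertices of the single tree $\BTT_\tau$ by Corollary~\ref{cor:GammaSsimplytransitiv}. Since $\BTT_\tau$ is a $(q+1)$-regular tree, a group acting freely and vertex-transitively on it is free, and the generators correspond to the edges emanating from the base vertex $\star_\tau$. The set $PA_\tau$ has exactly $q+1$ elements by Proposition~\ref{prop:stab_act_Atau1}(1) and maps bijectively to the neighbours of $\star_\tau$ (as shown in the proof of Proposition~\ref{prop:GammaS}\ref{propitem:GammaS3}), so these are precisely the edge generators. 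The involution $a\mapsto a^{-1}$ is $[1+\alpha F]\mapsto[1-\alpha F]$ by Proposition~\ref{prop:StabAtau_relations}\ref{propitem:StabAtau_relations2} (in the odd case $a^{-1}=d^{(q+1)/2}ad^{-(q+1)/2}$ represents the same geometric inverse edge), and since this involution is fixed-point-free on $PA_\tau$ (no generator is its own inverse when $q$ is odd, as $[1+\alpha F]=[1-\alpha F]$ would force $\alpha=0$), the $q+1$ generators pair up into $\frac{q+1}{2}$ free generators. I would conclude that $\Gamma_\tau$ is free of rank $\frac{q+1}{2}$.

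For assertion (2), I first observe that normality of $\Gamma_\tau$ in $\Lambda_\tau$ together with the complement statement follows from Corollary~\ref{cor:GammaS}: taking $S=\{0,\tau,\infty\}$ so that $S_0=\{\tau\}$, part \ref{coritem:GammaS3} gives the short exact sequence $1\to\Gamma_\tau\to\Lambda_\tau\xrightarrow{\rho}\Lambda_B\to 1$, and since $\rho$ is injective on $\Lambda_B$ (by the Corollary following Proposition~\ref{prop:formula_rho}) while trivial on $\Gamma_\tau$, the subgroup $\Lambda_B$ is a genuine complement. This realises $\Lambda_\tau$ as the semidirect product $\Gamma_\tau\rtimes\Lambda_B$.

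To extract the presentation, I would combine the abstract structure of the semidirect product with the explicit relations already computed. The dihedral factor $\Lambda_B=\langle d,s\mid d^{q+1}=s^2=(sd)^2=1\rangle$ contributes its relations by Notation~\ref{nota:ds}. By Proposition~\ref{prop:StabAtau_relations}\ref{propitem:StabAtau_relations1} the full generator set $PA_\tau$ is the conjugacy orbit $\{d^ia_\tau d^{-i}\}$, so a single generator $a_\tau$ suffices once $d$ is present. The relation $(d^{(q+1)/2}a_\tau)^2=1$ encodes the inversion $a_\tau^{-1}=d^{(q+1)/2}a_\tau d^{-(q+1)/2}$ from part \ref{propitem:StabAtau_relations2}, and the stabiliser relation—$sa_\tau=a_\tau s$ or $(sd)a_\tau=a_\tau(sd)$ according to whether $\tau$ is a square—comes directly from part \ref{propitem:StabAtau_relations3}.

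The main obstacle will be verifying that these relations are \emph{complete}, i.e. that no further relations are needed. The clean way to see this is a counting or normal-form argument: the conjugation action of $\Lambda_B$ on $PA_\tau$ is transitive (Proposition~\ref{prop:stab_act_Atau1}(2)), the stabiliser in $\Lambda_B$ of $a_\tau$ has order $2$ (generated by $s$ or $sd$ in the two cases), and $\Gamma_\tau$ is already known to be free on a set of $\frac{q+1}{2}$ inverse-pairs. I would check that the group defined by the proposed presentation surjects onto $\Lambda_\tau$ and that, using the semidirect product decomposition, every element of the abstract group has a unique normal form $w\cdot h$ with $w$ a reduced word in the free generators and $h\in\Lambda_B$; comparing with the known structure $\Lambda_\tau=\Gamma_\tau\rtimes\Lambda_B$ then forces the surjection to be an isomorphism. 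Concretely, one shows the relators suffice to rewrite any word into such a normal form, so the abstract group is no larger than $\Gamma_\tau\rtimes\Lambda_B$, which it already surjects onto.
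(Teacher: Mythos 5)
Your proposal is correct and follows essentially the same route as the paper: Bass--Serre theory applied to the simply transitive action on $\BTT_\tau$ for part (1), and the semidirect product structure from Corollary~\ref{cor:GammaS} together with the conjugation formulas of Proposition~\ref{prop:StabAtau_relations} (packaged via the standard presentation of a semidirect product) for part (2). The only divergence is that you rule out edge inversions (equivalently, $2$-torsion among the $[1+\alpha F]$) by the direct computation that $[1+\alpha F]=[1-\alpha F]$ forces $2\alpha=0$, hence $\alpha=0$, whereas the paper invokes the $p$-residual finiteness of $\Gamma_\tau$ from Proposition~\ref{prop:p_res_finite}; both arguments are valid since $q$ is odd in this subsection.
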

\begin{proof}
(1) We must show that $\Gamma_\tau$ acts freely on the tree $\BTT_\tau$ without inversion in order to deduce the freeness of $\Gamma_\tau$ from Bass--Serre theory \cite{serre:trees}. Triviality of vertex stabilisers was established in Corollary~\ref{cor:GammaSsimplytransitiv}. Due to $\Gamma_\tau$ acting vertex transitively by Proposition~\ref{prop:GammaS} \ref{propitem:GammaS3}, if there exist an edge on which $\Gamma_\tau$ acts by inversion, then there also is an edge starting in the standard vertex $\star_\tau$. In the proof of 
Proposition~\ref{prop:GammaS} we established that the elements of $\Gamma_\tau$ mapping $\star_\tau$ to its neighbours are exactly given by the elements of $PA_\tau$. So an inversion of such an edge means one of the elements $[1 + \alpha F] \in PA_\tau$ must be of order $2$. 
This contradicts that $\Gamma_\tau$ is $p$-residually finite 
according to Proposition~\ref{prop:p_res_finite}. 

More precisely, Bass--Serre theory shows now that $\Gamma_\tau$ is freely generated by $PA_\tau$ in the sense of generators with their inverses. The involution $a \mapsto a^{-1}$ on $PA_\tau$ was computed in Proposition~\ref{prop:StabAtau_relations} \ref{propitem:StabAtau_relations2}.

(2) That $\Gamma_\tau$ is a normal subgroup with complement $\Lambda_B$ was proven in Corollary~\ref{cor:GammaS}. The explicit finite presentation follows from the freeness of $\Gamma_\tau$ and the formulas from Proposition~\ref{prop:StabAtau_relations} 
that describe the action of 
\[
\Lambda_B = \langle d,s \ | \ d^{q+1} = s^2  = (sd)^2 = 1\rangle
\]
on $PA_\tau = \{ d^i a_\tau d^{-i} \ ; \ 0 \leq i \leq q\}$ together with the fact that $d^{(q+1)/2} a_\tau d^{-(q+1)/2} = a_\tau^{-1}$.
\end{proof}

Now we address the case of higher rank. 

\begin{nota}
\label{nota:zeta}
For $\alpha, \beta \in \bF_q[Z]^\times$ with $N(\alpha) \not= N(\beta)$, then $1 + \alpha/\beta \not= 0$ and we set
\[
\zeta_{\alpha}(\beta) = \frac{1 + \alpha/\beta}{1 + \bar \alpha/ \bar \beta}   \in \bT(q).
\]
When defined, we have
\[
\zeta_{\alpha}(\beta) \beta  = \frac{\alpha+\beta}{1 + \bar \alpha/ \bar \beta} = \bar \beta \cdot \frac{\alpha+\beta}{\bar \alpha +  \bar \beta}.
\]
\end{nota}

\begin{prop}
For $\tau \not= \sigma \in \bF_q^\times$ and $1 + \alpha F \in A_\tau$, and $1 + \beta F \in A_\sigma$ we have in $\GL_{1,\dA}(K)$
\[
(1 + \alpha F)(1+ \beta F) = (1 + \zeta_{\alpha}(\beta) \beta F)(1  + \zeta_\beta(\alpha) \alpha F).
\]
\end{prop}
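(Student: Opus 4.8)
The plan is to verify the identity by a direct computation inside the division algebra $D = L \oplus LF$, exploiting that $L$ is a \emph{commutative} field and that all non-commutativity is confined to the twist $Fx = \bar x F$ together with $F^2 = t$. First I would expand a generic two-factor product: for $\gamma, \delta \in \bF_q[Z] \subseteq L$ one has
\[
(1 + \gamma F)(1 + \delta F) = 1 + \gamma F + \delta F + \gamma (F\delta) F = 1 + (\gamma + \delta) F + \gamma \bar\delta\, t,
\]
using $F\delta = \bar\delta F$ and $F^2 = t$. Both sides of the asserted equality are of exactly this shape, so comparing them amounts to comparing the $L$-component and the $LF$-component separately under the unique decomposition $D = L \oplus LF$ (a left $L$-module splitting, since $F \notin L$).

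Writing $\alpha' = \zeta_\alpha(\beta)\beta$ and $\beta' = \zeta_\beta(\alpha)\alpha$ for the two coefficients on the right, the identity reduces to the two scalar equations in $L$
\[
\alpha' + \beta' = \alpha + \beta \qquad\text{and}\qquad \alpha' \bar{\beta'} = \alpha \bar\beta,
\]
coming respectively from the coefficient of $F$ and from the $L$-term (dividing the latter by $t \neq 0$). These are legitimate: from $N(\alpha) = \tau^{-1} \neq \sigma^{-1} = N(\beta)$ we get $\beta \neq -\alpha$, hence $\alpha + \beta \neq 0$ and $\bar\alpha + \bar\beta = \overline{\alpha+\beta} \neq 0$, so $\zeta_\alpha(\beta)$ and $\zeta_\beta(\alpha)$ are defined and $\alpha',\beta' \in \bF_q[Z]$.

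Next I would substitute the closed form recorded in Notation~\ref{nota:zeta}, namely
\[
\alpha' = \bar\beta \cdot \frac{\alpha + \beta}{\bar\alpha + \bar\beta}, \qquad \beta' = \bar\alpha \cdot \frac{\alpha + \beta}{\bar\alpha + \bar\beta},
\]
the second obtained from the first by the symmetry $\alpha \leftrightarrow \beta$. Putting $\alpha' + \beta'$ over the common denominator $\bar\alpha + \bar\beta$ gives $(\bar\alpha + \bar\beta)(\alpha+\beta)/(\bar\alpha + \bar\beta) = \alpha + \beta$, settling the first equation. For the second, applying $\sigma$ to $\beta'$ yields $\bar{\beta'} = \alpha(\bar\alpha + \bar\beta)/(\alpha + \beta)$, so upon multiplying by $\alpha'$ the factors $\alpha+\beta$ and $\bar\alpha+\bar\beta$ cancel, leaving $\alpha' \bar{\beta'} = \bar\beta\,\alpha = \alpha \bar\beta$, as required. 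All of these manipulations are valid precisely because $L$ is commutative.

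There is essentially no obstacle here; the only points requiring care are the bookkeeping of the twist $Fx = \bar x F$ when expanding the products and tracking which quantities are conjugated. As a consistency check one may note that since $\zeta_\alpha(\beta), \zeta_\beta(\alpha) \in \bT(q)$ have norm $1$, Lemma~\ref{lem:nrd_formula} gives $\Nrd(1 + \alpha' F) = 1 - N(\beta)t$ and $\Nrd(1 + \beta' F) = 1 - N(\alpha)t$; thus the identity merely interchanges the two reduced norms $1 - N(\alpha)t$ and $1 - N(\beta)t$, which is exactly the braiding of the $\tau$- and $\sigma$-directions one expects in the product of trees.
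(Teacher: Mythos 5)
Your proof is correct and follows essentially the same route as the paper: expand the product using $F\delta=\bar\delta F$, $F^2=t$, substitute the closed forms $\zeta_\alpha(\beta)\beta=\bar\beta\,\tfrac{\alpha+\beta}{\bar\alpha+\bar\beta}$ and $\zeta_\beta(\alpha)\alpha=\bar\alpha\,\tfrac{\alpha+\beta}{\bar\alpha+\bar\beta}$ from Notation~\ref{nota:zeta}, and match the $F$-coefficient and the $t$-term. Your version is in fact slightly more careful than the paper's, which writes down $\zeta_\alpha(\beta)\beta\cdot\overline{\zeta_\beta(\alpha)\alpha}=\alpha\bar\beta$ without the one-line cancellation you supply, and which does not comment on why $\alpha+\beta\neq 0$.
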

\begin{proof}
This is a simple calculation:
\begin{align*}
(1 + \zeta_{\alpha}(\beta) \beta  F)(1 + \zeta_\beta(\alpha) \alpha  F) & = 1    + \zeta_{\alpha}(\beta) \beta  F + \zeta_\beta(\alpha) \alpha  F +  \zeta_{\alpha}(\beta) \beta \cdot \ov{ \zeta_\beta(\alpha) \alpha} F^2 \\
& = 1  +  \Big(\bar \beta \cdot \frac{\alpha+\beta}{\bar \alpha +  \bar \beta} + \bar \alpha \cdot \frac{\alpha+\beta}{\bar \alpha +  \bar \beta} \Big) F + \alpha  \bar \beta F^2 \\
& = 1  +  (\alpha + \beta) F + \alpha  F  \beta F   = (1 + \alpha F)(1+ \beta F) . \qedhere
\end{align*}
\end{proof}

Next we can describe a presentation of $\Gamma_S$.

\begin{thm}
\label{thm:structure_Gamma_S}
Let $S \subseteq \bP^1$ be a finite set of $\bF_q$-rational places containing $B = \{0,\infty\}$ and $S_0 = S \setminus B$. 
\begin{enumerate}
\item 
The group $\Gamma_S$ has the following presentation:
\begin{itemize}
\item
$\Gamma_S$ is generated by $\bigcup_{\tau \in S_0} PA_\tau$,
\item
subject to the following relations:
\begin{enumerate}
\item[(i)]
for all $\tau \in S_0$ and $[1 + \alpha F] \in PA_\tau$, we have
\[
[1 + \alpha F] \cdot [1- \alpha F] = 1,
\]
\item[(ii)]
for all $\tau \not= \sigma \in S_0$ and $[1 + \alpha F] \in PA_\tau$, and $[1 + \beta F] \in PA_\sigma$, we have
\[
[1 + \alpha F] \cdot [1+ \beta F] = [1 + \zeta_{\alpha}(\beta) \beta F] \cdot [1  + \zeta_\beta(\alpha) \alpha F].
\]
\end{enumerate}
\end{itemize}
Here $\zeta_{\alpha}(\beta)$ is as in Notation~\ref{nota:zeta}.
\item
The group $\Gamma_S$ acts on the cube complex $\PT_{S_0}$ without fixing a cell. In particular, $\Gamma_S$ is torsion-free.
\end{enumerate}
\end{thm}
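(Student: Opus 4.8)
The plan is to establish part~(2) first and then read off the presentation in part~(1) as that of the fundamental group of the one-vertex cube complex $\Gamma_S\backslash\PT_{S_0}$. For part~(2), recall from Corollary~\ref{cor:GammaSsimplytransitiv} that $\Gamma_S$ acts simply transitively on the vertices of the CAT(0) cube complex $\PT_{S_0}$, and from Proposition~\ref{prop:p_res_finite} that $\Gamma_S$ embeds, via the localisation at $t=0$, into the image of the pro-$p$ group $1+\fm_0$ in $\PGL_{1,\dA}(K_0)$. Since $q$ is odd, $p$ is odd and any torsion element of $\Gamma_S$ has $p$-power order; replacing it by a suitable power I may assume it has order $p$. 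Lifting its image to $1+u\in 1+\fm_0\subseteq\cD_0^\times$ with $u\notin K_0$ (as the element is nontrivial), the order-$p$ condition in $\PGL_{1,\dA}(K_0)=\cD_0^\times/K_0^\times$ says $(1+u)^p\in K_0^\times$; as $u$ commutes with $1$ and $\Char\cD_0=p$, we get $(1+u)^p=1+u^p$, whence $u^p\in K_0$. Thus $[K_0(u):K_0]$ divides $p$, yet every subfield of the quaternion division algebra $\cD_0$ has degree at most $2$, so for odd $p$ we are forced into $u\in K_0$, a contradiction. Hence $\Gamma_S$ is torsion-free, and since it is a discrete cocompact lattice it acts properly on $\PT_{S_0}$; every cell stabiliser is therefore finite, hence trivial, so the action is free and no cell is fixed.

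For part~(1), the freeness just proved exhibits $\PT_{S_0}$---contractible as a finite product of trees---as the universal cover of $X:=\Gamma_S\backslash\PT_{S_0}$, giving $\Gamma_S\cong\pi_1(X)$. I would then invoke the standard presentation of the fundamental group of a cube complex with a single vertex: generators are the oriented edges, each being the inverse of its reverse, and the defining relations are the boundary words of the $2$-cells, higher cubes contributing nothing. By the proof of Proposition~\ref{prop:GammaS} the oriented edges at $\star_{S_0}$ in the $\tau$-direction are precisely the elements of $PA_\tau$, and the reverse of $[1+\alpha F]$ is traversed by $[1+\alpha F]^{-1}=[1-\alpha F]$ by Proposition~\ref{prop:StabAtau_relations}; this is exactly relation~(i). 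Since $\PT_{S_0}$ is a product of trees, every square has the form $e_\tau\times e_\sigma$ with $\tau\neq\sigma$, and traversing such a square at $\star_{S_0}$ with sides $[1+\alpha F]\in PA_\tau$ and $[1+\beta F]\in PA_\sigma$ produces exactly the product formula of the Proposition that precedes Theorem~\ref{thm:structure_Gamma_S}, namely relation~(ii). Matching generators and relations yields the asserted presentation.

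The step I expect to be the main obstacle is the combinatorial bookkeeping in part~(1): one must verify that the relations~(ii), indexed by all ordered pairs $\tau\neq\sigma$ and all pairs of generators, describe every square of $X$ exactly once up to the evident redundancy---each geometric square being visible from two base corners and under two orderings of the factors---and that no further $2$-cells occur. Both are guaranteed by the product-of-trees structure, which also makes Gromov's link condition automatic so that no higher cube imposes a relation. On the arithmetic side the delicate point in part~(2) is the characteristic-$p$ computation $(1+u)^p=1+u^p$ together with the degree bound on subfields of $\cD_0$; it is essential that $p$ be odd, for when $q$ is even the generators instead satisfy $[1+\alpha F]^2=1$ and the torsion-free conclusion genuinely fails, which confines the present theorem to the case $q$ odd.
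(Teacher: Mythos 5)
Your proof is correct, and part~(1) is essentially the paper's argument: the authors work upstairs, identifying the $1$-skeleton of $\PT_{S_0}$ with the Cayley graph of $\Gamma_S$ and generating all relations from backtrackings and from boundaries of $2$-cells based at $\star_{S_0}$ via simple connectivity of $\PT_{S_0}$, while you work downstairs with $\pi_1$ of the one-vertex quotient complex; these are the same computation, and the paper records your version separately as Proposition~\ref{prop:presentation_1-vertexcubecomplex}. The genuine divergence is in part~(2). The paper argues geometrically: by simple vertex-transitivity, a nontrivial element stabilising a cube must act as a central symmetry of one of its faces, hence have order $2$, which is impossible because $\Gamma_S$ lies in the image of the pro-$p$ group $1+\fm_0$ with $p$ odd (Proposition~\ref{prop:p_res_finite}); torsion-freeness is then deduced from the Bruhat--Tits fixed point lemma. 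You instead kill all torsion directly by arithmetic at the ramified place: a $p$-torsion element lifts to $1+u\in 1+\fm_0$ with $(1+u)^p=1+u^p\in K_0^\times$ by the Frobenius identity in characteristic $p$, so $K_0(u)/K_0$ has degree $1$ or $p$, and for odd $p$ this contradicts the degree bound $2$ on commutative subfields of the division algebra $\cD_0$; freeness on cells then follows from properness of the action rather than from the fixed-point lemma. Your route is slightly heavier but proves a bit more (the entire image of $1+\fm_0$ in $\PGL_{1,\dA}(K_0)$ is torsion-free for odd $p$), whereas the paper's version is shorter because it only needs to exclude $2$-torsion; both correctly isolate the hypothesis that $q$ is odd, which fails for $q$ even exactly as you observe. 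One structural remark: your derivation of~(1) presupposes the freeness established in~(2), whereas the paper's Cayley-graph argument for~(1) needs only simple transitivity on vertices.
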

\begin{proof}
(1) Since $\Gamma_S$ acts simply transitively on the vertices of $\PT_{S_0}$ and the generators in $\bigcup_{\tau \in S_0} PA_\tau$ correspond exactly to the neighbours of the standard vertex $\star_{S_0}$, the $1$-skeleton of $\PT_{S_0}$ is the Cayley graph for $\Gamma_S$ with respect to the generating set $\bigcup_{\tau \in S_0} PA_\tau$. 

The relations are obtained as cycles in the Cayley graph, which are products of backtracking and cycles around $2$-dimensional faces in $\PT_{S_0}$. All $2$-dimensional faces of $\PT_{S_0}$ are translates by $\Gamma_S$ of $2$-dimensional faces with $\star_{S_0}$ as one of their corners. Therefore it is enough for a presentation of $\Gamma_S$ to consider relations obtained by backtracking of an edge from $\star_{S_0}$ and cycles around $2$-dimensional faces of $\PT_{S_0}$ starting from $\star_{S_0}$. These are precisely the relations claimed in the theorem.

\smallskip

(2) Since $\PT_{S_0}$ is a CAT(0) space, any finite subgroup of $\Gamma_S$ stabilises a cell of $\PT_{S_0}$ by the Bruhat--Tits fixed point lemma \cite[II.2.8(1)]{bridson_haeflinger}. So the first claim implies the second. 

Because we already know that $\Gamma_S$ acts simply transitively on the vertices of $\PT_{S_0}$, the only possibility for fixing a cell is by central symmetry of a face of the cell,  and this leads to an element of order $2$. This contradicts that $\Gamma_S$ is $p$-residually finite according to Proposition~\ref{prop:p_res_finite}. 
\end{proof}

\begin{cor} \label{cor:structure_Gamma_S}
Let $S \subseteq \bP^1$ be a finite set of $\bF_q$-rational places containing $B = \{0,\infty\}$ and $S_0 = S \setminus B$. 
The group $\Lambda_S$ is a semi-direct product $\Lambda_S = \Gamma_S \rtimes \Lambda_B$, and we have the following presentation:
\begin{itemize}
\item
$\Lambda_S$ is generated by $d, s$ and $a_\tau$ for each $\tau \in S_0$,
\item
For all $\tau \in S_0$ we have $PA_\tau = \{ d^i a_\tau d^{-i} \ ; \ 0 \leq i \leq q\} \subseteq \Lambda_S$,
\item
relations I: $d^{q+1} = s^2 = (sd)^2 = 1$, and for all $\tau \in S_0$ we have $(d^{(q+1)/2}a_\tau)^2 = 1$.
\item
relations II:
\begin{align*}
a_\tau & = sa_\tau s & \text{ if $\tau$ is a square in $\bF_q$}, \\
a_\tau & = (sd)a_\tau (sd)^{-1} &  \text{ if $\tau$ is not a square in $\bF_q$}. 
\end{align*}
\item
relations III: for all $\tau \not= \sigma \in S_0$ and $[1 + \alpha F] \in PA_\tau$, and $[1 + \beta F] \in PA_\sigma$ we have
\[
[1 + \alpha F] \cdot [1+ \beta F] = [1 + \zeta_{\alpha}(\beta) \beta F] \cdot [1  + \zeta_\beta(\alpha) \alpha F].
\]
\end{itemize}
Here $d,s$ are as in Notation~\ref{nota:ds}, the element $a_\tau \in PA_\tau$ is as in 
Notation~\ref{nota:a_tau}, and $\zeta_{\alpha}(\beta)$ is as in 
Notation~\ref{nota:zeta}.
\end{cor}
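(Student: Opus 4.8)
The plan is to assemble the presentation of $\Lambda_S$ from three ingredients that are already in place: the semi-direct product decomposition, the presentation of the normal factor $\Gamma_S$, and the presentation of the complement $\Lambda_B$ together with its conjugation action on $\Gamma_S$. The decomposition $\Lambda_S = \Gamma_S \rtimes \Lambda_B$ is read off from Corollary~\ref{cor:GammaS}: by \ref{coritem:GammaS3} and \ref{coritem:GammaS4} the subgroup $\Gamma_S = \ker(\rho)$ is normal of index $2(q+1)$, while $\rho$ restricts to an isomorphism on $\Lambda_B \subseteq \Lambda_S$, so $\Lambda_B$ is a complement to $\Gamma_S$ and $\Lambda_S$ is a split extension of $\Lambda_B$ by $\Gamma_S$. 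I may therefore invoke the standard recipe presenting a split extension: the generators are those of the kernel together with those of the quotient, and the relations are those of the kernel, lifts of those of the quotient, and one conjugation relation $y\,g\,y^{-1} = (\text{word in kernel generators})$ for each kernel generator $g$ and each quotient generator $y$.

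First I would write out this recipe in its a~priori larger form. The generators are the full set $\bigcup_{\tau \in S_0} PA_\tau$ generating $\Gamma_S$ together with $d, s$; the relations are (i) and (ii) from Theorem~\ref{thm:structure_Gamma_S}, the relations $d^{q+1} = s^2 = (sd)^2 = 1$ of $\Lambda_B$ from Notation~\ref{nota:ds}, and for each $g \in PA_\tau$ the two conjugation relations expressing $d\,g\,d^{-1}$ and $s\,g\,s^{-1}$ as elements of $\Gamma_S$, given explicitly by the formulas in Proposition~\ref{prop:stab_act_Atau1}. The next step is to shrink the generating set by Tietze transformations: Proposition~\ref{prop:StabAtau_relations} \ref{propitem:StabAtau_relations1} gives $PA_\tau = \{\,d^i a_\tau d^{-i} \ ; \ 0 \le i \le q\,\}$, so every generator other than the $a_\tau$ is already a word in $d$ and $a_\tau$ and can be eliminated, leaving exactly the generators $d, s, a_\tau$ of the claimed presentation.

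It then remains to track what the relations become under this elimination. The conjugation-by-$d$ relations turn into the definitional identities $d\,(d^i a_\tau d^{-i})\,d^{-1} = d^{i+1} a_\tau d^{-(i+1)}$ and are simply absorbed into the substitutions, the relation $d^{q+1}=1$ ensuring the cyclic closure back to $a_\tau$. For conjugation by $s$ one uses $s d s = d^{-1}$ (a consequence of $s^2 = (sd)^2 = 1$) to rewrite $s\,(d^i a_\tau d^{-i})\,s = d^{-i}\,(s a_\tau s)\,d^{i}$, so all these relations follow from the single relation on $a_\tau$ recorded as relations~II, which is precisely Proposition~\ref{prop:StabAtau_relations} \ref{propitem:StabAtau_relations3}. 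Relation~(i) applied to $a_\tau$ becomes, through the identification $a_\tau^{-1} = [1 - \alpha_0 F]$ of Proposition~\ref{prop:StabAtau_relations} \ref{propitem:StabAtau_relations2}, the $2$-torsion relation which for $q$ odd reads $(d^{(q+1)/2} a_\tau)^2 = 1$, supplying the remaining part of relations~I; relation~(i) for the other elements $d^i a_\tau d^{-i}$ then follows by conjugating this with $d^i$. Finally relation~(ii) survives verbatim as relations~III.

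The only substantive point, and the main bookkeeping obstacle, is this last verification: that relation~(i) and the conjugation relations for all members of $PA_\tau$ are genuine consequences of the finitely many relations imposed on $a_\tau$ alone together with the $\Lambda_B$-relations. This rests on nothing beyond the cyclic description $PA_\tau = \{d^i a_\tau d^{-i}\}$, the identity $sds = d^{-1}$, and Proposition~\ref{prop:StabAtau_relations}; no new geometric or arithmetic input enters, since the genuinely hard work --- simple transitivity, the presentation of $\Gamma_S$, and the complement property of $\Lambda_B$ --- has already been carried out in Corollary~\ref{cor:GammaSsimplytransitiv}, Theorem~\ref{thm:structure_Gamma_S}, and Corollary~\ref{cor:GammaS}.
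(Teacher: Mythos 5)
Your proof is correct and follows the same route as the paper: the semi-direct product structure from Corollary~\ref{cor:GammaS}, the relations of $\Gamma_S$ from Theorem~\ref{thm:structure_Gamma_S}, and the $\Lambda_B$-action on the generators from Proposition~\ref{prop:StabAtau_relations}. The paper's proof is a three-line citation of exactly these ingredients; your Tietze-transformation bookkeeping (eliminating the $d^i a_\tau d^{-i}$ and checking that the conjugation and inversion relations for all of $PA_\tau$ reduce to the single relations I--II on $a_\tau$) is the detail the authors leave implicit, and you carry it out correctly.
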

\begin{proof}
Corollary~\ref{cor:GammaS} shows the structure of a semi-direct group $\Lambda_S = \Gamma_S \rtimes \Lambda_B$. The  relations describing the action of $\Lambda_B$ on the generators $\bigcup_{\tau \in S_0} PA_\tau$ are proven in
Proposition~\ref{prop:StabAtau_relations}, and the relations for $\Gamma_S$ are dealt with in Theorem~\ref{thm:structure_Gamma_S}.
\end{proof}

\subsubsection{The case $q$ even}
As in the case $q$ odd, we begin with the case of rank 1.
\begin{thm}
\label{thm:structure_Lambda_tau_even_q}
Let $\tau \in \bF_q^\times$ and $\Gamma_\tau := \langle PA_\tau \rangle \subseteq \Lambda_\tau$.
\begin{enumerate}
\item
\label{thmitem:Lambda_tau_even_q2}
The group $\Gamma_\tau$ has the following presentation:
\[
\Gamma_\tau = \bigl\langle PA_\tau \ | \ [1+\alpha F]^2=1 \ \text{for all $[1+\alpha F]\in PA_\tau$} \bigr\rangle
\]
\item
\label{thmitem:Lambda_tau_even_q3}
The group $\Gamma_\tau$ is a normal subgroup in $\Lambda_\tau$ with complement $\Lambda_B$, so that we have a semi-direct product:
\[
\Lambda_\tau = \Gamma_\tau \rtimes \Lambda_B = \bigl\langle d, s, a_\tau \ | \ d^{q+1} = s^2  = (sd)^2 = a_\tau^2 = 1, \ sa_\tau  = a_\tau  s\bigr\rangle.
\]
Here $d,s$ are as in Notation~\ref{nota:ds} and $a_\tau \in PA_\tau$ is as in Notation~\ref{nota:a_tau}.
\end{enumerate}
\end{thm}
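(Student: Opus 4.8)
The plan is to run the rank-one argument in parallel with the odd case of Theorem~\ref{thm:structure_Lambda_tau}, the one structural change being that for even $q$ the generators are involutions, so the free group of the odd case is replaced by a free product of copies of $\bZ/2\bZ$. For part~(1) I would begin from Corollary~\ref{cor:GammaSsimplytransitiv}: $\Gamma_\tau$ acts simply transitively on the vertices of the $(q+1)$-regular tree $\BTT_\tau$, so the $1$-skeleton of $\BTT_\tau$ is the Cayley graph of $\Gamma_\tau$ on the generating set $PA_\tau$, whose $q+1$ elements are in bijection with the neighbours of $\star_\tau$. By Proposition~\ref{prop:StabAtau_relations} \ref{propitem:StabAtau_relations2} every generator $a = [1+\alpha F]$ now satisfies $a^2 = 1$, so $a$ swaps $\star_\tau$ with $a\star_\tau$ and inverts the joining edge. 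To convert this action-with-inversion into a graph-of-groups decomposition I would pass to the barycentric subdivision $\BTT_\tau'$ of $\BTT_\tau$, on which $\Gamma_\tau$ acts without inversion and on which the midpoint $m_a$ of the edge $\{\star_\tau, a\star_\tau\}$ has stabiliser exactly $\langle a\rangle \cong \bZ/2\bZ$.

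Next I would determine the quotient graph of groups $\Gamma_\tau \backslash \BTT_\tau'$. Vertex transitivity (Proposition~\ref{prop:GammaS} \ref{propitem:GammaS3}) collapses all original vertices to a single vertex with trivial group; using the triviality of the vertex stabilisers one checks that the $q+1$ midpoints $m_a$ lie in $q+1$ distinct orbits, each with group $\langle a\rangle$, and that the involution $a$ identifies the two half-edges meeting $m_a$, so that each original edge descends to a single edge with trivial edge group. The quotient is therefore a star, whose underlying graph is a tree and all of whose edge groups are trivial; the Bass--Serre structure theorem \cite{serre:trees} then identifies $\Gamma_\tau$ with the free product of the vertex groups,
\[
\Gamma_\tau \;\cong\; \ast_{a \in PA_\tau} \langle a \rangle \;=\; \bigl\langle PA_\tau \ \big| \ a^2 = 1 \ \text{for all } a \in PA_\tau \bigr\rangle,
\]
which is precisely assertion~(1).

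For part~(2) the normality of $\Gamma_\tau$ and the splitting $\Lambda_\tau = \Gamma_\tau \rtimes \Lambda_B$ are already provided by Corollary~\ref{cor:GammaS} applied to $S_0 = \{\tau\}$, so only the explicit presentation remains. Here I would use that for even $q$ the Frobenius $x \mapsto x^2$ is bijective on $\bF_q$, whence every $\tau \in \bF_q^\times$ is a square and Proposition~\ref{prop:StabAtau_relations} \ref{propitem:StabAtau_relations3} gives $s a_\tau s = a_\tau$ with no case distinction. Assembling the dihedral presentation $\Lambda_B = \langle d, s \mid d^{q+1} = s^2 = (sd)^2 = 1\rangle$ of Notation~\ref{nota:ds}, the presentation of $\Gamma_\tau$ from part~(1) rewritten through $PA_\tau = \{d^i a_\tau d^{-i} : 0 \le i \le q\}$ (Proposition~\ref{prop:StabAtau_relations} \ref{propitem:StabAtau_relations1}), and the conjugation action of $\Lambda_B$ on $\Gamma_\tau$, I obtain the standard semidirect-product presentation: the relations $(d^i a_\tau d^{-i})^2 = 1$ all reduce to $a_\tau^2 = 1$, conjugation by $d$ merely names the remaining elements of $PA_\tau$ and imposes nothing new, and conjugation by $s$ contributes only $s a_\tau = a_\tau s$. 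This is exactly the presentation in~(2).

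The genuinely new ingredient relative to the odd case is the presence of inversions, and this is where I expect the main work to lie. In Theorem~\ref{thm:structure_Lambda_tau} the absence of order-two generators made the action inversion-free and produced a free group; here the generators are exactly the edge-inverting involutions, so the crux is the bookkeeping of the quotient graph of groups after barycentric subdivision—verifying that the midpoint orbits are pairwise distinct and that the underlying quotient graph is a tree—so that the structure theorem yields a free product of copies of $\bZ/2\bZ$ in place of a free group.
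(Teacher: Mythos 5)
Your proposal is correct and follows essentially the same route as the paper: both pass to the barycentric subdivision of $\BTT_\tau$ to remove the edge inversions, identify the stabiliser of each midpoint adjacent to $\star_\tau$ as $\langle [1+\alpha F]\rangle \cong \bZ/2\bZ$, and invoke Bass--Serre theory to realise $\Gamma_\tau$ as the free product (amalgam) of these order-two subgroups, while part (2) is deduced in both cases from Corollary~\ref{cor:GammaS} together with the description in Proposition~\ref{prop:StabAtau_relations} of the $\Lambda_B$-action on $PA_\tau = \{d^i a_\tau d^{-i}\}$. Your extra bookkeeping (distinctness of the midpoint orbits, the star-shaped quotient graph) only makes explicit what the paper leaves implicit.
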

Note that for $q$ even, we always have $a_\tau=1+\alpha_0F$ for a uniquely determined $\alpha_0\in\bF_q$ with $\alpha_0^2=\tau^{-1}$.
\begin{proof}
For the first part, we observe that by Corollary \ref{cor:GammaSsimplytransitiv} the action of $\Gamma_\tau$ on $\BTT_\tau$ has no fixed vertices, but $\Gamma_\tau$ acts with inversion of edges. Each edge attached to the standard vertex $\star_\tau$ is precisely stabilised by $\langle [1+\alpha F] \rangle\cong\bZ/2\bZ$ for some $[1+\alpha F]\in PA_\tau$. Hence the action of $\Gamma_\tau$ on the barycentric subdivision of $\BTT_\tau$ has no inversion but each neighbour of $\star_\tau$ (which was the midpoint of the corresponding edge in $\BTT_\tau$) has $\langle [1+\alpha F] \rangle$ with the corresponding $[1+\alpha F]\in PA_\tau$ as stabiliser. Hence by Bass-Serre theory, cf. \cite[I.5.4, Thm.13]{serre:trees}, $\Gamma_\tau$ is an amalgam of $\langle [1+\alpha F] \rangle$, where $[1+\alpha F]$ runs over all elements of $PA_\tau$. This implies the presentation of $\Gamma_\tau$ as desired.

The explicit presentation of $\Gamma_\tau$ as in \ref{thmitem:Lambda_tau_even_q3} follows from Proposition~\ref{prop:StabAtau_relations} that describes the action of 
\[
\Lambda_B = \langle d,s \ | \ d^{q+1} = s^2  = (sd)^2 = 1\rangle
\]
on $PA_\tau = \{ d^i a_\tau d^{-i} \ ; \ 0 \leq i \leq q\}$.
\end{proof}

Next we can describe a presentation of $\Gamma_S$.

\begin{thm}
\label{thm:structure_Gamma_S_even_q}
Let $S \subseteq \bP^1$ be a finite set of $\bF_q$-rational places containing $B = \{0,\infty\}$ and $S_0 = S \setminus B$. 
\begin{enumerate}
\item
The group
\[
\Gamma_S' := \left\langle aa' \ ; \ a,a' \in PA_\tau, \ \tau \in S_0 \right\rangle
\]
is a normal subgroup of $\Gamma_S$ with quotient $\{\pm1\}^{S_0}$ . Moreover, $\Gamma_S'$ acts on the cube complex $\PT_{S_0}$ without fixing a cell. In particular, $\Gamma_S'$ is torsion-free. 
\item 
The group $\Gamma_S$ has the following presentation:
\begin{itemize}
\item
$\Gamma_S$ is generated by $\bigcup_{\tau \in S_0} PA_\tau$,
\item
subject to the following relations:
\begin{enumerate}
\item[(i)]
for all $\tau \in S_0$ and $[1 + \alpha F] \in PA_\tau$, we have
\[
[1 + \alpha F]^2 = 1,
\]
\item[(ii)]
for all $\tau \not= \sigma \in S_0$ and $[1 + \alpha F] \in PA_\tau$, and $[1 + \beta F] \in PA_\sigma$, we have
\[
[1 + \alpha F] \cdot [1+ \beta F] = [1 + \zeta_{\alpha}(\beta) \beta F] \cdot [1  + \zeta_\beta(\alpha) \alpha F].
\]
\end{enumerate}
\end{itemize}
Here $\zeta_{\alpha}(\beta)$ is as in Notation~\ref{nota:zeta}.
\end{enumerate}
\end{thm}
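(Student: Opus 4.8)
The plan is to first prove the presentation asserted in part~(2), which is the exact analogue of Theorem~\ref{thm:structure_Gamma_S}\,(1) for even $q$, and then to deduce the structure of $\Gamma_S'$ in part~(1) by means of a sign homomorphism recording the bipartition classes of the tree factors.

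For part~(2) I would repeat the argument of Theorem~\ref{thm:structure_Gamma_S}\,(1) almost verbatim. By Corollary~\ref{cor:GammaSsimplytransitiv} the group $\Gamma_S$ acts simply transitively on the vertices of the CAT(0), hence simply connected, cube complex $\PT_{S_0}$, and the generating set $\bigcup_{\tau \in S_0} PA_\tau$ is in bijection with the neighbours of $\star_{S_0}$, so the $1$-skeleton of $\PT_{S_0}$ is the Cayley graph of $\Gamma_S$. As $\PT_{S_0}$ is simply connected, a complete set of relations is furnished by the backtracking loops and by the boundaries of the $2$-dimensional faces through $\star_{S_0}$. The only change compared to the odd case is that for even $q$ a generator equals its own inverse by Proposition~\ref{prop:StabAtau_relations}\,\ref{propitem:StabAtau_relations2}, so backtracking along the edge labelled $[1+\alpha F]$ now reads $[1+\alpha F]^2 = 1$, which is relation~(i). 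The boundary of a square spanned by a $\tau$- and a $\sigma$-edge is governed by the product identity of the Proposition preceding Theorem~\ref{thm:structure_Gamma_S}, giving relation~(ii); here one checks $N(\zeta_\alpha(\beta)\beta) = N(\beta)$ and $N(\zeta_\beta(\alpha)\alpha) = N(\alpha)$, so that the two factors on the right-hand side lie in $PA_\sigma$ and in $PA_\tau$ respectively.

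For part~(1) I would introduce the homomorphism
\[
\epsilon = (\epsilon_\tau)_{\tau \in S_0} \colon \Gamma_S \longrightarrow \{\pm 1\}^{S_0},
\]
where $\epsilon_\tau$ records whether an element preserves or exchanges the two classes of the bipartite tree $\BTT_\tau$. A generator in $PA_\tau$ moves $\star_{S_0}$ to a neighbour in the $\tau$-direction and fixes $\star_v$ for $v \neq \tau$ (as in the proof of Proposition~\ref{prop:GammaS}), hence it maps to the vector with $-1$ in the $\tau$-slot and $+1$ elsewhere; in particular $\epsilon$ is surjective and every generator $aa'$ of $\Gamma_S'$ lies in $\ker\epsilon$, so $\Gamma_S' \subseteq \ker\epsilon$. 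To identify the quotient I would compute in $\Gamma_S/\Gamma_S'$: by the definition of $\Gamma_S'$ together with relation~(i) all elements of a fixed $PA_\tau$ share one image $t_\tau$ with $t_\tau^2 = 1$, while relation~(ii) forces $t_\tau t_\sigma = t_\sigma t_\tau$, since it carries a $PA_\tau$--$PA_\sigma$ pair to a $PA_\sigma$--$PA_\tau$ pair. Thus $\Gamma_S/\Gamma_S'$ is generated by commuting involutions, yielding a surjection $\{\pm 1\}^{S_0} \twoheadrightarrow \Gamma_S/\Gamma_S'$, $e_\tau \mapsto t_\tau$, whose composite with the map induced by $\epsilon$ is the identity. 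Both maps are therefore isomorphisms, so $\Gamma_S' = \ker\epsilon$ is normal with $\Gamma_S/\Gamma_S' \cong \{\pm 1\}^{S_0}$.

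It remains to show that $\Gamma_S'$ acts on $\PT_{S_0}$ without fixing a cell. Suppose $g \in \Gamma_S'$ stabilises a cell $C = \prod_{\tau \in S_0} C_\tau$; acting diagonally, each local component $g_\tau$ then stabilises $C_\tau$. On an edge $C_\tau$ the only options are to fix both endpoints or to exchange them, and the latter would give $\epsilon_\tau(g) = -1$, contradicting $g \in \ker\epsilon = \Gamma_S'$; hence $g_\tau$ fixes a vertex of $C_\tau$ for every $\tau$, so $g$ fixes a vertex of $C$ and $g = 1$ by simple transitivity on vertices. Since $\PT_{S_0}$ is CAT(0), any finite subgroup of $\Gamma_S'$ stabilises a cell by the Bruhat--Tits fixed point lemma \cite[II.2.8(1)]{bridson_haeflinger}, whence $\Gamma_S'$ is torsion-free. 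I expect the main obstacle to be exactly this last step together with the precise identification of the quotient: the odd-case proof excludes torsion via $p$-residual finiteness, but for even $q$ the generators are genuine involutions, so the type homomorphism $\epsilon$ must take over that role, and the bookkeeping that relation~(ii) exchanges the colours $\tau$ and $\sigma$ is what pins the quotient down to $\{\pm 1\}^{S_0}$ rather than to something larger.
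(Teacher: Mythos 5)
Your proof follows essentially the same route as the paper's. Part (2) is obtained by the identical Cayley--graph/simple-connectivity argument as in Theorem~\ref{thm:structure_Gamma_S}, with backtracking now reading $[1+\alpha F]^2=1$; and for part (1) your type homomorphism $\epsilon$ is literally the paper's map $[a]\mapsto\big((-1)^{\ord_\tau(\Nrd(a))}\big)_{\tau\in S_0}$, since the parity of $\ord_\tau(\Nrd(a))$ is exactly the bipartition-class shift in $\BTT_\tau$. The cell-fixing argument is also the paper's: a nontrivial stabiliser of a cell would have to invert an edge in some direction of that cell, contradicting membership in $\ker\epsilon$, and then the Bruhat--Tits fixed point lemma gives torsion-freeness.

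One logical wrinkle in part (1): you compute ``in $\Gamma_S/\Gamma_S'$'' before normality of $\Gamma_S'$ has been established, and the double-surjection argument, read carefully with the normal closure $N$ of $\Gamma_S'$ in place of $\Gamma_S'$, only identifies $N$ with $\ker\epsilon$. To conclude $\Gamma_S'=\ker\epsilon$ itself (whence normality) you should add that relation (ii) allows one to sort any word in the generators into a product $w_{\tau_1}\cdots w_{\tau_d}$ with each $w_{\tau_i}$ a word in $PA_{\tau_i}$ alone; for an element of $\ker\epsilon$ every $w_{\tau_i}$ then has even length and is visibly a product of pairs $aa'$ with $a,a'\in PA_{\tau_i}$, so $\ker\epsilon\subseteq\Gamma_S'$. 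With that one observation added, the argument closes up and matches the paper's (which is equally terse at this point).
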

\begin{proof}
(1) Consider the group homomorphism $\Gamma_S \to \{\pm1\}^{S_0}$, $[a] \mapsto (e_\tau(a))_{\tau\in S_0}$, where
\[
e_\tau(a) := (-1)^{\ord_\tau(\Nrd(a))}.
\]
It is easily seen that this is surjective and has $\Gamma_S'$ as kernel. Hence $\Gamma_S'$ is a normal subgroup of $\Gamma_S$ with quotient $\{\pm1\}^{S_0}$. To see that this acts on the cube complex $\PT_{S_0}$ without fixing a cell, observe that since $\Gamma_S$ acts simply transitively on the vertices of $\PT_{S_0}$, the only possibility for fixing a cell is by central symmetry of a face of a cell. Under this action, each vertex is mapped to another vertex with odd distances along all the directions of this cell. This cannot be the case for those elements from $\Gamma_S'$ since all the generators have reduced norm $(1-t/\tau)^2$ for some $\tau\in S_0$, meaning that they send each vertex to another vertex of even distance along all the directions. Consequently, $\Gamma_S'$ must be torsion-free by the Bruhat-Tits fixed point lemma.

(2) This can be done in the same way as Theorem \ref{thm:structure_Gamma_S}.
\end{proof}

\begin{cor}
Let $S \subseteq \bP^1$ be a finite set of $\bF_q$-rational places containing $B = \{0,\infty\}$ and $S_0 = S \setminus B$. 
The group $\Lambda_S$ is a semi-direct product $\Lambda_S = \Gamma_S \rtimes \Lambda_B$, and we have the following presentation:
\begin{itemize}
\item
$\Lambda_S$ is generated by $d, s$ and $a_\tau$ for each $\tau \in S_0$,
\item
For all $\tau \in S_0$ we have $PA_\tau = \{ d^i a_\tau d^{-i} \ ; \ 0 \leq i \leq q\} \subseteq \Lambda_S$,
\item
relations I: $d^{q+1} = s^2 = (sd)^2 = 1$, and for all $\tau \in S_0$ we have $a_\tau^2 = 1$.
\item
relations II: $a_\tau = sa_\tau s$.
\item
relations III: for all $\tau \not= \sigma \in S_0$ and $[1 + \alpha F] \in PA_\tau$, and $[1 + \beta F] \in PA_\sigma$ we have
\[
[1 + \alpha F] \cdot [1+ \beta F] = [1 + \zeta_{\alpha}(\beta) \beta F] \cdot [1  + \zeta_\beta(\alpha) \alpha F].
\]
\end{itemize}
Here $d,s$ are as in Notation~\ref{nota:ds}, the element $a_\tau \in PA_\tau$ is as in 
Notation~\ref{nota:a_tau}, and $\zeta_{\alpha}(\beta)$ is as in 
Notation~\ref{nota:zeta}.
\end{cor}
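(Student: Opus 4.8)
The plan is to deduce everything from the semi-direct product decomposition together with the presentations already obtained for the two factors. By Corollary~\ref{cor:GammaS} we already have $\Lambda_S = \Gamma_S \rtimes \Lambda_B$, with $\Lambda_B$ the dihedral group from Proposition~\ref{prop:global_elements} acting on $\Gamma_S$ by conjugation; this also shows that $d$, $s$ and the $a_\tau$ generate $\Lambda_S$, since $\Lambda_B = \langle d, s\rangle$ and the orbits $PA_\tau = \{d^i a_\tau d^{-i}\}$ generate $\Gamma_S$ by Proposition~\ref{prop:StabAtau_relations}\ref{propitem:StabAtau_relations1}. The remaining task is purely to collect relations, using the standard recipe that a presentation of a split extension $N \rtimes H$ is furnished by a presentation of $N$, a presentation of $H$, and relations $hnh^{-1} = \phi_h(n)$ encoding the action on a generating set of $N$.

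First I would record relations I: the dihedral relations $d^{q+1} = s^2 = (sd)^2 = 1$ come from Notation~\ref{nota:ds}, while $a_\tau^2 = 1$ is exactly relation (i) of Theorem~\ref{thm:structure_Gamma_S_even_q} (equivalently the $2 \mid q$ case of Proposition~\ref{prop:StabAtau_relations}\ref{propitem:StabAtau_relations2}, where the involution on $PA_\tau$ is trivial). Relations II encode the conjugation action of $s = [F]$: by Proposition~\ref{prop:StabAtau_relations}\ref{propitem:StabAtau_relations3}, and using that for $q$ even every $\tau \in \bF_q^\times$ is a square (Frobenius is bijective in characteristic $2$, as already noted in Notation~\ref{nota:a_tau}), only the first case survives, giving $a_\tau = s a_\tau s$ for all $\tau$. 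The action of $d$ is subsumed by declaring the full orbit $PA_\tau = \{d^i a_\tau d^{-i}\}$ as generators of $\Gamma_S$. Finally, relations III are precisely the braid-type relations (ii) of Theorem~\ref{thm:structure_Gamma_S_even_q}, with each $[1+\alpha F] \in PA_\tau$ expressed as a $d$-conjugate of $a_\tau$.

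The substantive point is completeness: checking that no further relations are needed. Here I would invoke that Theorem~\ref{thm:structure_Gamma_S_even_q}(2) already furnishes a complete presentation of the normal subgroup $\Gamma_S$ on the generating set $\bigcup_{\tau} PA_\tau$, and that $\Lambda_B$ is the dihedral group on $d, s$. Since the extension splits, the only additional data required are the conjugation relations for $g \in \{d, s\}$ acting on the chosen generators of $\Gamma_S$, and these are supplied by Proposition~\ref{prop:StabAtau_relations}. The main obstacle is the bookkeeping in translating between the two generating sets: relations (i) and (ii) of Theorem~\ref{thm:structure_Gamma_S_even_q} are phrased for arbitrary $[1+\alpha F]$, whereas the corollary's generators are $d, s, a_\tau$, so one must confirm that conjugating $a_\tau$ by powers of $d$ recovers every element of $PA_\tau$, and hence every instance of relations (i) and (ii). This is exactly Proposition~\ref{prop:StabAtau_relations}\ref{propitem:StabAtau_relations1}, so the verification is routine rather than deep, and the argument closes in parallel with Corollary~\ref{cor:structure_Gamma_S} in the odd case.
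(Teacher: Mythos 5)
Your proposal is correct and follows essentially the same route as the paper: the paper's proof of this corollary simply says it is done as in Corollary~\ref{cor:structure_Gamma_S}, which likewise combines the semi-direct product decomposition from Corollary~\ref{cor:GammaS}, the conjugation formulas of Proposition~\ref{prop:StabAtau_relations}, and the presentation of $\Gamma_S$ from Theorem~\ref{thm:structure_Gamma_S_even_q}. Your additional remarks on the split-extension presentation recipe and on every $\tau$ being a square in characteristic $2$ correctly fill in the details the paper leaves implicit.
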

\begin{proof}
This is done in the same way as Corollary \ref{cor:structure_Gamma_S}.
\end{proof}

%----------------------------------------------------------------------------------------------------------------
\subsection{A concrete model of $\Lambda_S$} 
\label{sec:concretemodel}
Let $q$ be a prime power. 
We describe the presentation of $\Lambda_S$ in terms of finite fields only.  Let 
\[
\delta \in \bF_{q^2}^\times
\]
be a generator of the multiplicative group of the field with $q^2$ elements. If $i,j \in \bZ/(q^2-1)\bZ$ are 
\[
i \not\equiv j \pmod{q-1},
\]
then $1+\delta^{j-i} \not= 0$, since otherwise
\[
1 = (-1)^{q+1} = \delta^{(j-i)(q+1)} \not= 1,
\]
a contradiction. Then there is a unique $x_{i,j} \in \bZ/(q^2-1)\bZ$ with 
\[
\delta^{x_{i,j}} = 1  + \delta^{j-i}.
\]
With these $x_{i,j}$ we set $y_{i,j} := x_{i,j} + i - j$, so that 
\[
\delta^{y_{i,j}} = \delta^{x_{i,j} + i - j} = (1  + \delta^{j-i}) \cdot \delta^{i-j} = 1  + \delta^{i-j}.
\]
We moreover set  
\begin{align*}
l(i,j) & := i -  x_{i,j}(q-1), \\
k(i,j) & := j -  y_{i,j}(q-1).
\end{align*}

If $\alpha = \delta^i$ and $\beta = \delta^j$, then 
\begin{equation}
\label{eq:concreterelation1}
\delta^{k(i,j)}  = \delta^{j -  y_{i,j}(q-1)} = \delta^j (1+ \delta^{i-j})^{1-q} =  \frac{\delta^i + \delta^j}{(1+ \delta^{i-j})^q} =  \frac{\delta^i + \delta^j}{(\delta^i + \delta^{j})^q} \cdot \delta^{jq} = \zeta_{\alpha}(\beta) \beta,
\end{equation}
and
\begin{equation}
\label{eq:concreterelation2}
\delta^{l(i,j)}  = \delta^{i-  x_{i,j}(q-1)} = \delta^i (1+ \delta^{j-i})^{1-q} =  \frac{\delta^i + \delta^j}{(1+ \delta^{j-i})^q} =  \frac{\delta^i + \delta^j}{(\delta^i + \delta^{j})^q} \cdot \delta^{iq} = \zeta_{\beta}(\alpha) \alpha.
\end{equation}

Let now $M \subseteq \bZ/(q^2-1)\bZ$ be a union of cosets under $(q-1)\bZ/(q^2-1)$. Then we define a group in terms of a presentation by 
\[
\Lambda_{M,\delta} = \left\langle d,s,a_i  \text{ for all } i \in M \ \left| 
\begin{array}{c}
d^{q+1} = s^2 = (sd)^2 = 1,  \\
s a_i s = a_{qi}, \ d a_i d^{-1} = a_{i+1-q}, \ a_{i+ (q^2 - 1)/2}  a_i = 1 \text{ for all $i \in M$}, \\
a_i a_j = a_{k(i,j)}a_{l(i,j)} \text{ for all $i,j \in M$ with $i \not\equiv j \pmod{q-1}$}
\end{array}
\right.\right\rangle
\]
if $q$ is odd, and if $q$ is even:
\[
\Lambda_{M,\delta} = \left\langle d,s,a_i  \text{ for all } i \in M \ \left| 
\begin{array}{c}
d^{q+1} = s^2 = (sd)^2 = 1,  \\
s a_i s = a_{qi}, \ d a_i d^{-1} = a_{i+1-q}, \ a_i^2 = 1 \text{ for all $i \in M$}, \\
a_i a_j = a_{k(i,j)}a_{l(i,j)} \text{ for all $i,j \in M$ with $i \not\equiv j \pmod{q-1}$}
\end{array}
\right.\right\rangle .
\]

\begin{prop}
\label{prop:explicitLambda}
Let $M,\delta$ be as above. With  the finite set of rational places 
\[
S = \{0,\infty\} \cup \{\delta^{-i(q+1)} \ ; \ i \in M\} \subseteq \bP^1
\]
the assignment $d \mapsto d$, $s \mapsto s$ and $a_i \mapsto [1+\delta^i F]$ defines an isomorphism
\[
\Lambda_{M,\delta} \xrightarrow{\sim} \Lambda_S.
\]
\end{prop}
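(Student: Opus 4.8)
The plan is to realise the map $\phi\colon\Lambda_{M,\delta}\to\Lambda_S$ given by $d\mapsto d$, $s\mapsto s$, $a_i\mapsto[1+\delta^iF]$ as a well-defined homomorphism and then to produce an explicit two-sided inverse, measuring everything against the presentation of $\Lambda_S$ from Corollary~\ref{cor:structure_Gamma_S} (resp.\ its even analogue).

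First I would set up the dictionary between the index set and the generating set. Writing $\alpha=\delta^i$ with $\delta$ a generator of $\bF_q[Z]^\times=\bF_{q^2}^\times$, the norm is $N(\delta^i)=\delta^{i(q+1)}$, so $[1+\delta^iF]$ lies in $PA_\tau$ exactly for $\tau=\delta^{-i(q+1)}$; this value is Frobenius-fixed, hence in $\bF_q^\times$, so $S$ is a genuine set of $\bF_q$-rational places containing $B$. In exponents $\ker(N)$ is $(q-1)\bZ/(q^2-1)$, so the fibres of $i\mapsto\delta^{-i(q+1)}$ are precisely the cosets of $(q-1)\bZ/(q^2-1)$, each of cardinality $q+1$. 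Therefore the hypothesis that $M$ is a union of such cosets matches $S_0=\{\delta^{-i(q+1)}\ ;\ i\in M\}$ and turns $i\mapsto[1+\delta^iF]$ into a bijection from $M$ onto $\bigcup_{\tau\in S_0}PA_\tau$.

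Next I would check that $\phi$ carries every defining relation of $\Lambda_{M,\delta}$ to a true identity in $\Lambda_S$; surjectivity is then automatic since the image already contains $d$, $s$ and all $PA_\tau$, which generate $\Lambda_S$. The dihedral relations live in $\Lambda_B$ by Notation~\ref{nota:ds}. The relations $sa_is=a_{qi}$ and $da_id^{-1}=a_{i+1-q}$ are exactly the conjugation formulas of Proposition~\ref{prop:stab_act_Atau1}, via $\bar\alpha=\alpha^q$ and $\delta/\bar\delta=\delta^{1-q}$. For $q$ odd the relation $a_{i+(q^2-1)/2}a_i=1$ is the involution $[1+\alpha F]^{-1}=[1-\alpha F]$ of Proposition~\ref{prop:StabAtau_relations} together with $-1=\delta^{(q^2-1)/2}$; for $q$ even the relation $a_i^2=1$ is the corresponding statement $[1+\alpha F]^2=1$. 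Finally the braid relation $a_ia_j=a_{k(i,j)}a_{l(i,j)}$ is Theorem~\ref{thm:structure_Gamma_S}(1) rewritten through the identities $\delta^{k(i,j)}=\zeta_\alpha(\beta)\beta$ and $\delta^{l(i,j)}=\zeta_\beta(\alpha)\alpha$ of \eqref{eq:concreterelation1} and \eqref{eq:concreterelation2}; here the hypothesis $i\not\equiv j\pmod{q-1}$ is just $\tau\neq\sigma$, and since $k(i,j)\equiv j$ and $l(i,j)\equiv i\pmod{q-1}$, the two right-hand indices again lie in $M$.

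For injectivity I would exhibit the inverse $\psi\colon\Lambda_S\to\Lambda_{M,\delta}$ on the generators of Corollary~\ref{cor:structure_Gamma_S} by $d\mapsto d$, $s\mapsto s$ and $a_\tau\mapsto a_{i_\tau}$, where $i_\tau\in M$ is the index with $\delta^{i_\tau}$ the chosen representative from Notation~\ref{nota:a_tau}, and check that relations~I, II, III become consequences of the defining relations of $\Lambda_{M,\delta}$. The key structural point is that $da_id^{-1}=a_{i-(q-1)}$ makes conjugation by $d$ act on each coset as a single $(q+1)$-cycle (consistent because $d^{q+1}=1$), so every $a_i$ is a $d$-conjugate of the chosen $a_{i_\tau}$ and $\psi$ indeed sends relation~III to the defining braid relation $a_ia_j=a_{k(i,j)}a_{l(i,j)}$. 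The main obstacle is the index bookkeeping for relations~I and II: one checks $(d^{(q+1)/2}a_{i_\tau})^2=1$ from $a_{i_\tau+(q^2-1)/2}a_{i_\tau}=1$ using $d^{(q+1)/2}=d^{-(q+1)/2}$, and one reads off relation~II from $sa_{i_\tau}s=a_{qi_\tau}$, the square/non-square dichotomy corresponding via Notation~\ref{nota:a_tau} to $i_\tau\equiv0$ resp.\ $i_\tau\equiv-1\pmod{q+1}$. The even case is identical with $a_\tau^2=1$ replacing the involution relation. Since $\phi$ and $\psi$ are mutually inverse on generators, $\phi$ is an isomorphism.
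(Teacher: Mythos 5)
Your proof is correct and follows essentially the same route as the paper's (very terse) argument: identify the cosets of $(q-1)\bZ/(q^2-1)\bZ$ in $M$ with the places in $S_0$, check that the presentation of $\Lambda_{M,\delta}$ is well-defined because all indices appearing in the relations stay in $M$, and match the relations against the presentation of $\Lambda_S$ via \eqref{eq:concreterelation1} and \eqref{eq:concreterelation2}. You simply spell out in full the relation-by-relation verification and the explicit two-sided inverse that the paper leaves implicit.
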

\begin{proof}
The presentation for $\Lambda_{M,\delta}$ is welldefined because $qi \equiv i \pmod{q-1}$, so all relations use only $a_i$ with $i \in M$. 

Each coset under $(q-1)\bZ/(q^2-1)$ in $M$ contributes to a single $\tau = \delta^{-i(q+1)}$, because the isomorphism $\bZ/(q^2-1) \simeq \bF_{q^2}^\times$, $i \mapsto \delta^i$ translates the norm map $\bF_{q^2}^\times \to \bF_{q}^\times \subseteq \bF_{q^2}^\times$ into multiplication by $q+1$. 

The translation of the relations in the presentation for $\Lambda_S$ from 
Theorem~\ref{thm:structure_Gamma_S} follows essentially from the computation in \eqref{eq:concreterelation1} and \eqref{eq:concreterelation2} above.
\end{proof}

For the sake of completeness we also give the version of the vertex transitive lattices $\Gamma_S$ as a group with an explicit  presentation by 
\[
\Gamma_{M,\delta} = \left\langle a_i  \text{ for all } i \in M \ \left| 
\begin{array}{c}
a_{i+ (q^2 - 1)/2}  a_i = 1 \text{ for all $i \in M$}, \\
a_i a_j = a_{k(i,j)}a_{l(i,j)} \text{ for all $i,j \in M$ with $i \not\equiv j \pmod{q-1}$}
\end{array}
\right.\right\rangle
\]
if $q$ is odd, and if $q$ is even:
\[
\Gamma_{M,\delta} = \left\langle a_i  \text{ for all } i \in M \ \left| 
\begin{array}{c}
a_i^2 = 1 \text{ for all $i \in M$}, \\
a_i a_j = a_{k(i,j)}a_{l(i,j)} \text{ for all $i,j \in M$ with $i \not\equiv j \pmod{q-1}$}
\end{array}
\right.\right\rangle .
\]

\begin{cor}
\label{cor:explicitGamma}
Let $M,\delta$ be as above. With the finite set of rational places 
\[
S = \{0,\infty\} \cup \{\delta^{-i(q+1)} \ ; \ i \in M\} \subseteq \bP^1
\]
the isomorphism of Proposition~\ref{prop:explicitLambda} restricts to an isomorphism,
\[
\Gamma_{M,\delta} \xrightarrow{\sim} \Gamma_S.
\]
\end{cor}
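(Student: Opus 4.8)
The plan is to recognise $\Gamma_{M,\delta}$ as nothing but the presentation of $\Gamma_S$ from Theorem~\ref{thm:structure_Gamma_S} (for $q$ odd) and Theorem~\ref{thm:structure_Gamma_S_even_q} (for $q$ even), rewritten in the exponential coordinates $\alpha = \delta^i$, and then to observe that the resulting isomorphism is literally the restriction of the isomorphism $\varphi\colon \Lambda_{M,\delta}\xrightarrow{\sim}\Lambda_S$ of Proposition~\ref{prop:explicitLambda}. First I would match the two generating sets. The isomorphism $\bZ/(q^2-1) \simeq \bF_{q^2}^\times$, $i\mapsto\delta^i$, carries the norm map to multiplication by $q+1$, so (as already observed in the proof of Proposition~\ref{prop:explicitLambda}) each coset of $(q-1)\bZ/(q^2-1)$ contained in $M$ corresponds to a single place $\tau = \delta^{-i(q+1)}\in S_0$ and to the full fibre $\{\alpha \ ; \ N(\alpha)=\tau^{-1}\}$. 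Since $M$ is a union of such cosets, the assignment $i\mapsto[1+\delta^i F]$ is a bijection from $M$ onto the generating set $\bigcup_{\tau\in S_0} PA_\tau$ of $\Gamma_S$.

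Next I would verify that the defining relations of $\Gamma_{M,\delta}$ are exactly the $\delta$-translates of relations (i) and (ii) of Theorem~\ref{thm:structure_Gamma_S} (resp.\ Theorem~\ref{thm:structure_Gamma_S_even_q}). For relation (i): when $q$ is even, $[1+\alpha F]^2=1$ reads $a_i^2=1$; when $q$ is odd, using $-1=\delta^{(q^2-1)/2}$ one has $[1-\delta^iF]=a_{i+(q^2-1)/2}$, so $[1+\alpha F]\cdot[1-\alpha F]=1$ becomes $a_{i+(q^2-1)/2}\,a_i=1$. For relation (ii), setting $\alpha=\delta^i$ and $\beta=\delta^j$ the computations \eqref{eq:concreterelation1} and \eqref{eq:concreterelation2} give $\zeta_\alpha(\beta)\beta=\delta^{k(i,j)}$ and $\zeta_\beta(\alpha)\alpha=\delta^{l(i,j)}$, so the relation becomes $a_i a_j = a_{k(i,j)}a_{l(i,j)}$; and the hypothesis $\tau\neq\sigma$, i.e.\ $N(\alpha)\neq N(\beta)$, translates into $i(q+1)\not\equiv j(q+1)$, that is $i\not\equiv j\pmod{q-1}$. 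These are precisely the relations appearing in the presentation of $\Gamma_{M,\delta}$.

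Granting this dictionary, the map $a_i\mapsto[1+\delta^i F]$ identifies the presentation defining $\Gamma_{M,\delta}$ with the presentation of $\Gamma_S$ furnished by Theorem~\ref{thm:structure_Gamma_S} (resp.\ Theorem~\ref{thm:structure_Gamma_S_even_q}), and is therefore a group isomorphism $\Gamma_{M,\delta}\xrightarrow{\sim}\Gamma_S$. Finally, since this map sends each $a_i$ to the same element $[1+\delta^i F]$ as $\varphi$, and since $\varphi$ carries the subgroup generated by the $a_i$ onto $\Gamma_S=\langle\bigcup_{\tau}PA_\tau\rangle$ compatibly with the semidirect decompositions $\Lambda_{M,\delta}=\Gamma_{M,\delta}\rtimes\Lambda_B$ and $\Lambda_S=\Gamma_S\rtimes\Lambda_B$ of Corollary~\ref{cor:structure_Gamma_S}, the isomorphism $\Gamma_{M,\delta}\xrightarrow{\sim}\Gamma_S$ is exactly the restriction of $\varphi$, as claimed.

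I do not expect a substantial obstacle: the entire content is index bookkeeping, essentially all of which has already been carried out for the ambient groups in Proposition~\ref{prop:explicitLambda}. The only point requiring genuine care is that dropping the generators $d,s$ together with the relations of types I and II loses no information about $\Gamma_S$; this is precisely what is guaranteed by Theorem~\ref{thm:structure_Gamma_S} (resp.\ Theorem~\ref{thm:structure_Gamma_S_even_q}), which already isolates relations (i) and (ii) as a \emph{complete} set of relations for $\Gamma_S$ on the generating set $\bigcup_{\tau}PA_\tau$, independently of the ambient lattice $\Lambda_S$.
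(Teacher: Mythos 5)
Your proposal is correct and follows exactly the route the paper intends: the paper states this corollary without a separate proof precisely because it is the restriction of the dictionary of Proposition~\ref{prop:explicitLambda}, combined with the fact that Theorem~\ref{thm:structure_Gamma_S} (resp.\ Theorem~\ref{thm:structure_Gamma_S_even_q}) already gives a complete presentation of $\Gamma_S$ on the generators $\bigcup_{\tau\in S_0}PA_\tau$, and the computations \eqref{eq:concreterelation1} and \eqref{eq:concreterelation2} translate those relations into the defining relations of $\Gamma_{M,\delta}$. Your bookkeeping (cosets of $(q-1)\bZ/(q^2-1)$ matching the norm fibres, $-1=\delta^{(q^2-1)/2}$ handling relation (i), and $i\not\equiv j\pmod{q-1}$ matching $\tau\neq\sigma$) is accurate and complete.
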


\begin{ex}
\label{ex:666}
We compute the smallest example in dimension $3$ given by $q=5$ and
\[
M = \{i \in \bZ/24\bZ \ ; \ 4 \nmid i\}.
\]
This corresponds to the set $S_0 = \{2,3,4\} \subseteq \bP^1(\bF_5)$, the torsion-free lattice $\Gamma_S$ acts vertex transitively on 
\[
\RT_{6} \times  \RT_{6} \times  \RT_{6},
\]
and  Corollary~\ref{cor:explicitGamma} leads to 
\[
\Gamma_{\{2,3,4\}} = \left\langle \begin{array}{c}
a_1,a_5,a_9,a_{13},a_{17},a_{21}, \\
b_2,b_6,b_{10},b_{14},b_{18},b_{22}, \\
c_3,c_7,c_{11},c_{15},c_{19},c_{23}
\end{array}
\ \left| 
\begin{array}{c}
a_ia_{i+12} = b_i b_{i+12} = c_ic_{i+12} = 1  \ \text{ for all $i$ }, \\

a_{1}b_{2}a_{17}b_{22}, \ 
a_{1}b_{6}a_{9}b_{10}, \ 
a_{1}b_{10}a_{9}b_{6}, \ 
a_{1}b_{14}a_{21}b_{14}, \ 
a_{1}b_{18}a_{5}b_{18}, \\ 
a_{1}b_{22}a_{17}b_{2}, \ 
a_{5}b_{2}a_{21}b_{6}, \ 
a_{5}b_{6}a_{21}b_{2}, \ 
a_{5}b_{22}a_{9}b_{22}, \\

a_{1}c_{3}a_{17}c_{3}, \ 
a_{1}c_{7}a_{13}c_{19}, \ 
a_{1}c_{11}a_{9}c_{11}, \ 
a_{1}c_{15}a_{1}c_{23}, \ 
a_{5}c_{3}a_{5}c_{19}, \\
a_{5}c_{7}a_{21}c_{7}, \ 
a_{5}c_{11}a_{17}c_{23}, \ 
a_{9}c_{3}a_{21}c_{15}, \ 
a_{9}c_{7}a_{9}c_{23}, \\

b_{2}c_{3}b_{18}c_{23}, \ 
b_{2}c_{7}b_{10}c_{11}, \ 
b_{2}c_{11}b_{10}c_{7}, \ 
b_{2}c_{15}b_{22}c_{15}, \ 
b_{2}c_{19}b_{6}c_{19}, \\
b_{2}c_{23}b_{18}c_{3}, \ 
b_{6}c_{3}b_{22}c_{7}, \ 
b_{6}c_{7}b_{22}c_{3}, \ 
b_{6}c_{23}b_{10}c_{23}.
\end{array}
\right.\right\rangle .
\]
\end{ex}

%%%%%%%%%%%%%%%%%%%%%%%%%%%%%%%%%%%%%%%%%%%
\section{Hurwitz quaternions} 
\label{sec:latticeinhurwitz}
Lattices acting on products of two trees based on rational Hamilton quaternions were
constructed in \cite{mozes:cartan} and \cite{rattaggi:thesis}. Here we extend these constructions to quaternion lattices of higher rank. The notation from Section~\S\ref{sec:lattice} will not apply in this section.

%----------------------------------------------------------------------------------------------------------------
\subsection{The Hurwitz order}
The classical Hamilton quaternion algebra over $\bQ$
\[
D := \left(\frac{-1,-1}{\bQ}\right) = \bQ\{i,j\}/(i^2 = j^2 = -1, ji=-ij).
\]
ramifies exactly at the places $B = \{2, \infty\}$. Let as usual $k := ij$ and 
\[
\rho:=\frac12(1+i+j+k)\in H.
\]
For later use we remark that $\rho^2 = \rho - 1$, i.e., $\rho$ is a $6$-th root of unity.
The Hurwitz maximal order is defined as
\[
\dA := \bZ \oplus \bZ\cdot i \oplus \bZ\cdot j \oplus \bZ\cdot\rho \subseteq D.
\]
Since we consider $\dA$ rather as a coherent sheaf on $\Spec(\bZ)$ of maximal orders of $D$,  we write 
\[
\dA(R) := \dA \otimes_{\bZ} R \qquad \text{ and } \qquad \dA^\times(R) = \big(\dA \otimes_{\bZ} R\big)^\times
\]
for an arbitrary ring $R$. We will also sometimes work with the Lipschitz order
\[
\dA_L := \bZ \oplus \bZ\cdot i \oplus \bZ\cdot j \oplus \bZ\cdot k \subseteq \dA
\]
and set $\dA_L(R):=\dA_L\otimes_{\bZ}R$ for an arbitrary ring $R$. Note that if $2$ is invertible in $R$, then 
\[
\dA(R) = \dA_L(R) = R \oplus R\cdot i \oplus R\cdot j \oplus R\cdot k = R\{i,j\}/(i^2 = j^2 = -1, ji=-ij).
\]
The reduced norm $\Nrd:\dA\to\dO_{\Spec(\bZ)}$ as well as the group schemes $\GL_{1,\dA}$, $\SL_{1,\dA}$ and $\PGL_{1,\dA}$ are defined in a similar way as in Section~\S\ref{sec:lattice}.

\subsection{The $S$-arithmetic group and its volume}
Let $S$ be a finite set of places in $\bZ$ containing $B$. We denote by $\bZ_S$ the localisation of $\bZ$ adding fractions with products of primes from $S$ as denominators. This means in particular that $\bZ_B=\bZ[\frac12]$. We will be concerned with the $S$-arithmetic group
\[
\Lambda_S := \PGL_{1,\dA}(\bZ_S) = \dA^\times(\bZ_S)/\bZ_S^\times = \big(\bZ_S\{i,j\}/(i^2 = j^2 = -1, ji=-ij)\big)^\times/\bZ_{S}^\times
\]
Note that the right equality holds by the same argument as in Section \ref{sec:lattice} since $\bZ_S$ is a principal ideal domain and $2$ is invertible in $\bZ_S$. Furthermore, if $B\subseteq T\subseteq S$, we have an inclusion $\Lambda_T\subseteq \Lambda_S$.

\begin{nota}
As in Section~\S\ref{sec:lattice}, we will use the notation $g\mapsto [g]$ for the map
\[
\GL_{1,\dA}(\bZ_S) \to \PGL_{1,\dA}(\bZ_S)
\]
if it becomes necessary to distinguish between group elements and representatives modulo the center. 
\end{nota}

\begin{nota}
We are interested in the Bruhat-Tits action of $\Lambda_S$. For this purpose, let $S_0 := S\setminus B$ be the set of unramified places in $S$. For each $p\in S_0$, the group $\PGL_{1,\dA}(\bQ_p) \simeq \PGL_2(\bQ_p)$ acts on its Bruhat-Tits building $\RT_{p+1}$, a regular tree of valency $p+1$, with the standard vertex $\star_p$. We set
\[
\PT_{S_0} := \prod_{p\in S_0} \RT_{p+1}
\]
and let $\star_{S_0}\in\PT_{S_0}$ be the product of the distinguished vertices in each factor. 
The arithmetic lattice $\Lambda_S$ acts on $\PT_{S_0}$ through its localisations $\PGL_{1,\dA}(\bQ_p) \simeq \PGL_2(\bQ_p)$ in all $p \in S_0$ as a discrete cocompact subgroup 
\[
\Lambda_S \inj \prod_{p \in S_0} \PGL_2(\bQ_p).
\]
\end{nota}

To compute the orbifold degree $N_0(\Lambda_S)$, we proceed as in Section~\S\ref{sec:lattice} and begin with the exact sequence of (non-abelian) \'etale cohomology
\begin{equation}
\label{eq:comparewith_sc_lattice-Hurwitz}
1 \to \mu_2(\bZ_S) = \{\pm 1\} \to \tilde{\Lambda}_S \to \Lambda_S \xrightarrow{\delta} \rH^1(\bZ_S,\mu_2) = \bZ_S^\times/(\bZ_S^\times)^2,
\end{equation}
where $\tilde{\Lambda}_S=\SL_{1,\dA}(\bZ_S)$. As in Lemma \ref{lem:delta_as_Nrd}, $\delta$ is given by the reduced norm modulo $(\bZ_S^\times)^2$.

\begin{prop}
\label{prop:delta_image-Hurwitz}
For all $S$ with $B \subseteq S$, the image of $\delta: \Lambda_S \to \bZ_{S}^\times/(\bZ_{S}^\times)^2$ is a subgroup of index $2$. 
\end{prop}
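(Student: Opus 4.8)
The plan is to compute the image of $\delta$ explicitly by combining its description as the reduced norm modulo squares with the positivity of the norm form on the \emph{definite} quaternion algebra $D$. First I would invoke the analogue of Lemma~\ref{lem:delta_as_Nrd} recalled just above the statement: for $a \in \Lambda_S$ lifting to $\tilde a \in \GL_{1,\dA}(\bZ_S)$ one has $\delta(a) = \Nrd(\tilde a) \bmod (\bZ_S^\times)^2$. Since $\Nrd$ is multiplicative, $\delta$ is a group homomorphism and its image is indeed a subgroup. Any lift $\tilde a$ is a Hurwitz quaternion, so its reduced norm is a sum of four squares of (half-)integers, hence a \emph{positive} element of $\bZ_S^\times$. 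Therefore the image of $\delta$ is contained in the subgroup of classes of positive units.

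Next I would pin down the target. Writing $S = \{2,\infty\} \cup S_0$ with $S_0 = \{p_1,\dots,p_k\}$ a set of odd primes, the unit group is $\bZ_S^\times = \langle -1 \rangle \times \langle 2 \rangle \times \langle p_1 \rangle \times \cdots \times \langle p_k \rangle$, so $\bZ_S^\times/(\bZ_S^\times)^2$ is an $\bF_2$-vector space with basis the classes of $-1, 2, p_1, \dots, p_k$. The subgroup of positive classes is spanned by $2, p_1, \dots, p_k$ and has index $2$, the nontrivial quotient being detected by the sign. It thus suffices to show that $\delta$ surjects onto this positive subgroup.

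For surjectivity I would exhibit an explicit preimage for each basis element. The class of $2$ is realized by $1 + i$, since $\Nrd(1+i) = 2 \in \bZ_S^\times$. For each odd prime $p \in S_0$, Lagrange's four-square theorem provides integers with $x_0^2 + x_1^2 + x_2^2 + x_3^2 = p$; the corresponding Lipschitz quaternion $w = x_0 + x_1 i + x_2 j + x_3 k \in \dA_L \subseteq \dA(\bZ_S)$ has $\Nrd(w) = p \in \bZ_S^\times$, so $[w] \in \Lambda_S$ and $\delta([w]) = p \bmod (\bZ_S^\times)^2$. Hence the classes of $2, p_1, \dots, p_k$ all lie in the image, which therefore equals the full positive subgroup and has index $2$.

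The conceptual heart, and the only place where genuine input is needed, is the confinement of the image to positive units; this is forced precisely by the ramification of $D$ at the archimedean place, i.e.\ by definiteness of the norm form, and it is exactly what distinguishes the Hurwitz case from the behaviour at $\infty$ in the function field setting, producing index $2$ rather than surjectivity (cf.\ Proposition~\ref{prop:delta_surjective}). The remaining surjectivity onto the positive subgroup is a routine consequence of Lagrange's theorem (equivalently, of the fact that the Lipschitz order represents all positive integers), so I do not expect it to present any difficulty.
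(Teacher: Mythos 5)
Your proposal is correct and follows essentially the same route as the paper: confinement of the image to positive classes because $D$ is ramified at $\infty$ (definiteness of the norm form), followed by surjectivity onto the positive subgroup via the four-square theorem applied to each prime in $S\setminus\{\infty\}$. The only cosmetic difference is that the paper cites Jacobi rather than Lagrange and does not spell out the basis of $\bZ_S^\times/(\bZ_S^\times)^2$, which you do; your parenthetical that a lift has (half-)integer coefficients is slightly imprecise (coefficients lie in $\bZ_S$), but positivity of the reduced norm holds regardless.
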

\begin{proof}
The reduced norm of every element of $\dA(\bZ_S)^\times$ is positive since $D$ is ramified at $\infty$. Hence it suffices to show that every $p\in S\setminus\{\infty\}$ is a reduced norm of an element of $\dA(\bZ_S)$. But this follows from Jacobi's theorem that every prime number can be written as the sum of four squares of integers.
\end{proof}

\begin{prop}
\label{prop:volumeLambdaS_Hurwitz}
The orbifold degree of the lattice $\Lambda_S$ is
\[
N_0(\Lambda_S) 
=  \frac{1}{24}.
\]
\end{prop}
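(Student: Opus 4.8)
The plan is to follow the strategy of Proposition~\ref{prop:volumeLambdaS} verbatim, replacing the function field $\bF_q(t)$ by $\bQ$ and accounting for the archimedean contribution that arises because $D$ ramifies at $\infty$. First I would record the comparison between the orbifold degree and the Tits measure through the Euler--Poincar\'e measure, using \cite[\S4]{borel_prasad}:
\[
N_0(\Lambda_S) \cdot \prod_{p \in S_0} \frac{1-p}{2} = \mu_{{\rm EP},S_0}(\Lambda_S) = \mu_{{\rm Tits}}(\Lambda_S) \cdot \prod_{p \in S_0} \frac{1-p}{1+p},
\]
so that it suffices to compute $\mu_{{\rm Tits}}(\Lambda_S)$ and then multiply by $\prod_{p \in S_0} \frac{2}{1+p}$.

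Next I would apply Prasad's volume formula \cite[Theorem 3.7]{prasad:volumeformula} to the simply connected group $\SL_{1,\dA}$ and its principal arithmetic subgroup $\tilde{\Lambda}_S = \SL_{1,\dA}(\bZ_S)$. Over $\bQ$ the discriminant is $1$ and the Tamagawa number is $1$, so the only transcendental input is the archimedean Euler factor; since $\SL_{1,\dA}$ is of type $A_1$ with single exponent $m_1 = 1$ and $\SL_{1,\dA}(\bR) = \SL_1(\bH) \cong \Sp(1)$ is the compact form, this factor equals $\frac{m_1!}{(2\pi)^{m_1+1}} = \frac{1}{4\pi^2}$, which plays the role of the leading constant $q^{-3}$ in Proposition~\ref{prop:volumeLambdaS}. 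Collecting the non-archimedean local factors exactly as in the function field case --- $\frac{p^2}{p-1}$ at each building place $p \in S_0$, $\frac{p^2}{p+1}$ at the finite ramified place $p = 2$, and $\frac{1}{1-p^{-2}}$ at every remaining finite place --- and regrouping $\prod_{p \notin S} \frac{1}{1-p^{-2}} = \zeta(2) \prod_{p \in \{2\}\cup S_0}(1-p^{-2})$, the products telescope to
\[
\mu_{{\rm Tits}}(\tilde{\Lambda}_S) = \frac{1}{4\pi^2} \cdot \zeta(2) \cdot (2-1) \cdot \prod_{p \in S_0}(p+1) = \frac{1}{24} \prod_{p \in S_0}(p+1),
\]
where I used $\zeta(2) = \pi^2/6$ and that the finite ramified place $2$ contributes the factor $(2-1) = 1$.

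Then I would descend from the simply connected lattice to $\Lambda_S$ via the exact sequence \eqref{eq:comparewith_sc_lattice-Hurwitz}. By Proposition~\ref{prop:delta_image-Hurwitz} the image of $\delta$ has index $2$ in $\bZ_S^\times/(\bZ_S^\times)^2$; since $\bZ_S^\times \cong \{\pm 1\} \times \bZ^{\#S_0+1}$, this image has order $2^{\#S_0+1}$, whence
\[
\mu_{{\rm Tits}}(\Lambda_S) = \mu_{{\rm Tits}}(\tilde{\Lambda}_S) \cdot \frac{\#\mu_2(\bZ_S)}{\#\im(\delta)} = \frac{\mu_{{\rm Tits}}(\tilde{\Lambda}_S)}{2^{\#S_0}} = \frac{1}{24} \prod_{p \in S_0} \frac{p+1}{2}.
\]
Substituting into the first display cancels the products and gives $N_0(\Lambda_S) = \frac{1}{24}$; reassuringly, just as the function field value $1/2(q+1)$ equals $1/\#\Lambda_B$, the value $1/24$ reflects a vertex stabiliser $\Lambda_B$ of order $24$.

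The main obstacle I anticipate is not the bookkeeping, which telescopes precisely as in Proposition~\ref{prop:volumeLambdaS}, but fixing the archimedean normalisation in Prasad's formula and verifying that the Euler-factor and measure conventions at the ramified finite place $p = 2$ are compatible with those of \cite{borel_prasad} used in the Euler--Poincar\'e comparison, so that the transcendental factor $\frac{1}{4\pi^2}$ combines with $\zeta(2) = \pi^2/6$ into the clean rational value $\frac{1}{24}$.
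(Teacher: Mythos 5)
Your proposal is correct and follows essentially the same route as the paper: the Euler--Poincar\'e/Tits measure comparison from \cite[\S4]{borel_prasad}, Prasad's volume formula giving $\mu_{\rm Tits}(\tilde{\Lambda}_S) = \frac{1}{4\pi^2}\,\zeta_\bQ(2)\,(2-1)\prod_{p\in S_0}(p+1) = \frac{1}{24}\prod_{p\in S_0}(p+1)$, and the descent to $\Lambda_S$ via the index-$2$ image of $\delta$ (Proposition~\ref{prop:delta_image-Hurwitz}) and Dirichlet's $S$-unit theorem, yielding the factor $2^{-\#S_0}$. The extra detail you supply on the archimedean Euler factor and the explicit order of $\im(\delta)$ is consistent with, and merely elaborates on, the paper's computation.
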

\begin{proof}
The proof goes similarly to Proposition \ref{prop:volumeLambdaS}. Here the Tits measure $\mu_{\rm Tits}$ can be compared with the Euler-Poincar\'e measure $\mu_{{\rm EP},S_0}$ using \cite[\S4]{borel_prasad} by
\[
N_0(\Lambda_S) \cdot \prod_{p \in S_0} \frac{1-p}{2} = \mu_{{\rm EP},S_0}(\Lambda_S) = \mu_{{\rm Tits}}(\Lambda_S) \cdot \prod_{p \in S_0} \frac{1-p}{1+p}.
\]
Applying Prasad's volume formula \cite[Theorem 3.7]{prasad:volumeformula} yields
\begin{equation*}
\mu_{\rm Tits}(\tilde{\Lambda}_S) = \frac{1}{4\pi^2} \cdot \zeta_\bQ(2) \cdot \prod_{p \in B\setminus\{\infty\}} (p-1) \cdot \prod_{p \in S_0} (p + 1)  = \frac{1}{24} \cdot \prod_{p \in S_0} (p + 1).
\end{equation*}
Hence by Proposition~\ref{prop:delta_image-Hurwitz}, \eqref{eq:comparewith_sc_lattice-Hurwitz} and Dirichlet's $S$-Unit Theorem,
\begin{align*}
\mu_{\rm Tits}(\Lambda_S) = \mu_{\rm Tits}(\tilde \Lambda_S) \cdot \frac{\# \mu_2(\bZ_S)}{\frac{1}{2}\# \bZ_S^\times/(\bZ_S^\times)^2} = \frac{\mu_{\rm Tits}(\tilde \Lambda_S)}{2^{\#S_0}} = \frac{1}{24} \cdot \prod_{p \in S_0} \frac{p + 1}{2}.
\end{align*}
The claimed formula for $N_0(\Lambda_S)$ follows at once.
\end{proof}

\subsection{The $2$-adic filtration}
We now work locally at the ramified place given by $2$. The base extension to the field of $2$-adic numbers
\[
\cD_2:=D\otimes_\bQ\bQ_2 = \left(\frac{-1,-1}{\bQ_2}\right)
\]
is a skew field and has thus a unique maximal order $\fO_{D,2}\subseteq \cD_2$. Its unique maximal ideal $\fm_2\unlhd\fO_{D,2}$ is generated by $\pi:=i-j$, an element of reduced norm $2$ with $\pi^2=-2$. Furthermore, the extension $\bQ_2(\rho)/\bQ_2$ is unramified (recall that $\rho^2  = \rho-1$) and $\pi(-)\pi^{-1}$ induces the nontrivial Galois automorphism $\sigma$ because
\[
\pi\rho = -j+k = (i-j)-(i-k) = \pi-\rho\pi = (1-\rho)\pi.
\]
Let $\bQ_2(\rho)\{\pi\}$ be the $\sigma$ semi-linear polynomial ring, i.e., $\sigma(x) = \pi x \pi^{-1}$ for all $x \in \bQ_2(\rho)$. Then we obtain
\begin{align*}
\cD_2 & = \bQ_2(\rho)\{\pi\}/(\pi^2=-2) \\ 
\fO_{D,2} & = \bZ_2[\rho]\{\pi\}/(\pi^2=-2)  \subseteq \cD_2.  
\end{align*}
For each $n\in\bN$, we define the $n$-th higher unit group as
\[
U^n = U^n_{D,2} := 1+\fm_2^n \subseteq \fO_{D,2}^\times.
\]

\begin{lem}
Let $i,r,s\in\cD_2^\times/(\bQ_2^\times\cdot U^2)$ be the images of $i$, $\rho$ and $ij\pi = i+j$ respectively.
\begin{enumerate}
\item
The group $\fO_{D,2}^\times/U^2$ is isomorphic to the alternating group $A_4$ and has the following presentation:
\begin{equation}\label{eq:OD2}
\fO_{D,2}^\times/U^2 = \langle i,r \ | \ i^2, r^3, (ir)^3 \rangle.
\end{equation}
\item
The group $\cD_2^\times/(\bQ_2^\times\cdot U^2)$ is isomorphic to the symmetric group $S_4$ and has the following presentation:
\begin{equation}\label{eq:D2modU2}
\cD_2^\times/(\bQ_2^\times\cdot U^2) = \langle r,s \ | \ r^3, s^2, (rs)^4 \rangle.
\end{equation}
\end{enumerate}
\end{lem}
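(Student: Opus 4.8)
The plan is to pin down the two group orders from the filtration by higher unit groups, then verify the von Dyck relations by explicit quaternion arithmetic, and finally upgrade the resulting surjections to isomorphisms by counting. Throughout I use the valuation $v$ on $\cD_2$ normalized by $v(\pi)=1$ (equivalently $v=v_2\circ\Nrd$, so $v(2)=2$ because $\pi^2=-2$), whose kernel on $\cD_2^\times$ is $\fO_{D,2}^\times$.

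\emph{The orders.} Since $\fO_{D,2}$ is local with residue ring $\fO_{D,2}/\fm_2$ a finite division ring, hence the field $\bF_4$ (the residue field of the unramified extension $\bQ_2(\rho)/\bQ_2$), reduction gives $\fO_{D,2}^\times/U^1\cong\bF_4^\times$ of order $3$, while $U^1/U^2\cong\fm_2/\fm_2^2\cong\bF_4$ additively (via $1+x\mapsto x$) of order $4$; thus $|\fO_{D,2}^\times/U^2|=12$. For the larger quotient, reduction of $v$ modulo $2$ gives a surjection $\cD_2^\times/(\bQ_2^\times U^2)\to\bZ/2$ with kernel $H:=\fO_{D,2}^\times\bQ_2^\times/(\bQ_2^\times U^2)$. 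The decisive point is that $\bZ_2^\times=1+2\bZ_2\subseteq 1+\fm_2^2=U^2$, because $2\in\fm_2^2$; this yields $\fO_{D,2}^\times\cap\bQ_2^\times U^2=U^2$, so by the second isomorphism theorem $H\cong\fO_{D,2}^\times/U^2$ has order $12$ and $|\cD_2^\times/(\bQ_2^\times U^2)|=24$.

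\emph{Part (1).} In $\fO_{D,2}^\times/U^2$ I would check the relations: since $-1=1+\pi^2\in U^2$ one gets $i^2=[-1]=1$ and $r^3=[\rho^3]=[-1]=1$; and $i\rho=\tfrac12(-1+i-j+k)$ has reduced trace $-1$ and reduced norm $1$, so it is a primitive cube root of unity, giving $(ir)^3=1$ already in $\cD_2$. These are exactly the defining relations of the von Dyck group $\langle i,r\mid i^2,r^3,(ir)^3\rangle$, which is classically $A_4$; hence there is a surjection $A_4\twoheadrightarrow\langle i,r\rangle$. To see it is an isomorphism onto everything, note $[i]\neq 1$ because $\Nrd(i-1)=2$ forces $i-1\in\fm_2\setminus\fm_2^2$, so $\langle i,r\rangle$ has even order; as the only proper quotients of $A_4$ have order $1$ or $3$, the surjection is injective, so $\langle i,r\rangle\cong A_4$ has order $12=|\fO_{D,2}^\times/U^2|$ and equals the whole group.

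\emph{Part (2).} In $G:=\cD_2^\times/(\bQ_2^\times U^2)$ I would compute $r^3=[\rho^3]=[-1]=1$, $s^2=[(i+j)^2]=[-2]=1$, and, using $\rho(i+j)=j-1$ together with $(j-1)^2=-2j$, that $(rs)^4=[(j-1)^4]=[-4]=1$. This gives a surjection from $\langle r,s\mid r^3,s^2,(rs)^4\rangle\cong S_4$ (the $(2,3,4)$ von Dyck group) onto $Q:=\langle r,s\rangle\le G$. For surjectivity onto $G$: by Part (1), $H\cong A_4$ is the index-$2$ subgroup of $G$, and $s\notin H$ since $\Nrd(i+j)=2$ makes $v(i+j)=1$ odd; hence $Q\cap H$ has index $2$ in $Q$ and, being a subgroup of $A_4$ containing the order-$3$ element $r$, has order $3$ or $12$, so $|Q|\in\{6,24\}$. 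Finally $(rs)^2=[j]\neq 1$ (again $\Nrd(j-1)=2$), so $rs$ has order $4$ and $4\mid|Q|$, excluding $6$; thus $|Q|=24=|G|$ and the surjection is an isomorphism.

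\emph{Main obstacle.} The quaternion computations are routine; the load-bearing steps are the structural inputs — identifying the residue field as $\bF_4$ and especially the inclusion $\bZ_2^\times\subseteq U^2$, which is what forces $\bQ_2^\times$ to contribute nothing beyond the parity of the valuation and thereby fixes $|G|=24$ — combined with the elementary group theory (the proper quotients of $A_4$ and the absence of an order-$6$ subgroup of $A_4$) that converts the two von Dyck surjections into isomorphisms.
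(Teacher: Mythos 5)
Your proof is correct, but it takes a genuinely different route from the paper's. The paper builds each group up from its filtration as an explicit extension: it presents $\fO_{D,2}^\times/U^2$ as the semidirect product $V_4\rtimes\bZ/3$ (with $U^1/U^2\simeq(\bF_4,+)$ generated by the classes of $i,j$ and $r$ acting by the cyclic permutation $i\mapsto j\mapsto ij$), then presents $\cD_2^\times/(\bQ_2^\times\cdot U^2)$ as an extension of $\bZ/2\bZ$ by this group split by $s$, and in each case \emph{derives} the stated two-generator presentation by Tietze transformations (substituting $j=rir^{-1}$, resp.\ $i=srsr$). You instead pin down the orders $12$ and $24$ from the same filtration, \emph{verify} the triangle-group relations by direct quaternion arithmetic, invoke the classical identifications of the $(2,3,3)$ and $(2,3,4)$ von Dyck groups with $A_4$ and $S_4$, and upgrade the two surjections to isomorphisms by counting, using that $A_4$ has no proper quotient of even order and no subgroup of order $6$. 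Your computations all check out ($-1=1+\pi^2\in U^2$, $i\rho$ a primitive cube root of unity, $\rho(i+j)=j-1$ with $(j-1)^2=-2j$, and $\Nrd(i-1)=\Nrd(j-1)=2$ forcing nontriviality of the relevant classes). What the paper's route buys is that the presentation falls out of the group structure rather than being guessed and confirmed; what yours buys is a cleaner justification of the order $24$ — your observation that $\bZ_2^\times=1+2\bZ_2\subseteq U^2$ because $2\in\fm_2^2$, whence $\fO_{D,2}^\times\cap\bQ_2^\times U^2=U^2$, makes explicit the injectivity of $\fO_{D,2}^\times/U^2\to\cD_2^\times/(\bQ_2^\times\cdot U^2)$ that the paper's second exact sequence leaves implicit.
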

\begin{proof}
(1) Observe first that $\fO_{D,2}^\times/U^2$ sits in the exact sequence
\[
1 \to U^1/U^2 \to \fO_{D,2}^\times/U^2 \to \fO_{D,2}^\times/U^1 \to 1.
\]
Since $\fO_{D,2}/\fm_2\simeq\bF_4$, it is easy to see that $U^1/U^2 \simeq (\bF_4,+) \simeq V_4$, the Klein four-group with generators $i,j$, and $\fO_{D,2}^\times/U^1\simeq\bF_4^\times$, a cyclic group of order $3$ with generator $\rho=[r]$. Hence we obtain the presentation
\[
\fO_{D,2}^\times/U^2 = \langle i,j,r \ | \ i^2,j^2,(ij)^2,r^3,rir^{-1}=j,rjr^{-1}=ij \rangle.
\]
After substituting $j$ by $rir^{-1}$, we get the presentation of $\fO_{D,2}^\times/U^2$ as in \eqref{eq:OD2}

(2) Let $v_2:\cD_2^\times\to\bZ$ be the valuation on $\cD_2$. This is surjective and yields the exact sequence
\[
1 \to \fO_{D,2}^\times/U^2 \to \cD_2^\times/(\bQ_2^\times\cdot U^2) \xrightarrow{v_2\bmod2} \bZ/2\bZ \to 1
\]
with a section $\bZ/2\bZ \to \cD_2^\times/(\bQ_2^\times\cdot U^2)$ given by $1\mapsto s=[i+j]$. Since $sis^{-1}=j$ and $srs^{-1}=ir^{-1}$, we obtain the following presentation:
\[
\cD_2^\times/(\bQ_2^\times\cdot U^2) = \langle i,r,s \ | \ i^2,r^3,(ir)^3,s^2,sis^{-1}=rir^{-1},srs^{-1}=ir^{-1}\rangle.
\]
The last relation together with $s^2=1$ implies that $i=srsr$. Substituting this in the above presentation, we obtain \eqref{eq:D2modU2} as desired.
\end{proof}

We can also use the $2$-adic filtration to describe the group $\Lambda_B$ as follows:

\begin{prop} \label{prop:LambdaBHurwitz}
The restrictions of the group homomorphism
\[
\psi' : D^\times \subseteq \cD_2^\times \twoheadrightarrow \cD_2^\times/(\bQ_2^\times\cdot U^2)
\]
yields group isomorphisms
\[
\Lambda_B \simeq \cD_2^\times/(\bQ_2^\times\cdot U^2) \quad \text{and} \quad \dA^\times/\{\pm1\} \simeq \fO_{D,2}^\times/U^2
\]
\end{prop}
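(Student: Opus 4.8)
The plan is to descend $\psi'$ to $\Lambda_B$, show it surjects onto $\cD_2^\times/(\bQ_2^\times U^2)\simeq S_4$ because the explicit generators of $S_4$ produced in the preceding lemma are images of honest global quaternions, and then invoke the order $\#\Lambda_B = 24$ coming from Prasad's volume formula to promote surjectivity to an isomorphism. The second isomorphism will then drop out by restriction to the obvious subgroup.

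First I would settle well-definedness and record the numerical inputs. Since $\bZ[\tfrac12]^\times\subseteq\bQ^\times\subseteq\bQ_2^\times$ maps to $1$ in $\cD_2^\times/(\bQ_2^\times U^2)$, the restriction of $\psi'$ to $\dA^\times(\bZ[\tfrac12])\subseteq D^\times$ factors through $\Lambda_B=\dA^\times(\bZ[\tfrac12])/\bZ[\tfrac12]^\times$, giving a homomorphism $\bar\psi':\Lambda_B\to\cD_2^\times/(\bQ_2^\times U^2)$. Applying Proposition~\ref{prop:volumeLambdaS_Hurwitz} with $S=B$, so that $S_0=\emptyset$ and $\PT_{S_0}$ is a single point, yields $N_0(\Lambda_B)=1/\#\Lambda_B=1/24$, hence $\#\Lambda_B=24=\#S_4$. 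The preceding lemma identifies $\cD_2^\times/(\bQ_2^\times U^2)\simeq S_4$ with presentation \eqref{eq:D2modU2}, and its index-$2$ subgroup $\fO_{D,2}^\times/U^2\simeq A_4$ with presentation \eqref{eq:OD2}, the latter being the kernel of $v_2\bmod 2$ applied to reduced norms.

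Next I would prove surjectivity of $\bar\psi'$. The element $\rho\in\dA$ has $\Nrd(\rho)=1$ and $i+j$ has $\Nrd(i+j)=2$, so both $\rho$ and $i+j$ lie in $\dA^\times(\bZ[\tfrac12])$ and define classes in $\Lambda_B$. By the very definitions in the preceding lemma (recall $ij\pi=i+j$) we have $\psi'(\rho)=r$ and $\psi'(i+j)=s$; since $\langle r,s\rangle=\cD_2^\times/(\bQ_2^\times U^2)$ by \eqref{eq:D2modU2}, the map $\bar\psi'$ is onto. As source and target both have order $24$, $\bar\psi'$ is an isomorphism, which is the first assertion. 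For the second, the inclusion $\dA^\times(\bZ)\hookrightarrow\dA^\times(\bZ[\tfrac12])$ descends to an injection $\dA^\times(\bZ)/\{\pm1\}\hookrightarrow\Lambda_B$, the kernel being $\dA^\times(\bZ)\cap\bZ[\tfrac12]^\times=\{\pm1\}$. Every $u\in\dA^\times(\bZ)$ has $\Nrd(u)=1$ (definiteness at $\infty$) and lies in $\dA(\bZ_2)=\fO_{D,2}$, so $\psi'(u)\in\fO_{D,2}^\times/U^2=A_4$; as the images of $i$ and $\rho$ generate $A_4$ by \eqref{eq:OD2}, the composite $\dA^\times(\bZ)/\{\pm1\}\to A_4$ is onto, and being injective (as $\bar\psi'$ is injective) it is an isomorphism onto $\fO_{D,2}^\times/U^2$.

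The argument is essentially bookkeeping once the preceding lemma and the volume count are in hand; there is no serious analytic obstacle. The only point requiring genuine care is the matching of the abstract generators $r=\psi'(\rho)$, $s=\psi'(i+j)$, $i=\psi'(i)$ of $S_4$ and $A_4$ with global Hurwitz quaternions lying in $\Lambda_B$ — it is precisely this identification that makes surjectivity immediate and lets the equality $\#\Lambda_B=\#S_4=24$, supplied by Prasad's formula, do the remaining work.
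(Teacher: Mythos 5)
Your proof is correct, but it runs in the opposite direction from the paper's. The paper first enumerates $\Lambda_B$ explicitly: a unit $w=u+vi+xj+yk\in\dA(\bZ[\tfrac12])^\times$, scaled so that $u,v,x,y$ are integers not all even, must have reduced norm $u^2+v^2+x^2+y^2\in\{1,2,4\}$ (since odd squares are $\equiv 1\bmod 8$), which produces the explicit list \eqref{eq:listLambdaB} of $24$ representatives; among these only $1$ lies in $\bQ_2^\times\cdot U^2$, so the induced map $\Lambda_B\to\cD_2^\times/(\bQ_2^\times\cdot U^2)$ is \emph{injective}, hence bijective by counting. You instead prove \emph{surjectivity} first, from the observation that the generators $r=\psi'(\rho)$ and $s=\psi'(i+j)$ of the presentation \eqref{eq:D2modU2} are images of global elements of $\dA(\bZ[\tfrac12])^\times$, and you import the count $\#\Lambda_B=24$ from the degenerate case $S=B$ of Proposition~\ref{prop:volumeLambdaS_Hurwitz} (where $S_0=\emptyset$ and $\PT_{S_0}$ is a point) — a legitimate use, since that proposition is stated and proved for all $S\supseteq B$, though it makes the statement depend on Prasad's volume formula where the paper's argument is a self-contained mod-$8$ computation. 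What your route gives up is the explicit list \eqref{eq:listLambdaB}, which the paper reuses immediately (to identify $\dA^\times/\{\pm1\}$ inside $\Lambda_B$ via the $2$-adic valuation, and later in normalising the generator sets $A_p$); what it gains is brevity and a cleaner verification of the second isomorphism, where surjectivity onto $A_4$ from the generators $i,\rho$ of \eqref{eq:OD2} together with injectivity inherited from $\bar\psi'$ replaces the paper's inspection of which listed representatives have even valuation.
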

\begin{proof}
Consider a unit $w=u+vi+xj+yk \in \dA(\bZ_B)^\times$. After scaling by a power of $2$, we may assume that $u,v,x,y$ are all integers in $\bZ$ and at least one of them is odd. Since $w \in \dA(\bZ_B)^\times$, its reduced norm $\Nrd(w)=u^2+v^2+x^2+y^2$ must be a power of $2$. Since the square of an odd number is congruent to 1 modulo 8, $w$ is invertible in $\dA(\bZ_B)$ if and only if $\Nrd(w)\in\{1,2,4\}$. Hence $\Lambda_B$ can be represented by the following elements:
\begin{equation} \label{eq:listLambdaB}
1, i, j, k, 1\pm i, 1\pm j, 1\pm k, i\pm j, i\pm k, j\pm k, 1\pm i\pm j\pm k.
\end{equation}
Among these elements, only $1$ lies in $(\bQ_2^\times\cdot U^2)$. Hence the induced map $\Lambda_B \to \cD_2^\times/(\bQ_2^\times\cdot U^2)$ is injective, thus also bijective since both $\Lambda_B$ and $\cD_2^\times/(\bQ_2^\times\cdot U^2)$ have exactly $24$ elements.

Since the elements of $\dA^\times/\{\pm1\}$ as subgroup of $\Lambda_B$ are represented by those from \eqref{eq:listLambdaB} with trivial $2$-adic valuation modulo $2$, we also get the isomorphism $\dA^\times/\{\pm1\} \simeq \fO_{D,2}^\times/U^2$ as desired.
\end{proof}
\begin{nota}
The composition of $\psi'$ from Proposition \ref{prop:LambdaBHurwitz} with the isomorphism $\cD_2^\times/(\bQ_2^\times\cdot U^2)\simeq\Lambda_B$ will be denoted by
\[
\psi : D^\times \to \Lambda_B.
\]
Since $\psi$ factorises over $D^\times/K^\times$, we will also, by abuse of notation, denote by $\psi$ the induced homomorphism $\Lambda_S\subseteq D^\times/K^\times\to\Lambda_B$ for any finite set $S$ of places in $\bZ$ containing $B = \{2,\infty\}$. Note that this is surjective since the restriction to $\Lambda_B\subseteq\Lambda_S$ is the identity.
\end{nota}

\subsection{The generator sets $A_p$}
Let $p$ be an odd prime and
\[
A_p := \{ x\in\dA(\bZ) \ ; \ \Nrd(x)=p \text{ and } \psi(x)=1\}.
\]
Note that each element of $A_p$ lies in the Lipschitz oder $\dA_L$ and is modulo $2\dA_L$ congruent to $1$ if $p\equiv1\bmod4$ and $i+j+k$ if $p\equiv3\bmod4$. 
Furthermore, let $PA_p$ be the image of $A_p$ under the quotient map in $\Lambda_p:=\Lambda_{\{0,\infty,p\}}$. We are going to show that the action of $PA_p$ on the standard vertex $\star_p\in\RT_{p+1}$ yields a bijection between $PA_p$ and the neighbours of $\star_p$. For this we need the following lemma:

\begin{lem}\label{lem:stabstdvertexinHurwitz}
Let $S$ be a finite set of places of $\bZ$ containing $B = \{2,\infty\}$ and $S_0 = S \setminus B$. The stabiliser of $\star_{S_0}\in\PT_{S_0}$ for the action of $\Lambda_S$ is $\Lambda_B$.
\end{lem}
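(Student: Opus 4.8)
The plan is to mirror the function-field argument of Proposition~\ref{prop:GammaS}\,\ref{propitem:GammaS1}: since the action on $\PT_{S_0}=\prod_{p\in S_0}\RT_{p+1}$ is the product of the factor actions, an element $g\in\Lambda_S$ fixes $\star_{S_0}$ if and only if it fixes each $\star_p$, and fixing $\star_p$ is a local integrality condition at the prime $p$. First I would record that the stabiliser of $\star_{S_0}$ is therefore $\Lambda_S\cap\bigcap_{p\in S_0}\mathrm{Stab}_p$, where $\mathrm{Stab}_p$ denotes the stabiliser of $\star_p$ for the action through the localisation $\PGL_{1,\dA}(\bQ_p)\simeq\PGL_2(\bQ_p)$.

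The local step is to identify $\mathrm{Stab}_p$ with $\PGL_{1,\dA}(\bZ_p)$. By construction the standard vertex $\star_p$ is the homothety class of $\bZ_p$-lattices corresponding to the maximal order $\dA\otimes_\bZ\bZ_p\simeq M_2(\bZ_p)$ inside $\dA\otimes_\bZ\bQ_p\simeq M_2(\bQ_p)$. Under the adjoint (conjugation) action, the vertices of $\RT_{p+1}$ are the maximal orders, and the $\PGL_2(\bQ_p)$-stabiliser of such an order is its normaliser; for $M_2(\bZ_p)$ this normaliser is $\PGL_2(\bZ_p)=\PGL_{1,\dA}(\bZ_p)$. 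Hence $g$ fixes $\star_p$ exactly when its image in $\PGL_{1,\dA}(\bQ_p)$ lies in $\PGL_{1,\dA}(\bZ_p)$.

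It then remains to intersect. Writing elements of $\Lambda_S=\PGL_{1,\dA}(\bZ_S)$ as classes $[w]$ with $w\in\dA(\bZ_S)^\times$ — possible because $\bZ_S$ is a principal ideal domain, so $\GL_{1,\dA}(\bZ_S)\to\PGL_{1,\dA}(\bZ_S)$ is surjective, exactly as in the exact sequence of Section~\ref{sec:lattice} — the conditions above say that the stabiliser is $\PGL_{1,\dA}(\bZ_S)\cap\bigcap_{p\in S_0}\PGL_{1,\dA}(\bZ_p)$, the intersection being taken inside $\PGL_{1,\dA}(\bQ)=D^\times/\bQ^\times$. Since $\PGL_{1,\dA}$ is an affine group scheme of finite type over $\bZ$, evaluation on a finite generating set of its coordinate algebra gives $\PGL_{1,\dA}(A\cap A')=\PGL_{1,\dA}(A)\cap\PGL_{1,\dA}(A')$ for subrings $A,A'\subseteq\bQ$. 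Together with the elementary identity $\bZ_S\cap\bigcap_{p\in S_0}\bZ_p=\bZ[\tfrac12]=\bZ_B$ (inverting $2$ and the primes of $S_0$ and then re-imposing $p$-integrality for every $p\in S_0$ leaves only powers of $2$ in the denominators), this yields that the stabiliser equals $\PGL_{1,\dA}(\bZ_B)=\Lambda_B$.

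The step I expect to require the most care is the local identification of $\mathrm{Stab}_p$ with $\PGL_{1,\dA}(\bZ_p)$: one must pin down that the chosen $\star_p$ genuinely corresponds to the maximal order $\dA\otimes_\bZ\bZ_p$ and carry out the normaliser computation through the quotient by the centre, i.e. verify that no non-integral scalar multiple of a non-integral $w$ can become integral at $p$. This is precisely the standard dictionary between vertices of the Bruhat–Tits tree, homothety classes of $\bZ_p$-lattices, and maximal orders in $M_2(\bQ_p)$; once it is invoked, the remaining global assembly is purely formal. The affineness/intersection argument also silently uses $\Pic(\bZ_S)=\Pic(\bZ_B)=0$ to pass between $\GL_{1,\dA}$ and $\PGL_{1,\dA}$ on integral points.
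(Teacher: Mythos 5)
Your proposal is correct and follows essentially the same route as the paper: the paper's proof simply refers back to Proposition~\ref{prop:GammaS}~\ref{propitem:GammaS1}, whose argument is exactly that the stabiliser of $\star_{S_0}$ consists of the elements that are in addition integral with respect to $\dA$ at every $p\in S_0$, hence equals $\PGL_{1,\dA}(\bZ_B)=\Lambda_B$. You merely make explicit the local Bruhat--Tits dictionary (stabiliser of $\star_p$ equals $\PGL_{1,\dA}(\bZ_p)$) and the intersection of rings, which the paper leaves implicit.
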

\begin{proof}
This is done in the same way as the proof of Proposition \ref{prop:GammaS} \ref{propitem:GammaS1}.
\end{proof}

\begin{prop}\label{prop:reducednorm_p_action}
Let $p$ be an odd prime. The action of $PA_p$ on the standard vertex $\star_p\in\RT_{p+1}$ yields a bijection between $PA_p$ and the neighbours of $\star_p$.
\end{prop}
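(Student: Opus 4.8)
The plan is to realise the map $PA_p\to\{\text{neighbours of }\star_p\}$, $[x]\mapsto[x]\star_p$, as a well-defined injection and then to match cardinalities, both sides having $p+1$ elements.

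First I would check well-definedness. For $x\in A_p$ we have $\Nrd(x)=p$, and since $p$ is odd $D$ splits at $p$, giving $\dA(\bZ_p)\simeq\mathrm{M}_2(\bZ_p)$ with $\Nrd$ corresponding to the determinant. Hence the image of $x$ has elementary divisors $(1,p)$, so $x\cdot\bZ_p^2$ has index $p$ in $\bZ_p^2$ and $[x]\star_p$ is indeed a neighbour of $\star_p$ in $\RT_{p+1}$.

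Next I would prove injectivity. If $[x],[x']\in PA_p$ yield the same neighbour, then $g:=[x]^{-1}[x']$ fixes $\star_p$, so $g\in\Lambda_B$ by Lemma~\ref{lem:stabstdvertexinHurwitz}. Applying $\psi$ and using $\psi(x)=\psi(x')=1$ (the defining condition for $A_p$) gives $\psi(g)=1$; since $\psi$ restricts to the identity on $\Lambda_B$, this forces $g=1$, i.e. $[x]=[x']$.

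Finally I would count. Writing $X_p=\{x\in\dA(\bZ)\ ;\ \Nrd(x)=p\}$, Jacobi's four-square theorem together with the half-integral Hurwitz points (equivalently, the class number one property of $\dA$, which produces $p+1$ principal left ideals of reduced norm $p$, parametrised by $\bP^1(\bF_p)$, each a free orbit of the $24$ units) shows that $X_p$ has $24(p+1)$ elements forming $p+1$ free left $\dA^\times$-orbits. For a fixed orbit $\dA^\times x_0$, the norm $\Nrd(x_0)=p$ is a $2$-adic unit, so $x_0\in\fO_{D,2}^\times$ and $\psi(x_0)$ lies in the subgroup $\fO_{D,2}^\times/U^2=\psi(\dA^\times)$ of $\Lambda_B$ identified in Proposition~\ref{prop:LambdaBHurwitz} with $A_4$. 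From $\psi(ux_0)=\psi(u)\psi(x_0)$ and the fact that $\psi|_{\dA^\times}$ surjects onto this subgroup with kernel $\{\pm1\}$, exactly two units $u=\pm u_0$ satisfy $\psi(ux_0)=1$; thus each orbit meets $A_p$ in the pair $\{\pm u_0x_0\}$ and contributes a single element to $PA_p=A_p/\{\pm1\}$. Hence $\#PA_p=p+1$, equal to the number of neighbours of $\star_p$, and the injection from the second step is a bijection. I expect the cardinality count to be the main obstacle: the genuinely arithmetic input is $\#X_p=24(p+1)$ (class number one, or four squares plus the half-integral contributions), after which the $\dA^\times$-equivariance of $\psi$ settles the remainder by the bookkeeping above.
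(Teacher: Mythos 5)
Your proposal is correct and follows essentially the same route as the paper: the injectivity step (stabiliser of $\star_p$ is $\Lambda_B$ by Lemma~\ref{lem:stabstdvertexinHurwitz}, then $\psi(x_1^{-1}x_2)=1$ forces $x_1=x_2$ via Proposition~\ref{prop:LambdaBHurwitz}) and the count (the $p+1$ ideals of reduced norm $p$ parametrised by $\bP^1(\bF_p)$, each principal with generator unique up to one of the $24$ units, of which exactly $\pm u_0$ normalise $\psi$ to $1$) are exactly the paper's argument, merely packaged through $\#X_p = 24(p+1)$ and a left- rather than right-ideal convention.
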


\begin{proof}
Observe first that the right ideals of reduced norm $p$ in $\dA$ are in a canonical bijection to the proper right ideals of $\dA/p\dA \simeq\rM_2(\bF_p)$, which are given by elements of $\bF_p^2\setminus\{0\}$ up to scalar multiplication. Hence there are exactly $p+1$ such ideals. Each of them is principal, i.e.~generated by an element in $\dA$ of reduced norm $p$. Such an element lies in $\fO_{D,2}^\times$ and can hence be multiplied from the right with some $u\in\dA^\times$ to get an $x\in\dA$ with reduced norm $p$ and $\psi(x)=1$, which generates the same right ideal. Here $u$ is unique up to sign, which shows that $PA_p$ has exactly $p+1$ elements.

To see that $PA_p$ maps $\star_p$ bijectively to all its neighbours, suppose first that $x_1,x_2\in PA_p$ are such that $x_1\star_p=x_2\star_p$. Then $x_1^{-1}x_2$ stabilises the standard vertex, thus lies in $\Lambda_B$ by Lemma \ref{lem:stabstdvertexinHurwitz}. On the other hand, since $\psi(x_1^{-1}x_2)=1$, we have $x_1^{-1}x_2=1$ by Proposition \ref{prop:LambdaBHurwitz}, hence $x_1=x_2$. Thus the claim follows by the cardinality argument.
\end{proof}

\begin{prop}\label{prop:localpermutationHurwitz}
Let $p,\ell$ be two different odd primes. There are unique permutations $\sigma^\ell_x\in\Aut(PA_\ell)$ for each $x\in PA_p$ and $\sigma^p_y\in\Aut(PA_p)$ for each $y\in PA_\ell$ such that
\[
x\cdot\sigma^\ell_x(y) = y\cdot\sigma^p_y(x)
\]
for all $x\in PA_p$ and $y\in PA_\ell$.
\end{prop}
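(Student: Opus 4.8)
The plan is to read the identity $x\,\sigma^\ell_x(y)=y\,\sigma^p_y(x)$ off the coordinatewise action of $\Lambda_{\{2,\infty,p,\ell\}}$ on $X:=\RT_{p+1}\times\RT_{\ell+1}$, using the rigidity supplied by $\psi$ together with the stabiliser computation of Lemma~\ref{lem:stabstdvertexinHurwitz}. First I would record the local behaviour of the generators: any $x\in PA_p$ is represented by an element of reduced norm $p$, a unit at $\ell$, so its image in $\PGL_2(\bQ_\ell)$ fixes $\star_\ell$, while by Proposition~\ref{prop:reducednorm_p_action} it sends $\star_p$ to a neighbour. Symmetrically every $y\in PA_\ell$ fixes $\star_p$ and moves $\star_\ell$ to a neighbour. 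Since $\Lambda_S$ acts on $X$ coordinatewise through its two localisations, the two tree-factors can be analysed separately.

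Next I would define the permutations geometrically. Because $x^{-1}$ fixes $\star_\ell$, it permutes the neighbours of $\star_\ell$, which by Proposition~\ref{prop:reducednorm_p_action} are parametrised bijectively by $PA_\ell$ via $y\mapsto y\star_\ell$. I therefore set $\sigma^\ell_x(y)$ to be the unique element of $PA_\ell$ with $\sigma^\ell_x(y)\,\star_\ell=x^{-1}y\,\star_\ell$; as a composite of bijections this is a permutation of $PA_\ell$. Symmetrically, $\sigma^p_y(x)$ is the unique element of $PA_p$ with $\sigma^p_y(x)\,\star_p=y^{-1}x\,\star_p$, a permutation of $PA_p$.

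With $y':=\sigma^\ell_x(y)$ and $x':=\sigma^p_y(x)$ I would then evaluate both sides on $\star=(\star_p,\star_\ell)$. Using that $y'$ fixes $\star_p$, that $x'$ fixes $\star_\ell$, and the two defining relations, a short coordinatewise computation shows that $xy'$ and $yx'$ both send $\star$ to $(x\star_p,y\star_\ell)$. Hence $(yx')^{-1}xy'$ fixes $\star$ and so lies in $\Lambda_B$ by Lemma~\ref{lem:stabstdvertexinHurwitz}; on the other hand all of $x,y,x',y'$ have $\psi$-image $1$ by the very definition of the sets $A_{\ast}$, whence $\psi\bigl((yx')^{-1}xy'\bigr)=1$. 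Since $\psi$ restricts to the identity on $\Lambda_B$, this element is trivial, giving the desired relation. For uniqueness I would observe that projecting any valid identity to the $\ell$-factor forces $\sigma^\ell_x(y)\,\star_\ell=x^{-1}y\,\star_\ell$ (because $\sigma^p_y(x)\in PA_p$ fixes $\star_\ell$), and projecting to the $p$-factor forces $\sigma^p_y(x)\,\star_p=y^{-1}x\,\star_p$; by the bijection of Proposition~\ref{prop:reducednorm_p_action} each of these pins down the permutation uniquely.

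The only step demanding real care is this rigidity input --- that a word in the $PA_{\ast}$-generators fixing $\star$ must be trivial --- which is precisely where the combination of the stabiliser lemma with the injectivity of $\psi$ on $\Lambda_B$ does the work; everything else is bookkeeping in the two tree-factors, so I expect no further obstacle.
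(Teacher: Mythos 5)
Your proof is correct, but it takes a genuinely different route from the paper's. You derive the relation geometrically: you define $\sigma^\ell_x(y)$ and $\sigma^p_y(x)$ by transporting the permutation that $x^{-1}$ (resp.\ $y^{-1}$) induces on the neighbours of $\star_\ell$ (resp.\ $\star_p$) back through the bijection of Proposition~\ref{prop:reducednorm_p_action}, check that $x\sigma^\ell_x(y)$ and $y\sigma^p_y(x)$ agree on the base vertex $\star_{S_0}$, and then kill the discrepancy using Lemma~\ref{lem:stabstdvertexinHurwitz} combined with the fact that $\psi$ is injective on $\Lambda_B$ and trivial on all four generators. The paper instead works entirely at the level of representatives in the order $\dA$: since $x_0y_0$ has square-free reduced norm $p\ell$, Hurwitz's unique factorization (as in Conway--Smith) produces $y_0'$ of norm $\ell$ with $x_0y_0=y_0'x_0'$, unique up to a unit, and the normalisation $\psi(y_0')=1$ pins it down. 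The paper's argument yields the identity already in $\GL_{1,\dA}$ before passing to $\PGL$, and is the natural analogue of the classical quaternionic constructions it cites; your argument avoids importing the unique-factorization theorem, runs in close parallel to the way relations are read off the Bruhat--Tits building in the function-field case (Theorem~\ref{thm:structure_Gamma_S}), and makes the role of the rigidity statement ($\mathrm{Stab}(\star_{S_0})=\Lambda_B$ together with $\ker(\psi)\cap\Lambda_B=1$) completely explicit. Both establish existence and uniqueness; the inputs overlap one level down, since Proposition~\ref{prop:reducednorm_p_action} is itself proved via the ideal theory of $\dA$.
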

\begin{proof}
Since $PA_p$ and $PA_\ell$ are stable under inversion, this is equivalent to show that for each $x\in PA_p$ and $y\in PA_\ell$, there are uniquely determined $x'\in PA_p$ and $y'\in PA_\ell$ such that $xy=y'x'$. This follows from properties of unique factorization valid for $\dA$ proven by Hurwitz to be recalled below. The same results were used in \cite[Cor 3.11(1)]{rattaggi:thesis}.

Let $x_0,y_0 \in \dA$ be representatives of $x$ and $y$ of reduced norm $p$ and $\ell$ respectively. Since $x_0y_0$ has square-free reduced norm $p\ell$, it follows from unique factorization in the Hurwitz order as in \cite[II \S5]{conway-smith}  that there is a $y'_0\in\dA$ with reduced norm $\ell$ such that
\begin{equation}\label{eq:VHstructureHurwitz}
x_0y_0=y'_0x'_0
\end{equation}
for some $x'_0\in\dA$. Here $y'_0$ is unique up to right multiplication with a unit, 
see  \cite[II \S5]{conway-smith}. Hence by Proposition \ref{prop:LambdaBHurwitz}, there is a unique way to choose $y'_0$ such that $\psi(y'_0)=1$. Consequently, there is also only one $x'_0\in\dA$ satisfying \eqref{eq:VHstructureHurwitz}. Applying the reduced norm and also $\psi$  shows that $x'_0$ has reduced norm $p$ and $\psi(x'_0)=1$ as desired.
\end{proof}

\subsection{Presentation of $\Lambda_S$}
As in the function field case, we now describe $\Lambda_S$ in terms of generators and relations.

\begin{prop}
\label{prop:GammaS-Hurwitz}
Let $S$ be a finite set of places of $\bQ$ containing $B = \{2,\infty\}$ and $S_0 = S \setminus B$.

\begin{enumerate}
\item 
\label{propitem:GammaS-Hurwitz1}
The group
\[
\Gamma_S = \langle PA_p \ ; \ p\in S_0\rangle.
\]
acts transitively on the vertices of $\PT_{S_0}$
\item
The group $\Lambda_S$ is generated by $\Lambda_B$ and all $PA_p$ for $p\in S_0$. More precisely,
\[
\Lambda_S = \Gamma_S\Lambda_B.
\]
\item
The homomorphism $\psi:\Lambda_S\to\Lambda_B$ fits into the exact sequence
\[
1 \to \Gamma_S \to \Lambda_S \xrightarrow{\psi} \Lambda_B \to 1.
\]
\item $\Gamma_S$ is a normal subgroup of $\Lambda_S$ of index $24$.
\end{enumerate}
\end{prop}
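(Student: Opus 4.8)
The plan is to mirror the function-field argument of Proposition~\ref{prop:GammaS} and Corollary~\ref{cor:GammaS}, exploiting the local analysis already assembled for the Hurwitz order. The four assertions are tightly linked: once vertex transitivity of $\Gamma_S$ in (1) is established, the product decomposition (2), the exact sequence (3), and the normality and index in (4) all follow by essentially formal bookkeeping using the stabiliser computation of Lemma~\ref{lem:stabstdvertexinHurwitz} together with the retraction $\psi$, whose restriction to $\Lambda_B$ is the identity.

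For (1) I would first observe that each $x \in PA_p$ has a representative in $\dA(\bZ)$ of reduced norm $p$; at every place $\ell \in S_0$ with $\ell \neq p$ this representative is a unit of $\dA \otimes \bZ_\ell$, so its image in $\PGL_{1,\dA}(\bQ_\ell)$ fixes the standard vertex $\star_\ell$. Hence $x$ moves $\star_{S_0}$ only in the $p$-direction, and by Proposition~\ref{prop:reducednorm_p_action} the map $x \mapsto x\star_{S_0}$ sends $PA_p$ bijectively onto the set of all $p+1$ neighbours of $\star_{S_0}$ in that direction. Ranging over $p \in S_0$, the $\Gamma_S$-orbit of $\star_{S_0}$ therefore contains every neighbour of $\star_{S_0}$; since the orbit is $\Gamma_S$-invariant it contains all neighbours of each of its vertices, and connectivity of the $1$-skeleton of $\PT_{S_0}$ gives transitivity.

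Granting (1), assertion (2) is immediate: for $\lambda \in \Lambda_S$ pick $\gamma \in \Gamma_S$ with $\gamma\star_{S_0} = \lambda\star_{S_0}$, so that $\gamma^{-1}\lambda$ stabilises $\star_{S_0}$ and lies in $\Lambda_B$ by Lemma~\ref{lem:stabstdvertexinHurwitz}, whence $\Lambda_S = \Gamma_S\Lambda_B$. For (3), the defining condition $\psi(x)=1$ on the generators of $\Gamma_S$ gives $\Gamma_S \subseteq \ker\psi$; conversely, writing any $\lambda \in \ker\psi$ as $\gamma b$ with $\gamma\in\Gamma_S$ and $b\in\Lambda_B$ yields $1 = \psi(\lambda) = \psi(\gamma)\psi(b) = \psi(b) = b$, using that $\psi$ restricts to the identity on $\Lambda_B$, so $b=1$ and $\ker\psi = \Gamma_S$; the sequence is then exact, surjectivity of $\psi$ being already known. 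Finally (4) follows from (3): $\Gamma_S$ is normal as a kernel, and $\Lambda_S/\Gamma_S \cong \Lambda_B$ has order $24$ by Proposition~\ref{prop:LambdaBHurwitz}.

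The only genuine input is Proposition~\ref{prop:reducednorm_p_action}, which already supplies the bijection between $PA_p$ and the neighbours of the standard vertex; the remaining point to check, where the integrality of the representatives (reduced norm exactly $p$) is used, is that an element of $PA_p$ fixes the standard vertex in every direction $\ell \neq p$. I do not expect a serious obstacle: everything else is the same formal extraction of a semidirect-product structure from a vertex-transitive action with known point stabiliser that was carried out in the function-field setting.
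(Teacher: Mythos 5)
Your proposal is correct and follows essentially the same route as the paper: vertex transitivity via Proposition~\ref{prop:reducednorm_p_action} and connectivity of the $1$-skeleton, then the decomposition $\Lambda_S=\Gamma_S\Lambda_B$ from the stabiliser Lemma~\ref{lem:stabstdvertexinHurwitz}, the identification $\ker\psi=\Gamma_S$ using that $\psi$ kills the generators and restricts to the identity on $\Lambda_B$, and the index $24$ from $\#\Lambda_B=24$. The paper states these deductions more tersely; your write-up merely makes explicit the same steps, including the observation that an element of $PA_p$ is a unit at each $\ell\neq p$ and hence fixes $\star_\ell$.
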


\begin{proof}
By Proposition \ref{prop:reducednorm_p_action}, the standard vertex $\star_{S_0}$ is moved by elements of $PA_p$ to all neighbours in $p$-direction. Hence all neighbours of $\star_{S_0}$ lie in its $\Gamma_S$-orbit, which proves (1). Assertion (2) follows from (1) and Lemma \ref{lem:stabstdvertexinHurwitz}. Also (3) follows from (2) and (4) follows from (3).
\end{proof}

\begin{thm}
\label{thm:structure_Gamma_S-Hurwitz}
Let $S$ be a finite set of places of $\bQ$ containing $B = \{2,\infty\}$ and $S_0 = S \setminus B$. 
\begin{enumerate}
\item 
The group $\Gamma_S$ has the following presentation:
\begin{itemize}
\item
$\Gamma_S$ is generated by $\bigcup_{p \in S_0} PA_p$,
\item
subject to the following relations:
\begin{enumerate}
\item[(i)]
for all $p \in S_0$ and $[x_0 + x_1i + x_2j + x_3k] \in PA_p$, we have
\[
[x_0 + x_1i + x_2j + x_3k] \cdot [x_0 - x_1i - x_2j - x_3k] = 1,
\]
\item[(ii)]
for all $p \not= \ell \in S_0$ and $x \in PA_p$, and $y \in PA_\ell$, we have
\[
x\cdot\sigma^\ell_x(y) = y\cdot\sigma^p_y(x).
\]
\end{enumerate}
\end{itemize}
Here $\sigma^\ell_x$ and $\sigma^p_y$ the permutations determined by 
Proposition~\ref{prop:localpermutationHurwitz}.
\item
If $p\equiv1\bmod4$ for all $p\in S_0$ or $p\equiv\pm1\bmod8$ for all $p\in S_0$, the group $\Gamma_S$ acts on the cube complex $\PT_{S_0}$ without fixing a cell. In particular, $\Gamma_S$ is torsion-free.
\end{enumerate}
\end{thm}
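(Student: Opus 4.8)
The plan is to handle the two parts separately: part~(1) follows the template already used for the function-field case in Theorem~\ref{thm:structure_Gamma_S}, while part~(2) requires a genuinely new $2$-adic input.

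For part~(1), I would first upgrade the transitivity of Proposition~\ref{prop:GammaS-Hurwitz} to \emph{simple} transitivity. By multiplicativity of the orbifold degree together with Proposition~\ref{prop:volumeLambdaS_Hurwitz} and the index statement of Proposition~\ref{prop:GammaS-Hurwitz},
\[
N_0(\Gamma_S) = (\Lambda_S : \Gamma_S)\cdot N_0(\Lambda_S) = 24 \cdot \tfrac{1}{24} = 1,
\]
and the stabiliser of $\star_{S_0}$ in $\Gamma_S$ is $\Gamma_S \cap \Lambda_B = \ker(\psi|_{\Lambda_B}) = 1$, since $\psi$ restricts to the identity on $\Lambda_B$. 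Hence $\Gamma_S$ acts simply transitively on the vertices, so the $1$-skeleton of $\PT_{S_0}$ is the Cayley graph of $\Gamma_S$ on $\bigcup_{p} PA_p$, the generators being exactly the neighbours of $\star_{S_0}$ by Proposition~\ref{prop:reducednorm_p_action}. As in Theorem~\ref{thm:structure_Gamma_S}, a complete set of relations comes from the cycles of this graph, generated by backtracking along one edge and by loops around the $2$-dimensional faces. Backtracking yields relation~(i) (here $[x]^{-1} = [\bar x]\in PA_p$ because $x\bar x = \Nrd(x)$ is an $S$-unit), and, since every square of $\PT_{S_0}$ is a $\Gamma_S$-translate of one with corner $\star_{S_0}$, the square loops give precisely relation~(ii) with the local permutations of Proposition~\ref{prop:localpermutationHurwitz}.

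For part~(2), I would argue, as in Theorem~\ref{thm:structure_Gamma_S}(2), that it suffices to exclude nontrivial torsion. Since $\PT_{S_0}$ is CAT(0), a finite-order $g \in \Gamma_S$ fixes a cell by the Bruhat--Tits fixed point lemma \cite[II.2.8(1)]{bridson_haeflinger}, and simple transitivity rules out a fixed vertex. As $\Gamma_S$ acts coordinatewise on the product of trees and finite-order tree automorphisms are elliptic, the minimal fixed cell is a cube $C$ whose edge-directions $p_1,\dots,p_k$ are exactly the factors on which $g$ inverts an edge; on $C$ the element $g$ flips each coordinate, i.e.\ acts as the central symmetry. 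Then $g^2$ fixes every vertex of $C$, so $g^2 = 1$ by simple transitivity. After translating so that $\star_{S_0}$ is a vertex of $C$, the element $g$ sends $\star_{S_0}$ to its antipode, whence $g$ equals a product $\tilde x_1 \cdots \tilde x_k$ of one generator per edge-direction and admits an integral representative $\tilde g \in \dA(\bZ)$ of reduced norm $p_1 \cdots p_k$.

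The decisive step is then a $2$-adic computation. From $g^2 = 1$ in $\PGL$ the element $\tilde g^2$ is central, so by Cayley--Hamilton $\tr(\tilde g) = 0$ and $\tilde g = v_1 i + v_2 j + v_3 k$ is pure with $v_1^2 + v_2^2 + v_3^2 = p_1 \cdots p_k$. Since $g \in \Gamma_S = \ker\psi$ and $\tilde g$ is a $2$-adic unit, Proposition~\ref{prop:LambdaBHurwitz} forces $\tilde g \in U^2 = 1 + 2\fO_{D,2}$, i.e.\ $\tilde g \equiv 1 \bmod 2\fO_{D,2} = 2(\dA\otimes\bZ_2)$; reducing in $\dA/2\dA$ and using $i+j+k = 2\rho - 1 \equiv 1$ shows this holds precisely when $v_1, v_2, v_3$ are all odd, whence $p_1 \cdots p_k \equiv 3 \bmod 8$. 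This contradicts the hypotheses, which give $p_1 \cdots p_k \equiv 1 \bmod 4$ in the first case and $p_1 \cdots p_k \equiv \pm 1 \bmod 8$ in the second, neither of which is $\equiv 3 \bmod 8$. Hence no nontrivial torsion exists and $\Gamma_S$ is torsion-free. I expect the main obstacle to be exactly this $2$-adic bookkeeping: correctly passing from $\psi(g)=1$ to the congruence modulo $2\fO_{D,2}$ and exploiting the Hurwitz element $\rho$ to see that all three coordinates of $\tilde g$ must be odd, which is what produces the decisive residue $3 \bmod 8$.
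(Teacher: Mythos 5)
Your proposal is correct, and for part (1) it coincides with the paper's proof, which simply invokes the argument of Theorem~\ref{thm:structure_Gamma_S} (simple transitivity from $N_0(\Gamma_S)=(\Lambda_S:\Gamma_S)\cdot N_0(\Lambda_S)=1$, then the Cayley-graph reading of backtracking and square relations). For part (2) your route differs from the paper's in the arithmetic endgame. The paper argues directly that \emph{every} nontrivial element of $\Gamma_S$ fails to be $2$-torsion: a representative in $\dA_L$ of a product of generators is congruent to $1$ or to $i+j+k$ modulo $2\dA_L$; in the first case the reduced trace is $\equiv 2 \pmod 4$, and in the second case (which under the hypothesis $p\equiv\pm1\bmod 8$ forces $\Nrd\equiv 7\bmod 8$) a vanishing trace would exhibit $\Nrd$ as a sum of three integer squares, impossible for residue $7$ modulo $8$. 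You instead first use the CAT(0) geometry to pin the torsion candidates down to central symmetries of cubes through $\star_{S_0}$, i.e.\ squarefree products of one generator per direction, and then derive from $\psi(g)=1$ that a trace-zero representative must have all three coordinates odd, so its norm is $\equiv 3\bmod 8$, contradicting both hypotheses at once. Both arguments are sound; yours buys a uniform contradiction ($3\bmod 8$) at the cost of an extra geometric reduction, while the paper's is shorter because it never needs to know which products actually occur. One cosmetic point: from $\psi(g)=1$ and $\Nrd(\tilde g)$ odd you only get $\tilde g\in\pm U^2$ rather than $\tilde g\in U^2$, but the sign is invisible modulo $2\fO_{D,2}$, so nothing is lost.
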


Note that this generalises the $2$-dimension results of \cite[\textsection3]{mozes:cartan} and \cite[Cor.3.11, Thm.3.30]{rattaggi:thesis}.

\begin{proof}
(1) is done in the same way as Theorem \ref{thm:structure_Gamma_S}. Also by the argument in loc.~cit., it suffices to show that $\Gamma_S$ has no $2$-torsion in the given cases to prove (2). In fact, each element of $\Gamma_S\setminus\{1\}$ has a representative $x\in\dA_L$ which is a product of elements of $A_p$ for $p\in S_0$. If $p\equiv1\bmod4$ for all $p\in S_0$, we have $x\equiv1\bmod2\dA_L$. If $p\equiv\pm1\bmod8$, we have $x\equiv1\bmod2$ or $x\equiv i+j+k\bmod2$ but has reduced norm congruent to $7$ modulo $8$, hence not of the form $ai+bj+ck$ with $a,b,c\in\bZ$. In either case, the reduced trace of $x$ does not vanish, implying that $[x]\in\Gamma_S$ is not a $2$-torsion element.
\end{proof}

\begin{cor} \label{cor:structure_Gamma_S-Hurwitz}
Let $S$ be a finite set of places of $\bZ$ containing $B = \{2,\infty\}$ and $S_0 = S \setminus B$. 
The group $\Lambda_S$ is a semi-direct product $\Lambda_S = \Gamma_S \rtimes \Lambda_B$, and we have the following presentation:
\begin{itemize}
\item
$\Lambda_S$ is generated by $r, s$ and $PA_p$ for each $p \in S_0$,
\item
for all $p \in S_0$ and $x=[x_0+x_1i+x_2j+x_3k]\in PA_p$, we have
\begin{enumerate}
\item[(i)] $[x_0+x_1i+x_2j+x_3k]\cdot[x_0-x_1i-x_2j-x_3k]=1$,
\item[(ii)] $r\cdot[x_0+x_1i+x_2j+x_3k]\cdot r^{-1} = [x_0+x_3i+x_1j+x_2k]$,
\item[(iii)] $s\cdot[x_0+x_1i+x_2j+x_3k]\cdot s^{-1} = [x_0+x_2i+x_1j-x_3k]$,
\end{enumerate}
\item
for all $p \not= \ell \in S_0$ and $x \in PA_p$, and $y \in PA_\ell$ we have
\[
x\cdot\sigma^\ell_p(y) = y\cdot\sigma^p_y(x).
\]
\end{itemize}
\end{cor}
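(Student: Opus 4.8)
The plan is to mimic the proof of Corollary~\ref{cor:structure_Gamma_S} in the function field case, assembling the presentation of $\Lambda_S$ out of three pieces already available: the normal subgroup $\Gamma_S$, its complement $\Lambda_B$, and the conjugation action of the complement on the normal subgroup. First I would invoke Proposition~\ref{prop:GammaS-Hurwitz}, which gives the split short exact sequence $1 \to \Gamma_S \to \Lambda_S \xrightarrow{\psi} \Lambda_B \to 1$; it splits because $\psi$ restricts to the identity on $\Lambda_B \subseteq \Lambda_S$, so $\Lambda_B$ is an honest complement and $\Lambda_S = \Gamma_S \rtimes \Lambda_B$. For the complement I would use the identification $\Lambda_B \simeq \cD_2^\times/(\bQ_2^\times \cdot U^2) \simeq S_4$ from Proposition~\ref{prop:LambdaBHurwitz} together with the presentation $\langle r,s \mid r^3, s^2, (rs)^4 \rangle$ of \eqref{eq:D2modU2}, where $r = [\rho]$ and $s = [i+j]$.

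Next I would apply the standard recipe for presenting a semi-direct product $N \rtimes H$: take a presentation of $N$, a presentation of $H$, and, for each pair of generators $y$ of $H$ and $x$ of $N$, a relation rewriting $y x y^{-1}$ as a word in the generators of $N$. With $N = \Gamma_S$ presented by Theorem~\ref{thm:structure_Gamma_S-Hurwitz}, this contributes the inversion relations~(i) and the commutation relations $x \cdot \sigma^\ell_x(y) = y \cdot \sigma^p_y(x)$; with $H = \Lambda_B \simeq S_4$ it contributes the relations on $r$ and $s$. The only genuinely new input is therefore the conjugation action of $r$ and $s$ on the generating set $\bigcup_{p\in S_0} PA_p$, recorded in the formulas~(ii) and~(iii).

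It thus remains to verify these two formulas by a direct computation in the Hurwitz order, which is where the actual work sits. For~(ii) I would compute $\rho x \rho^{-1}$ for $x = x_0 + x_1 i + x_2 j + x_3 k$, using $\rho^{-1} = \overline{\rho}$ (as $\Nrd(\rho)=1$) and the fact that conjugation by $\rho$ cyclically permutes $i \mapsto j \mapsto k \mapsto i$, which yields $x_0 + x_3 i + x_1 j + x_2 k$. For~(iii) I would compute $(i+j)\,x\,(i+j)^{-1}$ with $(i+j)^{-1} = \tfrac{-(i+j)}{2}$, checking on the basis that $i \leftrightarrow j$ while $k \mapsto -k$, which yields $x_0 + x_2 i + x_1 j - x_3 k$. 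Both are short quaternion multiplications. Along the way one checks that conjugation by $r$ or $s$ preserves the reduced norm $p$ and the condition $\psi(\cdot)=1$ cutting out $PA_p$, so that the right-hand sides land again in the generating set and the formulas are genuine relations among generators. The main point to watch is the sign on the $k$-coordinate in~(iii): in $\PGL_{1,\dA}$ an overall scalar is invisible, but since the representatives in $PA_p$ are pinned down by $\psi(\cdot)=1$, this individual sign is not, and the computation must reproduce it exactly.
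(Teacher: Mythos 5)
Your proposal is correct and follows essentially the same route as the paper, which simply says the corollary ``is done in the same way as Corollary~\ref{cor:structure_Gamma_S}'', i.e.\ assemble the semi-direct product presentation from Proposition~\ref{prop:GammaS-Hurwitz}, the presentation of $\Lambda_B$, and Theorem~\ref{thm:structure_Gamma_S-Hurwitz}. Your explicit quaternion computations for (ii) and (iii) (conjugation by $\rho$ cyclically permuting $i\mapsto j\mapsto k\mapsto i$, and conjugation by $i+j$ swapping $i\leftrightarrow j$ and negating $k$, both preserving the reduced norm and the condition $\psi(\cdot)=1$) check out and supply exactly the details the paper leaves implicit.
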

\begin{proof}
This is done in the same way as Corollary \ref{cor:structure_Gamma_S}
\end{proof}

\subsection{A torsion-free simply transitive lattice}
Although Theorem \ref{thm:structure_Gamma_S-Hurwitz} does not always yield a torsion-free lattice, we can still find a torsion-free arithmetic subgroup of $\Lambda_S$ acting on the vertices of $\PT_{S_0}$ simply transitively. For this we follow the strategy in \cite[\textsection3.3]{rattaggi:thesis} and define
\[
A'_p := \begin{cases} A_p & \text{if} ~ p\equiv 1\bmod4 \\ A_pi & \text{if} ~ p\equiv 3\bmod4, \end{cases}
\]
and let $PA'_p\subseteq\Lambda_p$ be the image of $A'_p$ under the quotient map.

\begin{prop}
\label{prop:torsionfreeinSHurwitz}
Let $p$ and $\ell$ be different odd primes.
\begin{enumerate}
\item
The action of $PA'_p$ on the standard vertex $\star_p\in\RT_{p+1}$ yields a bijection between $PA'_p$ and the neighbours of $\star_p$.
\item
There are permutations $\tau^\ell_x\in\Aut(PA'_\ell)$ for each $x\in PA'_p$ and $\tau^p_y\in\Aut(PA'_p)$ for each $y\in PA'_\ell$ such that
\[
x\cdot\tau^\ell_x(y) = y\cdot\tau^p_y(x)
\]
for all $x\in PA'_p$ and $y\in PA'_\ell$.
\end{enumerate}
\end{prop}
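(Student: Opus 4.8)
The plan is to reduce both assertions to their unmodified counterparts, Proposition~\ref{prop:reducednorm_p_action} and Proposition~\ref{prop:localpermutationHurwitz}, while carefully tracking the correcting unit. Write $u_p := 1$ if $p \equiv 1 \bmod 4$ and $u_p := i$ if $p \equiv 3 \bmod 4$, so that $A'_p = A_p u_p$ and hence $PA'_p = PA_p \cdot [u_p]$ inside $\Lambda_p$. The only properties of $[u_p]$ I will use are that it lies in $\dA^\times/\{\pm 1\} \subseteq \Lambda_B$ and that $[u_p]^2 = 1$.

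For part (1), recall that $[u_p] \in \Lambda_B$ fixes the standard vertex $\star_p$ by Lemma~\ref{lem:stabstdvertexinHurwitz}. Writing any $x \in PA'_p$ as $x = \tilde{x}[u_p]$ with $\tilde{x} \in PA_p$, one has $x\star_p = \tilde{x}[u_p]\star_p = \tilde{x}\star_p$. Since right multiplication by $[u_p]$ is a bijection $PA_p \to PA'_p$ and $\tilde{x} \mapsto \tilde{x}\star_p$ is a bijection onto the neighbours of $\star_p$ by Proposition~\ref{prop:reducednorm_p_action}, the composite $x \mapsto x\star_p$ is a bijection from $PA'_p$ onto the neighbours of $\star_p$.

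For part (2) the key observation is that conjugation by any element of $\Lambda_B$ preserves each $PA_\ell$ as a set: indeed $\Nrd$ is conjugation-invariant, conjugation by the unit $[u_p]$ keeps representatives integral, and since $\psi$ is a homomorphism one has $\psi(gxg^{-1}) = \psi(g)\psi(x)\psi(g)^{-1} = 1$ whenever $\psi(x) = 1$. Denote by $c_{u_p}$ the resulting permutation of $PA_\ell$; it is an involution because $[u_p]^2 = 1$. Given $x = \tilde{x}[u_p] \in PA'_p$ and $y = \tilde{y}[u_\ell] \in PA'_\ell$, I seek $z = \tilde{z}[u_\ell] \in PA'_\ell$ and $w = \tilde{w}[u_p] \in PA'_p$ with $xz = yw$. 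Rewriting $[u_p]\tilde{z} = c_{u_p}(\tilde{z})[u_p]$ (and likewise on the other side) and using that $[u_p]$ and $[u_\ell]$ are both powers of $[i]$, hence commute, the equation $xz = yw$ becomes
\[
\tilde{x}\,c_{u_p}(\tilde{z}) = \tilde{y}\,c_{u_\ell}(\tilde{w}).
\]
Now Proposition~\ref{prop:localpermutationHurwitz} provides unique $Z = \sigma^\ell_{\tilde{x}}(\tilde{y}) \in PA_\ell$ and $W = \sigma^p_{\tilde{y}}(\tilde{x}) \in PA_p$ with $\tilde{x}Z = \tilde{y}W$; setting $\tilde{z} := c_{u_p}(Z)$ and $\tilde{w} := c_{u_\ell}(W)$ yields the desired $z \in PA'_\ell$ and $w \in PA'_p$. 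Defining $\tau^\ell_x(y) := z$ and $\tau^p_y(x) := w$ gives exactly $x\,\tau^\ell_x(y) = y\,\tau^p_y(x)$.

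Finally, the permutation property is automatic: for fixed $x$ (equivalently fixed $\tilde{x}$) the assignment $y \mapsto z$ factors as the bijection $PA'_\ell \to PA_\ell$, $y \mapsto \tilde{y}$, followed by the permutation $\sigma^\ell_{\tilde{x}}$ of $PA_\ell$, the permutation $c_{u_p}$ of $PA_\ell$, and the bijection $PA_\ell \to PA'_\ell$, $\tilde{z} \mapsto \tilde{z}[u_\ell]$; hence $\tau^\ell_x \in \Aut(PA'_\ell)$, and symmetrically $\tau^p_y \in \Aut(PA'_p)$. I expect the only delicate point to be the bookkeeping in the displayed reduction — verifying that conjugation by $[u_p]$ genuinely preserves $PA_\ell$ and that the two correcting units commute, so that they can be collected on the right — after which both claims are inherited from the unmodified case.
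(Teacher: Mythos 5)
Your proof is correct. The paper itself gives no details here: its proof is the single sentence that both assertions ``can be done in the same way as Propositions~\ref{prop:reducednorm_p_action} and \ref{prop:localpermutationHurwitz}'', i.e.\ it re-runs the ideal-theoretic arguments (counting right ideals of reduced norm $p$, unique factorization of elements of reduced norm $p\ell$ in the Hurwitz order) with the normalisation $\psi(x)=\psi(u_p)$ in place of $\psi(x)=1$. You instead deduce the twisted statements formally from the untwisted ones, conjugating the correcting unit $[u_p]$ past the other factor and absorbing the resulting involution $c_{u_p}$ of $PA_\ell$ into the permutations. Both routes hinge on the same two facts, which you isolate explicitly: $[i]$ is a unit of $\dA$ normalising each $A_\ell$ (so $c_{u_p}$ is a well-defined involution of $PA_\ell$), and $[u_p]$, $[u_\ell]$ commute since each is $1$ or $[i]$ --- in the paper's implicit rerun this same commutativity is what guarantees that the second factor $x_0'$ in $x_0y_0=y_0'x_0'$ again satisfies $\psi(x_0')=\psi(u_p)$. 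Your reduction has the advantage of not reopening the quaternion arithmetic at all, at the cost of the conjugation bookkeeping; the paper's route keeps the two propositions independent. One tiny point worth stating explicitly in your write-up: the map $A'_p\to PA'_p$ is two-to-one exactly as for $A_p$, so $\#PA'_p=p+1$ and your cardinality transfer in part (1) is legitimate.
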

\begin{proof}
These can be done in the same way as Propositions \ref{prop:reducednorm_p_action} and \ref{prop:localpermutationHurwitz}.
\end{proof}

\begin{thm}
\label{thm:torsionfreelattice_S}
Let $S$ be a finite set of places of $\bZ$ containing $B = \{2,\infty\}$ and $S_0 = S \setminus B$. Define
\[
\Gamma'_S := \langle PA'_p \ | \ p\in S_0 \rangle.
\]
\begin{enumerate}
\item
Every element in $\Gamma'_S$ has a representative in $\dA_L$ congruent to $1$ or $1+j+k$ modulo $2\dA_L$.
\item
The action of $\Gamma'_S$ on $\PT_{S_0}$ is simply transitive on the vertices and has no fixed cells. In particular, $\Gamma'_S$ is torsion-free.
\end{enumerate}
\end{thm}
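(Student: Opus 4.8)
The plan is to prove (1) first, since the congruence it records is the engine behind all of (2). The key computation is to reduce the generators modulo $2\dA_L$. By the remark following the definition of $A_p$, an element of $A_p$ is congruent to $1$ modulo $2\dA_L$ when $p \equiv 1 \bmod 4$ and to $i+j+k$ when $p \equiv 3 \bmod 4$. Since $A'_p = A_p$ in the first case and $A'_p = A_pi$ in the second, and since $(i+j+k)i = -1 + j - k \equiv 1+j+k \bmod 2\dA_L$, every generator of $\Gamma'_S$ admits a Lipschitz representative congruent to $1$ or to $1+j+k$ modulo $2\dA_L$; this is exactly what the twist by $i$ in the definition of $A'_p$ is designed to achieve. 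The next observation is that the two classes $[1]$ and $[1+j+k]$ form a subgroup of order $2$ of the finite ring $\dA_L/2\dA_L$: one checks $(1+j+k)^2 = -1 + 2j + 2k \equiv 1$, and the pair is stable under conjugation and under $w \mapsto \bar w$ because $\overline{1+j+k} = 1-j-k \equiv 1+j+k$. Writing an arbitrary $\gamma \in \Gamma'_S$ as a word in the generators and representing the inverse of a generator $[x]$, $x \in A'_p$, by $\bar x \in \dA_L$, I would multiply these representatives out: the resulting $w \in \dA_L$ represents $\gamma$ up to a central $\bZ_S^\times$-scalar, reduces to $1$ or $1+j+k$ modulo $2\dA_L$, and has odd reduced norm $\Nrd(w) = \prod p_i$. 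This gives (1), together with two facts I will reuse: $\Nrd(w)$ is odd and the scalar part of $w$ is odd.

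For transitivity in (2), Proposition \ref{prop:torsionfreeinSHurwitz}(1) maps $PA'_p$ bijectively onto the neighbours of $\star_p$, while elements of $A'_p$ have reduced norm $p$ and therefore fix $\star_\ell$ for all $\ell \ne p$ in $S_0$; so the same argument as in Proposition \ref{prop:GammaS-Hurwitz} shows $\Gamma'_S\star_{S_0}$ contains all neighbours of $\star_{S_0}$ in every direction, hence all vertices. For simple transitivity it then remains to see that the stabiliser is trivial, and by Lemma \ref{lem:stabstdvertexinHurwitz} this stabiliser is $\Gamma'_S \cap \Lambda_B$. Given $\gamma$ in this intersection, I would compare the representative $w \in \dA_L$ from (1) with a representative $v$ of $\gamma$ taken from the explicit list \eqref{eq:listLambdaB}, whose reduced norm lies in $\{1,2,4\}$. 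Since $w = \mu v$ for some $\mu \in \bQ^\times$ with $\Nrd(w) = \mu^2\Nrd(v)$, the oddness of $\Nrd(w)$ rules out $\Nrd(v) = 2$ (the $2$-adic valuation of $\mu^2$ would be odd), and the integrality $w \in \dA_L$ rules out $\Nrd(v) = 4$ (otherwise $w = \pm\frac12\sqrt{\Nrd(w)}\,(1\pm i\pm j\pm k)$ has half-integral coordinates). Hence $\Nrd(v) = 1$, so $v \in \{1,i,j,k\}$ and $\mu$ is an odd integer; reducing $w \equiv v \bmod 2\dA_L$ and matching against the congruence from (1) leaves only $v = 1$, so $\gamma = 1$ and the action is simply transitive.

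The passage from simple transitivity to the absence of fixed cells is then the standard cube argument from Theorem \ref{thm:structure_Gamma_S}(2). If $\gamma$ stabilises a cube $C = e_1 \times \cdots \times e_d$, it acts on each edge factor either trivially or by the inversion exchanging its endpoints, so $\gamma^2$ fixes a vertex of $C$ and thus $\gamma^2 = 1$ by simple transitivity. But an element of order $2$ would have a representative $w$ with $\tr(w) = 0$ (from $w^2 \in \bQ$ and $w \notin \bQ$), contradicting that the scalar part of the representative from (1) is odd, hence nonzero. Therefore no nontrivial element fixes a cell, and since every finite subgroup of the $\mathrm{CAT}(0)$ complex $\PT_{S_0}$ fixes a cell by the Bruhat--Tits fixed point lemma, $\Gamma'_S$ is torsion-free. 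I expect the main obstacle to be (1): one has to recognise that twisting by $i$ for $p \equiv 3 \bmod 4$ is precisely what confines every generator to the two-element subgroup $\{[1],[1+j+k]\}$ of $\dA_L/2\dA_L$, after which (2) reduces to extracting the odd-norm and odd-trace consequences and running the CAT(0) argument.
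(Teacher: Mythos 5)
Your proposal is correct and follows essentially the same route as the paper: part (1) via the observation that $\{1,1+j+k\}$ is multiplicatively closed in $\dA_L/2\dA_L$ (with the twist by $i$ moving the $p\equiv3\bmod4$ generators into this set), and part (2) by reducing fixed cells to $2$-torsion via the CAT(0)/central-symmetry argument and excluding order-$2$ elements because the representative from (1) has reduced trace $\equiv2\pmod4$, hence nonzero. The only difference is that you spell out the triviality of the vertex stabiliser $\Gamma'_S\cap\Lambda_B$ by comparing norms and congruences against the explicit list of $24$ units, a step the paper leaves implicit in its reference back to Theorem~\ref{thm:structure_Gamma_S-Hurwitz}; this is a welcome clarification rather than a deviation.
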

\begin{proof}
Observe that each element in $A'_p$ as element of $\dA_L$ is congruent to $1$ or $1+j+k$ modulo $2\dA_L$. This proves (1) since $\{1,1+j+k\}$ is closed under the multiplication in $\dA/2\dA_L$. To prove (2), it suffices by the same argument as in Theorem \ref{thm:structure_Gamma_S-Hurwitz} to show that $\Gamma'_S$ has no $2$-torsion. This holds since each element of $\Gamma'_S$ has a representative in $\dA_L$ which is congruent to $1$ or $1+j+k$ modulo $2\dA_L$ and thus has a reduced trace $\equiv 2 \pmod 4$. Note that elements representing of order $2$ must have vanishing reduced trace.
\end{proof}

\begin{ex}
\label{ex:468}
We compute the smallest example in dimension $3$ given by $S_0 = \{3,5,7\}$. The torsion-free group $\Gamma'_S$ acts vertex transitively on the product
\[
 \RT_{4} \times  \RT_{6} \times  \RT_{8}.
\]
Here the generator sets are in terms of representatives in $\dA^\times (\bZ[\frac{1}{210}])$:
\begin{align*}
PA'_3 & = \{1 \pm i \pm j\}, \\
PA'_5 & = \{1 \pm 2i, 1 \pm 2j, 1 \pm 2k\}, \\
PA'_7 & = \{1 \pm 2i \pm j \pm k\}.
\end{align*}
The relations are determined by Proposition~\ref{prop:torsionfreeinSHurwitz}. More concretely, we set
\begin{align*}
PA'_3: \qquad & a_1 = 1 + j + k, \ a_2 = 1+j-k, \ a_3 = 1-j-k, \ a_4 = 1 -j + k, \\
PA'_5: \qquad  & b_1 = 1 + 2i, \ b_2  = 1 + 2j, \ b_3 = 1 + 2k, \ b_4 = 1 - 2i, \ b_5 = 1- 2j, \ b_6 = 1 - 2k, \\
PA'_7: \qquad  & c_1 = 1+2i + j + k, \ c_2 = 1-2i + j + k, \ c_3 = 1+2i - j + k, \ c_ 4= 1+2i + j - k, \\ &  c_5 = 1- 2i -  j - k, \ c_6 = 1+2i - j - k, \ c_7 = 1-2i + j - k, \ c_8 = 1-2i - j + k.
\end{align*}
With this notation we have $a_i^{-1} = a_{i+2}$, $b_i^{-1} = b_{i+3}$, and $c_i^{-1} = c_{i+4}$, and using these abbreviations we find the explicit presentation
\[
\Gamma'_{\{3,5,7\}} = \left\langle
\begin{array}{c}
a_1,a_2 \\ 
b_1,b_2,b_3 \\
c_1,c_2,c_3,c_4
\end{array}
\ \left| 
\begin{array}{c}
a_1b_1a_4b_2,  \ a_1b_2a_4b_4, \  a_1b_3a_2b_1, \ 
a_1b_4a_2b_3,  \ a_1b_5a_1b_6, \ a_2b_2a_2b_6 \\

a_1c_1a_2c_8, \ a_1c_2a_4c_4, \ a_1c_3a_2c_2, \ a_1c_4a_3c_3, \\
a_1c_5a_1c_6, \ a_1c_7a_4c_1, \ a_2c_1a_4c_6, \ a_2c_4a_2c_7 \\

b_1c_1b_5c_4, \
b_1c_2b_1c_5, \
b_1c_3b_6c_1, \
b_1c_4b_3c_6, \
b_1c_6b_2c_3, \
b_1c_7b_1c_8, \\
b_2c_1b_3c_2, \
b_2c_2b_5c_5, \
b_2c_4b_5c_3, \
b_2c_7b_6c_4, \
b_3c_1b_6c_6, \
b_3c_4b_6c_3
\end{array}
\right.\right\rangle.
\]
\end{ex}

%%%%%%%%%%%%%%%%%%%%%%%%%%%%%%%%%%%%%%%%%%%
\section{Combinatorial cube complexes}

Cube complexes and in particular CAT(0) cube complexes feature prominently in geometric group theory. Cube complexes are usually defined as cellular topological spaces consisting of cubes with attaching maps respecting the cubical structure. For an introduction we refer to 
\cite{sageev:cubecomplexes} and  \cite{ballmann_swiatkowski}. There is however an alternative categorical definition similar to simplicial complexes. This combinatorial description is more suitable for our treatment of the doubling construction in higher rank, and potentially for future applications.

%----------------------------------------------------------------------------------------------------------------
\subsection{The cube category, cubical sets and cube complexes}
We omit degeneracy maps for simplicity of exposition and because our modest applications of the formalism does not require them. Our category of cubes $\Cube$ rather is an analogue of the category  underlying semi-simplicial sets.

We refer to \cite{grandis_mauri} for a more in depth categorical analysis of cubical structures. Square complexes have been treated similarly in \cite[\S1]{burger-mozes:lattices}.

\begin{defi}
\label{defi:cube}
The \textbf{cube category} 
$\Cube$ has the following objects and morphisms. 
\begin{enumerate}
\item[(i)]
Objects $[n]$ for each $n \in \bN_{\geq 0}$. 
\item[(ii)]
Morphisms $f: [n] \to [m]$ in $\Cube$ are maps $f: \{-1,1\}^n \to \{-1,1\}^m$ of the form 
\[
f (a_1, \ldots, a_n) = (\ep_1 a_{\sigma^{-1}(1)}, \ldots, \ep_m a_{\sigma^{-1}(m)}),
\]
where $\sigma: \{1,\ldots,n\}  \inj \{1,\ldots,m\}$ is an injective map, $\ep$ is a map of signs 
\[
\ep :  \{1,\ldots, m\}  \to \{-1,1\}
\]
and, by abuse of notation, we write $a_{\sigma^{-1}(j)} :=  1$, $j \notin \sigma(\{1,\ldots,n\})$.
Composition of morphisms in $\Cube$ is composition of maps.
\end{enumerate}
A \textbf{cubical set} is a contravariant functor $X : \Cube^\op \to \sets$. Morphisms of cubical sets are natural transformations of functors. We abbreviate the effect of $X$ on morphisms $f : [n] \to [m]$ by $f^\ast := X(f)$. 
\end{defi}

\begin{rmk} 
The map $f$ assigned to $\sigma$ and $\ep$ as in the definition of $\Cube$ is the map that moves $a_i$ to the $j=\sigma(i)$-th position and multiplies it with $\ep_j$, and inserts signs $\ep_j$ at the $j$th place for $j$ not in the image of $\sigma$. 
\end{rmk}

\begin{rmk}
All morphisms in $\Cube$ are composites of automorphisms and products of \textbf{face maps} 
\[
\delta^\ep_i : [n]  \to [n+1],
\]
the map that inserts $\ep \in \{-1,1\}$ into the $i$th coordinate, for $1 \leq i \leq n+1$, i.e., 
\[
\delta^\ep_i(a_1,\ldots,a_n) = (a_1,\ldots,a_{i-1},\ep,a_i, \ldots, a_{n}).
\]
The automorphism group of the $n$-dimensional cube $[n] \in \Cube$ is
\[
G_n := \Aut_{\Cube}([n]) = \{\pm 1\} \wr S_n = \{\text{signed permutation matrices in } \GL_n(\bZ)\}.
\]
\end{rmk}

\begin{rmk}
Let $X : \Cube^\op \to \sets$ be a cubical set. The \textbf{parametrized cubes} of dimension $n$ is the set with right $G_n$-action 
\[
X_n := X([n]).
\]
A \textbf{cube} of dimension $n$ of $X$ is a $G_n$-orbit in $X_n$.  
For a parametrized cube $x \in X_n$ the parametrized cubes of dimension $n-1$
\[
\delta_i^{\ep,\ast}(x)
\]
are the \textbf{faces} of $x$ in position $\ep$ with respect to the $i$th coordinate. 
\end{rmk}

\begin{defi}
Let $I = [-1,1]$ be the real interval. The group $G_n$ acts geometrically on $I^n$ in a natural way by signed permutation matrices. Similarly, for each map $f: [n] \to [m]$ in $\Cube$ we have the corresponding linear map
\[
f_I : I^n \to I^m.
\]

The \textbf{geometric realization} of a cubical set $X$ is the topological space
\[
\abs{X} = \coprod_{n} X_n \times I^n/\sim
\]
where $\sim$ is generated by $\big(f^\ast(x), t \big) \sim  \big(x, f_I(t)\big)$
for all $f: [n] \to [m]$, all $x \in X_m$ and all $t \in I^{n}$. 

Note that the $G_n$-orbit of a cube that corresponds to changes of orientation of a fixed parametrized cube $x \in X_n$ gives rise to a single topological $n$-cell.
\end{defi}

\begin{defi}
In this note, a \textbf{cube complex (of dimension $n$)} is defined to be a cubical set $X$ with the following properties.
\begin{enumerate}
\item[(i)]
$X$ is of pure dimension $n$, i.e., for all $m > n$ we have $X_m = \emptyset$ and all $y \in X_{m}$ for $m <n$ are faces $y= \delta_i^{\ep,\ast}(x)$ of some $x \in X_{m+1}$.
\item[(ii)]
For all $m \leq n$ the $G_m$-action on $X_m$ is free.
\end{enumerate}
\end{defi}

\begin{rmk}
Traditionally, cube complexes of dimension $n$ are cellular complexes $\cX$ such that 
\begin{enumerate}
\item[(i)] each cell is isomorphic to $I^m$ for some $m \leq n$, and 
\item[(ii)] attaching maps are maps that factor via the geometric realization of face maps over attaching maps of cubes of smaller dimension. 
\end{enumerate}
If we consider such a topological space $\cX$ as endowed with cubical structure such that 
\[
X_n = \Hom(I^n,\cX)
\]
consists of a free right $G_n$-set for each geometric $n$-dimensional cube, then the $X_n$ naturally enrich to a functor $X : \Cube^\op \to \sets$ which is a cube complex in our sense. Its topological realization yields naturally $\abs{X} = \cX$ by evaluation $\Hom(I^n,\cX) \times I^n \to \cX$.
\end{rmk}

\begin{ex}
The cubical set $\square^n = \Hom(-,[n])$ is called the $n$-dimensional cube, and for a good reason. Its topological realisation is easily seen to be
\[
\abs{\square^n} = I^n,
\]
the $n$-dimensional standard cube on which we model topological realizations of cube complexes. The $p$-dimensional faces of $\square^n$ are 
\[
(\square^n)_p = \Hom([p],[n]) = \{f: \{-1,1\}^p \to \{-1,1\}^n \ ; \  \text{ as in Definition~\ref{defi:cube}} \}
\]
of which there are 
\[
\#(\square^n)_p = p! \binom{n}{p} \cdot 2^n.
\]
\end{ex}

%----------------------------------------------------------------------------------------------------------------
\subsection{Product decomposition of the standard cube}

For a group $G$ we denote the category of right $G$-sets by $\sets_G$.
Let $H$ be a subgroup of $G$. Restriction $\res_H^G: \sets_G \to \sets_H$  has a left adjoint 
\[
M  \times^H G = (M \times G)/\sim,
\]
where $M$ is a right $H$-set and $\sim$ is defined by 
\[
(m,hg) \sim (mh,g) \qquad \text{ for all } m \in M, g \in G, h \in H.
\]
The right $G$-action on $M \times^H G$ is by right translation on the second factor.
The natural bijection establishing adjointness is
\begin{align*}
\Hom_G(M \times^H G ,A) & = \Hom_H(M,\res_{H}^G(A)) \\
\big(\ph: M \times^H G \to A \big) & \mapsto  \ph(-,1) \\
\big((m,g) \mapsto \psi(m)g \big) & \mapsfrom \big(\psi: M \to A\big).
\end{align*}

\begin{defi}
The cube category carries a \textbf{monoidal structure} $\times$ as follows. For objects we set
\[
[n] \times [m] = [n+m]
\]
and for morphisms $\ph: [p] \to [a]$ and $\psi: [q] \to [b]$ we set
\[
\ph \times \psi : [p+q] \to [a + b], \qquad \ph \times \psi(t_1, \ldots,t_{p+q}) = \ph(t_1,\ldots,t_p) \oplus \psi(t_{p+1}, \ldots, t_{p+q}),
\]
where the sum $\oplus$ of tuples of values of $\ph$ and $\psi$ is meant in the sense of concatenation of tuples. 
\end{defi}

For $p+q = n$ the group $G_p \times G_q$ is naturally a subgroup of $G_n$ via $\times$. Moreover, the $\times$-product on maps together with the adjointness to restriction induces a map
\begin{equation}
\label{eq:decompose_cube}
\coprod_{p+q = n}   \big((\square^{a})_p \times (\square^{b})_q\big) \times^{G_p \times G_q} G_n
\to (\square^{a+b})_n.
\end{equation}
This map is clearly surjective and hence bijective as the following comparison of cardinalities shows:
\[
\sum_{p+q = n} 2^a p! \binom{a}{p} \cdot 2^b q! \binom{b}{q} \cdot \frac{2^n n!}{2^p p! \cdot 2^q q!} = 2^{a+b} n! \sum_{p+q = n}  \binom{a}{p}  \binom{b}{q} = 2^{a+b} n! \binom{a+b}{n}.
\]

%----------------------------------------------------------------------------------------------------------------
\subsection{Products of cube complexes} There are several notions of products of cube complexes.

\begin{defi}
The \textbf{cartesian product} of cubical sets is the product in the sense of presheaves of sets on the category $\Cube$. For cubical sets $X,Y$ it is the degreewise product $X \boxtimes Y : \Cube^\op \to \sets$ 
\begin{align*}
(X \boxtimes Y)_n &= X_n \times Y_n, \\
X \boxtimes Y(f) & =  X(f) \times Y(f).
\end{align*}
Being a product there are natural projections
\[
\pr_X: X \boxtimes Y \to X, \qquad \pr_Y : X \boxtimes Y \to Y.
\]
\end{defi}

\begin{defi}
More generally, there are \textbf{cartesian fibre products} of cubical sets  in the sense of presheaves of sets on the category $\Cube$. For cubical sets $X,Y,S$ and maps $X \to S$ and $Y \to S$ it is the degreewise fibre product $X \boxtimes_S Y : \Cube^\op \to \sets$ defined by 
\begin{align*}
(X \boxtimes_S Y)_n &= X_n \times_{S_n} Y_n, \\
X \boxtimes_S Y(f) & =  X(f) \times_{S(f)} Y(f).
\end{align*}
\end{defi}

Cube complexes are combinatorial blueprints for cell complexes whose cells are squares and all glueings are performed combinatorially. Hence, it comes at no surprise that there is a product of cube complexes that describes the product in topological spaces for their topological realization.

\begin{defi}
The \textbf{(topological) product} of cubical sets $X,Y$ it is defined as  $X \boxtimes Y : \Cube^\op \to \sets$ by  
\[
(X \times Y)_n = \coprod_{p+q = n} \big(X_p \times Y_q\big) \times^{G_p \times G_q} G_n.
\]
For the effect on maps  we have to define for every $f \in [n] \to [m]$, i.e., $f \in (\square^m)_n$ a map 
\[
f^\ast : \coprod_{a+b = m} \big(X_a \times Y_b\big) \times^{G_a \times G_b} G_m  \to \coprod_{p+q = n} \big(X_p \times Y_q\big) \times^{G_p \times G_q} G_n.\
\]
For a pair of cubes $x \in X_a$ and $y \in Y_b$ together with some $g \in G_m$ we define the effect of $f^\ast$ using that according to \eqref{eq:decompose_cube} 
\[
gf = (\ph \times \psi) h
\]
for some $\ph : [p] \to [a]$ and $\psi : [q] \to [b]$ where $p+q = n$ and some $h \in G_n$. Then
\[
f^\ast((x,y)g) = (\ph^\ast(x),\psi^\ast(y))h. 
\]
\end{defi}

We omit the tedious task of checking that $X \times Y$ is indeed a well defined functor $\Cube^\op \to \sets$. 

\begin{ex}
\label{ex:productofcubes}
For all $a,b \geq 0$ and $m=a+b$ the maps \eqref{eq:decompose_cube} yield a natural isomorphism
\[
\square^a \times \square^b \xrightarrow{\sim}  \square^{m}.
\]
\end{ex}

\begin{prop}
The topological product corresponds to the product in topological spaces under the functor geometric realisation. For cubical sets $X$ and $Y$ we have natural homeomorphisms
\[
\abs{X \times Y} = \abs{X} \times \abs{Y}.
\]
\end{prop}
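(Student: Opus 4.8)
The plan is to write down an explicit comparison map and to defer the only genuine difficulty, a point-set issue, to the very end.

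First I would exploit the product decomposition of the standard cube. By Example~\ref{ex:productofcubes} the isomorphism $\square^a \times \square^b \xrightarrow{\sim} \square^{a+b}$ realizes on topological spaces as the canonical identification $I^{a+b} = I^a \times I^b$, splitting a vector $t \in I^n$ as $(t',t'')$ with $t' \in I^p$, $t'' \in I^q$ whenever $p+q=n$. A point of $\abs{X\times Y}$ is the class of a pair $\big((x,y)g,\,t\big)$ with $(x,y) \in X_p \times Y_q$, $g \in G_n$ and $t \in I^n$; using the realization relation for the automorphism $g\colon[n]\to[n]$ I move $g$ into the $I^n$-coordinate, so that every class has a representative with $g=1$. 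I then set
\[
\Theta\big([\,(x,y),(t',t'')\,]\big) = \big([x,t'],[y,t'']\big) \in \abs{X} \times \abs{Y}.
\]

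Next I would check that $\Theta$ is well defined and continuous. The relation on $\abs{X\times Y}$ is generated by $(f^\ast(w),t) \sim (w,f_I(t))$ for $f\colon[n]\to[m]$ in $\Cube$. Writing $w=(x,y)g$ and invoking \eqref{eq:decompose_cube}, I factor $gf=(\ph\times\psi)h$ with $\ph,\psi$ morphisms of $\Cube$ and $h\in G_n$; the definition $f^\ast((x,y)g)=(\ph^\ast(x),\psi^\ast(y))h$ of the topological product on morphisms was arranged for exactly this purpose. Splitting coordinates and using $(gf)_I=(\ph_I\times\psi_I)\circ h_I$, a short computation shows that $\Theta$ sends both $(f^\ast(w),t)$ and $(w,f_I(t))$ to the same point, because the single relation on $\abs{X\times Y}$ decouples into the separate relations defining $\abs{X}$ and $\abs{Y}$. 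Continuity is immediate, since $\Theta$ is induced by the continuous coordinate-splitting maps on each $X_n\times I^n$, and naturality in $X$ and $Y$ is built into the formula. The set-theoretic inverse $\Psi$ is defined symmetrically by $\big([x,t'],[y,t'']\big)\mapsto\big[(x,y),(t',t'')\big]$ for $x\in X_p$, $y\in Y_q$; the same factorization \eqref{eq:decompose_cube}, read in the other direction, shows $\Psi$ respects the relations and is a two-sided inverse of $\Theta$. Thus $\Theta$ is a natural continuous bijection.

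The main obstacle is purely point-set: continuity of $\Psi$, equivalently the assertion that the product of the two quotient topologies coincides with the quotient topology on the product. This is the classical failure (Milnor, Dowker) of geometric realization to commute with products for the naive product topology on infinite complexes. I would dispose of it in the standard way: each $\abs{X}$ is a cube complex and hence a CW complex, and for CW complexes the cellular product agrees with the product topology whenever one factor is locally finite, which holds in all cases relevant here---the tree factors $\RT_\ast$ are locally finite and the quotients $\Gamma_S\backslash\PT_{S_0}$ are finite. Equivalently one may work throughout in the cartesian-closed category of compactly generated spaces, in which $\abs{-}$ preserves finite products on the nose. Either remedy promotes the continuous bijection $\Theta$ to the desired homeomorphism.
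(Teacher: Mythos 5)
Your proof is correct and follows essentially the same route as the paper, which simply reduces to the identification $\square^a \times \square^b \simeq \square^{a+b}$ of Example~\ref{ex:productofcubes} by regarding a cubical set and its realization as combinatorial gluings of standard cubes. You go further by writing out the comparison map and checking it against the defining relations, and, more substantively, by flagging and resolving the point-set issue (quotient topology on the product versus product of quotient topologies), which the paper's two-line proof passes over in silence; your appeal to local finiteness or to compactly generated spaces is the standard and correct way to close that gap.
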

\begin{proof}
A cubical set $X$ is the result of combinatorially gluing its cubes $x : \square^n \to X$. Similarly, its realization $\abs{X}$ is the result of combinatorially gluing realizations $\abs{\square^n} = I^n$. We may therefore reduce to the case treated in 
Example~\ref{ex:productofcubes}.
\end{proof}

\begin{prop}
\label{prop:productscommute}
Topological product commutes with fibre products of cubical sets. Let $X_i \to S_i$ and $Y_i \to S_i$ for $i=1,2$ be maps of cubical sets. Then the canonical map of cubical sets
\[
(X_1 \boxtimes_{S_1} Y_1) \times (X_2 \boxtimes_{S_2} Y_2) \to (X_1 \times X_2) \boxtimes_{S_1 \times S_2} (Y_1 \times Y_2)
\]
is an isomorphism.
\end{prop}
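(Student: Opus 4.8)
The plan is to reduce the claim to a degreewise statement about $G_n$-sets and then to three elementary facts about fibre products. First I would record that the canonical map is the one furnished by the universal property of the pullback $\boxtimes_{S_1\times S_2}$: writing $Z_i := X_i\boxtimes_{S_i}Y_i$, the projections give compatible maps $Z_1\times Z_2\to X_1\times X_2\to S_1\times S_2$ and $Z_1\times Z_2\to Y_1\times Y_2\to S_1\times S_2$, hence a map into the pullback. Since an isomorphism of cubical sets may be tested degreewise, it suffices to prove that in each degree $n$ the induced map of $G_n$-sets is a bijection; and because all the identifications below are canonical (built from universal properties), their compatibility with the structure maps $f^\ast$ is automatic.

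Next I would expand both sides in degree $n$. The left-hand side is $(Z_1\times Z_2)_n = \coprod_{p+q=n}\big((Z_1)_p\times(Z_2)_q\big)\times^{G_p\times G_q}G_n$ by definition of the topological product, with $(Z_i)_m = (X_i)_m\times_{(S_i)_m}(Y_i)_m$. The right-hand side is $\big((X_1\times X_2)_n\big)\times_{(S_1\times S_2)_n}\big((Y_1\times Y_2)_n\big)$, where each of the three factors is itself a coproduct over bidegrees via the topological product formula. I would then invoke: (a) fibre products of sets commute with disjoint unions, together with the observation that the projection $X_1\times X_2\to S_1\times S_2$ carries the bidegree-$(a,b)$ summand into the bidegree-$(a,b)$ summand (because induction of an $H$-map lands in the induced target), and likewise for $Y$, so that only matching bidegrees contribute; (b) induction $-\times^{H}G_n$ along $H=G_p\times G_q$ commutes with fibre products; and (c) in any category with finite limits, $(U_1\times U_2)\times_{S_1\times S_2}(V_1\times V_2)\cong(U_1\times_{S_1}V_1)\times(U_2\times_{S_2}V_2)$.

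The main work, and the only genuinely nonformal point, is (b): the functor $-\times^H G_n$ is a left adjoint to restriction, so it preserves colimits, but there is no formal reason for it to preserve the limit defining a fibre product. The key is that for $G$-sets this induction is levelwise free in a strong sense: choosing a system of representatives for the right cosets $H\backslash G_n$ produces a natural bijection $M\times^H G_n\cong\coprod_{Hg}M$ under which a map induced from an $H$-map $P\to R$ respects the coset label. Consequently the fibre product $(P\times^H G_n)\times_{R\times^H G_n}(Q\times^H G_n)$ is computed coset by coset and equals $\coprod_{Hg}(P\times_R Q)\cong(P\times_R Q)\times^H G_n$; one checks directly that this bijection is $G_n$-equivariant. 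Assembling (a), (b) and (c), the right-hand side in degree $n$ becomes $\coprod_{p+q=n}\big((X_1)_p\times_{(S_1)_p}(Y_1)_p\big)\times\big((X_2)_q\times_{(S_2)_q}(Y_2)_q\big)\times^{G_p\times G_q}G_n$, which is exactly $\coprod_{p+q=n}\big((Z_1)_p\times(Z_2)_q\big)\times^{G_p\times G_q}G_n=(Z_1\times Z_2)_n$, the left-hand side, and it agrees with the canonical map. This completes the argument.
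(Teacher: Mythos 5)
Your proposal is correct and follows essentially the same route as the paper's proof: existence of the canonical map by the universal property of the fibre product, degreewise reduction, matching of bidegree summands, commutation of the induction functor $-\times^{G_p\times G_q}G_n$ with fibre products, and the elementary interchange of products with fibre products of sets. The only difference is that you actually justify the middle step (via the coset decomposition $M\times^H G_n\cong\coprod_{Hg}M$ and the fact that induced maps preserve the coset label), which the paper merely asserts; this is a welcome addition rather than a deviation.
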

\begin{proof}
The map exists due to the universal property of the fibre product $- \boxtimes_{S_1 \times S_2} -$ using the projections of each topological product factor on the left hand side. 

It thus suffices to compare cubes in dimension $n$ on both sides. Since parametrized $n$-cubes decompose naturally in disjoint summands for $(p,q)$ with $p+q=n$, it suffices to compare the individual summands. Moreover, the functor $- \times^{G_p \times G_q} G_n$ preserves fibre products, hence it remains to see that the canonical map 
\[
(X_{1,p} \times_{S_{1,p}} Y_{1,p}) \times (X_{2,q} \times_{S_{2,q}} Y_{2,q})
\to 
\big(X_{1,p}  \times X_{2,q}\big)  \times_{(S_{1,p}  \times S_{2,q})}  \big(Y_{1,p}  \times Y_{2,q}\big)
\]
is bijection.
\end{proof}

\begin{rmk}
Trees, and more generally graphs, are topological realizations of cube complexes of dimension $1$. Products of graphs therefore are topological realizations of topological products, hence themselves cube complexes. We will often treat trees (and graphs) as cube complexes.
\end{rmk}

%%%%%%%%%%%%%%%%%%%%%%%%%%%%%%%%%%%%%%%%%%%
\subsection{The cyclic $1$-vertex cube complex}

As an example, we are going to define a higher rank analog 
of the square complexes $\cC_{k,l}$ of \cite[6.2.3]{burger-mozes:lattices}.

\begin{defi}
We define a \textbf{finite cyclic set with orientation} as a finite set $A$ together with a partition $A = A^+ \amalg A^-$, a bijection $T : A \to A$, and an involution $-1$ of $A$ denoted by $a \mapsto a^{-1}$, 
such that
\begin{enumerate}
\item[(i)] $T$ commutes with $-1$,
\item[(ii)]  $T$ permutes each of $A^+$ and $A^-$ cyclically,
\item[(iii)] and $-1$ interchanges $A^+$ with $A^-$.
\end{enumerate}
This datum comes equipped with an orientation map $\delta : A \to \{\pm 1\}$, the fibres of which are $\delta^{-1}(*) = A^*$.
\end{defi}

\begin{rmk}
Of course, all finite cyclic sets are isomorphic to one of the following form: 
\[
A = \{a_1, \ldots, a_n, a_1^{-1}, \ldots, a_n^{-1}\}
\]
with $T(a_i^{\pm}) = a_{i+1}^{\pm}$ and indices considered modulo $n$. The involution $-1$ acts by interchanging $a_i \leftrightarrow a_i^{-1}$, and the partition $A^\pm$ sorts $A$ by exponent. Defining an abstract notion frees our notation from an index.
\end{rmk}

\begin{defi}
Let $\underline{A} = (A_1, \ldots, A_d)$ be a tuple of finite cyclic sets with orientation. The cube complex
\[
\cC_{\underline{A}}
\]
is the cubical set $\cC_{\underline{A}} : \Cube \to \sets$ with $n$-dimensional cubes
\[
\cC_{\underline{A}}([n]) =  \left\{(\tau, a) \ ; \ 
\begin{array}{c}
\tau : \{1,\ldots, n\} \inj \{1,\ldots, d\} \ \text{ injective} \\[1ex]
a : \{1, \ldots, n\} \to \coprod_i A_i  \ \text{ with }  a(i) \in A_{\tau(i)}
\end{array}
\right\} .
\]
The definition of face maps for $\cC_{\underline{A}}$ is  bit more involved: 
For each cube $(\tau,a)$ we have an orientation map $\delta: \{1,\ldots, n\} \to \{-1,1\}$ defined by
\[
\delta (i) := \delta(a(i)) = \left\{
\begin{array}{rl}
1 & \text{ if } a(i) \in A_{\tau(i)}^+, \\[1ex]
-1 & \text{ if } a(i) \in A_{\tau(i)}^-.
\end{array}
\right.
\]
The face map of $\cC_{\underline{A}}$ induced by $f: [m] \to [n]$ defined by 
\[
\sigma : \{1,\ldots, m\} \inj \{1, \ldots, n\} \quad \text{ and } \quad \ep: \{1,\ldots,n\} \to \{-1,1\}
\]
is given by $f^{\ast}(\tau,a) = (\tau \circ \sigma, b)$ with 
$b : \{1, \ldots, m\} \to \coprod_i A_i$ defined as
\begin{align*}
s(j)  & := \sum_{k \not= \tau(j) \ : \ \bar{\ep}(k) = 1} \delta(k) \\[1ex]
b(j) & := T^{s(j)} \big(a(\sigma(j))^{\ep(\tau(j))}\big) \in A_{\tau(\sigma(j))}.
\end{align*}
Here we use $\bar \ep = f(-1,\ldots, -1)$ to denote a distinct corner of the face described by $f: [n] \to [m]$. It is the corner diagonally opposite to $\ep = f(1,\ldots, 1)$. The vector $\bar \ep$ has the opposite sign compared to $\ep$ for all $i$ in the image of $\sigma$.
\[
\bar{\ep}(i) = \left\{
\begin{array}{rl}
\ep(i) & \text{ if } i \notin \im(\sigma), \\[1ex]
- \ep(i) &  \text{ if } i \in \im(\sigma).
\end{array}
\right.
\]
\end{defi}

\begin{rmk}
The cube $(\tau,a)$ of $\cC_{\underline{A}}$ has edge $a(i)$ in $i$-th direction originating in the corner $(-1, \ldots, -1)$. The remaining edges can be understood by understanding all $2$-dimensional faces: in $i \not=j$-th direction at corner $e \in \{-1,1\}^n$ (with $e_i=e_j=-1$) we have the square
\begin{equation}
\label{eq:squares of cyclic cube}
\xymatrix@M0ex@R-2ex@C-2ex{
\bullet \ar[r]^(1){T^{s(e)+\delta(j)}a(i)} & \ar@{-}[r] & \bullet  \\
\ar@{-}[u] & & \ar@{-}[u] \\
\bullet  \ar[u]^(1){T^{s(e)}a(j)}  \ar[r]_(1){T^{s(e)}a(i)} &\ar@{-}[r]  & \bullet  \ar[u]_(1){T^{s(e)+\delta(i)}a(j)} \\
}
\end{equation}
where 
\[
s(e) = \sum_{k=1}^n \frac{e(k) +1}{2} \delta(k) = \sum_{k: \ e(k) = 1} \delta(k).
\]
\end{rmk}

\begin{prop}
The cubical set $\cC_{\underline{A}}$ is a cubical complex with a single vertex. Its universal cover is a product of trees.  The local permutation group on the set $A_i$ of half edges in $i$-th direction is given by the cyclic group generated by $T$.
\end{prop}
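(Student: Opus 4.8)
The plan is to verify the three assertions in turn: that $\cC_{\underline A}$ is a one-vertex cube complex, that its universal cover splits as a product of trees, and that the local permutation groups are the cyclic groups $\langle T\rangle$. The one-vertex statement is immediate, since $\cC_{\underline A}([0])$ consists of the single pair of empty maps. For the cube complex axioms I would argue as follows. Purity holds because injectivity of $\tau:\{1,\dots,n\}\hookrightarrow\{1,\dots,d\}$ forces $\cC_{\underline A}([m])=\emptyset$ for $m>d$, while every cube $(\tau,a)$ of dimension $m<d$ is a face of the cube obtained by enlarging $\tau$ by one direction $i\notin\im(\tau)$ together with any $a_0\in A_i$ (each $A_i$ being nonempty). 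The crucial point is freeness of the $G_m$-action, which I would extract from the orientation map. If $(\sigma,\ep)\in G_m$ fixes $(\tau,a)$, then $\tau\circ\sigma=\tau$ gives $\sigma=\id$ by injectivity of $\tau$; the remaining sign part sends the $j$-th edge to $T^{s(j)}\!\big(a(j)^{\ep(j)}\big)$, whose orientation equals $\ep(j)\cdot\delta(j)$, because $T$ preserves each of $A^+,A^-$ while the involution $a\mapsto a^{-1}$ interchanges them. Matching orientations with those of $a$ forces $\ep(j)=1$ for all $j$, so the stabiliser is trivial.

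Next I would compute the link $L$ of the unique vertex $v$. Its vertices are the half-edges $\coprod_i A_i$, and a finite family of half-edges spans a simplex exactly when they point in pairwise distinct directions, since any such family is realised as the edges at the corner $(-1,\dots,-1)$ of a cube $(\tau,a)$, while two half-edges of the same direction never lie in a common cube. Hence $L$ is the join $A_1 * \cdots * A_d$ of the discrete sets $A_i$, a complete $d$-partite complex, which is flag. By Gromov's link condition (see \cite{bridson_haeflinger}, and \cite{ballmann_swiatkowski}, \cite{sageev:cubecomplexes}), $\abs{\cC_{\underline A}}$ is non-positively curved, so its universal cover $X$ is a CAT(0) cube complex. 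The direction $i=\tau(1)$ of an edge is preserved by all face maps and by the squares \eqref{eq:squares of cyclic cube}, whose two parallel edges share a direction; this lifts to a $\{1,\dots,d\}$-colouring of the edges of $X$ and hence of its hyperplanes. Because every cube uses pairwise distinct directions and, conversely, every admissible family of directions is realised by a cube, hyperplanes of distinct colours cross completely and those of equal colour are disjoint or equal. The standard product decomposition of a CAT(0) cube complex carrying such a join structure on all links then yields $X\cong X_1\times\cdots\times X_d$, where $X_i$ is the CAT(0) cube complex dual to the colour-$i$ hyperplanes. Since colour-$i$ cubes are $1$-dimensional, each $X_i$ is a tree, and counting directed edges at $v$ identifies it as the regular tree $\RT_{\#A_i}$.

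Finally, the local action on the set $A_i$ is read directly off the squares \eqref{eq:squares of cyclic cube}: crossing an edge in a direction $j\neq i$ of orientation $\delta(j)$ transports the $i$-edge $T^{s(e)}a(i)$ to $T^{s(e)+\delta(j)}a(i)$, i.e.\ applies $T^{\pm1}$. Thus every elementary transition in direction $i$ is a power of $T$, and together these generate exactly the cyclic group $\langle T\rangle\subseteq\Sym(A_i)$, which is the asserted local permutation group.

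I expect the main obstacle to be the product decomposition of $X$ in the middle step. The complex $\cC_{\underline A}$ is itself \emph{not} a product, since the $T$-twist in \eqref{eq:squares of cyclic cube} obstructs a global splitting; the content of the argument is that this twist becomes invisible after passing to the simply connected cover, which is precisely what the join structure of the links, combined with the theory of hyperplanes in CAT(0) cube complexes, guarantees. The other steps are formal verifications, the freeness of the $G_m$-action being the one that genuinely uses the compatibility axioms (i) and (iii) of a finite cyclic set with orientation.
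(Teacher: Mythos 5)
Your proof is correct and follows essentially the same route as the paper's: the single vertex and the freeness of the $G_n$-action are read off the definition, the vertex link is identified as the complete $d$-partite flag complex on $\coprod_i A_i$ (which is exactly the link condition for the universal cover to be a product of trees), and the local action by $T^{\pm 1}$ is extracted from the squares \eqref{eq:squares of cyclic cube}. You merely supply details that the paper declares immediate, namely the orientation argument for freeness and the hyperplane/CAT(0) justification of the product decomposition.
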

\begin{proof}
That the action of $G_n$ on $\cC_{\underline{A}}([n])$ is free follows immediately from the definition. Also the number of vertices is obviously $1$.

The link at the unique vertex is easily seen to be the flag complex of all simplices from the set $A = \coprod_i A_i$ with at most one vertex from each $A_i$. This is precisely the link of a product of trees of valencies $m_i = \#A_i$, and also the condition for the universal covering to be a product of trees. 

The structure of the local permutation group follows by inspecting squares as in \eqref{eq:squares of cyclic cube}. An edge $a(i) \in A_i$ acts on $A_j$ for $j \not= i$ by $T^{-\delta(a(i))}$.
\end{proof}

\begin{rmk}
The fundamental group $\pi_1(\cC_{\underline{A}})$ can best be understood as a subgroup of the following group
\[
G(\underline{m}) = \left\langle a_1, \ldots, a_d, T_1, \ldots, T_d \ \left| \ 
\begin{array}{c}
T_i^{m_i} = 1, \ T_iT_j = T_j T_i  \ \text{ for all } i, j\\[1ex]
T_i a_j = a_j T_i \ \text{ for all } i \not= j \\[1ex]
(T_i a_i T_i^{-1})a_j = a_i(T_j a_j T_j^{-1}) \ \text{ for all } i \not= j 
\end{array}
\right.
\right\rangle.
\]
There is a homomorphism $\ph: G(\underline{m}) \surj \prod_{i} \bZ/m_i \bZ$ that maps all $a_i$ to $0$ and sends $T_i$ to the generator $1 \in \bZ/m_i \bZ$. The kernel of $\ph$ is canonically isomorphic to $\pi_1(\cC_{\underline{A}})$ with the $T_i$-conjugates and inverses of $a_i$ in bijection with $A_i$ such that conjugation with $T_i$ becomes the shift operator $T$ (or its inverse, depending on the convention of local permutation actions).
\end{rmk}

%%%%%%%%%%%%%%%%%%%%%%%%%%%%%%%%%%%%%%%%%%%
\section{Cube complexes and the doubling construction}
\label{sec:double}

%----------------------------------------------------------------------------------------------------------------
\subsection{The type trivializing cover and the doubling construction}  
\label{sec:double_general}
The doubling construction was used by Burger and Mozes \cite[\S2]{burger-mozes:lattices} in the case of square complexes (cube complexes of dimension $2$) to construct interesting lattices on products of two trees. Here we consider the higher dimensional analogue. The combinatorial description of cube complexes as cubical sets turns out to be useful.

Let $T$ be a cube complex whose topological realization is a tree. Then there are two maps (these are called a \textbf{folding} of $T$)
\[
T \to \square^1.
\]
For any automorphism $f: T \to T$ there is a unique automorphism $\ph$ such that
\[
\xymatrix@M+1ex{
T \ar[d] \ar[r]^f & T \ar[d] \\
\square^1 \ar[r]^\ph & \square^1
}
\]
commutes. The induced homomorphism sends $f$ to its \textbf{type} $\ph = \tau(f)$:
\[
\tau: \Aut(T) \to \Aut(\square^1)  = G_1 = \{\pm 1\}.
\]

Let now $\Gamma \subseteq \Aut(T_1) \times \ldots \times \Aut(T_d)$ be a cocompact lattice in a product of trees. Then the product
\[
Z := T_{1} \times \ldots \times T_{d}
\]
is a cube complex of dimension $d$. By fixing base points $\star_i \in T_i$ we choose maps 
$T_{i} \to \square^1$ that send $\star_{i}$ to $1 \in (\square^1)_0 = \partial I  = \{1,-1\}$. The product endows $Z$ with a surjective map (again called a folding of $Z$)
\[
Z \to (\square^1)^d = \square^d,
\]
and, since $\Gamma$ acts preserving factors, we also have a type homomorphism
\[
\tau: \Gamma \to \Aut(T_{1}) \times \ldots \times \Aut(T_{d}) \to \{\pm 1\}^d = (G_1)^d \subseteq G_d.
\]
The type preserving subgroup is $\Gamma_{0} := \ker\big(\tau: \Gamma \to \{\pm 1\}^d \big)$. 
We write 
\[
Y := \Gamma_{0}\backslash Z \to \square^d
\]
for the quotient cube complex. The map $Y \to X := \Gamma \backslash Z$ is a covering space map with Galois group
\[
\Gamma/\Gamma_0 \simeq \im(\tau) \subseteq  \{\pm 1\}^d
\]
an elementary abelian $2$-group. The following Lemma is immediate.

\begin{lem}
\label{lem:surjectivetype}
If $\Gamma$ acts simply transitively on the vertices of $Z$, then $\tau : \Gamma \to \{\pm 1\}^d$ is surjective.
\end{lem}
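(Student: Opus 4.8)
The plan is to make the type homomorphism $\tau$ completely explicit on the level of vertices and then invoke vertex-transitivity. For each tree $T_i$ the chosen folding $T_i \to \square^1$ restricts on vertices to the bipartition map $\delta_i$ recording the parity of the combinatorial distance to the base point $\star_i$, normalised so that $\delta_i(\star_i) = 1$; an automorphism of $T_i$ then has type $+1$ exactly when it preserves this bipartition and $-1$ when it interchanges the two classes. Writing $\delta = (\delta_1, \ldots, \delta_d)$ for the induced folding of $Z$ on its vertex set (valued in $\{\pm 1\}^d$), the product folding $Z \to \square^d$ and the commutative square defining $\tau$ translate into the cocycle-type identity
\[
\delta(g w) = \tau(g) \cdot \delta(w)
\]
for all $g \in \Gamma$ and all vertices $w$ of $Z$, where $\cdot$ denotes coordinatewise multiplication in $\{\pm 1\}^d$ (factor-preserving automorphisms of $\square^d$ act on its vertices precisely by such sign changes).

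First I would specialise this identity to $w = \star$. Since $\delta(\star) = (1,\ldots,1)$ by the normalisation above, it collapses to $\delta(g\star) = \tau(g)$. In words: the type of a group element $g$ is nothing but the type of the vertex to which $g$ carries the base point.

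Next I would note that every element of $\{\pm 1\}^d$ occurs as the type of some vertex. Indeed each $T_i$ is a regular tree of valency $\geq 2$, hence bipartite with both colour classes non-empty, so the folding $\delta$ is surjective onto $\{\pm 1\}^d$. Combining this with the hypothesis that $\Gamma$ acts transitively on the vertices of $Z$, for any $s \in \{\pm 1\}^d$ I can first choose a vertex $w$ with $\delta(w) = s$ and then $g \in \Gamma$ with $g\star = w$; the previous step then gives $\tau(g) = s$. As $s$ was arbitrary, $\tau$ is surjective.

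There is no genuine obstacle here, which is why the statement is flagged as immediate. The only step deserving a moment's care is the translation of the defining commutative square for $\tau$ into the identity $\delta(gw) = \tau(g)\cdot\delta(w)$: this is a direct unwinding of the definition of the type homomorphism together with the description of the product folding $Z \to \square^d$. Everything after that is a transitivity-plus-counting argument using that a product of non-trivial bipartite trees realises all $2^d$ vertex types.
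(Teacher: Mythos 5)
Your argument is correct and is precisely the unwinding the paper has in mind when it declares the lemma ``immediate'' without proof: the type of $g$ equals the colour of $g\star$ under the product folding, each non-trivial tree factor is bipartite with both classes non-empty, and vertex-transitivity then realises all $2^d$ types. Nothing to add.
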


\begin{defi}
In the above situation, and with $\Gamma$ acting simply transitively on the vertices of $Z$, we define the \textbf{doubling of $X$} as the quotient of the fibre product
\[
\cD(X) := (\Gamma/\Gamma_{0}) \backslash \big(Y \boxtimes_{\square^d} Y\big)
\]
where $\Gamma/\Gamma_{0}$ acts diagonally on both factors and via $\tau$ on $\square^d$.
\end{defi}

\begin{lem}
\label{lem:doubletreeconnected}
Let $\fg$ be a connected graph $\fg$, considered as a cube complex, with a nontrivial  map $\fg \to \square^1$. Then the graph $\fg \boxtimes_{\square^1} \fg$ is also connected.
\end{lem}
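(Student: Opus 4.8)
The plan is to translate everything into elementary graph theory. First I would observe that, since morphisms of cubical sets send $1$-cubes to $1$-cubes, a map $\fg \to \square^1$ is the same datum as a function $c : \fg_0 \to (\square^1)_0 = \{\pm 1\}$ that separates the two endpoints of every edge; in other words $c$ is a proper $2$-colouring, so $\fg$ is bipartite with bipartition $\fg_0 = c^{-1}(+1) \amalg c^{-1}(-1)$. Nontriviality of the folding guarantees that $\fg$ has at least one edge, hence that both colours occur. Unwinding the degreewise definition of $\boxtimes_{\square^1}$ (and using that $(\square^1)_n = \emptyset$ for $n \geq 2$, so the fibre product is again a graph), the graph $H := \fg \boxtimes_{\square^1} \fg$ has vertex set $H_0 = \{(u,u') \in \fg_0 \times \fg_0 \ ; \ c(u) = c(u')\}$, the \emph{monochromatic pairs}, and an edge joining $(u,u')$ to $(v,v')$ for every pair $(e,e')$ of edges of $\fg$ with $e$ joining $u,v$ and $e'$ joining $u',v'$; the equality of images in $(\square^1)_1$ required for this to be a cube of $H$ holds automatically once the endpoints are monochromatic pairs.

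The key structural step I would isolate is that a walk in $H$ starting at a monochromatic pair $(u,u')$ is precisely a pair of walks $w, w'$ in $\fg$ of the \emph{same length}, with $w$ starting at $u$ and $w'$ at $u'$. Indeed, along any walk in the bipartite graph $\fg$ the colour alternates, so $c(p_i) = c(u)\cdot(-1)^i$ for the $i$-th vertex $p_i$ of $w$ and likewise for $w'$; since $c(u) = c(u')$ the two walks stay monochromatic at every step, and the compatibility of edge-types needed for an $H$-edge at each step then holds automatically. Thus connecting $(u_1,u_1')$ to $(u_2,u_2')$ in $H$ reduces to producing walks $u_1 \to u_2$ and $u_1' \to u_2'$ in $\fg$ of equal length.

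The remaining point — and the one place where bipartiteness does real work — is matching the lengths. Since $\fg$ is connected, such walks exist; and the parity of a walk between two vertices is forced by their colours, namely its length is even if and only if the two endpoints share a colour. Because $(u_1,u_1')$ and $(u_2,u_2')$ are monochromatic pairs we have $c(u_1) = c(u_1')$ and $c(u_2) = c(u_2')$, so the two walks automatically have the \emph{same} parity. I would then equalise their lengths by padding the shorter one with back-and-forth traversals of a single edge incident to its endpoint (such an edge exists because $\fg$ is connected with at least one edge), each of which changes the length by $2$ and so preserves parity. Feeding the resulting equal-length pair into the previous paragraph yields a walk in $H$ from $(u_1,u_1')$ to $(u_2,u_2')$, proving connectedness.

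The genuinely delicate step is the parity and length-matching argument of the last paragraph: without bipartiteness one could not guarantee that the two coordinate walks are simultaneously realisable at a common length, and it is exactly here that the hypothesis of a nontrivial folding $\fg \to \square^1$ enters (it is what makes $\fg$ bipartite and supplies an edge for padding). Everything else is a routine unwinding of the degreewise fibre product and of the face maps of $\square^1$.
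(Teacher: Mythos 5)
Your proof is correct and rests on the same mechanism as the paper's: the bipartite structure forced by the folding, the identification of vertices of the fibre product with monochromatic pairs, and parity-matching of walks via back-and-forth padding along an edge. The paper merely organizes the induction by moving one coordinate two steps at a time while the other wiggles, which is the same argument in a slightly different packaging.
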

\begin{proof}
Let $\fg_0 = \fg_+ \amalg \fg_-$ be the partition of the vertices of $\fg$ given by the fibres of $\fg_0 \to (\square^1)_0 = \{+1,-1\}$. These are the equivalence classes of vertices according to the distance being even. The vertices of the fibre product are
\[
(\fg \boxtimes_{\square^1} \fg)_0 = \fg_+ \times \fg_+ \amalg \fg_- \times \fg_-.
\]
If $x,x' \in \fg_+$ have distance $2$ with common neighbour $x_1 \in \fg_-$, and $y \in \fg_+$ has neighbour $y_1 \in \fg_-$, then $(x,y)$, $(x_1,y_1)$, $(x',y)$ describes a path in 
$\fg \boxtimes_{\square^1} \fg$. It follows by induction on the distance that all points in $\fg_+ \times \fg_+$ (and likewise in $\fg_- \times \fg_-$) are connected. Clearly there is an edge linking the plus and the minus part.
\end{proof}

\begin{prop}
\label{prop:double_1vertex_general}
Let $\Gamma \subseteq \Aut(T_1) \times \ldots \times \Aut(T_d)$ be a cocompact lattice that acts simply transitively on the vertices of $Z =  \rT_{1} \times \ldots \times \rT_{d}$.  Let $X = \Gamma \backslash Z$ be the quotient cube complex. 

The doubling $\cD(X)$ is a cube complex of dimension $d$ with one vertex and with a product of trees as its universal covering space. If the tree factors of $Z$ have constant valency, then also the tree factors occurring in the universal covering of $\cD(X)$ have  constant valency.
\end{prop}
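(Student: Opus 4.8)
The plan is to lift the doubling $\cD(X)$ to a quotient of the fibre product $Z \boxtimes_{\square^d} Z$, to decompose that fibre product into a product of graphs by means of Proposition~\ref{prop:productscommute}, and only then to pass to universal covers. Throughout I use that $\Gamma$ acts freely on $Z$: this is built into the hypothesis that $X = \Gamma\backslash Z$ is a cube complex (a free $G_m$-action on cubes), and it holds in all cases of interest since those lattices are torsion-free.

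First I would realise $\cD(X)$ as a quotient of $Z \boxtimes_{\square^d} Z$. Writing $\Gamma_0 = \ker(\tau)$ and introducing the fibre-product group $\Gamma \times_\tau \Gamma = \{(\gamma_1,\gamma_2) \in \Gamma \times \Gamma \ ; \ \tau(\gamma_1) = \tau(\gamma_2)\}$, each $\gamma$ acts on $Z$ over $\tau(\gamma) \in \Aut(\square^d)$, so $\Gamma \times_\tau \Gamma$ acts on $Z \boxtimes_{\square^d} Z$. Because $\Gamma_0$ is type preserving, it acts over the identity on $\square^d$, and the degreewise formula for the fibre product yields $(\Gamma_0 \times \Gamma_0)\backslash(Z \boxtimes_{\square^d} Z) = Y \boxtimes_{\square^d} Y$; quotienting further by $(\Gamma \times_\tau \Gamma)/(\Gamma_0 \times \Gamma_0) \cong \Gamma/\Gamma_0$, embedded diagonally, reproduces exactly the diagonal action defining $\cD(X)$. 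Hence
\[
\cD(X) = (\Gamma \times_\tau \Gamma)\backslash\big(Z \boxtimes_{\square^d} Z\big).
\]
Since $\Gamma$ acts freely on $Z$, an element $(\gamma_1,\gamma_2)$ fixing a point forces $\gamma_1 = \gamma_2 = 1$, so $\Gamma \times_\tau \Gamma$ acts freely, properly and cocompactly; thus this is a covering map and $\cD(X)$ is a cube complex of dimension $d$. For the single vertex I would use simple transitivity alone: fixing a base vertex identifies $Z_0 \cong \Gamma$ with the type map becoming $\tau$, so the vertices $Z_0 \times_{(\square^d)_0} Z_0$ are identified with $\Gamma \times_\tau \Gamma$, on which the group acts by left translation, i.e. simply transitively; hence $\cD(X)$ has exactly one vertex.

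The geometric heart is the universal cover. Iterating Proposition~\ref{prop:productscommute} over the $d$ factors, using $(\square^1)^d = \square^d$ (Example~\ref{ex:productofcubes}) and that the folding $Z \to \square^d$ is the product of the foldings $T_i \to \square^1$, I would obtain
\[
Z \boxtimes_{\square^d} Z \;\cong\; \prod_{i=1}^d \big(T_i \boxtimes_{\square^1} T_i\big),
\]
where each factor $\fg_i := T_i \boxtimes_{\square^1} T_i$ is a connected graph by Lemma~\ref{lem:doubletreeconnected}, with the vertex $(x,y)$ having precisely $\deg_{T_i}(x)\deg_{T_i}(y)$ neighbours. Passing to universal covers, each $\widetilde{\fg_i}$ is a tree, and since a covering of a covering from a simply connected space is the universal cover, the universal cover of $\cD(X)$ equals that of $Z \boxtimes_{\square^d} Z$, namely $\prod_i \widetilde{\fg_i}$ — a product of trees. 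If $T_i$ is $k_i$-regular, then $\fg_i$ is $k_i^2$-regular, so $\widetilde{\fg_i}$ is the $k_i^2$-regular tree and constant valencies are preserved (indeed squared).

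I expect the main obstacle to be the correct treatment of the factors $\fg_i = T_i \boxtimes_{\square^1} T_i$. The temptation is to declare them trees and conclude that $Z \boxtimes_{\square^d} Z$ is already the universal cover; but $\fg_i$ genuinely has cycles (one can exhibit a non-backtracking $4$-cycle whose two projections to $T_i$ backtrack at complementary positions), so it is only a $k_i^2$-regular \emph{graph}. The point that must be handled with care is therefore that one first decomposes $Z \boxtimes_{\square^d} Z$ as a product of these graphs and only afterwards passes to the universal cover $\prod_i \widetilde{\fg_i}$, which is the product of regular trees asserted in the statement.
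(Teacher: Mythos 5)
Your proof is correct and follows essentially the same route as the paper's: both reduce the universal-cover claim via Proposition~\ref{prop:productscommute} to the factorwise decomposition $Z \boxtimes_{\square^d} Z \cong \prod_i (T_i \boxtimes_{\square^1} T_i)$, observe that each factor is a connected regular graph (not a tree), and pass to universal covers factor by factor. The only cosmetic difference is that you work one covering level higher, quotienting $Z \boxtimes_{\square^d} Z$ by $\Gamma \times_\tau \Gamma$, whereas the paper phrases the covering and one-vertex arguments through $Y \boxtimes_{\square^d} Y$ with the $\Gamma/\Gamma_0$-action (your description agrees with the paper's own Corollary~\ref{cor:pi1ofdouble}), and you supply the extra detail that the valencies get squared.
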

\begin{proof}
Since $\Gamma/\Gamma_{0}$ acts freely on $Y$, it also acts freely on the fibre product. Therefore
\[
Y \boxtimes_{\square^d} Y \to \cD(X)
\]
is a covering map and the local properties of $\cD(X)$ are the same as for its cover. 
The condition that only cubes of dimension $d$ are not faces of other cubes of $\cD(X)$ follows easily from the definition of the cartesian product $Y \boxtimes_{\square^d} Y$. 

Since $\Gamma$ acts vertex transitively on $Z$ by assumption, it follows that $Y \to \square^d$ is bijective on vertices. The same holds therefore for the fibre product $Y \boxtimes_{\square^d} Y \to Y$. All these vertices constitute just one orbit under $\Gamma/\Gamma_{0}$, hence $\cD(X)$ has only one vertex.

The fibre product preserves covering spaces, so it suffices to consider with 
Proposition~\ref{prop:productscommute}
\[
Z  \boxtimes_{\square^d} Z = \big(\rT_{1} \boxtimes_{\square^1} \rT_{1} \big) \times   \ldots \times \big(\rT_{d} \boxtimes_{\square^1} \rT_{d} \big).
\]
For each tree $T$ and choice of map $T \to \square^1$ the fibre product 
$T \boxtimes_{\square^1} T$ is again a graph, hence its universal covering space 
$\cT \to T \boxtimes_{\square^1} T$ is again a tree. If $T$ has constant valency, then $T$ being homogeneous, also all vertices in $T \boxtimes_{\square^1} T$ have the same valency, hence also $\cT$ is a tree of constant valency.
\end{proof}

%----------------------------------------------------------------------------------------------------------------
\subsection{The doubling construction as a fibre product} 
\label{sec:doublingasfibreproduct}
The construction of the double of $X = \Gamma \backslash Z$ with notation as in 
Section~\S\ref{sec:double_general} above relies on an auxiliary cover $Y \to X$. We next describe an alternative as a genuine fibre product in the category of cubical sets. 

The group $(G_1)^d$ acts freely on the cubical set $\square^d$. We denote the quotient (in the sense of a cubical set as a presheaf on $\Cube$, hence by passing to the quotient set in each dimension) by 
\[
\Delta^d = (G_1)^d \backslash \square^d.
\]
The $p$-dimensional cubes of $\Delta^d$ are in bijection with injective maps
\[
\sigma : \{1,\ldots,p\} \inj \{1,\ldots, d\}
\]
and the face maps of $\Delta^d$ only depends on the permutation part and not on the signs. 

By the categorical properties of the quotient we obtain a commutative square of cubical sets
\begin{equation}
\label{eq:directional_marking_X}
\xymatrix@M+1ex{
Y \boxtimes_{\square^d} Y  \ar[r]^{\pr_1} \ar[d] & Y \ar[r] \ar[d] & \square^d \ar[d] \\
\cD(X) \ar[r] & X \ar[r] & \Delta^d .
}
\end{equation}

\begin{prop}
Diagram \eqref{eq:directional_marking_X} consists of two fibre product squares in the category of cubical sets. In particular
\[
\cD(X) = X \boxtimes_{\Delta^d} X.
\]
\end{prop}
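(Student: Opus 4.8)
The plan is to reduce everything to the degreewise situation. Since cartesian fibre products of cubical sets are formed dimension by dimension in $\sets$ (by the definition of $\boxtimes_S$), it suffices to check that each of the two squares induces a bijection on $n$-cubes for every $n$. The structural input I would exploit throughout is that all the bottom-row horizontal arrows, as well as $\square^d \to \Delta^d$, are quotient maps for free actions of $(G_1)^d$: by definition $\Delta^d = (G_1)^d\backslash\square^d$ with $(G_1)^d$ acting freely on $\square^d$, while $Y \to X$ is the quotient by the free action of $\Gamma/\Gamma_0$, which Lemma~\ref{lem:surjectivetype} identifies with $(G_1)^d$ via $\tau$. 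The folding $Z \to \square^d$ is $\Gamma_0$-invariant because $\Gamma_0 = \ker\tau$ is type-preserving, so it descends to a map $Y \to \square^d$ that is $(G_1)^d$-equivariant and lies over $X \to \Delta^d$.

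For the right-hand square I would show $Y \cong X\boxtimes_{\Delta^d}\square^d$ degreewise. The comparison map sends an $n$-cube $y$ to $([y], \mathrm{fold}(y))$; it lands in the fibre product because folding covers $X \to \Delta^d$. Injectivity is immediate: if $y' = gy$ has the same fold, then $g$ fixes $\mathrm{fold}(y)$, and freeness of the $(G_1)^d$-action on $\square^d$ forces $g = 1$. For surjectivity, given $([y],c)$ with $c$ lying over $\mathrm{fold}(y)$ in $\Delta^d$, the cubes $c$ and $\mathrm{fold}(y)$ lie in one $(G_1)^d$-orbit, so $c = \mathrm{fold}(gy)$ and $gy$ is a preimage. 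Equivalently, folding restricts to a $(G_1)^d$-equivariant map between the fibre of $Y_n \to X_n$ and the fibre of $\square^d_n \to \Delta^d_n$, both of which are $(G_1)^d$-torsors, hence bijective.

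For the left-hand square I would run the same torsor argument for $Y\boxtimes_{\square^d}Y \to \cD(X)\boxtimes_X Y$, $(y_1,y_2)\mapsto\big([(y_1,y_2)],\,\pr_1(y_1,y_2)\big)$, recalling that $\cD(X)\to X$ is induced by $\pr_1$, so the square commutes. Here the relevant free action is the diagonal action of $(G_1)^d$ on $Y\boxtimes_{\square^d}Y$: injectivity holds because two representatives of the same class in $\cD(X)$ with equal first coordinate differ by a $g$ fixing that coordinate, whence $g=1$ by freeness; surjectivity follows by translating any chosen representative of a class in $\cD(X)$ into one whose first coordinate is the prescribed cube of $Y$.

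Finally, to obtain $\cD(X) = X\boxtimes_{\Delta^d}X$ I would substitute the right-square identification into both factors: $Y\boxtimes_{\square^d}Y \cong (X\boxtimes_{\Delta^d}\square^d)\boxtimes_{\square^d}(X\boxtimes_{\Delta^d}\square^d) \cong (X\boxtimes_{\Delta^d}X)\boxtimes_{\Delta^d}\square^d$, where the second isomorphism is a routine reassociation of fibre products identifying the two $\square^d$-factors. Under this identification the diagonal $(G_1)^d$-action becomes the action on the $\square^d$-factor alone, since $g$ acts on $Y \cong X\boxtimes_{\Delta^d}\square^d$ trivially on $X$ and by translation on $\square^d$. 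Passing to the quotient and using that the free $(G_1)^d$-quotient commutes with the fibre product over $\Delta^d$ then yields $\cD(X) = (G_1)^d\backslash\big((X\boxtimes_{\Delta^d}X)\boxtimes_{\Delta^d}\square^d\big) = (X\boxtimes_{\Delta^d}X)\boxtimes_{\Delta^d}\Delta^d = X\boxtimes_{\Delta^d}X$. The main obstacle is precisely this last bookkeeping: pinning down how the diagonal action transports across the reassociation isomorphism and justifying that the quotient by the free action commutes with the relative fibre product; once the equivariance of the folding is correctly set up, the torsor arguments in the two squares are formal.
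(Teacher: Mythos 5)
Your proof is correct and follows essentially the same route as the paper: both squares are handled by the identical torsor argument (the fibres of $Y\to X$ and of $\square^d\to\Delta^d$ are principal homogeneous $(G_1)^d$-sets, so the comparison map is a degreewise bijection), and your final reassociation $Y\boxtimes_{\square^d}Y\cong (X\boxtimes_{\Delta^d}X)\boxtimes_{\Delta^d}\square^d$ followed by quotienting is just an unwound version of the paper's base-change argument along the cover $\square^d\to\Delta^d$. The only difference is presentational: the paper invokes descent along that cover where you compute the quotient explicitly, which if anything makes the equivariance bookkeeping more transparent.
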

\begin{proof}
By the universal property of the fibre product we obtain a map
\[
f: Y \to X \boxtimes_{\Delta^d} \square^d,
\]
lying over the identity of $X$. Since both $X$ and $\Delta^d$ are quotients by free actions of $(G_1)^d$, the fibre of $f$ above a $p$-cube $x \in X_p$ is a map of principal homogeneous $(G_1)^d$-sets. Therefore $f$ is a bijection in each dimension. The proof for  the other square uses the same argument.

The claim for the double of $X$ follows since there is a map 
\[
\cD(X) \to X \boxtimes_{\Delta^d} X
\]
by construction and the universal property of the fibre product. This map is an isomorphism, because it becomes after base change with the `unramified' cover $\square^d \to \Delta^d$ the identity
\[
Y \boxtimes_{\square^d} Y = \cD(X) \boxtimes_{\Delta^d} \square^d  \to (X \boxtimes_{\Delta^d} X) \boxtimes_{\Delta^d} \square^d  = Y \boxtimes_{\square^d} Y
\]
up to canonical identifications of fibre products.
\end{proof}

%----------------------------------------------------------------------------------------------------------------
\subsection{Presentation of the fundamental group of the doubling} The \textbf{fundamental group} of a cube complex $Q$ is defined to be the fundamental group of its topological realization:
\[
\pi_1(Q) = \pi_1(\abs{Q})
\]
where we always implicitly think of a chosen base point.

Let $X$ be a cube complex as in Section~\S\ref{sec:double_general}, and let $\cD(X)$ be its double.

\begin{cor}
\label{cor:pi1ofdouble}
There is a short exact sequence
\[
1 \to \prod_{i=1}^d \pi_1(T_{i} \boxtimes_{\square^1} T_{i}) \to \pi_1(\cD(X))  \to \Gamma\times_{\Gamma/\Gamma_{0}} \Gamma \to 1.
\]
\end{cor}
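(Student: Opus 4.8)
The plan is to exhibit $\cD(X)$ as the quotient $H \backslash W$ of the connected cube complex $W := Z \boxtimes_{\square^d} Z$ by a free action of $H := \Gamma \times_{\Gamma/\Gamma_{0}} \Gamma$, and then to read off the extension from the theory of regular coverings together with the product decomposition of $W$ supplied by Proposition~\ref{prop:productscommute}.

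First I would pin down the group acting on $W$. An element $(\gamma,\gamma') \in \Gamma \times \Gamma$ acts on $Z \times Z$, and it preserves the fibre product $W = Z \boxtimes_{\square^d} Z$ precisely when $\gamma$ and $\gamma'$ induce the same automorphism of $\square^d$ under the folding $Z \to \square^d$. Since $\gamma$ acts on $\square^d$ through $\tau(\gamma)$, this condition is $\tau(\gamma) = \tau(\gamma')$, i.e.\ $(\gamma,\gamma') \in H$. As $\Gamma$ acts freely on $Z$, the product $\Gamma \times \Gamma$ acts freely on $Z \times Z$, and hence $H$ acts freely on $W$. Next I would verify that $H \backslash W = \cD(X)$. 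Because $\Gamma_{0}$ acts trivially on $\square^d$, passing degreewise to the quotient commutes with the set-theoretic fibre product, giving $(\Gamma_{0} \times \Gamma_{0}) \backslash W = (\Gamma_{0}\backslash Z) \boxtimes_{\square^d} (\Gamma_{0}\backslash Z) = Y \boxtimes_{\square^d} Y$. The residual group $H/(\Gamma_{0}\times\Gamma_{0})$ is carried isomorphically by $(\gamma,\gamma') \mapsto \tau(\gamma) = \tau(\gamma')$ onto the diagonal copy of $\Gamma/\Gamma_{0}$, whose induced action on $Y \boxtimes_{\square^d} Y$ is exactly the diagonal action defining $\cD(X)$. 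Composing the two quotients therefore identifies $\cD(X)$ with $H \backslash W$, with $H$ acting freely.

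Finally, since $H$ acts freely the map of geometric realizations $\abs{W} \to \abs{\cD(X)} = H\backslash\abs{W}$ is a regular covering with deck group $H$. Iterating Proposition~\ref{prop:productscommute} with $Z = T_1 \times \cdots \times T_d$ and $\square^d = \square^1 \times \cdots \times \square^1$ yields the topological product decomposition $W \cong (T_1 \boxtimes_{\square^1} T_1) \times \cdots \times (T_d \boxtimes_{\square^1} T_d)$; each factor is connected by Lemma~\ref{lem:doubletreeconnected}, so $W$ is connected, and since geometric realization commutes with products we get $\pi_1(W) = \prod_{i=1}^d \pi_1(T_i \boxtimes_{\square^1} T_i)$. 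The exact sequence of the regular covering $W \to \cD(X)$ then reads $1 \to \pi_1(W) \to \pi_1(\cD(X)) \to H \to 1$, which is the assertion.

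I expect the main obstacle to be the bookkeeping in the middle step: checking that the quotient by $\Gamma_{0} \times \Gamma_{0}$ commutes with the fibre product and, crucially, that the two successive free quotients assemble into a single free action whose deck group is $\Gamma \times_{\Gamma/\Gamma_{0}} \Gamma$ rather than merely $\Gamma/\Gamma_{0}$. The covering-space argument and the product decomposition of $W$ are routine consequences of the results already established.
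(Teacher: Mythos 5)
Your proposal is correct and follows essentially the same route as the paper: both identify $Z \boxtimes_{\square^d} Z \to Y\boxtimes_{\square^d} Y \to \cD(X)$ as a connected regular covering with deck group $\Gamma\times_{\Gamma/\Gamma_0}\Gamma$ (connectivity via Lemma~\ref{lem:doubletreeconnected}, the product decomposition via Proposition~\ref{prop:productscommute}) and then read off the exact sequence of the covering. You simply make explicit the bookkeeping — freeness of the $H$-action and the compatibility of the two successive quotients with the fibre product — that the paper's one-line proof leaves implicit.
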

\begin{proof}
This follows from Proposition~\ref{prop:double_1vertex_general}, because $Z \boxtimes_{\square^d} Z \to Y \boxtimes_{\square^d} Y \to \cD(X)$ is an unramified cover that is connected by Lemma~\ref{lem:doubletreeconnected}. Its covering group is
\[
\Aut\big(Z \boxtimes_{\square^d} Z/\cD(X) \big) = \Gamma \times_{\Gamma/\Gamma_{0}} \Gamma. \qedhere
\]
\end{proof}

The description $\cD(X) = X \boxtimes_{\Delta^d} X$ makes it easier to extract a finite presentation of $\pi_1(\cD(X))$. The map $f: X \to \Delta^d$ induces partition of edges
\[
X_1 = \coprod_{\tau = 1}^d A_{\tau}
\]
which we refer to as the partition by \textbf{direction} of the edges. Each set $A_{\tau} = f^{-1}(\tau)$ is a $G_1$-subset of $X_1$. The involution of $A_\tau$ induced by  $-1 \in G_1$ will be denoted by $a \mapsto a^{-1}$. It corresponds to the orientation reversion of the edge. The group $\pi_1(X)$  is generated by the union $X_1 = A_1 \cup \ldots \cup A_d$ and $a^{-1}$ is indeed the inverse of $a \in X_1$.

The set of squares of $X$ is similarly partitioned as
\[
X_2 = \coprod_{\tau \not= \sigma}  R_{\tau,\sigma} 
\]
with $R_{\tau,\sigma} = f^{-1}( i_{\tau,\sigma})$,
where $i_{\tau,\sigma}: \{1,2\} \inj \{1,\ldots, d\}$ denotes the square of $\Delta^d$ 
which sends $1 \mapsto \tau$ and $2 \mapsto \sigma$. The face maps of $\Delta^d$ have the effect
\[
\delta^{\pm}_1( i_{\tau,\sigma} ) = \tau, \qquad \delta^{\pm}_2( i_{\tau,\sigma} ) = \sigma.
\]

Each square $\rho \in R_{\tau, \sigma}$  induces a relation in $\pi_1(X)$ 
\[
\delta^+_1(\rho) \delta^-_2(\rho) = \delta^+_2(\rho) \delta^-_1(\rho) 
\]
where $\delta^{\pm}_1(\rho) \in A_\tau$ and $\delta^{\pm}_2(\rho) \in A_\sigma$.
The relations in a $G_2$-orbit of squares are equivalent. If we set
\[
\delta^{+}_1(\rho) = a, \ \delta^{-}_1(\rho) = a'^{-1} \quad \text{ and } \quad 
\delta^{+}_2(\rho) = b'^{-1}, \ \delta^{-}_2(\rho) = b,
\]
then the relation associated to $\rho$ is
\[
ab a'b',
\]
and the relations for the squares in the full $G_2$-orbit of $\rho$ are (we set $\bar \rho$ for the image of $\rho$ under the map $[2] \to [2]$ that flips the two entries with all signs $+1$):
\[
{\renewcommand{\arraystretch}{1.2}
\setlength{\arraycolsep}{1em} 
\begin{array}{c|c}
\toprule
\rho \leadsto ab a' b'  & (-1,1)^\ast\rho \leadsto a'^{-1}b^{-1}a^{-1}b'^{-1}  \\[1ex]  \midrule
(1,-1)^\ast \rho \leadsto a^{-1} b'^{-1} a'^{-1} b^{-1} & (-1,-1)^\ast \rho \leadsto a'b'ab \\[1ex] \midrule
\bar{\rho} \leadsto b'^{-1} a'^{-1} b ^{-1} a^{-1} & (-1,1)^\ast\bar{\rho} \leadsto ba'b'a \\[1ex]\midrule
(1,-1)^\ast\bar{\rho} \leadsto  b'aba' & (-1,-1)^\ast\bar{\rho} \leadsto  b^{-1}a^{-1}b'^{-1}a'^{-1}.
\\ \bottomrule
\end{array}
}
\]
In particular, the relations for $R_{\tau,\sigma}$ are equivalent to the relations for $R_{\sigma,\tau}$. 

\begin{prop}
\label{prop:presentation_1-vertexcubecomplex}
With the above notation, we have
\[
\pi_1(X) = \left\langle A_1 \cup \ldots \cup A_d \ \left| \
\begin{array}{cc}
aa^{-1}  = 1 & \text{ for all } a \in A_1 \cup \ldots \cup A_d \\
\delta^+_1(\rho) \delta^-_2(\rho) = \delta^+_2(\rho) \delta^-_1(\rho)  & \text{ for all } \rho \in \bigcup R_{\tau, \sigma}
\end{array}
\right.\right\rangle.
\]
\end{prop}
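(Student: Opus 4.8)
The plan is to compute $\pi_1(X) = \pi_1(\abs{X})$ directly from the cell structure of the topological realization, using that the fundamental group of a CW-complex depends only on its $2$-skeleton. First I would record that, since $X$ is a cube complex in the sense of this paper, the $G_n$-action on each $X_n$ is free, so the topological $n$-cells of $\abs{X}$ are exactly the $G_n$-orbits in $X_n$. In particular the topological edges are the $G_1$-orbits $\{a,a^{-1}\}$ in $X_1 = \coprod_\tau A_\tau$, and the topological squares are the $G_2$-orbits in $X_2 = \coprod_{\tau \neq \sigma} R_{\tau,\sigma}$. Because $X$ has a single vertex, the $1$-skeleton of $\abs{X}$ is a wedge of circles, one for each topological edge, so its fundamental group is free on a set of edge representatives (one orientation per edge). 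Taking instead all of $X_1$ as generators and imposing the relations $aa^{-1}=1$ presents the very same free group, which accounts for the first line of the claimed presentation.

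Next I would attach the $2$-cells. By the standard van Kampen description of a presentation $2$-complex, $\pi_1$ of the $2$-skeleton is the quotient of $\pi_1$ of the $1$-skeleton by the normal closure of the attaching loops of the squares. For a parametrized square $\rho \in R_{\tau,\sigma}$, its attaching map traverses the boundary $\partial I^2$ through its four edges $\delta_1^{\pm}(\rho)$ and $\delta_2^{\pm}(\rho)$; reading the corners of $[-1,1]^2$ in cyclic order and using that $\delta_i^{\ep}$ is the face with $i$-th coordinate fixed to $\ep$, the boundary word is $\delta_1^+(\rho)\,\delta_2^-(\rho)\,\delta_1^-(\rho)^{-1}\,\delta_2^+(\rho)^{-1}$, which is exactly the relation $\delta_1^+(\rho)\delta_2^-(\rho) = \delta_2^+(\rho)\delta_1^-(\rho)$ of the statement. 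Since attaching cells of dimension $\geq 3$ does not change $\pi_1$, this already computes $\pi_1(X)$.

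Finally I would dispose of the two redundancies: the generating set lists both orientations of every edge, and the relation set ranges over all of $\bigcup R_{\tau,\sigma}$ rather than over one representative per topological square. Both are harmless. The eight parametrized squares in a single $G_2$-orbit produce precisely the relators displayed in the table preceding the Proposition, and each of those is a cyclic rotation or an inverse of the others after applying $aa^{-1}=1$; hence they all generate the same normal subgroup as the single relator $aba'b'$. Listing the relation for every $\rho$ therefore does not enlarge the set of consequences, and the resulting presentation is exactly the asserted one. The only real obstacle is matching the combinatorial face-map conventions of the cubical set to the geometric boundary loop of $I^2$, i.e.\ verifying the boundary word above and its invariance (up to equivalence of relators) under the full $G_2$-action; once the orientation conventions are fixed as in the table, the remainder is bookkeeping.
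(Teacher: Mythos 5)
Your argument is correct and follows essentially the same route as the paper's (very terse) proof: compute $\pi_1$ from the $2$-skeleton of the one-vertex complex, with edges of $X_1$ as generators subject to $aa^{-1}=1$ and one relator per square, then observe that listing the relation for every parametrized square in a $G_2$-orbit is redundant by the table preceding the Proposition. Your version simply spells out the van Kampen bookkeeping that the paper leaves implicit.
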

\begin{proof}
For the fundamental group of a $1$-vertex cube complex, the generators correspond to the set of edges with the orientation inversion being equal to the inversion in $\pi_1$. The relations are all generated by relations coming from squares, hence agree with the relations noted in the above presentation.
\end{proof}

Since $\cD(X)$ is also a cube complex with just a single vertex and a structure map 
\[
\cD(X) \to \Delta^d,
\]
we simply have to determine the partition of edges and relate edges and squares  of $\cD(X)$ to edges and squares of $X$. We immediately get the following finite presentation:

\begin{prop}
\label{prop:presentation_double}
With the above notation, we have
\[
\pi_1(\cD(X)) = \left\langle \bigcup_{i=1}^d A_i \times A_i  \left| 
\begin{array}{c}
(a,a') (a^{-1},a'^-{1})  = 1 \qquad \text{ for all } (a,a') \in A_\tau \times A_\tau  \text{ and all } \tau \\[1ex]
\big(\delta^+_1(\rho),\delta^+_1(\rho')\big)\big(\delta^-_2(\rho),\delta^-_2(\rho')\big) = \big(\delta^+_2(\rho),\delta^+_2(\rho')\big) \big(\delta^-_1(\rho),\delta^-_1(\rho')\big)  \\[1ex]
\text{ for all } (\rho,\rho') \in  R_{\tau, \sigma}  \times R_{\tau,\sigma} \text{ and all } \tau \not= \sigma
\end{array}
\right.\right\rangle.
\]
\end{prop}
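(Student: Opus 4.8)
The plan is to apply the presentation of Proposition~\ref{prop:presentation_1-vertexcubecomplex} to $\cD(X)$ directly. By Proposition~\ref{prop:double_1vertex_general} the doubling $\cD(X)$ is again a $1$-vertex cube complex, and by Section~\ref{sec:doublingasfibreproduct} it carries a structure map $\cD(X) \to \Delta^d$ obtained as the fibre product of the two structure maps $X \to \Delta^d$. The proof of Proposition~\ref{prop:presentation_1-vertexcubecomplex} only uses that the complex has a single vertex and a folding to $\Delta^d$; it therefore applies verbatim to $\cD(X)$, expressing $\pi_1(\cD(X))$ by its edges (with the orientation-reversing involution) as generators and by its squares as relations of the form $\delta^+_1 \delta^-_2 = \delta^+_2 \delta^-_1$. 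Thus the entire task reduces to identifying the edges and squares of $\cD(X)$ together with their directions, involutions, and boundary maps.

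First I would compute the edges. Since $\cD(X) = X \boxtimes_{\Delta^d} X$ and fibre products of cubical sets are formed degreewise, one has $(\cD(X))_1 = X_1 \times_{(\Delta^d)_1} X_1$. The $1$-cubes of $\Delta^d$ are the directions $\{1,\ldots,d\}$, and the map $X_1 \to (\Delta^d)_1$ is precisely the directional partition with fibres $A_\tau$; hence $(\cD(X))_1 = \coprod_{\tau} A_\tau \times A_\tau = \bigcup_{i=1}^d A_i \times A_i$, which is the generating set claimed. The common direction of a pair $(a,a')$ is $\tau$, and because the automorphism $-1 \in G_1$ acts diagonally on the fibre product (its action being $X(-1) \times_{\Delta^d(-1)} X(-1)$, with $\Delta^d(-1) = \id$), the orientation reversal on $(\cD(X))_1$ is $(a,a') \mapsto (a^{-1},a'^{-1})$. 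This already yields the first family of relations $(a,a')(a^{-1},a'^{-1}) = 1$.

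Next I would treat the squares in the same manner: $(\cD(X))_2 = X_2 \times_{(\Delta^d)_2} X_2$, and since the $2$-cubes of $\Delta^d$ are the symbols $i_{\tau,\sigma}$ with $\tau \neq \sigma$ and $X_2 \to (\Delta^d)_2$ has fibres $R_{\tau,\sigma}$, we obtain $(\cD(X))_2 = \coprod_{\tau \neq \sigma} R_{\tau,\sigma} \times R_{\tau,\sigma}$. The face maps of a degreewise fibre product act componentwise, so $\delta^\pm_i(\rho,\rho') = (\delta^\pm_i(\rho), \delta^\pm_i(\rho'))$; substituting these into the square relation $\delta^+_1 \delta^-_2 = \delta^+_2 \delta^-_1$ reproduces exactly the displayed second family of relations, and the presentation follows.

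The only real content beyond this bookkeeping, and the step I would check most carefully, is that the presentation machinery of Proposition~\ref{prop:presentation_1-vertexcubecomplex} genuinely transfers to $\cD(X)$: this needs the freeness of the $G_n$-actions and the single-vertex property for $\cD(X)$, which are supplied by Proposition~\ref{prop:double_1vertex_general}, together with the fact that the $G_1$- and $G_2$-actions on the fibre product are diagonal and that face maps act componentwise. Both of the latter are immediate from the degreewise definition of $- \boxtimes_{\Delta^d} -$, so no genuine obstacle remains once these compatibilities are recorded.
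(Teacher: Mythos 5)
Your proposal is correct and follows essentially the same route as the paper: apply Proposition~\ref{prop:presentation_1-vertexcubecomplex} to the one-vertex complex $\cD(X)$, identified via $\cD(X)=X\boxtimes_{\Delta^d}X$ so that $\cD(X)_1=\coprod_\tau A_\tau\times A_\tau$ and $\cD(X)_2=\coprod_{\tau\neq\sigma}R_{\tau,\sigma}\times R_{\tau,\sigma}$ with componentwise face maps. The paper's proof is just a terser version of this; your additional remarks on the diagonal $G_1$-action and the hypotheses supplied by Proposition~\ref{prop:double_1vertex_general} are exactly the bookkeeping the paper leaves implicit.
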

\begin{proof}
We apply Proposition~\ref{prop:presentation_1-vertexcubecomplex} to $\cD(X)$ using that
\[
\cD(X)_1 = (A_1 \times A_1) \cup \ldots \cup (A_d \times A_d)
\]
and $\cD(X)_2$ is the union of the $R_{\tau, \sigma}  \times R_{\tau,\sigma}$ with face maps defined componentwise. 
\end{proof}

%%%%%%%%%%%%%%%%%%%%%%%%%%%%%%%%%%%%%%%%%%%
\subsection{Non-residual finite lattices on products of trees in higher rank}

We resume the discussion of the quaternionic lattice $\Gamma_S$ from Section~\S\ref{sec:lattice}, where  
\[
\{0,\infty\} \subseteq S = \{0,\infty,v_1,\ldots, v_d\} \subseteq \bP^1(\bF_q)
\]
is a finite set of rational places. The corresponding product of trees
\[
\PT_{S_0} = \BTT_{v_1} \times \ldots \times \BTT_{v_d}
\]
is a cube complex of dimension $d$. Our choice of a sheaf of maximal orders $\cA$ determines a standard vertex $\star_{v_i} \in \BTT_{v_i}$ and therefore fixes a folding map
\[
\BTT_{v_i} \to \square^1, \quad \star_{v_i} \mapsto 1 \in (\square^1)_0.
\]
It follows that $\PT_{S_0}$ comes equipped with a folding map $\PT_{S_0} \to (\square^1)^d = \square^d$. Since $\Gamma_S$ acts vertex transitively on $\PT_{S_0}$, we are in the situation of 
Section~\S\ref{sec:double_general} and the double 
\[
\cD(X_S) = (\Gamma_S/\Gamma_{S,0}) \backslash \big(Y_S \boxtimes_{\square^d} Y_S\big)
\]
of $X_S = \Gamma_S \backslash \PT_{S_0}$ exists 
(notation parallel to Section~\S\ref{sec:double_general}).

\begin{prop}
The doubling $\cD(X_S)$ is a cubical complex of dimension $d$ with one vertex and with a product of trees of constant valency as its universal covering space. There is a short exact sequence
\[
1 \to \prod_{i=1}^d \pi_1(\BTT_{v_i} \boxtimes_{\square^1} \BTT_{v_i}) \to \pi_1(\cD(X_S))  \to \Gamma_S \times_{\Gamma_S/\Gamma_{S,0}} \Gamma_S \to 1.
\]
\end{prop}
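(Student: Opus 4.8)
The plan is to recognize this statement as a direct specialization of the general doubling results established above, applied to $\Gamma = \Gamma_S$ and $T_i = \BTT_{v_i}$. The only work is to check that the hypotheses of Proposition~\ref{prop:double_1vertex_general} and Corollary~\ref{cor:pi1ofdouble} are satisfied in the present situation, after which both conclusions follow by invoking these results verbatim.

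First I would verify the simple transitivity hypothesis. By Corollary~\ref{cor:GammaSsimplytransitiv} the group $\Gamma_S$ acts simply transitively on the vertices of $\PT_{S_0} = \BTT_{v_1} \times \ldots \times \BTT_{v_d}$. In particular, by Lemma~\ref{lem:surjectivetype} the type homomorphism $\tau: \Gamma_S \to \{\pm 1\}^d$ is surjective, so the type-preserving subgroup $\Gamma_{S,0} = \ker(\tau)$ and the double $\cD(X_S) = (\Gamma_S/\Gamma_{S,0}) \backslash (Y_S \boxtimes_{\square^d} Y_S)$ are defined exactly as in Section~\S\ref{sec:double_general}, with $X_S = \Gamma_S \backslash \PT_{S_0}$.

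Next I would record that the tree factors have constant valency: since each $v_i$ is an $\bF_q$-rational place, one has $N(v_i) = q$, so by Notation~\ref{nota:lamdaSaslatticeinlocallycompactgroup} the building $\BTT_{v_i}$ is a regular tree of valency $q+1$. With simple transitivity and constant valency in hand, Proposition~\ref{prop:double_1vertex_general} applies directly and yields that $\cD(X_S)$ is a $d$-dimensional cube complex with a single vertex whose universal covering space is a product of trees, again of constant valency.

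Finally, the short exact sequence is exactly the conclusion of Corollary~\ref{cor:pi1ofdouble} with $Z = \PT_{S_0}$, $T_i = \BTT_{v_i}$ and $\Gamma = \Gamma_S$, $\Gamma_0 = \Gamma_{S,0}$; substituting these into the general sequence produces the displayed one. There is no genuine obstacle here --- the content was already carried by the general Proposition~\ref{prop:double_1vertex_general} and Corollary~\ref{cor:pi1ofdouble}, whose proofs rested on Lemma~\ref{lem:doubletreeconnected} (connectedness of $\BTT_{v_i} \boxtimes_{\square^1} \BTT_{v_i}$) and Proposition~\ref{prop:productscommute} (compatibility of topological products with fibre products). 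The only points requiring attention are the two hypotheses, namely simple transitivity (Corollary~\ref{cor:GammaSsimplytransitiv}) and constant valency of the tree factors.
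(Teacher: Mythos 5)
Your proposal is correct and matches the paper's proof, which consists precisely of citing Proposition~\ref{prop:double_1vertex_general} and Corollary~\ref{cor:pi1ofdouble}. Your explicit verification of the hypotheses (simple transitivity of $\Gamma_S$ on the vertices via Corollary~\ref{cor:GammaSsimplytransitiv}, and constant valency $q+1$ of each $\BTT_{v_i}$ since the places are $\bF_q$-rational) is exactly what the paper leaves implicit.
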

\begin{proof}
This is Proposition~\ref{prop:double_1vertex_general} and Corollary~\ref{cor:pi1ofdouble}.
\end{proof}

\begin{rmk}
Wise in his thesis \cite[Thm 5.5]{wise:thesis} constructs a non-residually finite lattice on a product of two trees. In a situation similar to ours, Burger and Mozes \cite[Cor 2.5]{burger-mozes:lattices} find non-residual finite fundamental groups of $1$-vertex square complexes under a condition on the local permutation actions. Burger and Mozes make use of the doubling construction in dimension $2$.  
\end{rmk}

Our result is a higher rank analog of the non-residually finiteness of the doubling construction for square complexes. Indeed, we use the higher rank version of the doubling construction established in  Section~\S\ref{sec:double}, and then deduce non-residual finiteness from a $2$-dimensional subcomplex which is the double of a $1$-vertex square complex. We need the following well known lemma and apply it to non positively curved cube complexes which are locally CAT(0).

\begin{lem}
\label{lem:localisometry_injectivepi1}
Let $f: X \to Y$ be a locally isometric map between connected, locally CAT(0) spaces. We fix a base point $x \in X$ and set $y = f(x)$. Then the induced map $f_\ast: \pi_1(X,x) \to \pi_1(Y,y)$ is injective.
\end{lem}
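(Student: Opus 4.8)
The plan is to pass to universal covers, where nonpositive curvature promotes a local isometry to a global isometric embedding, and then to deduce $\pi_1$-injectivity from the equivariance of the lift together with freeness of the deck action. First I would choose lifts $\tilde{x} \in \tilde{X}$ and $\tilde{y} \in \tilde{Y}$ of the base points. Since $\tilde{X}$ is simply connected, the standard lifting criterion for covering spaces lets me lift the composite $\tilde{X} \to X \xrightarrow{f} Y$ through the universal covering $\tilde{Y} \to Y$ to a map $\tilde{f}: \tilde{X} \to \tilde{Y}$ with $\tilde{f}(\tilde{x}) = \tilde{y}$. By construction $\tilde{f}$ is again a local isometry, and it is equivariant for $f_\ast$ in the sense that $\tilde{f}(\gamma \cdot p) = f_\ast(\gamma) \cdot \tilde{f}(p)$ for every $\gamma \in \pi_1(X,x)$, acting as a deck transformation, and every $p \in \tilde{X}$.

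The heart of the argument --- and the step I expect to be the main obstacle --- is to upgrade $\tilde{f}$ to a global isometric embedding, which is exactly where nonpositive curvature is indispensable. By the Cartan--Hadamard theorem \cite[II.4.1]{bridson_haeflinger}, the universal covers of the connected, complete, locally CAT(0) spaces $X$ and $Y$ are globally CAT(0); here completeness is automatic for the cube complexes to which we apply the lemma, their cells having only finitely many isometry types. Consequently $\tilde{X}$ and $\tilde{Y}$ are uniquely geodesic and, crucially, every local geodesic in them is a global geodesic. Given $p,q \in \tilde{X}$, let $c$ be the geodesic joining them; the local isometry $\tilde{f}$ carries the local geodesic $c$ to a local geodesic $\tilde{f} \circ c$ of equal length, which in the CAT(0) target $\tilde{Y}$ is automatically a global geodesic. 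Hence $d(\tilde{f}(p), \tilde{f}(q)) = \len(\tilde{f} \circ c) = \len(c) = d(p,q)$, so $\tilde{f}$ preserves distances and is in particular injective. The delicate points to verify are that $\tilde{f}$ neither shortens the image path nor permits backtracking; both follow from the infinitesimal length-preservation built into the local-isometry hypothesis together with the absence of geodesic bigons in the CAT(0) space $\tilde{Y}$.

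Finally I would conclude injectivity of $f_\ast$. Suppose $\gamma \in \ker(f_\ast)$. Equivariance gives $\tilde{f}(\gamma \cdot p) = f_\ast(\gamma) \cdot \tilde{f}(p) = \tilde{f}(p)$ for all $p \in \tilde{X}$, and injectivity of $\tilde{f}$ then forces $\gamma \cdot p = p$ for all $p$, so $\gamma$ acts trivially on $\tilde{X}$. Since the deck action of $\pi_1(X,x)$ on its universal cover is free, $\gamma = 1$; hence $f_\ast$ is injective.
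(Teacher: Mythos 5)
Your proof is correct, but it takes a genuinely different route from the paper's. The paper argues downstairs: a nontrivial $g \in \pi_1(X,x)$ is represented by a loop that is freely homotopic to a (nonconstant) closed local geodesic $\beta$; the local isometry $f$ carries $\beta$ to a closed local geodesic in $Y$, and a nonconstant closed local geodesic in a locally CAT(0) space is never nullhomotopic, since otherwise it would lift to the CAT(0) universal cover, where local geodesics are true geodesics --- contradicting the periodicity of the loop. You instead pass to universal covers, upgrade the lift $\tilde f$ to a global isometric embedding via Cartan--Hadamard together with the same key fact that local geodesics in a CAT(0) space are global geodesics, and conclude by $f_\ast$-equivariance and freeness of the deck action. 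Your version proves slightly more (the induced map on universal covers is an isometric embedding, not merely that $f_\ast$ is injective) and avoids the paper's implicit appeal to the existence of a closed geodesic representative in each free homotopy class, which needs compactness or properness of $X$; in exchange you need completeness for Cartan--Hadamard, which you correctly flag as automatic for the finite-dimensional cube complexes to which the lemma is applied. Both arguments are sound for the intended application.
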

\begin{proof}
Let $g \in \pi_1(X,x)$, $g\neq 1$ and $\gamma$ a loop in $X$ representing $g$.  We must show that $f_\ast(g) \not= 1$. Note that  $\gamma$ is freely homotopic to a locally geodesic loop $\beta$. Since $f$ is a local isometry, it sends local geodesics to local geodesics and $f(\beta)$ is a locally geodesic loop. 

Nontrivial locally geodesic loops are not nullhomotopic in locally CAT(0) spaces.  Otherwise the locally geodesic loop would lift to the universal covering space, a genuine CAT(0) space in which locally geodesics are true geodesics, a contradiction to the periodicity of the loop. 

But $f_\ast(g) = 1$  holds if and only if $f(\beta)$ is (freely) nullhomotopic in $Y$. This proves the lemma.
\end{proof}

\begin{prop}
\label{prop:arithmeticdouble}
Let $S \subseteq \bP^1$ be a finite set of $\bF_q$-rational places containing $B = \{0,\infty\}$  and set $S_0 = S \setminus B$. 

Let $\Gamma_S$ be the lattice of Theorem~\ref{thm:structure_Gamma_S} acting on the product of trees $\PT_{S_0}$. The double $\cD(X_S)$ of 
\[
X_S = \Gamma_S \backslash \PT_{S_0}
\]
is a one vertex cube complex with product of trees as universal cover and fundamental group $\pi_1(\cD(X_S))$ of the following presentation:
\begin{enumerate}
\item generators are pairs  $([1+\alpha F],[1+\alpha' F])$ with $N(\alpha)^{-1} = N(\alpha')^{-1} \in S_0$,
\item
relations for inverses:
\[
([1+\alpha F],[1+\alpha' F]) \cdot ([1-\alpha F],[1-\alpha' F]) = 1,
\]
\item and relations from geometric squares: for all $\tau \not= \sigma \in S_0$ and $\alpha,\alpha'$ with $N(\alpha)^{-1} = N(\alpha')^{-1} = \tau$, and $\beta,\beta'$ with  $N(\beta)^{-1} = N(\beta')^{-1} = \sigma$ we have
\[
([1 + \alpha F],[1+\alpha' F]) \cdot ([1+ \beta F],[1+\beta' F]) = ([1 + \zeta_{\alpha}(\beta) \beta F],[1 + \zeta_{\alpha'}(\beta') \beta' F]) \cdot ([1  + \zeta_\beta(\alpha) \alpha F],[1  + \zeta_{\beta'}(\alpha') \alpha' F]).
\]
\end{enumerate}
\end{prop}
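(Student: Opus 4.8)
The plan is to derive both assertions from the general machinery already set up for the doubling construction, so that essentially no new geometric input is needed. The first claim---that $\cD(X_S)$ is a one-vertex cube complex of dimension $d$ whose universal cover is a product of trees of constant valency---is an immediate instance of Proposition~\ref{prop:double_1vertex_general} applied to the lattice $\Gamma_S$, which acts simply transitively on the vertices of $\PT_{S_0}$ by Corollary~\ref{cor:GammaSsimplytransitiv}; constant valency of the tree factors holds because each $\BTT_{v_i}$ is a $(q+1)$-regular tree. The entire remaining work is to specialise the abstract presentation of Proposition~\ref{prop:presentation_double} to $X = X_S$ by translating the combinatorial data (the direction partition $A_\tau$ of the edges and the squares $R_{\tau,\sigma}$) into the quaternionic description.

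First I would identify the combinatorial pieces of $X_S = \Gamma_S \backslash \PT_{S_0}$. Since $\Gamma_S$ acts simply transitively on the vertices, $X_S$ is a one-vertex cube complex whose $1$-skeleton is the Cayley graph of $\Gamma_S$ with respect to the generating set $\bigcup_{\tau \in S_0} PA_\tau$, as established in the proof of Theorem~\ref{thm:structure_Gamma_S}. The edges of direction $\tau$ are precisely the elements of $PA_\tau$ (they are the neighbours of $\star_{S_0}$ in the $\tau$-direction, cf.\ the proof of Proposition~\ref{prop:GammaS}), so the direction partition is $A_\tau = PA_\tau$, with the orientation-reversing involution given by $[1+\alpha F] \mapsto [1-\alpha F]$ according to Proposition~\ref{prop:StabAtau_relations}. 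The squares $R_{\tau,\sigma}$ of $X_S$ and the relation each of them imposes are exactly those recorded in Theorem~\ref{thm:structure_Gamma_S}(1)(ii), namely
\[
[1+\alpha F]\,[1+\beta F] = [1 + \zeta_\alpha(\beta)\beta F]\,[1 + \zeta_\beta(\alpha)\alpha F]
\]
for $\tau \neq \sigma$; reading off the four face maps $\delta^\pm_1, \delta^\pm_2$ of such a square lands the horizontal edges in $A_\tau = PA_\tau$ and the vertical edges in $A_\sigma = PA_\sigma$, as required.

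Next I would feed this data into Proposition~\ref{prop:presentation_double}. The generators of $\pi_1(\cD(X_S))$ are the pairs in $\bigcup_i (A_i \times A_i)$, which by the previous step are exactly the pairs $([1+\alpha F],[1+\alpha' F])$ with $N(\alpha)^{-1} = N(\alpha')^{-1} = \tau \in S_0$; this is assertion (1). The inverse relations $(a,a')(a^{-1},a'^{-1}) = 1$ become
\[
([1+\alpha F],[1+\alpha' F])\,([1-\alpha F],[1-\alpha' F]) = 1,
\]
which is assertion (2). The square relations of Proposition~\ref{prop:presentation_double} are indexed by pairs $(\rho,\rho') \in R_{\tau,\sigma} \times R_{\tau,\sigma}$ and are obtained by applying the face maps componentwise, so the quaternionic square relation above, imposed in each of the two coordinates simultaneously, yields precisely assertion (3).

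The only genuinely delicate point, and the step I expect to require the most care, is the faithful bookkeeping of the square face maps. One must check that the abstract relation $\delta^+_1(\rho)\delta^-_2(\rho) = \delta^+_2(\rho)\delta^-_1(\rho)$ of Proposition~\ref{prop:presentation_1-vertexcubecomplex}, once the four corners of the geometric square attached to the quaternion relation are matched to $[1+\alpha F]$, $[1+\beta F]$, $[1+\zeta_\alpha(\beta)\beta F]$ and $[1+\zeta_\beta(\alpha)\alpha F]$ with the correct orientations, reproduces exactly the word displayed in Theorem~\ref{thm:structure_Gamma_S}, and that the fibre-product structure over $\Delta^d$ forces both components $\rho,\rho'$ to share the same direction-pair $(\tau,\sigma)$, so that the componentwise face maps are simultaneously defined. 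Once this identification is in place, all three assertions follow formally.
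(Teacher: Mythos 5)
Your proposal is correct and follows exactly the paper's own route: the first assertion is Proposition~\ref{prop:double_1vertex_general} applied to the simply vertex-transitive lattice $\Gamma_S$, and the presentation is obtained by specialising Proposition~\ref{prop:presentation_double} with the edge/square data supplied by Theorem~\ref{thm:structure_Gamma_S}. Your write-up is in fact more explicit than the paper's two-line proof about identifying $A_\tau$ with $PA_\tau$ and matching the face maps, which is a welcome addition but not a different argument.
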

\begin{proof}
Proposition~\ref{prop:double_1vertex_general} shows that the lattice $\pi_1(\cD(X_S))$ in question acts on a product of trees with a one vertex cube complex as a quotient. The  claimed presentation for $\pi_1(\cD(X_S))$ follows from Theorem~\ref{thm:structure_Gamma_S} 
together with Proposition~\ref{prop:presentation_double}.
\end{proof}

\begin{thm}
\label{thm:non residually finite doubles}
In the context of Proposition~\ref{prop:arithmeticdouble} we assume $\#S_0 \geq 2$. Then the lattice $\pi_1(\cD(X_S))$ is not residually finite.
\end{thm}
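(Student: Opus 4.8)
The plan is to reduce the assertion to the two-dimensional case treated by Burger and Mozes, and then to import their non-residual finiteness criterion. Since $\#S_0 \geq 2$, I would fix two distinct places $\tau \neq \sigma$ in $S_0$. The edges and squares of $X_S = \Gamma_S \backslash \PT_{S_0}$ pointing only in the $\tau$- and $\sigma$-directions span a sub-cube-complex which, by Theorem~\ref{thm:structure_Gamma_S}, is exactly the one-vertex square complex $X' := X_{\{0,\infty,\tau,\sigma\}}$ attached to the rank-two lattice $\Gamma_{\{0,\infty,\tau,\sigma\}}$ acting on $\BTT_\tau \times \BTT_\sigma$. Forming the double is compatible with such direction-subcomplex inclusions, so the double $\cD(X')$ is naturally a sub-cube-complex of $\cD(X_S)$; both have a product of trees as universal cover by Proposition~\ref{prop:double_1vertex_general} and are therefore locally CAT(0). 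As $\cD(X') \hookrightarrow \cD(X_S)$ is a full, locally convex subcomplex inclusion, hence a local isometry, Lemma~\ref{lem:localisometry_injectivepi1} yields an injection $\pi_1(\cD(X')) \hookrightarrow \pi_1(\cD(X_S))$.

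Since residual finiteness is inherited by subgroups, it then suffices to show that $\pi_1(\cD(X'))$ is not residually finite. Here $\cD(X')$ is the double of the one-vertex square complex $X'$ in the sense of Burger and Mozes. First I would record the local permutation data of $X'$: by the cyclic structure of the generator sets $PA_\tau$ and $PA_\sigma$ (conjugation by $d$ acting as a single $(q+1)$-cycle), the local action in each of the two directions is the regular cyclic group $\langle T \rangle \cong \bZ/(q+1)\bZ$ on the $q+1$ neighbours. I would then feed the resulting square complex into the criterion of Burger and Mozes \cite[Cor.~2.5]{burger-mozes:lattices}, whose conclusion is precisely that the fundamental group of such a double is not residually finite.

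Concretely, the Burger--Mozes mechanism exhibits a nontrivial element of $\pi_1(\cD(X'))$ lying in every finite-index subgroup. To locate it I would use the explicit presentation of $\pi_1(\cD(X'))$ supplied by Proposition~\ref{prop:arithmeticdouble} in the case of two directions, together with the normal subgroup $\pi_1(\BTT_\tau \boxtimes_{\square^1} \BTT_\tau) \times \pi_1(\BTT_\sigma \boxtimes_{\square^1} \BTT_\sigma)$ --- a product of two free groups --- and the quotient $\Gamma \times_{\Gamma/\Gamma_0} \Gamma$ of Corollary~\ref{cor:pi1ofdouble} applied to $\Gamma = \Gamma_{\{0,\infty,\tau,\sigma\}}$. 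The folding symmetry interchanging the two factors of the double then forces a commutation relation in every finite quotient that fails in $\pi_1(\cD(X'))$ itself.

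The hard part is the verification that our explicit local actions meet the hypotheses of the Burger--Mozes criterion, equivalently, the construction of the offending element in the finite residual. The delicate point is that doubling replaces each edge by a pair of edges carrying independent orientations, so the relevant local action of $\cD(X')$ on $A_\sigma \times A_\sigma$ is generated by the symbols $(T^{\pm 1}, T^{\pm 1})$ with independent signs; one must check that this enriched action is rich enough to trigger the Burger--Mozes obstruction while remaining compatible with the cyclic rigidity inherited from the quaternionic structure. Once this purely two-dimensional verification is in place, the reduction of the first paragraph propagates the conclusion from the subcomplex $\cD(X')$ to $\pi_1(\cD(X_S))$, establishing non-residual finiteness for every $\#S_0 \geq 2$.
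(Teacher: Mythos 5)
Your reduction to the rank-two case coincides with the paper's argument: you embed $\cD(X_{\tau,\sigma})$ into $\cD(X_S)$ via the direction-subcomplex obtained by fixing the standard vertex in the remaining tree factors, and use Lemma~\ref{lem:localisometry_injectivepi1} to get an injection $\pi_1(\cD(X_{\tau,\sigma}))\hookrightarrow\pi_1(\cD(X_S))$, so that it suffices to treat $\#S_0=2$. Up to that point there is nothing to object to.

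The problem is the rank-two step, which you do not actually carry out. You propose to feed the square complex into the Burger--Mozes criterion \cite[Cor.\ 2.5]{burger-mozes:lattices}, but that criterion only applies \emph{under a condition on the local permutation actions} (the paper says exactly this in the remark preceding the theorem), and you explicitly defer the verification of those hypotheses as ``the hard part''. For the quaternionic lattices in question the local action in each direction is the cyclic group $\langle T\rangle\cong\bZ/(q+1)\bZ$ acting regularly on the $q+1$ half-edges: it is never $2$-transitive and, for $q+1$ composite, not even quasi-primitive, so it is far from clear that the Burger--Mozes hypotheses can be met, and you give no argument. Since in the case $\#S_0=2$ this verification \emph{is} the theorem, the proof is incomplete. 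The paper avoids the issue altogether: it quotes Proposition 4.15 of \cite{caprace:survey}, which rests on Caprace--Monod \cite[Prop.\ 2.4]{caprace_monod:non-residualfiniteness} and exploits the irreducibility and normal-subgroup structure of the $S$-arithmetic lattice $\Gamma_{\{0,\infty,\tau,\sigma\}}$ (via the fibre product $\Gamma\times_{\Gamma/\Gamma_0}\Gamma$ of Corollary~\ref{cor:pi1ofdouble}) rather than any property of the local permutation groups. To close your gap you should either switch to that criterion or prove directly that the fibre product of an irreducible lattice satisfying the normal subgroup theorem over a finite quotient is not residually finite; trying to force these examples into the Burger--Mozes framework is likely to fail.
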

\begin{proof}
Let $\tau \not = \sigma \in S_0$ and consider the analog construction for the set $\{0,\infty, \tau, \sigma\}$. Then $X_{\tau,\sigma} := X_{\{0,\infty, \tau, \sigma\}}$ is a subcomplex of $X_S$ via adding the standard vertex in the remaining tree factors. Since the doubling is functorial we also obtain an embedding  $\cD(X_{\tau,\sigma}) \subseteq \cD(X_S)$
which by Lemma~\ref{lem:localisometry_injectivepi1} gives rise to an injective map
\[
\pi_1\big(\cD(X_{\tau,\sigma})\big) \inj \pi_1\big(\cD(X_S)\big).
\]
This reduces the claim of the theorem to the case $\#S_0 = 2$, which is dealt with in Proposition 4.15 of \cite{caprace:survey} based on a result of 
Caprace and Monod \cite[Prop. 2.4]{caprace_monod:non-residualfiniteness}. 
\end{proof}

%%%%%%%%%%%%%%%%%%%%%%%%%%%%%%%%%%%%%%%%%%
\section{Ramanujan complexes}
\label{sec:ramanujan}
%----------------------------------------------------------------------------------------------------------------
\subsection{Adjacency operators for graphs and Ramanujan graphs} 
Let $X$ be a connected graph with uniformly bounded valencies. We consider $X$ as a $1$-dimensional cubical complex and write $X_0$ for the set of vertices of $X$. We write $P \sim Q$ if two vertices $P,Q \in X_0$ are adjacent, and we denote by $\mu(P,Q)$ the number of edges that connect $P$ with $Q$. The \textbf{adjacency operator} $A_X$ acting on the space of $L^2$-functions $f : X_0 \to \bC$ is defined as 
\[
A_X(f)(P)= \sum_{Q  \sim P} \mu(P,Q) f(Q),
\]
where we sum over adjacent vertices with the multiplicity of the number of edges linking them.
The adjacency operator commutes with the induced right action of the group $\Aut(X)$ of graph automorphisms on $L^2(X_0)$.

\begin{rmk}
For the moment, let $X$ be a finite graph of constant valency $q+1$. The \textbf{trivial eigenvalues} of $A_X$ acting on $L^2(X_0)$ are $\lambda = \pm (q+1)$. These are obtained by the constant non-zero function for $\lambda = q+1$, and by the `alternating function' with $f(P) = - f(Q) \not= 0$ for all $P \sim Q$ for $\lambda = -(q+1)$. The latter only exists if $X$ has a bipartite structure, i.e., as a  cube complex $X$ admits a folding $X \to \square^1$. 
\end{rmk}

\begin{rmk}
Alon and Boppana \cite{alon-boppana} prove that asymptotically in families of finite $(q+1)$-regular graphs $X_n$ with diameter tending to $\infty$ the largest absolute value of a non-trivial eigenvalue $\abs{\lambda(X_n)}$ of the adjacency operator $A_{X_n}$ has limes inferior
\[
\varliminf_{n \to \infty} \abs{\lambda(X_n)} \geq 2\sqrt{q}.
\]
Lubotzky, Phillips and Sarnak turn this estimate into a definition for the extremal case as follows. 
\end{rmk}

\begin{defi}[{\cite[Definition 1.1]{LPS}}]
A finite $(q+1)$-regular graph $X$ is defined to be a \textbf{Ramanujan graph}  
if all non-trivial eigenvalues $\lambda$ of the adjacency operator $A_X$ have absolute value 
$\abs{\lambda} \leq 2\sqrt{q}$. 
\end{defi}

%----------------------------------------------------------------------------------------------------------------
\subsection{Adjacency operators for augmented cubical complexes} 
\label{sec:v adjacency}
Recall the cube complex $\Delta^d = (G_1)^d \backslash \square^d$ from 
Section~\S\ref{sec:doublingasfibreproduct}, whose $p$-dimensional cubes are in bijection with injective maps
\[
\sigma : \{1,\ldots,p\} \inj \{1,\ldots, d\}.
\]
We will consider $d$-dimensional cubical complexes $X$ together with an augmentation 
\[
X \to \Delta^d
\]
that we may think of a consistent way to associate to a $p$-dimensional cube of $X$ an ordered set of `directions' taken from a subset of $\{1,\ldots, d\}$ of size $p$. For adjacent vertices $P,Q \in X_0$ the edge joining $P$ with $Q$ has a well defined direction $v \in \{1,\ldots, d\}$. We denote by $P \sim_v Q$ if $P$ and $Q$ are \textbf{adjacent in $v$-direction}, i.e., the edge linking $P$ with $Q$ has direction $v$. We denote by $\mu_v(P,Q)$ the number of edges in $v$-direction that connect $P$ with $Q$.

We now assume that $X$ satisfies a condition of locally finiteness: all vertices are faces of at most a uniformly bounded number of  edges. Then, for every $v \in \{1,\ldots, d\}$, we can define an \textbf{adjacency operator $A_v$ in $v$-direction} on $L^2(X_0)$ by 
\[
A_v(f)(P) = \sum_{Q  \sim_v P} \mu_v(P,Q) f(Q). 
\]
The operators $A_v$ commute with the automorphisms of $L^2(X_0)$ induced by  automorphisms of $X$ preserving the augmentation $X \to \Delta^d$. 

\begin{rmk}
\label{rmk:comuting v adjacency}
If we further assume that locally all pairs of edges starting in a vertex $P$ in directions $v \not= w$ belong to a unique square in $X_2$ (in direction $\{v,w\}$), then the operators $A_v$ and $A_w$ commute with each other. Indeed, the products $A_vA_w$ and $A_wA_v$ both sum over all values of secondary neighbours that are one step away both in $v$ and $w$-direction.  
\end{rmk}

\begin{rmk}
For the moment, let $X \to \Delta^d$ be an augmented finite cube complex and for all $v \in \{1, \ldots, d\}$ there is a $q_v$ such that every vertex $P \in X_0$ has $q_v + 1$ neighbours in $v$-direction. We say that $X$ has \textbf{constant valency $(q_v+1)_v$ in all directions}.

The \textbf{trivial eigenvalues} of $A_v$ acting on $L^2(X_0)$ are $\lambda = \pm (q_v+1)$. These are obtained by non-zero functions that are constant in $v$-direction for $\lambda = q_v+1$, and by the `$v$-alternating function' with $f(P) = - f(Q) \not= 0$ for all $P \sim_v Q$ for $\lambda = -(q_v+1)$. The latter only exists if $X$ has a kind of bipartite structure in $v$-direction. Existence for all $v$ means that the augmentation lifts to a folding $X \to \square^d$.
\end{rmk}

\begin{defi}
Let $X \to \Delta^d$ be a finite cubical complex of dimension $d$ that  has constant valency $(q_v+1)_v$ in all directions. Then $X$ is a \textbf{cubical Ramanujan complex}, if for each $v \in \{1, \ldots, d\}$, the eigenvalues $\lambda$ of $A_v$ are trivial, i.e., $\lambda = \pm(q_v+1)$, or non-trivial and then bounded by 
\[
\abs{\lambda} \leq 2\sqrt{q_v}.
\]
\end{defi}

\begin{rmk}
Our definition of cubical Ramanujan complexes follows Jordan and Livn\'e \cite[Definition 2.5]{JL}, which is motivated by their Alon-Boppana type result \cite[Proposition 2.4]{JL}. As in \cite{JL}, a generalization of our definition applies to spectrum of an action on cellular cochains of $X$ in degree $> 0$.

For simplicial versions of Ramanujan complexes see also \cite{csz} and \cite{LSV,LSV1}, and for recent surveys on higher expanders we refer to \cite{lubotzky:takagilecture,lubotzky:highexpanders}.
\end{rmk}

%----------------------------------------------------------------------------------------------------------------
\subsection{Hecke action and adjacency operator} 
\label{sec:adjacency is Hecke}
We now work in the context and notation of Section~\S\ref{sec:lattice}, with the exception\footnote{There is no more explicit Frobenius map or computation in a quaternion algebra, and we need the letter $K$ for compact open subgroups.} that we now denote the global field by $F$. So $F$ is a global field, $D/F$ is a quaternion algebra ramified in the set of places $B$ contained in a finite set of places $S$. We set $S_0 = S \setminus B$. At this moment it is not necessary to restrict to a global function field $F$ over $\bF_q$, or to ask the absolute norms $q_v := N(v)$ of the places $v \in S_0$ to be all equal to $q$.
For $v \in S_0$, as in Notation~\ref{nota:lamdaSaslatticeinlocallycompactgroup}, we consider the Bruhat-Tits building $\BTT_v$, a regular tree of valency $q_v + 1$, of the locally pro-finite group
\[
{G}_v := \PGL_2(F_v) \simeq \PGL_{1,\dA}(F_v).
\]
The choice of the maximal order $\dA$ distinguishes a vertex $\star_v \in \BTT_v$. The stabilizer of $\star_v$ agrees with a maximal compact subgroup $K_v \subseteq {G}_v$. Let $t_v \in F_v$ be a uniformizer. If $F$ is a global function field as in Section~\S\ref{sec:lattice}, then $F_v \simeq \bF_{q_v}((t_v))$, and with a suitable choice of isomorphism  ${G}_v  \simeq \PGL_2\big(\bF_{q_v}((t_v))\big)$, we have
\[
K_v := \PGL_2(\mathbb{F}_{q_v}[[t_v]]) \subseteq {G}_v = \PGL_2\big(\mathbb{F}_{q_v}((t_v))\big).
\]
The set $\BTT_{v,0}$ of vertices of $\BTT_v$ is in ${G}_v$-equivariant bijection with ${G}_v/K_v$ via the map ${G}_v/K_v \to \BTT_{v,0}$, $g \mapsto g\star_v$. So, the adjacency operator $A_{\BTT_v}$ acts on 
\[
L^2(\BTT_{v,0}) = L^2({G}_v/K_v) = \{ f \in L^2({G}_v) \ ; \ f \text{ right $K_v$-invariant} \}
\]
and as such has the following well known explicit description as a Hecke operator from the spherical Hecke algebra $\dH(G_v \!\sslash\! K_v)$ of compactly supported $K_v$-biinvariant functions on $G_v$ with multiplication by convolution. 
We consider the diagonal matrix 
\[
a_v = \matzz{t_v}{}{}{1} \in G_v.
\]
Under the bijection $G_v/K_v = \BTT_{v,0}$ the double coset $K_v a_v K_v$ maps to the set of neighbours of the distinguished vertex $\star_v$:
\[
K_v a_v K_v/K_v = \{P \in V(\BTT_v) \ ; \ P \sim \star_v\} =: \rS^1(\star_v).
\]
Therefore the corresponding Hecke operator defined by the double coset $K_v a_v K_v$ (and with respect to a suitably normalized right-invariant Haar measure of $G_v$; here $ \one_{K_v a_v K_v}(x)$ is the characteristic function of $K_v a_v K_v$)
\[
f|_{[K_v a_v K_v]} (g) = \int_{G_v}  f(gx^{-1})  \one_{K_v a_v K_v}(x) d\mu_{G_v}(x) = \sum_{P \sim \star_v} f(gP) = A_{\BTT_v}(f)(g)
\]
is nothing but the adjacency operator $A_{\BTT_v}$, because $g \rS^1(\star_v) =  \rS^1(g\star_v)$ is the set of neighbours of $g\star_v$. Note that for a tree all non-zero multiplicities are equal to $1$.

The spherical Hecke algebra is isomorphic via the Satake isomorphism, see \cite[\S8 Thm 7]{satake}, to the polynomial ring 
\[
\dH(G_v \!\sslash\! K_v)  \simeq \bC[A_{\BTT_v}].
\]
The adjacency operator $A_{\BTT_v}$ commutes with the automorphisms of $L^2(G_v/K_v)$ induced by left translation with $g \in G_v$.

%----------------------------------------------------------------------------------------------------------------
\subsection{Hecke action and adjacency operators for products of trees} 
We set $d = \#S_0$. As in Section~\S\ref{sec:lattice}, we now consider the product building as an augmented $d$-dimensional cubical complex 
\[
\PT_{S_0} := \prod_{v \in S_0} \BTT_v \to \prod_{v \in S_0} \square^1  \simeq \square^d \to \Delta^d.
\]
The group $G= \prod_{v \in S_0} G_v$ acts transitively on the vertices $\PT_{S_0,0}$  with stabilizer $K = \prod_{v \in S_0} K_v$ of the distinguished vertex $\star = (\star_v)_{v \in S_0} \in \PT_{S_0,0}$. We can therefore identify $\PT_{S_0,0}$ with $G/K$.

We write $P \sim_v Q$ if two vertices $P,Q \in \PT_{S_0,0}$ are adjacent in $v$-direction, i.e., only the entries with index $v$ differ and are adjacent in $\BTT_v$.  Consistently with the definition in Section~\S\ref{sec:v adjacency} we define an \textbf{adjacency operator $A_v$ in $v$-direction} on $L^2(G/K) = L^2(\PT_{S_0,0})$ for every $v \in S_0$ by 
\[
A_v(f)(P) = \sum_{Q  \sim_v P} f(Q). 
\]
Note again, that all non-zero multiplicities $\mu_v(P,Q)$ are equal to $1$.

\begin{rmk}
Clearly, the operators $A_v$ and $A_w$ for $v,w \in S_0$ commute with each other, because $A_vA_w$ and $A_wA_v$ both sum over all values of secondary neighbours that are one step away both in $v$ and $w$-direction, see Remark~\ref{rmk:comuting v adjacency}.  In fact, this also follows from the interpretation as Hecke operators, since the Hecke algebra turns out to be commutative.
Similarly to the case of the adjacency operator of just one tree $\BTT_v$ treated in the previous section, the $v$-directional adjacency operators $A_v$ have interpretations as Hecke operators for $G/K$ by the $K$-double coset of 
\[
(1, \ldots, 1, a_v, 1 \ldots, 1) \in G = \prod_{v \in S_0} G_v.
\]
In fact, the (spherical) Hecke algebra  can be identified with  the polynomial algebra  with generators $A_v$, for $v \in S_0$:
\[
\dH(G\!\sslash\! K) = \bigotimes_{v \in S_0} \dH(G_v \!\sslash\! K_v) \simeq \bC[A_v \ ; \ v \in S_0].
\]
\end{rmk}

\begin{ex}
Let $\Gamma$ be a torsionfree cocompact lattice in $G$, then the quotient $X_\Gamma := \Gamma \backslash \PT_{S_0}$ is a finite cubical complex. Since left translation by the group respects the augmentation $\PT_{S_0}  \to \Delta^d$, we obtain a map 
\[
X_\Gamma \to \Delta^d.
\]
With $q_v = N(v)$ the finite cubical complex $X_\Gamma$ has \textbf{constant valency $(q_v+1)_v$ in all directions}. Hence there are $v$-directional adjacency operators on 
\[
L^2(X_{\Gamma,0}) 
= \{ f \in L^2(\Gamma \backslash G) \ ; \ f \text{ is right $K$-invariant}\} \subseteq L^2(\Gamma \backslash G).
\]
Moreover, a computation similar to that of Section~\S\ref{sec:adjacency is Hecke} shows that the operator $A_v$ on $L^2(X_{\Gamma,0})$ is the Hecke operator associated to  the $K$-double coset of $(1, \ldots, 1, a_v, 1 \ldots, 1) \in G$ for the $G$-representation $L^2(\Gamma \backslash G)$ by right translation; and $L^2(X_{\Gamma,0})$ is the corresponding Hecke module of $K$-invariant vectors (also called the $K$-spherical vectors).
\end{ex}

%----------------------------------------------------------------------------------------------------------------
\subsection{Hecke spectrum for congruence quotients}

Our lattices $\Gamma_S$ in Section~\S\ref{sec:lattice} are $S$-arithmetic lattices contained in $\Lambda_S = \PGL_{1,\dA}(\fo_{F,S})$, see Section~\S\ref{sec:globaltorsioninlattice} for notation and Section~\S\ref{sec:adjacency is Hecke} for a modification. 

\begin{rmk}
Congruence subgroups of $\Gamma_S$ are defined as follows. For an ideal $I \lhd \fo_{F,S}$, the residue ring $\fo_{F,S}/I$ is finite, and 
\[
\PGL_{1,\dA}(\fo_{F,S}/I) \simeq \PGL_2(\fo_{F,S}/I).
\]
Hence the kernel $\Lambda_S(I)$ of \textit{evaluation mod $I$}
\[
\Lambda_S(I) := \ker\Big(\ev_I: \PGL_{1,\dA}(\fo_{F,S}) \to  \PGL_{1,\dA}(\fo_{F,S}/I) \simeq \PGL_2(\fo_{F,S}/I)\Big)
\]
is of finite index in $\Lambda_S$ and called the \textbf{principal congruence subgroup of level $I$}. Since $S_0$ is non-empty, strong approximation for $\SL_{1,\dA}$ shows that 
\[
\PSL_2(\fo_{F,S}/I) \subseteq \Lambda_S/\Lambda_S(I) \simeq \im(\ev_I)
\subseteq \PGL_2(\fo_{F,S}/I).
\]
The precise image of $\ev_I$
depends on $\dA(\fo_{F,S})$ containing elements of reduced norm that are (non-)squares modulo $I$. We shall not pursue this question here. 
The principal congruence subgroups of $\Gamma_S$ are 
\[
\Gamma_S(I) = \Gamma_S \cap \Lambda_S(I).
\]
Since $\Gamma_S$ is described in $\Lambda_S$ by congruence conditions at $t=0$, strong approximation shows that also 
\[
\PSL_2(\fo_{F,S}/I) \subseteq \Gamma_S/\Gamma_S(I).
\]
For prime level, i.e., $I = \fp$ is a prime ideal of $\fo_{F,S}$ corresponding to a place of $\bP^1_{\bF_q}$ not in $S$, we find 
\[
\PSL_2(N(\fp)) \subseteq \Gamma_S/\Gamma_S(\fp) \subseteq \PGL_2(N(\fp)).
\]

As usual, a \textbf{congruence subgroup} of $\Gamma_S$ is a subgroup $\Gamma \subseteq \Gamma_S$ that contains a principal congruence subgroup.
\end{rmk}

\begin{ex}
The quotient $X_S = \Gamma_S \backslash \PT_{S_0}$ is a cubical Ramanujan complex for trivial reasons: the complex has one vertex and thus all functions on vertices are constant. All Hecke operators $A_v$ have only the trivial eigenvalue $q_v + 1$. (Here it is important that we have defined the adjacency operators with multiplicity.)
\end{ex}

We now replace $\Gamma_S$ by a congruence subgroup. Closely following \cite{LSV1} we assert the following ample supply of cubical Ramanujan complexes of arithmetic origin.

\begin{thm}
\label{thm:Ramanujan}
Let $\Gamma_S$ be one of the lattices of finite characteristic $\not=2$ constructed in 
Section~\S\ref{sec:lattice}, and let $\Gamma \subseteq \Gamma_S$ be a congruence subgroup. Then the quotient $X_\Gamma = \Gamma \backslash \PT_{S_0}$ is a cubical Ramanujan complex.
\end{thm}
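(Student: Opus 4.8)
The plan is to follow the strategy of \cite{LSV1}, transporting the question about the adjacency spectrum into a statement about the local components of automorphic representations and then invoking the Ramanujan--Petersson conjecture, which over a global function field is a theorem of Drinfeld. First I would reduce to the case of a principal congruence subgroup. Since $\Gamma$ contains some $\Gamma_S(I)$, the finite covering $X_{\Gamma_S(I)} \to X_\Gamma$ induces a Hecke-equivariant isometric embedding $L^2(X_{\Gamma,0}) \inj L^2(X_{\Gamma_S(I),0})$; as the operators $A_v$ commute with this pullback, every eigenvalue of $A_v$ on $X_\Gamma$ already occurs on $X_{\Gamma_S(I)}$, so it suffices to treat $\Gamma = \Gamma_S(I)$. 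Because $I \lhd \fo_{F,S}$ is supported at places outside $S$, the level is disjoint from $S_0$, so the relevant representations stay unramified at every $v \in S_0$.

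Next, using strong approximation for $\SL_{1,\dA}$ (available since $S_0 \neq \emptyset$ and $D$ is a division algebra), I would identify $X_{\Gamma_S(I),0}$ with a double coset space, and hence identify $L^2(X_{\Gamma_S(I),0})$ with the space of vectors inside $L^2\big(\PGL_{1,\dA}(F)\backslash \PGL_{1,\dA}(\bA_F)\big)$ that are spherical at $S_0$ and of level $I$ outside $S$. By Section~\S\ref{sec:adjacency is Hecke} the operator $A_v$ then becomes the spherical Hecke operator at $v$ attached to the double coset of $\diag(t_v,1)$. Since $D$ is anisotropic modulo its center, this automorphic $L^2$-space is a discrete Hilbert direct sum of irreducible automorphic representations $\pi = \bigotimes_w \pi_w$, and $A_v$ acts on the $\pi$-isotypic spherical line by the Satake eigenvalue of $\pi_v$. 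Thus each simultaneous eigenvalue of the commuting family $(A_v)_{v\in S_0}$ is attached to one such $\pi$, and the eigenvalue $\lambda$ of $A_v$ depends only on the local factor $\pi_v$.

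Finally I would analyze $\pi_v$. The one-dimensional $\pi$ factor through the reduced norm and contribute exactly the trivial eigenvalues $\lambda = \pm(q_v+1)$, the sign recording whether the associated unramified character is trivial or the quadratic character detecting the bipartite structure in the $v$-direction. Every remaining $\pi$ is infinite-dimensional, hence corresponds under the Jacquet--Langlands correspondence to an automorphic representation $\mathrm{JL}(\pi)$ of $\GL_2(\bA_F)$ that is square-integrable at the two ramified places of $B$ and therefore cuspidal; at each $v \in S_0$ the local transfer is trivial, so $\pi_v = \mathrm{JL}(\pi)_v$ is an unramified principal series with Satake parameter $\alpha_v$, on whose spherical vector $A_v$ acts by $q_v^{1/2}(\alpha_v + \alpha_v^{-1})$. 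By Drinfeld's theorem establishing Ramanujan--Petersson for cuspidal representations of $\GL_2$ over a global function field, $\mathrm{JL}(\pi)_v$ is tempered, i.e. $\abs{\alpha_v} = 1$, whence $\abs{\lambda} = q_v^{1/2}\abs{\alpha_v + \alpha_v^{-1}} \leq 2\sqrt{q_v}$, which is precisely the cubical Ramanujan bound.

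The main obstacle is the middle step: setting up the adelic reformulation so that the finite combinatorial spectrum of $L^2(X_{\Gamma,0})$ is matched precisely with spherical vectors of the correct level, and --- crucially --- verifying that the only non-tempered automorphic constituents are the one-dimensional ones, so that every non-trivial eigenvalue genuinely arises from a cuspidal, hence (by Drinfeld) tempered, representation. The hypothesis of characteristic $\neq 2$ enters to guarantee the clean form of the Jacquet--Langlands transfer and of the local unramified theory at the places of $S_0$ used here.
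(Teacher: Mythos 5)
Your proposal is correct and follows essentially the same route as the paper: commutativity of the spherical Hecke algebra reduces the problem to simultaneous eigenvectors, the finite-dimensional (equivalently, trivial-eigenvalue) constituents are isolated exactly as in the paper's Proposition on trivial eigenvalues, and the infinite-dimensional ones are transferred by Jacquet--Langlands to cuspidal representations of $\GL_2$ whose temperedness at the places of $S_0$ is Drinfeld's theorem. The only cosmetic differences are your preliminary reduction to principal congruence subgroups and your fully adelic setup from the start, whereas the paper first works with the $S_0$-local group $G=\prod_{v\in S_0}G_v$ and only then extends the Hecke eigenfunction to an adelic automorphic form via strong approximation; neither change affects the substance of the argument.
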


\begin{conj}
\label{conj:Ramanujan}
Moreover, infinitely many of the Ramanujan complexes of Theorem~\ref{thm:Ramanujan} are higher-dimensional coboundary expanders of bounded degree in the sense of \cite{gromovandco}.
\end{conj}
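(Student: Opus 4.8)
The plan is to fix the set $S$ (hence the ramified trees, their valencies $q_v+1$, and all local combinatorics) and to vary the congruence level $I$, so that $\Gamma = \Gamma_S(I)$ ranges over an infinite family of distinct finite cube complexes $X_\Gamma = \Gamma\backslash\PT_{S_0}$ whose links — and therefore whose degree in the sense of \cite{gromovandco} — are independent of $I$. Thus the bounded-degree requirement is automatic, and everything reduces to a uniform lower bound on the $\bF_2$-coboundary expansion constants $\ep_k(X_\Gamma)$ in the relevant degrees $0\le k\le d-1$, where $d=\#S_0$. The natural tool is a local-to-global (Garland-type) argument: one controls $\ep_k(X_\Gamma)$ through the spectral expansion of the links of all lower-dimensional cells together with the global expansion of the skeleta supplied by Theorem~\ref{thm:Ramanujan}.

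The first and easiest step is to identify the links. For a deep enough level $I$ the group $\Gamma$ is torsion-free, so $X_\Gamma$ is a non-positively curved cube complex whose links coincide with those of the universal cover $\PT_{S_0}=\prod_{v\in S_0}\BTT_v$. Because each factor is a tree, carrying no $2$-cells, the link of a $k$-cube spanning the directions $T\subseteq S_0$ is exactly the join (complete multipartite complex) of the discrete neighbour-sets in the remaining directions $v\in S_0\setminus T$, each of cardinality $q_v+1$. These complete multipartite complexes are optimal local spectral expanders, the spectral gap of the up-down Laplacian being bounded away from the trivial eigenvalue by a constant depending only on the $q_v$, hence uniformly in $I$. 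This verifies the local hypothesis of the machinery at every cell in the cheapest possible form.

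With the links in hand I would invoke the cubical analogue of the local-to-global theorems of Kaufman--Kazhdan--Lubotzky and Evra--Kaufman \cite{KKL}: if all proper links of a $d$-dimensional complex are coboundary expanders with good constants (here joins of points, the base case of coboundary expansion) and the relevant skeleta are good global spectral expanders, then the complex is a cosystolic expander in every dimension, with constants bounded below independently of its size. The missing global input — that the $1$-skeleton and higher skeleta expand in each direction — is precisely the directional Ramanujan property of Theorem~\ref{thm:Ramanujan}, which supplies the optimal two-sided bound $\abs{\lambda}\le 2\sqrt{q_v}$ on the nontrivial eigenvalues of the operators $A_v$. To pass from cosystolic to genuine coboundary expanders one then needs vanishing of the relevant mod-$2$ cohomology $\rH^k(X_\Gamma;\bF_2)$; this cannot hold at every level, which is exactly why the statement asserts only that \emph{infinitely many} of the complexes are coboundary expanders. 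I would isolate a cofinal subfamily of levels for which the cohomology vanishes in the relevant degrees, using that the optimal spectral gap forces the real harmonic cochains to vanish and then bootstrapping to $\bF_2$-coefficients along that subfamily.

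The main obstacle is twofold. First, the entire local-to-global apparatus (Garland's method and its cosystolic refinements) has been developed for simplicial complexes, so transferring it requires a genuinely cubical Garland formula relating global coboundary expansion to the spectra of cubical links; the product-of-trees structure, which makes every link a join of points, should make this adaptation tractable but still demands care in defining the cubical Laplacians and comparing them across dimensions. Second, and more seriously, combinatorial $\bF_2$-coboundary expansion is a nonlinear statement that does not follow formally from the $\bR$-spectral gap; bridging this gap is the heart of the Kaufman--Kazhdan--Lubotzky method, and controlling the $\bF_2$-cohomology uniformly enough to extract the infinitely many genuinely coboundary-expanding levels is where the real work lies.
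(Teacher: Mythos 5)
You are attempting to prove something the paper does not prove: the statement is labelled a \emph{Conjecture} and is left entirely open --- the authors supply no argument, not even a sketch, so there is no ``paper proof'' for your proposal to match, and it must stand on its own as new mathematics. As written it does not: it is a research plan whose two pivotal steps are precisely the open content of the conjecture, and you concede as much yourself. First, the local-to-global machinery you invoke (Garland-type arguments, \cite{KKL} and the Evra--Kaufman refinement, which is not even among the paper's references) is developed for \emph{simplicial} complexes; no cubical analogue relating $\bF_2$-coboundary expansion of $X_\Gamma$ to the spectra of its links is established in the paper, in its bibliography, or in your text. Observing that the links are joins of discrete sets is correct but is the trivial part; the cubical Garland formula itself is never produced. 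Second, and decisively, your route from the Ramanujan property to honest coboundary (rather than cosystolic) expansion hinges on the claim that ``the optimal spectral gap forces the real harmonic cochains to vanish and then bootstrapping to $\bF_2$'' along a cofinal family of levels. No mechanism for this bootstrap is given, and none can exist in the generality you assert, because mod-$2$ cohomology classes are invisible to real spectra.

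Here is a concrete class that your plan never confronts. For each direction $v \in S_0$, let $\phi_v$ be the $\bF_2$-valued $1$-cochain on $X_\Gamma$ that is $1$ on every edge in $v$-direction and $0$ elsewhere. Every geometric square of $X_\Gamma$ has exactly two boundary edges in each of its two directions, so $\delta\phi_v = 0$: these are canonical $1$-cocycles, present at every level $I$. Moreover $\phi_v$ is a coboundary if and only if $X_\Gamma$ admits a bipartition in $v$-direction, i.e.\ if and only if the directional type character $\tau_v \colon \Gamma \to \{\pm 1\}$ (the $v$-component of the type homomorphism of Section~\S\ref{sec:double_general}, which by Lemma~\ref{lem:surjectivetype} is surjective on $\Gamma_S$ itself) vanishes on $\Gamma$. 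Whenever some $\tau_v$ is nontrivial on $\Gamma = \Gamma_S(I)$ --- and whether this happens is a quadratic-residue-type condition on $1 - t/v$ modulo $I$, not something controlled by eigenvalue bounds --- one gets $\rH^1(X_\Gamma;\bF_2) \neq 0$, hence $\ep_1(X_\Gamma) = 0$, and $X_\Gamma$ is \emph{not} a coboundary expander in dimension $1$ at all, Ramanujan or not. So your cofinal subfamily would have to be chosen by arithmetic conditions on the levels $I$ (forcing $\Gamma \subseteq \Gamma_{S,0}$, among other things), and even then you would still need to kill all remaining $\bF_2$-classes in degrees $1 \le k \le d-1$ --- for which your proposal offers no tool. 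This is the genuine gap: the passage from the spectral statement of Theorem~\ref{thm:Ramanujan} to the combinatorial $\bF_2$-statement of the conjecture is not a technical transfer but the whole problem, which is presumably why the authors state it as a conjecture.
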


\begin{rmk}
Let $I \lhd \fo_{F,S}$ be a level coprime to $S$. The corresponding cubical complex 
\[
X_S(I) := \Gamma_S(I) \backslash \PT_{S_0} 
\]
has an explicit group theoretic description as follows. Since $\Gamma_S$ acts simply transitively on the vertices of $\PT_{S_0}$ the vertices of $X_S(I)$ are naturally
\[
{X_S(I)}_0 = \Gamma_S(I) \backslash \Gamma_S = \Gamma_S/\Gamma_S(I).
\]
Recall the natural generating set $\bigcup_{\tau \in S_0} PA_\tau$ of $\Gamma$ from 
Theorem~\ref{thm:structure_Gamma_S}. The $1$-skeleton of $X_S(I)$ becomes under the above identification the Cayley graph of the quotient $\Gamma_S/\Gamma_S(I)$ with respect to the image of $\bigcup_{\tau \in S_0} PA_\tau$. Remembering the individual $PA_\tau$ leads to a local structure of edges of $X_S(I)$ in $\tau$-direction. The full cubical complex is then obtained by gluing in all cubes that are compatible with the directional structure of the edges of the $1$-skeleton. This is possible in a unique way as soon as the diameter of $X_S(I)$ in each direction is large enough ($\geq 2$ suffices).

The above fairly explicit description of a cubical complex, cf.\ Section~\S\ref{sec:concretemodel}, is only available because $\Gamma_S$ is torsion free and acts vertex transitively on $\PT_{S_0}$. Moreover, the resulting cubical Ramanujan complex $X_S(I)$ has a vertex transitive group of automorphisms since the $\Gamma_S$-action on $\PT_{S_0}$ induces an action by $\Gamma_S/\Gamma_S(I)$ on $X_S(I)$.
\end{rmk}

%----------------------------------------------------------------------------------------------------------------
\subsection{Proof of Theorem~\ref{thm:Ramanujan}} The overall proof strategy
follows closely the strategy used in \cite{margulis, LPS}, 
see also \cite{lubotzky:expanderbook, lubotzky:ramanujan}. 
The same strategy was employed by Jordan and Livn\'e in \cite{JL}.

\subsubsection{Hecke eigenvector}
Let $X_\Gamma = \Gamma \backslash \PT_{S_0}$ be the finite cubical complex associated to the congruence subgroup $\Gamma \subseteq \Gamma_S$ as in 
Theorem~\ref{thm:Ramanujan}. 

We must control the spectrum of the adjacency operators $A_v$ acting on $L^2(X_{\Gamma,0})$. Since the Hecke algebra $\dH(G\!\sslash\! K)$ containing the $A_v$ is commutative,  their spectrum on $L^2(X_{\Gamma,0})$ is described by the eigenvalues for simultaneous eigenvectors $f_\omega$ for all Hecke operators, i.e., a $1$-dimensional Hecke submodule $\bC f_\omega \subseteq L^2(X_{\Gamma,0})$. 
We consider this eigenvector as a left $\Gamma$ and right $K$-invariant function 
\[
f_\omega : G \to \bC,
\]
where being a simultaneous Hecke eigenvector means that there is a Hecke character
\[
\omega : \dH(G\!\sslash\! K) \to \bC
\]
such that for all $v \in S_0$ and $\alpha \in \dH(G\!\sslash\! K)$ we have 
\[
\alpha (f_\omega)  = \omega(\alpha) \cdot f_\omega.
\]
We consider the smooth admissible $G$-representation  
\[
V_\omega := \langle g(f_\omega) ; g \in G \rangle_\bC \subseteq L^2(\Gamma \backslash G)
\]
spanned by translates of $f_w$. Then $V_\omega$ is irreducible because its Hecke module $\bC f_\omega = V_\omega^K$ is irreducible. 

\subsubsection{Product decomposition}
By a theorem of Flath \cite{flath}, the product decomposition $G = \prod_{v \in S_0} G_v$ implies a tensor product decomposition
\[
V_\omega = \otimes_{v \in S_0} \pi_v
\]
with $\pi_v$ irreducible admissible $K_v$-spherical representations for $G_v$, for all $v \in S_0$. The eigenvalues of $A_v$, our only concern actually, by virtue of being the Hecke operator associated to 
\[
K_v \matzz{t_v}{}{}{1}K_v
\]
are nothing but the Satake parameters of the local $G_v$-representations $\pi_v$.

\subsubsection{Non-trivial eigenvalues means infinite dimensional}
If $\dim(\pi_v) < \infty$ is finite dimensional, then $G_v \simeq \PGL_2(F_v)$ acts through a commutative quotient. Hence the action map factors over 
\[
\ov{\det} : G_v \simeq \PGL_2(F_v) \to F_v^\times/(F_v^\times)^2.
\]
Being irreducible and essentially a representation of a finite abelian group, the space $\pi_v$ must be $1$-dimensional. As it also contains a $K_v$-spherical (invariant) vector, we conclude that the $G_v$ action is either trivial or by $-1$ to the power the $v$-valuation of the determinant modulo squares: the shift of type in the tree $\BTT_v$. The first case means that $A_v$ has trivial eigenvalue $q_v  +1$ and the second case means that 	we have trivial eigenvalue $-(q_v+1)$. With the same proof as in \cite[Prop 3.3 and 3.6]{LSV1}, essentially because of strong approximation for $\SL_{1,\dA}$,  we obtain the following proposition.

\begin{prop}
\label{prop:trivialeigenvalues}
With the notation above the following are equivalent:
\begin{enumerate}
\item[(a)] $A_v$ has trivial eigenvalue acting on $f_\omega$ for one $v \in S_0$,
\item[(a')] $A_v$ has trivial eigenvalue acting on $f_\omega$ for all $v \in S_0$,
\item[(b)] $\dim(\pi_v)$ is finite for one $v \in S_0$,
\item[(b')] $\dim(\pi_v)$ is finite for all $v \in S_0$,
\item[(c)] $\dim(V_\omega)$ is finite.
\end{enumerate}
\end{prop}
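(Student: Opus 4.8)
The plan is to reduce everything to two inputs: a purely local equivalence at each place $v\in S_0$, and a single global implication obtained from strong approximation. Writing (a)$_v$ and (b)$_v$ for the conditions ``$A_v$ has a trivial eigenvalue on $f_\omega$'' and ``$\dim(\pi_v)<\infty$'' respectively, so that (a) and (a') (resp.\ (b) and (b')) are the existential and universal quantifications of (a)$_v$ (resp.\ (b)$_v$) over $v\in S_0$, I would first prove (a)$_v \Leftrightarrow$ (b)$_v$ for each fixed $v$, then read off (b') $\Leftrightarrow$ (c) from the Flath decomposition, and finally prove the global implication (b) $\Rightarrow$ (b'). All remaining implications are then formal.

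For the local equivalence, the direction (b)$_v \Rightarrow$ (a)$_v$ is exactly the computation preceding the proposition: a finite-dimensional irreducible admissible representation of $G_v\simeq\PGL_2(F_v)$ factors through the abelian quotient $\ov{\det}\colon G_v\to F_v^\times/(F_v^\times)^2$, hence is $1$-dimensional, and its $K_v$-spherical vector is an eigenvector of $A_v$ for $\pm(q_v+1)$. For the converse (a)$_v\Rightarrow$ (b)$_v$ I would use the Satake isomorphism $\dH(G_v\!\sslash\!K_v)\simeq\bC[A_{\BTT_v}]$ together with the classification of $K_v$-spherical irreducible representations of $\PGL_2(F_v)$: the eigenvalue of $A_v$ on the spherical vector records the Satake parameter, and the extremal values $\lambda=\pm(q_v+1)$ occur precisely at the reducibility points of the unramified principal series, at which the unique spherical constituent is the trivial, respectively sign, character and is therefore $1$-dimensional (the complementary Steinberg constituent carries no $K_v$-fixed vector). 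This establishes (a)$_v\Leftrightarrow$ (b)$_v$, and hence (a) $\Leftrightarrow$ (b) and (a') $\Leftrightarrow$ (b').

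From the decomposition $V_\omega=\bigotimes_{v\in S_0}\pi_v$ one has $\dim(V_\omega)=\prod_{v\in S_0}\dim(\pi_v)$ with each factor at least $1$ by irreducibility; since $S_0$ is finite this yields (b') $\Leftrightarrow$ (c) at once, while (b') $\Rightarrow$ (b) and (a') $\Rightarrow$ (a) are trivial. The whole proposition therefore reduces to the implication (b) $\Rightarrow$ (b'): finiteness at one place forces finiteness at every place.

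This last step is the crux, and it is where strong approximation for the simply connected group $\SL_{1,\dA}$ is needed. Suppose $\pi_{v_0}$ is $1$-dimensional for some $v_0\in S_0$; since a $1$-dimensional representation of $\PGL_2(F_{v_0})$ is trivial on the image of $\SL_2(F_{v_0})$, every vector of $V_\omega\subseteq L^2(\Gamma\backslash G)$ is right-invariant under the copy of $\SL_2(F_{v_0})$ sitting in the $v_0$-slot of $G$. Strong approximation applies to $\SL_{1,\dA}$ because $\SL_{1,\dA}(F_{v_0})\simeq\SL_2(F_{v_0})$ is non-compact, and it forces the restriction of the automorphic representation $V_\omega$ to the norm-one adelic group $\SL_{1,\dA}(\bA)$ to be trivial: an automorphic function that is right-invariant under $\SL_2(F_{v_0})$ and under the level structure at the remaining places is already constant, since the density of $\SL_{1,\dA}(F)$ collapses the relevant double coset space to a single point. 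Hence the irreducible representation $V_\omega$ factors through the abelian quotient $\prod_{v\in S_0}F_v^\times/(F_v^\times)^2$, which forces each $\pi_v$ to be $1$-dimensional; this is (b'). Combined with the formal implications this closes the equivalence of (a), (a'), (b), (b'), (c), and the execution of this final step is identical in structure to \cite[Prop.\ 3.3 and 3.6]{LSV1}, to which I would defer for the adelic bookkeeping.
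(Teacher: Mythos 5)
Your proof is correct, and its global crux coincides with the paper's: both arguments reduce to a single non-formal implication that is settled by strong approximation for $\SL_{1,\dA}$, with the adelic bookkeeping deferred to \cite[Prop.\ 3.3 and 3.6]{LSV1}. The architecture and the local input differ, however. The paper proves (a)\,$\Rightarrow$\,(c) in one stroke: a $\pm(q_v+1)$-eigenfunction on the finite complex is constant or $v$-alternating, hence fixed by the type-preserving subgroup of $G_v$, so $G_v$ acts on all of $V_\omega$ through an abelian quotient; since the closure of $\Gamma G_v$ in $G$ has finite index by strong approximation, $G$ acts through a finite group and $\dim V_\omega<\infty$. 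You instead factor the statement into a local equivalence at each place, the global implication (b)\,$\Rightarrow$\,(b'), and the Flath-decomposition identity $\dim V_\omega=\prod_v\dim\pi_v$. Your local direction from (a) to (b) uses an ingredient the paper never needs, namely the Satake classification of spherical irreducible representations of $\PGL_2(F_v)$ and the identification of the eigenvalues $\pm(q_v+1)$ with the reducibility points of the unramified principal series, where the spherical constituent is the trivial or sign character and Steinberg carries no $K_v$-fixed vector; the paper's combinatorial maximum-principle observation bypasses this. Your route buys a clean separation of local and global content and an honest place-by-place equivalence; the paper's buys brevity and independence from the classification of local representations. One wording caution, not a gap: in your strong-approximation step, ``already constant'' should read ``constant on $\SL_{1,\dA}(\bA)$-orbits'', i.e.\ right-invariant under the adelic norm-one group; promoting right-invariance under $\SL_2(F_{v_0})$ times a compact open subgroup to invariance under all of $\SL_{1,\dA}(\bA)$ uses normality of $\SL_{1,D}$ in $\GL_{1,D}$ and is precisely the bookkeeping you delegate to \cite{LSV1}, exactly as the paper does.
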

\begin{proof}
We only have to show (a) implies (c). If $A_v$ has trivial eigenvalue acting on $f_\omega$, then $f_\omega$ is either constant or $v$-alternating. In particular, $G_v$ acts through an abelian quotient on all of $V_\omega$ while $\Gamma$ acts trivially. Therefore the set of products $\Gamma G_v \subset G$ acts with commuting operators, hence by continuity its closure acts via an abelian group of operators. The topological and group closure in $G$ has finite index in $G$, so that $G$ acts through a finite group on $V_\omega$. This proves (c).
\end{proof}

In view of Proposition~\ref{prop:trivialeigenvalues} we must show that for infinite dimensional irreducible admissible $V_\omega = \otimes \pi_v \subseteq L^2(\Gamma \backslash G)$ the Satake parameters of all local components $\pi_v$ are bounded in absolute value by $2\sqrt{q_v}$. (This is done in \cite{LSV1} for the simplicial analogue  of $\PGL_d$ and $\#S_0 = 1$.)

\subsubsection{Automorphic representation and Jacquet--Langlands} Let us set 
\[
\bG :=  \GL_{1,D}
\]
for the algebraic group defined over $F$ of units of the quaternion algebra $D$. The group $\bG$ is an inner form of $\GL_2$. We will consider now the representations $\pi_v$ (resp.\ $V_\omega = \otimes_{v \in S_0} \pi_v$) as representations of $\bG(F_v) \simeq \GL_2(F_v)$ (resp.\ $\bG(F_{S_0}) = \prod_{v \in S_0} \bG(F_v) \simeq \prod_{v \in S_0} \GL_2(F_v)$, where we abbreviated $F_{S_0} = \prod_{v \in S_0} F_v$) with trivial central character.  In particular, the Hecke eigenvector $f_\omega$ is now a smooth function
\[
f_\omega : \bG(F_{S_0}) \to \bC.
\]
We denote by $\bA_F$ the ring of adels of the global field $F$. Using strong approximation for $\SL_{1,D}$ and  the fact that our representations have trivial central character, we may extend $f_\omega$ to an automorphic form (still denoted $f_\omega$)
\[
f_\omega : \bG(F) \backslash \bG(\bA_F) \to \bC.
\]
The corresponding irreducible admissible automorphic representation $\pi$ of $\bG(\bA_F)$ has local components $\pi_v$, for all $v \in S_0$, considered as above as representations with trivial central character.

The Jacquet--Langlands correspondence \cite[\S10]{gelbart} translates $\pi$ into an irreducible cuspidal automorphic representation $\rho$ of $\GL_2$ over $F$. As the local components $\pi_v$ are infinite dimensional for $v \in S_0$, the same holds for the local components $\rho_v$ of $\rho$. Moreover, since the global Jacquet--Langlands correspondence is compatible with its local counterpart \cite[\S10]{gelbart}, the eigenvalues of $A_v$ are up to sign the corresponding Hecke eigenvalues for $\rho_v$. 

The claim on the non-trivial spectrum of $A_v$ now follows from the 
Ramanujan--Petersson conjecture for $\GL_2$ over function fields, in fact a theorem of Drinfeld \cite{drinfeldRP}, see also \cite[Thm VI.10]{lafforgue}. Indeed, the components $\rho_v$ for $v \in S_0$ are spherical and $\rho$ is an infinite dimensional cuspidal automorphic representation of $\GL_2$.

%%%%%%%%%%%%%%%%%%%%%%%%%%%%%%%%%%%%%%%%%%
%%%%%% Bibliography %%%%%%%%%%%%%%%%%%%%%%%%%%%%%
%%%%%%%%%%%%%%%%%%%%%%%%%%%%%%%%%%%%%%%%%%

\end{document}